\newtheorem{theorem}{Theorem}[section]
\newtheorem{lemma}[theorem]{Lemma}
\theoremstyle{definition}
\newtheorem{definition}[theorem]{Definition}
\newtheorem{cor}[theorem]{Corollary}
\newtheorem{remark}[theorem]{Remark}
\newtheorem{prop}[theorem]{Proposition}
\def \RP {{\bf RP}}
\def \Z  {\mathbb{Z}}
\def \N  {\mathbb{N}}
\def \Q {{\bf Q}}
\begin{document}

\title{Top-nilpotent enveloping semigroups and pro-nilsystems}

\author[J.~Qiu]{Jiahao Qiu}
\address[J.~Qiu]{Wu Wen-Tsun Key Laboratory of Mathematics, USTC, Chinese Academy of Sciences and
School of Mathematics, University of Science and Technology of China,
Hefei, Anhui, 230026, P.R. China}
\email{qiujh@mail.ustc.edu.cn}

\author[J.~Zhao]{Jianjie Zhao}
\address[J.~Zhao]{Wu Wen-Tsun Key Laboratory of Mathematics, USTC, Chinese Academy of Sciences and
School of Mathematics, University of Science and Technology of China,
Hefei, Anhui, 230026, P.R. China}
\email{zjianjie@mail.ustc.edu.cn}

\keywords{Nilsystems, enveloping semigroups}
\subjclass[2010]{54H20, 37B05, 37B99}
\begin{abstract}
In this paper, it is shown that for $d\in \mathbb{N}$, a minimal system $(X,T)$ is a $d$-step pro-nilsystem if
its enveloping semigroup is a $d$-step top-nilpotent group, answering an open question by Donoso. Thus,
combining the previous result of Donoso, it turns out that a minimal system $(X,T)$ is a $d$-step pro-nilsystem if
and only if its enveloping semigroup is a $d$-step top-nilpotent group.
\end{abstract}
\date{\today}

\maketitle

\section{Introduction}
By a \emph{topological dynamical system} or just a \emph{dynamical system},
we mean a pair $(X,G)$, where $X$ is a compact metric space with a metric $\rho$
and $G$ acts on it as a group of homeomorphisms.
In this paper, we only focus on abelian group actions.
When $G$ is the group induced by some homeomorphism $T$,
we just write it as $(X,T)$.

\subsection{The history of the question}
In order to study the asymptotic behaviors of a dynamical system $(X,G)$,
Ellis introduced in 1960 the enveloping semigroup $E(X,G)$
which has been proved to be a very powerful tool in the theory of topological dynamical systems.
It is defined as the closure of the set $\{g:g\in G\}$ in $X^X$ (with its compact, usually non-metrizable, pointwise
convergence topology). Ellis showed that a dynamical system $(X,G)$ is equicontinuous
if and only if $E(X,G)$ is a group of homeomorphisms,
and $(X,G)$ is distal if and only if $E(X,G)$ is a group.
Furthermore, when $(X,G)$ is minimal,
then it is equicontinuous if and only if $E(X,G)$ is an abelian group.
So, it is natural to ask: can we give a finer classification of minimal distal systems
using enveloping semigroups?
This is the main motivation of the current paper.

\medskip

In the recent years, the study of the dynamics of rotations on nilmanifolds
and inverse limits of this kind of dynamics has drawn much interest, since it relates
to many dynamical properties and has important applications in number theory.
We refer to \cite{HK06} and the references therein for a systematic treatment of the subject.

\medskip

In a pioneer work, Host, Kra and Maass in \cite{HKM} introduced the notion of
{\it regionally proximal relation of order $d$}
for a dynamical system $(X,T)$, denoted by $\mathbf{RP}^{[d]}(X)$.
For $d\in\N$, we say that a minimal system is a \emph{system of order d}
if $\mathbf{RP}^{[d]}(X)=\Delta$ and this is equivalent for $(X,T)$ being a $d$-step pro-nilsystem,
i.e. an inverse limit of rotations on $d$-step nilsystems (see \cite[Theorem 2.8]{HKM}).
For a minimal distal system $(X,T)$, it was proved that
$\mathbf{RP}^{[d]}(X)$ is an equivalence relation and $X/\mathbf{RP}^{[d]}(X)$
is the maximal factor of order $d$ \cite{HKM}.
Then Shao and Ye \cite{SY-12} showed that in fact for
any minimal system, $\mathbf{RP}^{[d]}(X)$ is an equivalence
relation and $\mathbf{RP}^{[d]}(X)$
has the so-called lifting property.

We note that for general group actions,
regional proximality of higher order can also be defined.
Meanwhile,
using similar methods,
the result obtained by Shao and Ye \cite{SY-12}
can be generalized to general group actions \cite{GGY}.

\medskip

An earlier open question is the following: is it true that a minimal system
$(X,T)$ is a $d$-step pro-nilsystem if and only if its enveloping
semigroup is a $d$-step nilpotent group? It is Glasner who first considered the
question. In \cite{EG3},  Glasner proved that the question has an affirmative answer,
when $d=2$ and the system is an extension of its maximal equicontinuous factor by a torus.
In \cite{SD14}, Donoso shown that the enveloping semigroup of a $d$-step pro-nilsystem
is a $d$-step top-nilpotent group
and hence a $d$-step nilpotent group.
Note that an Ellis group $E$ is \emph{top-nilpotent},
 if the descending sequence
 $E=E_1^{\mathrm{top}} \supset E_2^{\mathrm{top}} \supset \ldots$ terminates at some point,
 where $E_{j+1}^{\mathrm{top}}$ is defined as the closure of the subgroup spanned by $[E_j^{\text{top}},E],j\geq1$.
 \footnote{Actually in the current paper, we consider the smallest closed subgroup generated by the commutators, but we
prove later that this coincides with the closure of the group.}
By using the notation of top-nilpotency, Donoso \cite{SD14} proved that a minimal system with a 2-step
top-nilpotent enveloping semigroup has to be a 2-step pro-nilsystem.

\medskip

So, it seems that a more suitable question according to the result of Donoso is
that (Question 1.3 in \cite{SD14}):
Let $(X, T)$ be a minimal system with a $d$-step top-nilpotent enveloping
semigroup with $d >2$.
Is $(X, T)$ a $d$-step pro-nilsystem?
We will address this question in the current paper and give an affirmative answer.

\subsection{The main result}
Now we state the main result of the paper and describe briefly how we obtain it.
Host, Kra and Maass \cite{HKM} provided  a method to describe
regional proximality of higher order by using dynamical cubespaces
which are also called dynamical parallelepipeds. Assume $(X, T)$ is a minimal system with a $d$-step
top-nilpotent enveloping semigroup. To show that $(X,T)$ is a $d$-step pro-nilsystem,
the first step we do is to show that $(X,T)$ is a system of order $\infty$. Then in the second
step we prove that indeed $(X,T)$ is a $d$-step pro-nilsystem.

\medskip

In the proof of the first step,
we study general top-nilpotent Ellis groups and their Host-Kra cubegroups.
Let $E$ be a $d$-step top-nilpotent Ellis group,
then $E^{[l]}=E^{2^l}$ is also a $d$-step top-nilpotent Ellis group with the product topology for every $l\in \N$.
In \cite[Chapter 12]{HK06}, Host-Kra proved that the Host-Kra cubegroups of nilpotent Lie groups are also nilpotent
Lie groups.
Unfortunately, it does not hold for top-nilpotent Ellis groups.
That is, the Host-Kra cubegroup $\mathcal{HK}^{[l]}(E)$ is not an Ellis group.
We denote by $\widetilde{E}^{[l]}$
the closed subgroup generated by $\mathcal{HK}^{[l]}(E)$,
which is also a $d$-step top-nilpotent Ellis group.
To study the Ellis group $\widetilde{E}^{[l]}$ and its topological commutators subgroups, the binary cubegroups $\{\mathcal{C}_j^{[l]}(E)\}_{j=1}^d$ are introduced
 which are much easier to handle.
It is shown that  for every $j=1,\ldots,d,( \widetilde{E}^{[l]})_j^{\mathrm{top}}$
is included in the group $\mathcal{C}_j^{[l]}(E)$.
Using this result,
we show that the enveloping semigroups of the dynamical cubespaces
associated with $(X,T)$ are also top-nilpotent.
Moreover, the corresponding topological commutators subgroups
have precise forms.
So there is some restriction on the order of regional proximality,
meaning that the system cannot admit nontrivial regionally proximal pairs of order $\infty$.
The proof of the first step indicates that
we can restrict to the case that $\mathbf{RP}^{[\infty]}(X)=\Delta$.
Note that a system of order $\infty$ is an inverse limit of
minimal nilsystems (see \cite[Theorem 3.6]{PD13}).
 Since the inverse limit is easy to handle, we need only to
focus on minimal nilsystems.

In the proof of the second step, we study the Furstenberg tower of a minimal nilsystem and
show that
this tower coincides with the maximal factors of higher order.
Following these facts, we deduce that the question has an affirmative answer.

\medskip

Now we state the main result and the results we need to get it.
As a parallelepipeds group
is generated by several commuting transformations,
we consider
general abelian groups action.

\begin{theorem}\label{main-thm1}
For abelian group action,
a minimal system with a top-nilpotent enveloping semigroup is a system of order $\infty$.
\end{theorem}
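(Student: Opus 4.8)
The plan is to reduce the statement to a structural property of the enveloping semigroups of the dynamical cubespaces $\mathbf{Q}^{[l]}(X)$, to prove that property by analysing the topological derived series of the Host-Kra cubegroups of $E=E(X,G)$, and then to read off that no nontrivial pair can be regionally proximal of order $\infty$. Fix $d$ with $E$ being $d$-step top-nilpotent, i.e. $E_{d+1}^{\mathrm{top}}=\{e\}$. Since $(X,G)$ is minimal, $\mathbf{Q}^{[l]}(X)$ is the orbit closure of $\Delta^{[l]}=\{z^{[l]}=(z,\ldots,z):z\in X\}$ under the Host-Kra cubegroup acting on $X^{2^{l}}$, so every cube $w\in\mathbf{Q}^{[l]}(X)$ has the form $w=q\,z^{[l]}$ with $z\in X$ and $q=(q_\epsilon)_{\epsilon\in\{0,1\}^{l}}$ in the enveloping semigroup of that action, which is exactly the closed subgroup $\widetilde{E}^{[l]}=\overline{\mathcal{HK}^{[l]}(E)}$ of $E^{2^{l}}=E(X^{2^{l}},G^{2^{l}})$. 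Now $E^{2^{l}}$ with the product topology is again $d$-step top-nilpotent, since $(E^{2^{l}})_j^{\mathrm{top}}=(E_j^{\mathrm{top}})^{2^{l}}$, and the topological derived series of a closed subgroup $H\le K$ satisfies $H_j^{\mathrm{top}}\subseteq K_j^{\mathrm{top}}$; hence $\widetilde{E}^{[l]}$ is itself a $d$-step top-nilpotent Ellis group. (This is the point at which working with a general abelian $G$ is natural, since the Host-Kra cubegroup is generated by several commuting transformations.)

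The technical core is to describe the filtration $\widetilde{E}^{[l]}=(\widetilde{E}^{[l]})_1^{\mathrm{top}}\supseteq(\widetilde{E}^{[l]})_2^{\mathrm{top}}\supseteq\cdots$ much more precisely than the crude bound $(\widetilde{E}^{[l]})_j^{\mathrm{top}}\subseteq(E_j^{\mathrm{top}})^{2^{l}}$ affords. For this I would introduce, for $j=1,\ldots,d$, the binary cubegroup $\mathcal{C}_j^{[l]}(E)$: a concretely presented closed subgroup of $E^{2^{l}}$ whose elements $(g_\epsilon)_\epsilon$ are trivial on the low-dimensional vertices and whose remaining components are built multiplicatively out of elements of $E_j^{\mathrm{top}}$ indexed by faces, arranged so that the $\mathcal{C}_j^{[l]}(E)$ decrease in $j$ and $\mathcal{C}_{d+1}^{[l]}(E)=\{e\}$ (using $E_{d+1}^{\mathrm{top}}=\{e\}$). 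Granting that each $\mathcal{C}_j^{[l]}(E)$ is a closed subgroup, that $\widetilde{E}^{[l]}=(\widetilde{E}^{[l]})_1^{\mathrm{top}}\subseteq\mathcal{C}_1^{[l]}(E)$, and --- the key point --- that $[\widetilde{E}^{[l]},\mathcal{C}_j^{[l]}(E)]\subseteq\mathcal{C}_{j+1}^{[l]}(E)$ for $1\le j\le d$, an induction on $j$ gives $(\widetilde{E}^{[l]})_j^{\mathrm{top}}\subseteq\mathcal{C}_j^{[l]}(E)$ for every $j$, which is the precise form of these topological commutator subgroups. The key inclusion is proved by a direct commutator computation on the generators of $\widetilde{E}^{[l]}$ --- the diagonal elements and the face elements --- using standard identities such as $[ab,c]=[a,c]^{b}[b,c]$ (with $x^{y}=y^{-1}xy$) together with $[E_i^{\mathrm{top}},E]\subseteq E_{i+1}^{\mathrm{top}}$ inside the top-nilpotent group $E$, and --- this is the delicate part --- checking that passing to the closed subgroup generated by such commutators does not leave $\mathcal{C}_{j+1}^{[l]}(E)$, which works because $\mathcal{C}_{j+1}^{[l]}(E)$ is already closed and the parametrization of the binary cubegroups is continuous in the product topology of $E^{2^{l}}$.

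With this structural result in hand, I would conclude as follows. Suppose toward a contradiction that $(x,y)\in\mathbf{RP}^{[\infty]}(X)$ with $x\neq y$. Then $(x,y)\in\mathbf{RP}^{[l]}(X)$ for every $l$, so by the cube-characterization of regional proximality for minimal systems \cite{HKM,SY-12} a corner configuration involving only $x$ and $y$ belongs to $\mathbf{Q}^{[l+1]}(X)$; writing it as $q\,z^{[l+1]}$ with $q=(q_\epsilon)_\epsilon\in\widetilde{E}^{[l+1]}$ and $z\in X$ gives $q_{\vec{0}}z=x$ and $q_\epsilon z=y$ for the relevant $\epsilon\neq\vec{0}$. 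Now take $l>d$. Since $(\widetilde{E}^{[l+1]})_j^{\mathrm{top}}\subseteq\mathcal{C}_j^{[l+1]}(E)$ for all $j$ and $(\widetilde{E}^{[l+1]})_{d+1}^{\mathrm{top}}\subseteq\mathcal{C}_{d+1}^{[l+1]}(E)=\{e\}$, the constraints on $\widetilde{E}^{[l+1]}$ force a cube identity expressing $q_{\vec{0}}$ through the components $q_\epsilon$ with $\epsilon\neq\vec{0}$, up to an error lying in the trivial group $(\widetilde{E}^{[l+1]})_{d+1}^{\mathrm{top}}$; evaluating this identity at $z$ yields $q_{\vec{0}}z=q_\epsilon z$, that is $x=y$, a contradiction. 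Hence $\mathbf{RP}^{[\infty]}(X)=\Delta$, i.e. $(X,G)$ is a system of order $\infty$; cf. Donoso's treatment of the case $d=2$ in \cite{SD14}.

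The main obstacle is the key inclusion $[\widetilde{E}^{[l]},\mathcal{C}_j^{[l]}(E)]\subseteq\mathcal{C}_{j+1}^{[l]}(E)$ and the topological bookkeeping it requires: the binary cubegroups must be set up so as to be simultaneously large enough to absorb all iterated commutators of the generators of $\widetilde{E}^{[l]}$, small enough that $\mathcal{C}_{d+1}^{[l]}(E)$ is trivial, and closed, so that generating a closed subgroup from commutators cannot escape $\mathcal{C}_{j+1}^{[l]}(E)$. A secondary difficulty is the final step: turning the abstract cube identity valid in $d$-step top-nilpotent groups into the concrete identity needed for the special corner configurations, and verifying that the error term genuinely lands in the $(d{+}1)$-st topological commutator subgroup.
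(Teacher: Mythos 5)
Your Section-4-style machinery is sound and matches the paper: the binary cubegroups $\mathcal{C}_j^{[l]}(E)$, their closedness, the inclusion $(\widetilde{E}^{[l]})_j^{\mathrm{top}}\subset\mathcal{C}_j^{[l]}(E)$, and the fact that elements of $\mathcal{C}_d^{[l]}(E)$ satisfy the alternating-product identity $\prod_{\epsilon}g_\epsilon^{(-1)^{|\epsilon|}}=e$ (which works precisely because $E_d^{\mathrm{top}}$ is central). The gap is in your concluding step. You write the corner configuration as $q\,z^{[l+1]}$ with $q$ known only to lie in $\widetilde{E}^{[l+1]}\subset\mathcal{C}_1^{[l+1]}(E)$, and then invoke ``a cube identity expressing $q_{\vec 0}$ through the other components up to an error in $(\widetilde{E}^{[l+1]})_{d+1}^{\mathrm{top}}=\{e\}$.'' No such identity holds on $\mathcal{C}_1^{[l+1]}(E)$ or on $\widetilde{E}^{[l+1]}$: the alternating product is only well-defined and trivial on $\mathcal{C}_d^{[l+1]}(E)$, where the entries are central, and for general $q$ the information ``$q_\epsilon z=y$ for all $\epsilon\neq\vec 0$'' does not even determine the $q_\epsilon$ (arbitrary elements of an enveloping semigroup are not determined by their value at one point; that rigidity is available only for the automorphism group $E_d^{\mathrm{top}}(X)$). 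In effect, the claim that the $\vec 0$-coordinate of a cube is forced by the others modulo nothing is equivalent to $X$ being a system of order $l$, i.e.\ to the conclusion you are trying to prove.

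What is missing is the mechanism that places the connecting element in the $d$-th topological commutator subgroup rather than merely in $\widetilde{E}^{[l]}$. The paper does this by induction on the nilpotency class $d$: one first shows that $\mathbf{R}_{d-1}(X)=\{(x,px):p\in E_d^{\mathrm{top}}(X)\}$ is a closed invariant equivalence relation whose quotient has a $(d-1)$-step top-nilpotent enveloping semigroup (Lemma \ref{euqi}), so that by the inductive hypothesis $\mathbf{RP}^{[\infty]}(X)\subset\mathbf{R}_{d-1}(X)$; one then proves that the maximal factor of order $\infty$ of $(\mathbf{Q}^{[l]}(X),\mathcal{G}^{[l]})$ is $\mathbf{Q}^{[l]}(X/\mathbf{RP}^{[\infty]}(X))$ (Theorem \ref{infinity-step}), and applies the inductive hypothesis again to the cubespace system to conclude that the corner configuration $(px,x^{[l]}_*)$ equals $\mathbf{q}x^{[l]}$ with $\mathbf{q}\in(\widetilde{E}^{[l]})_d^{\mathrm{top}}\subset\mathcal{C}_d^{[l]}(E)$. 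Only at that point do the alternating-product identity and the automorphism property of $E_d^{\mathrm{top}}(X)$ combine to give the contradiction $p=\mathrm{id}$. Without the induction and the identification of the order-$\infty$ factor of the cubespace, your final step does not close.
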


The remarkable theorem of Furstenberg on minimal distal systems states that a minimal distal system
is the inverse limit of isometric extensions, which we will refer as the Furstenberg tower. The following result shows
that for a minimal nilsystem, the Furstenberg tower and the maximal factors of
higher order coincide.
This fact seems easy to prove, but in fact the proof is much involved.

\begin{theorem}\label{main-thm2}
  Let $s\geq 3$ be an integer and let $(X,T)$ be a minimal $s$-step nilsystem.
Then the extension $X\to X/\mathbf{RP}^{[s-1]}(X)$ is isometric
and the maximal isometric extension of $X/\mathbf{RP}^{[r]}(X)$ below $X$
is $X/\mathbf{RP}^{[r+1]}(X),r=1,\ldots,s-2$.
\end{theorem}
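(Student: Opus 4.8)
We work with a concrete model of the nilsystem: write $X=G/\Gamma$ with $G$ a connected, simply connected $s$-step nilpotent Lie group, $\Gamma$ a cocompact lattice, and $T$ left translation by an element $g_0$ whose orbit is dense. By the rationality of the lower central series with respect to $\Gamma$, each term $G_i$ of the descending central series of $G$ satisfies that $G_i\Gamma$ is closed, $G_i\cap\Gamma$ is a lattice in $G_i$, and $G_i\Gamma/\Gamma$ is a subnilmanifold. The first ingredient is the standard description of the Host--Kra factors of a nilsystem: for each $r$, $\mathbf{RP}^{[r]}(X)$ is the orbit equivalence relation of $G_{r+1}$ acting on $X$, so that $X/\mathbf{RP}^{[r]}(X)=G/(G_{r+1}\Gamma)=:X_r$ is an $r$-step nilsystem, with $X_s=X$. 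Under this dictionary the theorem becomes two statements about the tower $X=X_s\to X_{s-1}\to\cdots\to X_1$: first, that $X\to X_{s-1}$, and more generally each step $X_{r+1}\to X_r$, is isometric; second, that $X_{r+1}$ is the \emph{maximal} isometric extension of $X_r$ sitting below $X$.

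The isometric statements I would prove by exhibiting each step of the tower as a group extension by a compact abelian group. Passing to $\overline G:=G/G_{r+2}$, the image of $G_{r+1}$ is central; hence $G_{r+1}\Gamma$ normalizes $G_{r+2}\Gamma$, the compact abelian group $G_{r+1}/\bigl(G_{r+2}(G_{r+1}\cap\Gamma)\bigr)$ acts freely transitively on the fibres of $X_{r+1}\to X_r$, and this action commutes with $T$ because $G_{r+1}$ is central modulo $G_{r+2}$. A group extension by a compact group admits a fibrewise $T$-invariant metric, hence is isometric. The case $r=s-1$ is identical, using $G_s\subseteq Z(G)$, and already yields the first assertion of the theorem.

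The substantive point is maximality, and this is where I expect the real difficulty. The maximal isometric extension of $X_r$ below $X$ is the quotient of $X$ by the closed invariant equivalence relation $\langle\mathbf{RP}_{X_r}(X)\rangle$ generated by the regionally proximal relation $\mathbf{RP}_{X_r}(X)$ of $X$ relative to the factor $X_r$. Since $X_{r+1}\to X_r$ is isometric, $X_{r+1}$ is a factor of this maximal extension, and to obtain equality it suffices to prove the reverse inclusion
\[
\mathbf{RP}^{[r+1]}(X)\ \subseteq\ \mathbf{RP}_{X_r}(X),
\]
for then $\langle\mathbf{RP}_{X_r}(X)\rangle=\mathbf{RP}^{[r+1]}(X)$ and the maximal isometric extension is exactly $X_{r+1}$. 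Concretely, the pairs in $\mathbf{RP}^{[r+1]}(X)$ are exactly those of the form $(x,ux)$ with $u\in G_{r+2}$, and since $G_{r+2}$ is normal and contained in $G_{r+1}\Gamma$ such a pair lies in a single fibre of $X\to X_r$; one must show it is regionally proximal relative to $X_r$.

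To this end I would produce witnessing nets: the constant net $x_i=x$ together with $y_i=w_ix$, where $w_i\in G_{r+1}$ and $w_i\to u$, chosen along with integers $n_i\to\infty$ so that $g_0^{n_i}w_ig_0^{-n_i}\to e$ while $g_0^{n_i}x$ converges. Then $x_i$ and $y_i$ have the same image in $X_r$ (because $w_i\in G_{r+1}$), and $T^{n_i}x_i=g_0^{n_i}x$ and $T^{n_i}y_i=(g_0^{n_i}w_ig_0^{-n_i})(g_0^{n_i}x)$ tend to the same point, so $(x,ux)\in\mathbf{RP}_{X_r}(X)$. The crux is the construction of the $w_i$ and $n_i$. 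Writing $u=\exp\xi$ with $\xi\in\mathfrak g_{r+2}$ and $A=\mathrm{Ad}(g_0)|_{\mathfrak g_{r+1}}$, which is unipotent because $G$ is nilpotent, one needs $\eta_i\in\mathfrak g_{r+1}$ with $\eta_i\to\xi$ and $A^{n_i}\eta_i\to0$. This is where minimality of the nilsystem enters decisively: it forces $\mathrm{Ad}(g_0)-I$, together with conjugation by $\Gamma$, to have enough range on $\mathfrak g_{r+1}$ to reach $\mathfrak g_{r+2}$ modulo $\mathfrak g_{r+3}$, after which the higher-order terms of $A^{n}=\sum_{k}\binom{n}{k}(A-I)^{k}$ can be arranged to cancel the $A$-fixed part of $\xi$. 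Turning this into a proof requires an induction down the filtration $\mathfrak g_{r+2}\supseteq\mathfrak g_{r+3}\supseteq\cdots$, a delicate choice of the scales $n_i$ level by level, and a fair amount of bookkeeping with the lattice; it is the accumulation of these technicalities, rather than a single hard idea, that makes this seemingly immediate statement laborious to prove.
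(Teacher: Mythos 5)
Your architecture matches the paper's: identify $X/\mathbf{RP}^{[r]}(X)$ with $Z_r=G/(G_{r+1}\Gamma)$ (Lemma \ref{nilfact}), prove each step $Z_{r+1}\to Z_r$ of the tower is isometric (your group-extension argument is a concrete substitute for the paper's Theorem \ref{isom}, which proves this for arbitrary minimal distal systems), and prove maximality by showing that the relative regionally proximal relation $Q(R_{\pi_r})$ over $Z_r$ already contains $R_{\pi_{r+1}}$, so that the maximal isometric extension $X/Q(R_{\pi_r})$ is squeezed against $Z_{r+1}$. All of that is sound. The gap is in the one step you yourself flag as the crux: producing, for $u\in G_{r+2}$ with $u\notin\Gamma$, elements $w_i\in G_{r+1}$ with $w_i\to u$ and times $n_i$ such that $g_0^{n_i}w_ig_0^{-n_i}$ tends to the identity modulo the lattice.

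The mechanism you propose --- unipotency of $A=\mathrm{Ad}(g_0)$ and the expansion $A^{n}=\sum_{k}\binom{n}{k}(A-I)^{k}$ --- cannot work by itself, for a linear-algebra reason: modulo $\mathfrak{g}_{r+3}$ the operator $A^{n}-I$ restricted to $\mathfrak{g}_{r+1}$ equals $n\,\mathrm{ad}(\log g_0)$, a single linear map whose image in $\mathfrak{g}_{r+2}/\mathfrak{g}_{r+3}$ is in general a proper subspace. In the free $3$-step nilpotent Lie group on two generators, $\mathrm{ad}(\log g_0)$ sends the one-dimensional $\mathfrak{g}_{2}$ into the two-dimensional $\mathfrak{g}_{3}$ and so is never onto; for generic $u\in G_3$ your equation $n_i(A-I)\log w_i\to-\xi$ has no solution. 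One must therefore realize $u$ as a commutator $[h,v]$ with $v$ ranging over all of $G$, and then trade $v$ for a power of $\tau$ times a lattice element via minimality; the resulting error $[h,\gamma]$ with $\gamma\in\Gamma$ is only harmless if it itself lies in $\Gamma$. That is exactly what the paper's Lemma \ref{key} arranges: approximate $h$ by a rational element $u_1\in G_{s-1}$ with $u_1^{l}\in\Gamma$, use divisibility of $G^{0}$ to write $v_1=v_2^{l}$ so that $[u_1,v_1\tau^{n}]=[u_1^{l},v_2][u_1,\tau^{n}]$, replace $v_2$ by $\tau^{m}\gamma$ by minimality, and absorb $[u_1^{l},\gamma]\in G_s\cap\Gamma$ into the lattice; divisibility is used again (Remark \ref{small}) to make $h$ small without changing the commutator. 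Your phrase ``a fair amount of bookkeeping with the lattice'' is where the entire proof lives, and your plan contains none of these ingredients. A secondary structural point: the paper only ever proves this approximation at the central top level $G_s$, handling general $r$ by passing to the quotient nilsystem $G/(G_{r+3}\Gamma)$ and then running a downward induction on $r$ through maximal isometric extensions (proof of Theorem \ref{main-thm2} via Lemmas \ref{tower} and \ref{intersection}); attacking the non-central level $G_{r+2}$ inside $X$ directly, as you propose, only adds difficulty.
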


Given a minimal system $(X,T)$ with a $d$-step top-nilpotent enveloping semigroup,
let
$\mathbf{R}_j(X)=\{(x,px):x\in X,p\in E_{j+1}^{\mathrm{top}}(X)\}, j=1,\ldots,d.$
We can show that indeed $\mathbf{R}_j(X)$ is a closed invariant equivalence relation
and $X/\mathbf{R}_j(X)$  is the maximal factor of $X$
which has a $j$-step top-nilpotent enveloping semigroup.
Moreover it turns out that the extension
$X/\mathbf{R}_{j+1}(X)\to X/\mathbf{R}_{j}(X)$ is isometric.

Following Theorem \ref{main-thm2}, for a minimal nilsystem,
the Furstenberg tower, the maximal factors of higher order and the factors defined
above all coincide.
By Theorem \ref{main-thm1}, we show that such property holds for any minimal
system with a top-nilpotent enveloping semigroup,
 answering the open question asked by Donoso.

\begin{theorem}\label{main-thm3}
Let $(X,T)$ be a minimal system and $d\in \N$.
Then, it is a $d$-step pro-nilsystem if it has a $d$-step top-nilpotent enveloping semigroup.
\end{theorem}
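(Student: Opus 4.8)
The plan is to deduce Theorem~\ref{main-thm3} from Theorems~\ref{main-thm1} and~\ref{main-thm2}, together with the observation that a $d$-step top-nilpotent enveloping semigroup passes to factors. So let $(X,T)$ be minimal with a $d$-step top-nilpotent enveloping semigroup $E=E(X,T)$. First, since $E$ is top-nilpotent, Theorem~\ref{main-thm1} applies and $(X,T)$ is a system of order $\infty$, i.e. $\mathbf{RP}^{[\infty]}(X)=\Delta$. By \cite[Theorem 3.6]{PD13} a minimal system of order $\infty$ is an inverse limit of minimal nilsystems, so $(X,T)=\varprojlim_{i}(X_i,T_i)$ with each $(X_i,T_i)$ a minimal nilsystem. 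Since systems of order $d$ (equivalently, $d$-step pro-nilsystems) are closed under inverse limits, it therefore suffices to prove that each $(X_i,T_i)$ is a $d$-step pro-nilsystem.

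The second step is a descent fact: if $\pi\colon(X,T)\to(Y,S)$ is a factor map and $E(X,T)$ is a $d$-step top-nilpotent Ellis group, then so is $E(Y,S)$. Here $\pi$ induces the canonical continuous surjective homomorphism $\phi\colon E(X,T)\to E(Y,S)$. Each $E_j^{\mathrm{top}}(X)$ is a closed, hence compact, subgroup, so $\phi(E_j^{\mathrm{top}}(X))$ is a closed subgroup of $E(Y,S)$; and since $\phi$ is a surjective homomorphism, $\phi([E_{j-1}^{\mathrm{top}}(X),E(X,T)])=[\phi(E_{j-1}^{\mathrm{top}}(X)),E(Y,S)]$. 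A straightforward induction on $j$ --- with base case $\phi(E(X,T))=E(Y,S)$, and inductive step using that $\phi(E_j^{\mathrm{top}}(X))$ is a closed subgroup containing $[\phi(E_{j-1}^{\mathrm{top}}(X)),E(Y,S)]\supseteq[E_{j-1}^{\mathrm{top}}(Y),E(Y,S)]$, hence containing the smallest closed subgroup generated by the latter, namely $E_j^{\mathrm{top}}(Y)$ --- yields $\phi(E_j^{\mathrm{top}}(X))\supseteq E_j^{\mathrm{top}}(Y)$ for every $j$. In particular $E_{d+1}^{\mathrm{top}}(X)=\{e\}$ forces $E_{d+1}^{\mathrm{top}}(Y)=\{e\}$. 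Applying this to each projection $X\to X_i$, every $(X_i,T_i)$ is a minimal nilsystem with a $d$-step top-nilpotent enveloping semigroup.

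The third and key step treats a single minimal nilsystem $(X_i,T_i)$, where Theorem~\ref{main-thm2} is used. I claim $\mathbf{R}_j(X_i)=\mathbf{RP}^{[j]}(X_i)$ for every $j\geq1$. The inclusion $\mathbf{R}_j(X_i)\subseteq\mathbf{RP}^{[j]}(X_i)$ is formal: $X_i/\mathbf{RP}^{[j]}(X_i)$ is a $j$-step pro-nilsystem, so by \cite{SD14} its enveloping semigroup is $j$-step top-nilpotent, and since $X_i/\mathbf{R}_j(X_i)$ is the maximal factor of $X_i$ with a $j$-step top-nilpotent enveloping semigroup, $X_i/\mathbf{RP}^{[j]}(X_i)$ is a factor of it. For the reverse inclusion I argue by induction on $j$: for $j=1$ both quotients are the maximal equicontinuous factor of $X_i$; assuming $\mathbf{R}_j(X_i)=\mathbf{RP}^{[j]}(X_i)$, the extension $X_i/\mathbf{R}_{j+1}(X_i)\to X_i/\mathbf{R}_j(X_i)=X_i/\mathbf{RP}^{[j]}(X_i)$ is isometric and lies below $X_i$, so by Theorem~\ref{main-thm2} it is a factor of the maximal isometric extension of $X_i/\mathbf{RP}^{[j]}(X_i)$ below $X_i$, namely $X_i/\mathbf{RP}^{[j+1]}(X_i)$; hence $\mathbf{RP}^{[j+1]}(X_i)\subseteq\mathbf{R}_{j+1}(X_i)$, completing the induction. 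Now, because $E(X_i,T_i)$ is $d$-step top-nilpotent, $E_{d+1}^{\mathrm{top}}(X_i)=\{e\}$, so $\mathbf{R}_d(X_i)=\Delta$ directly from its definition; therefore $\mathbf{RP}^{[d]}(X_i)=\mathbf{R}_d(X_i)=\Delta$, and $(X_i,T_i)$ is a system of order $d$, i.e. a $d$-step pro-nilsystem.

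Passing back to $(X,T)=\varprojlim_i X_i$, we conclude that $(X,T)$ is an inverse limit of systems of order $d$, hence itself a system of order $d$, that is, a $d$-step pro-nilsystem, as claimed. (Combined with the theorem of Donoso in \cite{SD14} this also yields the converse implication, hence the equivalence announced in the abstract.) Given Theorems~\ref{main-thm1} and~\ref{main-thm2}, the remaining argument is essentially bookkeeping; the one point that demands care is the induction in the third step, where the two maximality properties --- that of $X_i/\mathbf{R}_j(X_i)$ among factors with a $j$-step top-nilpotent enveloping semigroup, and that of $X_i/\mathbf{RP}^{[j+1]}(X_i)$ as an isometric extension below $X_i$ provided by Theorem~\ref{main-thm2} --- have to be aligned level by level.
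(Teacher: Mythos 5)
Your proposal is correct and follows essentially the same route as the paper: Theorem \ref{main-thm1} plus \cite[Theorem 3.6]{PD13} to reduce to minimal nilsystems, the fact that top-nilpotency of the enveloping semigroup passes to factors, and then the level-by-level identification of $X/\mathbf{R}_j$ with $X/\mathbf{RP}^{[j]}$ via Lemmas \ref{euqi} and \ref{iso--} and Theorem \ref{main-thm2} (the paper's Lemma \ref{d-step-nil} and Corollary \ref{key-thm}). The only cosmetic differences are that the paper treats $d=1$ separately via Theorem \ref{distal-ellis-group} and phrases the final step as a proof by contradiction rather than reading $\mathbf{R}_d=\Delta$ off the definition.
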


Note that the converse statement was proved in \cite{SD14}. Thus, we conclude that a minimal system
is a $d$-step pro-nilsystem if and only if it has a $d$-step top-nilpotent enveloping semigroup.

As a consequence, we obtain an interesting result immediately.

\begin{theorem}\label{main-thm4}
Let $(X,T)$ be a minimal system with a $d$-step top-nilpotent enveloping semigroup, where $d\in \N$.
For $k=1,\ldots,d$ and points $x,y\in X$,
we have that $(x,y)\in \mathbf{RP}^{[k]}(X)$
if and only if there is some element $p\in E_{k+1}^{\mathrm{top}}(X)$
with $y=px$.
\end{theorem}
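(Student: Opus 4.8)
The plan is to reduce Theorem~\ref{main-thm4} to the single identification $\mathbf{R}_k(X)=\mathbf{RP}^{[k]}(X)$ for every $k=1,\ldots,d$, where $\mathbf{R}_k(X)=\{(x,px):x\in X,\ p\in E_{k+1}^{\mathrm{top}}(X)\}$ is the algebraically defined relation introduced after Theorem~\ref{main-thm2}. Once this equality is in hand, the statement is just its unwinding: $(x,y)\in\mathbf{RP}^{[k]}(X)$ precisely when $(x,y)\in\mathbf{R}_k(X)$, i.e.\ when $y=px$ for some $p\in E_{k+1}^{\mathrm{top}}(X)$. So I would first invoke the structural facts already recorded in the excerpt: $\mathbf{R}_k(X)$ is a closed invariant equivalence relation, so $X/\mathbf{R}_k(X)$ is a genuine minimal factor, and it is the \emph{maximal} factor of $X$ whose enveloping semigroup is $k$-step top-nilpotent.

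For the inclusion $\mathbf{R}_k(X)\subseteq\mathbf{RP}^{[k]}(X)$ I would observe that $X/\mathbf{RP}^{[k]}(X)$ is minimal and, being a system of order $k$, is a $k$-step pro-nilsystem; hence by the converse direction of the equivalence (Donoso, \cite{SD14}) its enveloping semigroup is $k$-step top-nilpotent. Since $X/\mathbf{R}_k(X)$ is maximal among factors of $X$ with this property, the canonical map $X\to X/\mathbf{RP}^{[k]}(X)$ factors through $X\to X/\mathbf{R}_k(X)$, which is exactly the assertion $\mathbf{R}_k(X)\subseteq\mathbf{RP}^{[k]}(X)$. For the reverse inclusion I would use Theorem~\ref{main-thm3}: the factor $X/\mathbf{R}_k(X)$ has a $k$-step top-nilpotent enveloping semigroup, hence is a $k$-step pro-nilsystem, so $\mathbf{RP}^{[k]}(X/\mathbf{R}_k(X))=\Delta$. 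Because $\mathbf{RP}^{[k]}$ of a minimal system is pushed into $\mathbf{RP}^{[k]}$ of any factor under the factor map (the monotonicity from \cite{SY-12}), every pair in $\mathbf{RP}^{[k]}(X)$ collapses under $X\to X/\mathbf{R}_k(X)$, i.e.\ $\mathbf{RP}^{[k]}(X)\subseteq\mathbf{R}_k(X)$. Combining the two inclusions yields $\mathbf{RP}^{[k]}(X)=\mathbf{R}_k(X)$, and the theorem follows.

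The work here is mostly bookkeeping rather than a genuine obstacle: all the substance lives in Theorems~\ref{main-thm1}--\ref{main-thm3} and in \cite{SD14}. The only points that need a careful check are that $\mathbf{R}_k(X)$ is truly an equivalence relation cutting out a minimal factor with the claimed maximality property (granted to us by the discussion following Theorem~\ref{main-thm2}) and the standard monotonicity of $\mathbf{RP}^{[k]}$ along factor maps of minimal systems. Thus Theorem~\ref{main-thm4} is the clean corollary obtained by pinning the dynamically defined relation $\mathbf{RP}^{[k]}$ to the algebraically defined relation $\mathbf{R}_k$.
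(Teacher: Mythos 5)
Your proposal is correct and follows essentially the same route as the paper: both directions come from identifying $\mathbf{R}_k(X)$ with $\mathbf{RP}^{[k]}(X)$, using the maximality of $X/\mathbf{R}_k(X)$ among factors with $k$-step top-nilpotent enveloping semigroup (Lemma~\ref{euqi}) for one inclusion and Theorem~\ref{main-thm3} (so that $X/\mathbf{R}_k(X)$ is a $k$-step pro-nilsystem, forcing it below the maximal factor of order $k$) for the other. This matches the paper's argument step for step.
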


\subsection{Organization of paper}
The paper is organized as follows.
In Section 2, the basic notions used in the paper are introduced.
In Section 3,
a description of the maximal factors of order $\infty$ of dynamical cubespaces is given.
In Section 4,
we study general top-nilpotent Ellis groups and their Host-Kra cubegroups.
In Section 5,
we show that
for abelian group action,
any minimal system with a top-nilpotent enveloping semigroup is a
system of order $\infty$ (Theorem \ref{main-thm1}).
In Section 6,
we prove that for a minimal nilsystem, the Furstenberg tower and the maximal factors of higher order
coincide (Theorem \ref{main-thm2}).
In the final section,
we give proofs of Theorems \ref{main-thm3} and \ref{main-thm4}.

\medskip

\noindent {\bf Acknowledgments.}
The authors would like to thank Professors W. Huang, S. Shao,  X. Ye and Dr. F. Cai
for helping discussions and remarks. We thank Professor V. Bergelson for bringing our
attention to this problem again.
The authors were supported by NNSF of China (11431012).

\section{Preliminaries}
In this section we gather definitions and preliminary results that
will be necessary later on.
Let $\N$ and $\Z$ be the sets of all positive integers
and integers respectively.
Let $F$ be a finite set and denote by $|F|$ the number of the elements of $F$.

\subsection{Topological dynamical systems}
Throughout the paper, $(X,G)$ denotes a \emph{topological dynamical system}
(or \emph{dynamical system})
where $X$ is a compact metric space with a metric $\rho(\cdot,\cdot)$
and $G$ is a countable abelian group.
For $x\in X,\mathcal{O}(x,G)=\{gx:g\in G\}$ denotes the \emph{orbit} of $x$.
A dynamical system $(X,G)$ is called \emph{minimal} if
every point has dense orbit in $X$.
When $G$ is induced by some homeomorphism $T$,
i.e. $G=\{T^n:n\in\Z\}$,
we write it as $(X,T)$.

A \emph{homomorphism} $\pi :X\to Y$ between systems $(X,G)$ and $(Y,G)$
is a continuous onto map such that $\pi \circ g=g \circ \pi$ for every $g\in G$;
one says that $(Y,G)$ is a \emph{factor} of $(X,G)$
and that $(X,G)$ is an \emph{extension} of $(Y,G)$, and one also refers to $\pi$
as a \emph{factor map} or an \emph{extension}. The systems are said to be \emph{conjugate} if $\pi$ is bijective.
An extension $\pi $ is determined by the corresponding closed invariant equivalence relation
$R_\pi=\{   (x_1,x_2):\pi (x_1)=\pi(x_2)\}$.

\medskip

Suppose that we have a compact group $K$ of homeomorphisms of $X$ commuting with $G$, i.e. $kg=gk$ for
any $g\in G$ and $k\in K$ (where $K$ is endowed with the topology of uniform convergence).
Let $R_K=\{(x,kx):x\in X,k\in K\}$  and it is an equivalence relation.
We can define a factor $Y$ by setting $Y=X/R_K$ and we say that $(X,G)$
is an extension of $(Y,G)$ by the group $K$.

Let $\pi:X\to Y$ be a factor map between minimal systems $(X,G)$ and $(Y,G)$.
We say that $(X,G)$ is an isometric extension of $(Y,G)$ if for any $\varepsilon>0$,
there exists $\delta>0$ such that $\rho(gx,gy)<\varepsilon$ for all $g\in G$
whenever $\pi(x)=\pi(y)$ and $\rho(x,y)<\delta$.

Note that any group extension is isometric.

\subsection{Discrete cubes and faces}
For an integer $l\ge0$, we denote the set of maps $\{0,1\}^{l}\to X$
by $X^{[l]}(X^{[0]}=X)$.
For $x\in X$, write $x^{[l]}=(x,x,\ldots,x)\in X^{[l]}$.
The \emph{diagonal} of $X^{[l]}$ is $\Delta^{[l]}=\Delta^{[l]}(X)=\{ x^{[l]}:x\in X\}$.
Usually, when $l=1$, denote the diagonal by $\Delta_X$ or $\Delta$
instead of $\Delta^{[1]}$.
We can isolate the first coordinate,
writing $X^{[l]}_*=X^{2^l-1}$ and writing $\mathbf{x}\in X^{[l]}$
as $\mathbf{x}=(\mathbf{x}({\vec{0}}),\mathbf{x}_*)$,
where $\mathbf{x}_*=(\mathbf{x}(\epsilon):\epsilon\in \{0,1\}^l\backslash \{ \vec{0}\})\in X^{[l]}_*$.

Identifying $\{0,1\}^l$ with the set of vertices of the Euclidean unit cube,
an Euclidean isometry of the unit cube permutes the vertices of the
cube and thus the coordinates
of a point $\mathbf{x}\in X^{[l]}$.
These permutations are the \emph{Euclidean permutations} of $X^{[l]}$.

\medskip

A set of the form
\[
F=\{\epsilon\in\{0,1\}^{l}: \epsilon_{i_1}=a_{1},\ldots,\epsilon_{i_{k}}=a_{k}\}
\]
for some $k\geq0$, $1\leq i_{1}<\ldots<i_{k}\leq l$ and
 $a_{i}\in\{0,1\}$ is called a \emph{face} of \emph{codimension} $k$ of the discrete
cube $\{0,1\}^{l}$.\footnote{The case $k=0$ corresponds to $\{0,1\}^{d}$.}
One writes $\mathrm{codim}(F)=k$.
A face of codimension 1 is called a \emph{hyperface}.
If all $a_i=1$ we say that the face is \emph{upper}.
Note that all upper faces contain $\vec{1}$ and there are exactly $2^l$ upper faces.

\subsection{Host-Kra cubegroups}
Let $H$ be a group and let $F$ be a face of $\{0,1\}^l$.
For $h\in H$ we denote by $h^{(F)}$ the element of $H^{[l]}$
defined as $h^{(F)}(\epsilon)=h$ if $\epsilon\in F$ and $h^{(F)}(\epsilon)=e$ otherwise,
where $e$ denotes the unit element of $H$.
We call the subgroup
of $H^{[l]}$ generated by $h^{(F)}$,
where $h\in H$ and $F$ ranges over all hyperfaces of $\{0,1\}^l$,
the \emph{Host-Kra cubegroup} and denote it by $\mathcal{HK}^{[l]}(H)$.
We call the subgroup
of $H^{[l]}$ generated by $h^{(F)}$,
where $h\in H$ and $F$ ranges over all upper hyperfaces of $\{0,1\}^l$,
the \emph{face cubegroup} and denote it by $\mathcal{F}^{[l]}(H)$.
It is easy to see that $\mathcal{HK}^{[l]}(H)$ is generated by $\mathcal{F}^{[l]}(H)$ and $\Delta^{[l]}(H)$.

\medskip

The Host-Kra and face cubegroups originate in \cite[Section 5]{HK05}
and coincide with the parallelepiped groups and face groups respectively of \cite[Definition 3.1]{HKM}
introduced for abelian actions.
 As a matter of fact, if $H$ is an abelian group,
 then for any $\mathbf{g}\in \mathcal{HK}^{[l]}(H)$ there exist $h\in H$ and $\vec{h}=(h_1,\ldots,h_l)\in H^l$
 such that
 \begin{equation}\label{rep-hk}
   \mathbf{g}=(h+\vec{h}\cdot \epsilon:\epsilon\in \{0,1\}^l),
 \end{equation}
where $\vec{h}\cdot \epsilon=\sum_{i=1}^{l}h_i\cdot \epsilon_i$.
Moreover, if $\mathbf{g}\in \mathcal{F}^{[l]}(H)$,
then $h=0$ in (\ref{rep-hk}).

\subsection{Dynamical cubespaces}
Let $(X,G)$ be a dynamical system and $l\in \N$.
We often write $\mathcal{F}^{[l]}$ and $\mathcal{G}^{[l]}$
instead of $\mathcal{F}^{[l]}(G)$ and $\mathcal{HK}^{[l]}(G)$.
The Host-Kra cubegroups act (coordinatewise) on
$X^{[l]}$ by
\[
(\mathbf{g}\mathbf{x})(\epsilon)=\mathbf{g}(\epsilon)\mathbf{x}(\epsilon)
\]
for $\mathbf{g}\in \mathcal{G}^{[l]},\mathbf{x}\in X^{[l]}$ and $\epsilon\in\{0,1\}^l$.

Let $\Q^{[l]}(X)=\overline{\{ \mathbf{g} x^{[l]}:x\in X,\mathbf{g} \in \mathcal{F}^{[l]}\}}$.
We call this set a \emph{dynamical cubespace of dimension $l$} of the
dynamical system $(X,G)$.
It is important to note that $\mathbf{Q}^{[l]}(X)$ is invariant under the Euclidean permutations of $X^{[l]}$.

For convenience, we denote the orbit closure of $\mathbf{x}\in X^{[l]}$
under $\mathcal{F}^{[l]}$ by $\overline{\mathcal{F}^{[l]}}(\mathbf{x})$,
instead of $\overline{\mathcal{O}(\mathbf{x},\mathcal{F}^{[l]})}$.
Let $\Q^{[l]}_x(X)=\Q^{[l]}(X)\cap (\{x\}\times X^{2^l-1})$.

\begin{theorem}\cite{SY-12}\label{property}
  Let $(X,G)$ be a minimal system and $l\in \N$. Then
  \begin{enumerate}
  \item $(\mathbf{Q}^{[l]}(X),\mathcal{G}^{[l]})$ is a minimal system.
    \item $(\overline{\mathcal{F}^{[l]}}(x^{[l]}),\mathcal{F}^{[l]})$ is minimal for all $x\in X$.
    \item $\overline{\mathcal{F}^{[l]}}(x^{[l]})$ is the unique $\mathcal{F}^{[l]}$-minimal
    subset in $\mathbf{Q}^{[l]}_x(X)$ for all $x\in X$.
  \end{enumerate}
\end{theorem}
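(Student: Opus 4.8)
The plan is to reduce all three assertions to a single statement, which I will call the \emph{return lemma}: for every $x\in X$ and every $\mathbf{x}\in\mathbf{Q}^{[l]}_x(X)$ one has $x^{[l]}\in\overline{\mathcal{F}^{[l]}}(\mathbf{x})$. I would first record the elementary fact (call it Step A) that $\overline{\mathcal{G}^{[l]}\,x^{[l]}}=\mathbf{Q}^{[l]}(X)$ for every $x$: the inclusion $\subseteq$ comes from the representation (\ref{rep-hk}), which lets one write each $\mathbf{g}\in\mathcal{G}^{[l]}$ as $\Delta^{[l]}(h)\mathbf{f}$ with $\mathbf{f}\in\mathcal{F}^{[l]}$, while $\supseteq$ follows because minimality of $(X,G)$ forces $\Delta^{[l]}(X)\subseteq\overline{\mathcal{G}^{[l]}x^{[l]}}$, a set which is $\mathcal{F}^{[l]}$-invariant. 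Granting the return lemma, the theorem is immediate. Since the upper hyperfaces all omit $\vec 0$, every element of $\mathcal{F}^{[l]}$ fixes the $\vec0$-coordinate, so $\overline{\mathcal{F}^{[l]}}(x^{[l]})\subseteq\mathbf{Q}^{[l]}_x(X)$; applying the return lemma to any $\mathbf{y}$ in this orbit closure gives $x^{[l]}\in\overline{\mathcal{F}^{[l]}}(\mathbf{y})$, hence (2). For (3), any $\mathcal{F}^{[l]}$-minimal $M\subseteq\mathbf{Q}^{[l]}_x(X)$ contains a point $\mathbf{z}$ with $x^{[l]}\in\overline{\mathcal{F}^{[l]}}(\mathbf{z})=M$, whence $\overline{\mathcal{F}^{[l]}}(x^{[l]})\subseteq M$ and minimality forces equality. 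For (1), given any $\mathbf{x}\in\mathbf{Q}^{[l]}(X)$ with $x=\mathbf{x}(\vec0)$, the return lemma places $x^{[l]}$ in $\overline{\mathcal{G}^{[l]}\mathbf{x}}$, so Step A yields $\mathbf{Q}^{[l]}(X)=\overline{\mathcal{G}^{[l]}x^{[l]}}\subseteq\overline{\mathcal{G}^{[l]}\mathbf{x}}\subseteq\mathbf{Q}^{[l]}(X)$, i.e.\ every orbit is dense.

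The return lemma itself I would prove by induction on $l$, the case $l=1$ being a direct consequence of minimality, since $\overline{\mathcal{F}^{[1]}}(x,y)=\{x\}\times\overline{\mathcal{O}(y,G)}=\{x\}\times X\ni(x,x)$. For the inductive step I split a point of $X^{[l]}$ according to the value of the last coordinate $\epsilon_l$, writing $\mathbf{x}=(\mathbf{x}',\mathbf{x}'')$ with $\mathbf{x}',\mathbf{x}''\in X^{[l-1]}$ the restrictions to the faces $\{\epsilon_l=0\}$ and $\{\epsilon_l=1\}$. Under this splitting the generators behave transparently: for $i<l$ the transformation $g^{(F_i)}$, where $F_i=\{\epsilon:\epsilon_i=1\}$ is the $i$-th upper hyperface, acts as $g^{(F_i')}\times g^{(F_i')}$ with $F_i'$ the corresponding upper hyperface of $\{0,1\}^{l-1}$ (the two copies move together), while $g^{(F_l)}$ acts as the identity on the first copy and as the diagonal element $(g,\dots,g)$ on the second. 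Projection to the first copy therefore intertwines the $\mathcal{F}^{[l]}$-action with the $\mathcal{F}^{[l-1]}$-action and carries $\mathbf{Q}^{[l]}_x(X)$ onto $\mathbf{Q}^{[l-1]}_x(X)$. By the inductive form of the return lemma there is a net in $\mathcal{F}^{[l-1]}$ driving $\mathbf{x}'$ to $x^{[l-1]}$; lifting it through the symmetric generators $g^{(F_i)}$ ($i<l$) and passing to a subnet, I move $\mathbf{x}$ to a point $(x^{[l-1]},\mathbf{w})$ with $\mathbf{w}\in\mathbf{Q}^{[l-1]}(X)$. It then remains to carry the second copy from $\mathbf{w}$ to $x^{[l-1]}$ \emph{without disturbing} the first copy, after which $x^{[l]}=(x^{[l-1]},x^{[l-1]})$ is reached and the induction closes.

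This last decoupling is where the real difficulty lies, and I would handle it with minimal idempotents in an enveloping semigroup. On the second copy the moves $g^{(F_i')}$ ($i<l$) together with the diagonal subgroup $\Delta^{[l-1]}(G)$ generate precisely the Host--Kra group $\mathcal{G}^{[l-1]}$, so by the inductive case (1) the set $\mathbf{Q}^{[l-1]}(X)$ is $\mathcal{G}^{[l-1]}$-minimal and $x^{[l-1]}\in\overline{\mathcal{G}^{[l-1]}\mathbf{w}}$; the obstruction is only that the $g^{(F_i')}$ also drag the first copy. To neutralize this I would pass to the enveloping semigroup $E$ of the system $(Y\times Z,\Gamma)$, where $Y=\overline{\mathcal{F}^{[l-1]}}(x^{[l-1]})$, $Z=\mathbf{Q}^{[l-1]}(X)$ and $\Gamma$ is the group generated by the transformations above, together with the first projection $\pi_1\colon Y\times Z\to Y$, which is $\Gamma$-equivariant over the $\mathcal{F}^{[l-1]}$-action. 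Since $Y$ is minimal by the inductive case (2), I would choose a minimal idempotent of $E$ lying over a minimal idempotent of $E(Y,\mathcal{F}^{[l-1]})$ that fixes $x^{[l-1]}$; applying it keeps the first copy pinned at $x^{[l-1]}$. The crucial structural point is that the generators $g^{(F_l)}$, acting as the identity on the first copy and diagonally on the second, move the second copy freely while leaving the first untouched, so interspersing these with idempotent corrections realizes, in the closure, the full $\mathcal{G}^{[l-1]}$-action on the second copy subject to the first copy returning to $x^{[l-1]}$; combined with the $\mathcal{G}^{[l-1]}$-minimality of $Z$ this produces an element of $\overline{\mathcal{F}^{[l]}}(\mathbf{x})$ both of whose copies equal $x^{[l-1]}$. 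Making the phrase ``interspersing idempotent corrections realizes the full $\mathcal{G}^{[l-1]}$-action'' precise---that is, verifying that the subsemigroup of $E$ fixing the first copy still acts with dense orbits on the second---is the technical heart of the argument and the step I expect to be the most delicate.
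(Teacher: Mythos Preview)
This theorem is quoted from \cite{SY-12} and the present paper supplies no proof of it, so there is no in-paper argument to compare your proposal against.

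That said, your reduction of (1)--(3) to the ``return lemma'' is correct, and the inductive scheme via the splitting $X^{[l]}\cong X^{[l-1]}\times X^{[l-1]}$ along the last coordinate is the standard one. The genuine difficulty is exactly where you locate it. The assertion that ``interspersing idempotent corrections realizes the full $\mathcal{G}^{[l-1]}$-action on the second copy while the first returns to $x^{[l-1]}$'' is not yet an argument: a minimal idempotent $u\in E$ projecting to one that fixes $x^{[l-1]}$ on the first copy will in general move the second copy in an uncontrolled way, and you have given no reason why the sub-semigroup of $E$ fixing the first coordinate should act with dense orbits on the second. Establishing a statement of precisely this type \emph{is} the content of the theorem, so at present the proposal correctly identifies the crux but does not resolve it. In \cite{SY-12} the enveloping-semigroup machinery is also invoked, but organized differently: one first shows that $x^{[l]}$ is itself an $\mathcal{F}^{[l]}$-minimal point, and then that every $\mathcal{F}^{[l]}$-minimal set in $\mathbf{Q}^{[l]}_x(X)$ must contain $x^{[l]}$, rather than attempting the direct decoupling you describe.
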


\subsection{Proximality and regional proximality of higher order}
Let $(X,G)$ be a dynamical system. A pair $(x,y)\in X\times X$ is a \emph{proximal} pair if
\[
\inf_{g\in G}\rho(gx,gy)=0
\]
and a distal pair if it is not proximal.
Denote by $\mathbf{P}(X)$ the set of all proximal pairs of $X$.
The dynamical system $(X,G)$ is \emph{distal} if $(x,y)$ is a distal pair whenever $x,y\in X$ are distinct.

\begin{definition}\label{def-rp}
Let $(X,G)$ be a dynamical system and $d\in \N$.
   The \emph{regionally proximal relation of order $d$} is the relation $\textbf{RP}^{[d]} (X)$
defined by: $(x,y)\in\textbf{RP}^{[d]}(X)$ if
and only if for every $\delta>0$, there
exist $x',y'\in X$ and $\vec{g}\in G^d$ such that:
$\rho(x,x')<\delta,\rho(y,y')<\delta$ and
\[
\rho(  (\vec{g}\cdot\epsilon) x', (\vec{g}\cdot\epsilon)  y')<\delta
\]
for all $\epsilon\in \{0,1\}^d\backslash\{ \vec{0}\}$.

We say $(X,G)$ is a \emph{system of order $d$}
if its regionally proximal relation of order $d$ is trivial.
\end{definition}

\begin{remark}
Note that Definition \ref{def-rp} is suitable for all dynamical systems.
In the current paper, we only focus on minimal systems.
Thus, when we say a dynamical system is a system of order $d$,
always assume that it is minimal.
\end{remark}

It is easy to see that $\mathbf{RP}^{[d]}(X)$ is a closed and invariant relation.
It follows from \cite[Lemma 3.5]{SY-12} that
\[\mathbf{P}(X)\subset
\ldots\subset \mathbf{RP}^{[d+1]}(X)\subset \mathbf{RP}^{[d]}(X)\subset
\ldots \subset\mathbf{RP}^{[2]}(X)\subset \mathbf{RP}^{[1]}(X).
\]

\begin{theorem}\cite{SY-12}\label{cube-minimal}
  Let $(X,G)$ be a minimal system and $d\in \N$.
  Then,
  \begin{enumerate}
    \item $(x,y)\in \mathbf{RP}^{[d]}(X)$ if and only if $(x,y,y,\ldots,y)=(x,y^{[d+1]}_*)\in \mathbf{Q}^{[d+1]}(X)$
    if and only if $(x,y,y,\ldots,y)=(x,y^{[d+1]}_*)\in \overline{\mathcal{F}^{[d+1]}}(x^{[d+1]})$.
    \item $\mathbf{RP}^{[d]}(X)$ is an equivalence relation.
  \end{enumerate}
\end{theorem}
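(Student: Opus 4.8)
The plan is to establish the three equivalences in part (1) first, and then to deduce part (2) from them. Throughout I abbreviate $\mathbf{p}=(x,y^{[d+1]}_*)$, the point of $X^{[d+1]}$ with $\mathbf{p}(\vec 0)=x$ and $\mathbf{p}(\epsilon)=y$ for all $\epsilon\neq\vec 0$. Three of the four implications needed for (1) are soft. The inclusion $\overline{\mathcal F^{[d+1]}}(x^{[d+1]})\subseteq\mathbf{Q}^{[d+1]}(X)$ gives ``(c)$\Rightarrow$(b)'' immediately. For ``(b)$\Rightarrow$(a)'' I would unpack a cube: since $G$ is abelian, $\mathbf{p}$ is a limit of points $((\vec h_n\cdot\epsilon)w_n:\epsilon)$ with $w_n\in X$, $\vec h_n\in G^{d+1}$, and reading off $\mathbf{p}$ forces $w_n\to x$ and $(\vec h_n\cdot\epsilon)w_n\to y$ for every $\epsilon\neq\vec 0$. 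Splitting $\vec h_n=(\vec m_n,k_n)$ with $\vec m_n\in G^d$ and setting $x'_n=w_n$, $y'_n=k_nw_n$, one gets $x'_n\to x$, $y'_n\to y$, while for each $\epsilon'\neq\vec 0$ both $(\vec m_n\cdot\epsilon')x'_n$ and $(\vec m_n\cdot\epsilon')y'_n$ are cube-coordinates tending to $y$, hence tend together; this is exactly the defining condition of $\mathbf{RP}^{[d]}$. Finally ``(b)$\Rightarrow$(c)'' uses Theorem \ref{property}: as $\mathcal F^{[d+1]}$ fixes the $\vec 0$-coordinate, the $\mathcal F^{[d+1]}$-orbit of $\mathbf{p}$ stays in $\mathbf{Q}^{[d+1]}_x(X)$ and, after forgetting the constant $\vec 0$-coordinate, projects to the orbit of the all-$y$ point inside the minimal system $\overline{\mathcal F^{[d+1]}}(y^{[d+1]})$ (Theorem \ref{property}(2)); thus $\mathbf{p}$ is almost periodic, its orbit closure is an $\mathcal F^{[d+1]}$-minimal subset of $\mathbf{Q}^{[d+1]}_x(X)$, and by the uniqueness in Theorem \ref{property}(3) it coincides with $\overline{\mathcal F^{[d+1]}}(x^{[d+1]})$, whence $\mathbf{p}\in\overline{\mathcal F^{[d+1]}}(x^{[d+1]})$.

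The remaining implication ``(a)$\Rightarrow$(b)'' is the genuinely hard one, and it is the step I expect to be the main obstacle. The difficulty is that the raw data $x',y',\vec g$ furnished by the definition only assemble into a \emph{half}-cube $\{(\vec g\cdot\epsilon')x'\}\cup\{(\vec g\cdot\epsilon')y'\}$: this is not a genuine $(d+1)$-cube unless $y'$ is a group translate of $x'$, and its nonzero coordinates need not be anywhere near $y$. I would argue by induction on $d$ (the base $d=1$ being the classical theory of the regionally proximal relation). Since $\mathbf{RP}^{[d]}(X)\subseteq\mathbf{RP}^{[d-1]}(X)$, the induction hypothesis already places the lower face $(x,y^{[d]}_*)$ in $\mathbf{Q}^{[d]}(X)$, so what remains is to \emph{lift} this $d$-cube along a new direction in such a way that the whole upper face collapses onto $y$. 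Concretely one must produce group elements that simultaneously carry the base near $y$ and approximately fix the coordinates already sitting near $y$; the existence of such near-returns is what combines minimality with the regional proximality of $(x,y)$, via a limiting argument in the enveloping semigroup $E(X,G)$. Retaining the perturbation freedom of Definition \ref{def-rp} (i.e. \emph{not} insisting $x'=x,\ y'=y$) is essential here, and this is precisely where all the technical weight of the argument lies.

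Given part (1), part (2) is comparatively clean. Reflexivity and symmetry are immediate from Definition \ref{def-rp}: taking $x'=y'$ gives reflexivity, and the defining inequality $\rho((\vec g\cdot\epsilon)x',(\vec g\cdot\epsilon)y')<\delta$ is symmetric in $x',y'$. For transitivity, suppose $(x,y),(y,z)\in\mathbf{RP}^{[d]}(X)$. By part (1), $(x,y^{[d+1]}_*)\in\overline{\mathcal F^{[d+1]}}(x^{[d+1]})$ and $(y,z^{[d+1]}_*)\in\overline{\mathcal F^{[d+1]}}(y^{[d+1]})$. An orbit closure under $\mathcal F^{[d+1]}$ equals the orbit under the Ellis semigroup $E=E(\mathbf{Q}^{[d+1]}(X),\mathcal F^{[d+1]})$, and $\mathcal F^{[d+1]}$ acts coordinatewise (each coordinate through $E(X,G)$); hence there are $r=(r_\epsilon)_\epsilon$ and $p=(p_\epsilon)_\epsilon$ in $E$ with $r\,x^{[d+1]}=(x,y^{[d+1]}_*)$ and $p\,y^{[d+1]}=(y,z^{[d+1]}_*)$, so that $r_{\vec 0}=\mathrm{id}$ and $r_\epsilon x=y$ for $\epsilon\neq\vec 0$, while $p_{\vec 0}=\mathrm{id}$ and $p_\epsilon y=z$ for $\epsilon\neq\vec 0$. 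Since $E$ is a semigroup, $pr\in E$, and evaluating coordinatewise gives $(pr)\,x^{[d+1]}=(x,z^{[d+1]}_*)$, because the $\vec 0$-coordinate stays $x$ while $p_\epsilon r_\epsilon x=p_\epsilon y=z$ for $\epsilon\neq\vec 0$. Therefore $(x,z^{[d+1]}_*)\in\overline{\mathcal F^{[d+1]}}(x^{[d+1]})\subseteq\mathbf{Q}^{[d+1]}(X)$, and part (1) yields $(x,z)\in\mathbf{RP}^{[d]}(X)$, which completes transitivity and hence shows $\mathbf{RP}^{[d]}(X)$ is an equivalence relation.
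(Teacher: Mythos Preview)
The paper does not prove this theorem; it is quoted from Shao--Ye \cite{SY-12} as a black box, so there is no in-paper proof to compare against. Your three ``soft'' implications are fine: (c)$\Rightarrow$(b) is the inclusion, your (b)$\Rightarrow$(a) unpacking via $\vec h_n=(\vec m_n,k_n)$ is correct, and your (b)$\Rightarrow$(c) argument---that $\overline{\mathcal F^{[d+1]}}(\mathbf p)$ and $\overline{\mathcal F^{[d+1]}}(y^{[d+1]})$ differ only in the frozen $\vec 0$-coordinate, hence the former is $\mathcal F^{[d+1]}$-minimal, hence equals $\overline{\mathcal F^{[d+1]}}(x^{[d+1]})$ by Theorem~\ref{property}(3)---is a clean use of the uniqueness of the minimal set in $\mathbf Q^{[d+1]}_x(X)$. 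Your transitivity argument via the Ellis semigroup of $(\mathbf Q^{[d+1]}(X),\mathcal F^{[d+1]})$ is also valid: since $\mathcal F^{[d+1]}$ acts coordinatewise with trivial $\vec 0$-component, any $r,p$ in the enveloping semigroup do too, and the composition $pr$ sends $x^{[d+1]}$ to $(x,z^{[d+1]}_*)$ exactly as you say.

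The genuine gap is, as you yourself flag, the implication (a)$\Rightarrow$(b). Your inductive scheme---place the lower face $(x,y^{[d]}_*)$ in $\mathbf Q^{[d]}(X)$ via $\mathbf{RP}^{[d]}\subset\mathbf{RP}^{[d-1]}$, then lift along a new direction collapsing the upper face to $y$---is morally the right shape, but the sentence ``the existence of such near-returns is what combines minimality with the regional proximality of $(x,y)$, via a limiting argument in the enveloping semigroup'' is not a proof. The actual argument in \cite{SY-12} is substantial: it goes through a detailed analysis of the $\mathcal F^{[d+1]}$-minimal sets inside $\mathbf Q^{[d+1]}(X)$ and requires careful control of how the raw data $(x',y',\vec g)$ can be upgraded to a genuine cube; the ``half-cube'' obstruction you identify is real, and overcoming it is precisely the content of that paper. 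So your proposal is a correct reduction of the theorem to its one hard step, together with an honest admission that you have not carried out that step.
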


The regionally proximal relation of order $d$ allows us to construct the maximal
factor of order $d$
of a minimal system. That is, any factor of order $d$
factorizes through this system.

\begin{theorem}\label{lift-property}\cite{SY-12}
  Let $\pi :(X,G)\to (Y,G)$ be the factor map between minimal systems and $d\in \N$. Then,
  \begin{enumerate}
    \item $(\pi \times \pi )\mathbf{RP}^{[d]}(X)=\mathbf{RP}^{[d]}(Y)$.
    \item $(Y,G)$ is a system of order $d$ if and only if $\mathbf{RP}^{[d]}(X)\subset R_\pi$.
  \end{enumerate}

In particular, the quotient of $(X,G)$ under $\mathbf{RP}^{[d]}(X)$
is the maximal factor of order $d$ of $X$.
\end{theorem}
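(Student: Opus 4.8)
The plan is to establish statement (1) first and then deduce (2) and the final ``in particular'' assertion by purely formal manipulations. For the inclusion $(\pi\times\pi)\mathbf{RP}^{[d]}(X)\subseteq\mathbf{RP}^{[d]}(Y)$ I would use only the uniform continuity of $\pi$ on the compact space $X$: given $\varepsilon>0$, choose $\delta>0$ with $\rho(a,b)<\delta\Rightarrow\rho(\pi a,\pi b)<\varepsilon$, apply Definition \ref{def-rp} to $(x,y)$ with this $\delta$ to obtain $x',y'\in X$ and $\vec{g}\in G^d$, and note that $\pi(x'),\pi(y')$ together with the same $\vec{g}$ witness $(\pi x,\pi y)\in\mathbf{RP}^{[d]}(Y)$ at scale $\varepsilon$.

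The real content is the opposite inclusion (the lifting property): given $(y_1,y_2)\in\mathbf{RP}^{[d]}(Y)$ and an arbitrary $x_1\in\pi^{-1}(y_1)$, I must produce $x_2\in\pi^{-1}(y_2)$ with $(x_1,x_2)\in\mathbf{RP}^{[d]}(X)$. I would transfer the problem to cubespaces via Theorem \ref{cube-minimal}(1): the hypothesis becomes $(y_1,y_2,\ldots,y_2)\in\overline{\mathcal{F}^{[d+1]}}(y_1^{[d+1]})$, where $(y_1,y_2,\ldots,y_2)$ denotes the point of $Y^{[d+1]}$ with $\vec{0}$-entry $y_1$ and all other entries $y_2$, and it suffices to find $x_2\in\pi^{-1}(y_2)$ with $(x_1,x_2,\ldots,x_2)\in\overline{\mathcal{F}^{[d+1]}}(x_1^{[d+1]})$. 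The factor map $\pi$ induces a coordinatewise, $\mathcal{F}^{[d+1]}$-equivariant map $\pi^{[d+1]}:X^{[d+1]}\to Y^{[d+1]}$, which by continuity and compactness carries $\overline{\mathcal{F}^{[d+1]}}(x_1^{[d+1]})$ onto a nonempty closed $\mathcal{F}^{[d+1]}$-invariant subset of $\overline{\mathcal{F}^{[d+1]}}(y_1^{[d+1]})$; by minimality of the latter (Theorem \ref{property}(2)) this subset is everything. Hence there is $\mathbf{z}\in\overline{\mathcal{F}^{[d+1]}}(x_1^{[d+1]})$ with $\pi^{[d+1]}(\mathbf{z})=(y_1,y_2,\ldots,y_2)$; since no upper hyperface of $\{0,1\}^{d+1}$ contains $\vec{0}$, the group $\mathcal{F}^{[d+1]}$ fixes the $\vec{0}$-coordinate, so $\mathbf{z}(\vec{0})=x_1$ and $\pi(\mathbf{z}(\epsilon))=y_2$ for every $\epsilon\neq\vec{0}$.

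The crux is to ``homogenize'' $\mathbf{z}$, that is, to replace it by a point of $\overline{\mathcal{F}^{[d+1]}}(x_1^{[d+1]})$ whose non-$\vec{0}$ entries all coincide; I expect this to be the main obstacle, since a priori the entries $\mathbf{z}(\epsilon)$ merely share the image $y_2$ and there is no obvious group acting transitively on the fibre of $\pi^{[d+1]}$ over $(y_1,y_2,\ldots,y_2)$ within $\overline{\mathcal{F}^{[d+1]}}(x_1^{[d+1]})$. The approach I would take is an induction on $d$: restricting $\mathbf{z}$ to an upper hyperface of $\{0,1\}^{d+1}$ produces a point of a lower-dimensional dynamical cubespace, and, exploiting that $\mathbf{Q}^{[d+1]}(X)$ is closed, invariant, and invariant under Euclidean permutations, one can glue $(d+1)$-dimensional parallelepipeds along common hyperfaces to force the entries to agree one by one, pulling each correction back inside $\overline{\mathcal{F}^{[d+1]}}(x_1^{[d+1]})$ by the uniqueness of the $\mathcal{F}^{[d+1]}$-minimal subset of $\mathbf{Q}^{[d+1]}_{x_1}(X)$ (Theorem \ref{property}(3)). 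This yields $x_2$ with $(x_1,x_2,\ldots,x_2)\in\overline{\mathcal{F}^{[d+1]}}(x_1^{[d+1]})$ and $\pi(x_2)=y_2$, and Theorem \ref{cube-minimal}(1) converts this into $(x_1,x_2)\in\mathbf{RP}^{[d]}(X)$, completing the proof of (1).

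Granting (1), part (2) is formal: if $\mathbf{RP}^{[d]}(X)\subseteq R_\pi$ then $\mathbf{RP}^{[d]}(Y)=(\pi\times\pi)\mathbf{RP}^{[d]}(X)\subseteq(\pi\times\pi)R_\pi=\Delta_Y$, so $(Y,G)$ is a system of order $d$; conversely, if $(Y,G)$ is a system of order $d$ and $(x_1,x_2)\in\mathbf{RP}^{[d]}(X)$, then $(\pi x_1,\pi x_2)\in\mathbf{RP}^{[d]}(Y)=\Delta_Y$ by the easy inclusion, whence $(x_1,x_2)\in R_\pi$. For the last claim, $\mathbf{RP}^{[d]}(X)$ is a closed invariant equivalence relation (by the remarks preceding Theorem \ref{cube-minimal} and Theorem \ref{cube-minimal}(2)), so the quotient map $\pi_d:X\to X/\mathbf{RP}^{[d]}(X)$ satisfies $R_{\pi_d}=\mathbf{RP}^{[d]}(X)$; part (2) then shows $X/\mathbf{RP}^{[d]}(X)$ is a system of order $d$, and if $\psi:X\to Z$ is any factor map onto a system of order $d$, part (2) gives $\mathbf{RP}^{[d]}(X)=R_{\pi_d}\subseteq R_\psi$, so $\psi$ factors through $\pi_d$; hence $X/\mathbf{RP}^{[d]}(X)$ is the maximal factor of order $d$.
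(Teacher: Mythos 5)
This statement is quoted from \cite{SY-12} and the paper supplies no proof of its own, so there is nothing internal to compare against; I can only assess your argument on its merits. The easy inclusion $(\pi\times\pi)\mathbf{RP}^{[d]}(X)\subseteq\mathbf{RP}^{[d]}(Y)$ via uniform continuity is correct, as are the purely formal deductions of part (2) and of the ``in particular'' clause from part (1). The reduction of the hard inclusion to finding $\mathbf{z}\in\overline{\mathcal{F}^{[d+1]}}(x_1^{[d+1]})$ with $\pi^{[d+1]}(\mathbf{z})=(y_1,y_2^{[d+1]}_*)$ is also sound: the image of the orbit closure is a closed invariant set containing $y_1^{[d+1]}$, hence contains $\overline{\mathcal{F}^{[d+1]}}(y_1^{[d+1]})$, and $\mathcal{F}^{[d+1]}$ fixes the $\vec{0}$-coordinate.

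The genuine gap is exactly where you flag it: the homogenization of $\mathbf{z}$. The strategy you sketch --- correcting the coordinates one at a time by gluing cubes along hyperfaces and returning to $\overline{\mathcal{F}^{[d+1]}}(x_1^{[d+1]})$ via uniqueness of the minimal subset --- is modelled on the argument of Proposition \ref{fiber-property} in this paper, but that argument works only for \emph{distal} systems, and distality is used essentially: after pushing coordinates to $x_1$ one recovers the original configuration because every point of a distal product system lies in the orbit closure of every point of its own orbit closure. For a general minimal system that return step fails, and the appeal to Theorem \ref{property}(3) does not repair it: knowing that the corrected point lies in $\mathbf{Q}^{[d+1]}_{x_1}(X)$ and that $\overline{\mathcal{F}^{[d+1]}}(x_1^{[d+1]})$ is the unique minimal subset tells you the orbit closure of the corrected point \emph{contains} the minimal set, but gives you no point of the special form $(x_1,x_2^{[d+1]}_*)$ inside it. Moreover the ``gluing along common hyperfaces'' closure property of $\mathbf{Q}^{[d+1]}(X)$ that you invoke is itself not available for free outside the distal setting. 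This homogenization is precisely the hard content of the Shao--Ye theorem; their proof does not homogenize at all but instead develops enveloping-semigroup machinery ($\mathcal{F}^{[d]}$-minimal idempotents) to show that $\mathbf{RP}^{[d]}[x_1]$ already contains the $\vec{1}$-coordinate of \emph{every} point of $\overline{\mathcal{F}^{[d+1]}}(x_1^{[d+1]})$, after which the lift $x_2=\mathbf{z}(\vec{1})$ is immediate. As written, your proof establishes the theorem only for distal systems, which happens to cover the applications in this paper but not the statement as claimed.
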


It follows that for any minimal system $(X,G)$,
\[
\mathbf{RP}^{[\infty]}(X)=\bigcap_{d\geq1}\mathbf{RP}^{[d]}(X)
\]
is a closed invariant equivalence relation.

Now we formulate the definition of systems of order $\infty$.

\begin{definition}
    A minimal system $(X,G)$ is a \emph{system of order $\infty$},
  if the equivalence relation $\mathbf{RP}^{[\infty]}(X)$ is trivial,
  i.e. coincides with the diagonal.
\end{definition}

\begin{remark}\label{infi}
  By using similar methods,
  we can show that Theorem \ref{lift-property} also holds for $d=\infty$.
\end{remark}

The following result can be found in \cite[Corollary 4.2]{HKM} for a single transformation.
For completeness, we include the proof.
\begin{prop}\label{fiber-property}
   Let $(X,G)$ be a minimal distal system and let $d\geq2$ be an  integer.
  Let $\pi:X\to X/\mathbf{RP}^{[d-1]}(X)$ be the factor map.
  If points $x_1,\ldots,x_{2^d}\in X$ satisfy $$\pi(x_1)=\ldots=\pi(x_{2^d}),$$
  then $( x_1,\ldots,x_{2^d}) \in \mathbf{Q}^{[d]}(X)$.
\end{prop}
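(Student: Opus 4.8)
The plan is to prove the statement by induction on $d$, reducing the general case to the key fact that $\mathbf{Q}^{[d]}(X)$ can be built up one Euclidean-hyperface at a time. First I would dispose of the base case $d=2$: if $\pi(x_1)=\pi(x_2)=\pi(x_3)=\pi(x_4)$ with $\pi:X\to X/\mathbf{RP}^{[1]}(X)$ the map to the maximal equicontinuous factor, then each pair $(x_i,x_j)$ lies in $\mathbf{RP}^{[1]}(X)$, and one checks directly using Theorem \ref{cube-minimal}(1) together with the fact that $\mathbf{Q}^{[2]}(X)$ is invariant under Euclidean permutations and closed under the coordinatewise action of $\mathcal{G}^{[2]}$ that $(x_1,x_2,x_3,x_4)\in\mathbf{Q}^{[2]}(X)$. (Here distality of $(X,G)$ guarantees that $\mathbf{Q}^{[l]}(X)$ coincides with the orbit closures $\overline{\mathcal{F}^{[l]}}(x^{[l]})$ in a usable way, and that the relevant fibers are "rigid" enough.)

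For the inductive step, suppose the claim holds for $d-1$ and let $x_1,\dots,x_{2^d}$ lie in a single fiber of $\pi:X\to X/\mathbf{RP}^{[d-1]}(X)$; index them by $\{0,1\}^d$, writing $\mathbf{x}=(x_\epsilon)_{\epsilon\in\{0,1\}^d}$. Split $\{0,1\}^d$ into its two hyperfaces $F_0=\{\epsilon_d=0\}$ and $F_1=\{\epsilon_d=1\}$, each a copy of $\{0,1\}^{d-1}$. Since $\mathbf{RP}^{[d-1]}(X)\subset\mathbf{RP}^{[d-2]}(X)$, the restrictions $\mathbf{x}|_{F_0}$ and $\mathbf{x}|_{F_1}$ each consist of $2^{d-1}$ points in a common fiber of $X\to X/\mathbf{RP}^{[d-2]}(X)$, so by the inductive hypothesis both lie in $\mathbf{Q}^{[d-1]}(X)$. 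It remains to glue these two $(d-1)$-cubes into a $d$-cube in $\mathbf{Q}^{[d]}(X)$. The mechanism for this is the "gluing/extension" property of cubespaces: given two points of $\mathbf{Q}^{[d-1]}(X)$ whose "bases" are suitably related by an element of $\mathbf{RP}^{[d-1]}(X)$ applied coordinatewise, their concatenation lies in $\mathbf{Q}^{[d]}(X)$. Concretely, I would first use minimality of $(\mathbf{Q}^{[d-1]}(X),\mathcal{G}^{[d-1]})$ (Theorem \ref{property}(1)) and the action of the face group to move $\mathbf{x}|_{F_0}$ to a point of the form $z^{[d-1]}$ along an orbit, carrying $\mathbf{x}|_{F_1}$ to some $\mathbf{y}\in\mathbf{Q}^{[d-1]}(X)$ with $\mathbf{y}(\vec 0)=z$ and all coordinates of $\mathbf{y}$ in the $\mathbf{RP}^{[d-1]}$-class of $z$; then $(z^{[d-1]},\mathbf{y})$ is a $d$-cube which, by Theorem \ref{cube-minimal}(1) coordinatewise, sits in $\overline{\mathcal{F}^{[d]}}(z^{[d]})\subset\mathbf{Q}^{[d]}(X)$; finally undo the orbit move by applying the corresponding element of $\mathcal{G}^{[d]}$ acting on $X^{[d]}$.

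The main obstacle I expect is exactly this gluing step: knowing $\mathbf{x}|_{F_1}\in\mathbf{Q}^{[d-1]}(X)$ and that every coordinate is $\mathbf{RP}^{[d-1]}$-equivalent to the corresponding coordinate of $\mathbf{x}|_{F_0}$ does not obviously produce a single $\mathcal{F}^{[d]}$-orbit limit realizing the concatenation — one needs a simultaneous approximation of the whole $F_1$-face by a face-group element, uniformly over the $2^{d-1}$ coordinates. The way around this is to work inside $\overline{\mathcal{F}^{[d]}}(x^{[d]})$ and exploit Theorem \ref{property}(3): $\overline{\mathcal{F}^{[d]}}(x_{\vec 0}^{[d]})$ is the \emph{unique} $\mathcal{F}^{[d]}$-minimal set in $\mathbf{Q}^{[d]}_{x_{\vec 0}}(X)$, and distality forces $\mathbf{Q}^{[d]}(X)$ to be minimal under $\mathcal{G}^{[d]}$, so it suffices to produce \emph{any} element of $\mathbf{Q}^{[d]}(X)$ with the prescribed coordinates and then use minimality plus Euclidean-permutation invariance to conclude; the existence of such an element is precisely what the inductive gluing, phrased as a statement about $\mathbf{Q}^{[d]}(X)\cap(\mathbf{Q}^{[d-1]}(X)\times\mathbf{Q}^{[d-1]}(X))$ projecting onto fibers of $\pi$, delivers. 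I would organize the argument around this projection and a compactness/limit passage rather than an explicit construction, so that the routine verifications (that limits of face-group elements act correctly, that Euclidean permutations preserve everything) stay in the background.
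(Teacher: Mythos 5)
Your overall architecture (induction on $d$, reduction to a configuration with a single non-diagonal coordinate, then distality to pull the original point back into $\mathbf{Q}^{[d]}(X)$) is the right one, but the step you yourself flag as the main obstacle --- the gluing --- is a genuine gap, not a routine verification. The assertion that $(z^{[d-1]},\mathbf{y})\in\mathbf{Q}^{[d]}(X)$ whenever $\mathbf{y}\in\mathbf{Q}^{[d-1]}(X)$ has all coordinates in the $\mathbf{RP}^{[d-1]}$-class of $z$ does not follow from ``Theorem \ref{cube-minimal}(1) coordinatewise'': that theorem only covers the configuration with exactly one coordinate different from the rest, namely $(x,y,\dots,y)$ with $(x,y)\in\mathbf{RP}^{[d-1]}(X)$ up to a Euclidean permutation, whereas your $(z^{[d-1]},\mathbf{y})$ can have up to $2^{d-1}$ coordinates differing from $z$. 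Worse, this gluing statement is itself a special case of the proposition you are proving (all $2^d$ coordinates of $(z^{[d-1]},\mathbf{y})$ lie in one fiber of $\pi$), so invoking it at level $d$ inside an induction on $d$ is circular unless you prove it independently. Minimality of $(\mathbf{Q}^{[d]}(X),\mathcal{G}^{[d]})$ and the uniqueness of the $\mathcal{F}^{[d]}$-minimal set in a fiber (Theorem \ref{property}(3)) do not manufacture the missing cube either; they only become useful once you already possess some point of $\mathbf{Q}^{[d]}(X)$ lying in the $\mathcal{G}^{[d]}$-orbit closure of $\mathbf{x}$.

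The paper closes exactly this gap by collapsing one coordinate direction at a time instead of splitting into two hyperfaces once. Starting from $\mathbf{x}$, at stage $i$ it applies the inductive hypothesis to the restriction of the current point $\mathbf{c}_i$ to the lower hyperface $F^0_{i+1}$ (whose coordinates all lie in $\mathbf{RP}^{[d-1]}[x]$, hence in a common fiber of $X\to X/\mathbf{RP}^{[d-2]}(X)$), obtains elements of $\mathcal{F}^{[d-1]}$ pushing that restriction toward $x_1^{[d-1]}$, lifts them to $\mathcal{F}^{[d]}$ by inserting a zero in the $(i+1)$-st slot so they are constant across the two $F_{i+1}$-hyperfaces, and passes to a limit $\mathbf{c}_{i+1}$, which is constant equal to $x_1$ on $F^0_1\cup\dots\cup F^0_{i+1}$ while its remaining coordinates stay in $\mathbf{RP}^{[d-1]}[x]$ by equivalence of $\mathbf{RP}^{[d-1]}(X)$ under limits. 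After $d$ stages only the $\vec{1}$-coordinate can differ from $x_1$, and that configuration is in $\mathbf{Q}^{[d]}(X)$ by Theorem \ref{cube-minimal}(1); distality of $(X^{[d]},\mathcal{G}^{[d]})$ then puts $\mathbf{x}$ itself in $\mathbf{Q}^{[d]}(X)$. If you insist on the two-hyperface formulation, you would have to prove your gluing lemma by essentially this same iterated collapse, at which point the split into $F_0$ and $F_1$ buys nothing. (Your base case $d=2$ is also only sketched; it requires the same two-stage collapse followed by the distality argument.)
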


\begin{proof}
For $d\in \N$ and $x\in X$, let
\[
\RP^{[d]}[x]=\{y\in X:(x,y)\in \mathbf{RP}^{[d]}(X)\}.
\]

We show it by induction on $d$.

When $d=2$.
Let $x_i\in X$ with $x_i\in \RP^{[1]}[x],i=1,\ldots,4$.
As the system $(X,G)$ is minimal,
there exists some sequence $\{g_i\}_{i\in \N}\subset G$ such that $g_ix_2\to x_1,i\to \infty$.
Without loss of generality, assume that $g_i x_4\to x_4',i\to \infty$
for some $x_4'\in X$. Thus
\[
(\mathrm{id}\times g_i\times  \mathrm{id}  \times g_i)(x_1,x_2,x_3,x_4)\to (x_1,x_1,x_3,x_4'),i\to \infty.
\]
By equivalence of $\mathbf{RP}^{[1]}(X)$, we get that $x_4'\in \RP^{[1]}[x]$.

Similarly, there exists some sequence $\{h_i\}_{i\in \N}\subset G$ such that $h_ix_3\to x_1,i\to \infty$.
Without loss of generality,
assume that $h_ix_4'\to x_4'',i\to \infty$ for some $x_4''\in X$.
Thus
\[
(\mathrm{id}\times \mathrm{id}\times h_i  \times h_i)(x_1,x_1,x_3,x_4')\to (x_1,x_1,x_1,x_4''),i\to \infty.
\]
By equivalence of $\mathbf{RP}^{[1]}(X)$, we get that $x_4''\in \RP^{[1]}[x]$.
This shows that
\[
\mathbf{x}''\in\overline{\mathcal{O}(\mathbf{x}',\mathcal{G}^{[2]})}\subset \overline{\mathcal{O}(\mathbf{x}_0,\mathcal{G}^{[2]})},
\]
where $\mathbf{x}_0=(x_1,x_2,x_3,x_4),\mathbf{x}'=(x_1,x_1,x_3,x_4')$
and $\mathbf{x}''=(x_1,x_1,x_1,x_4'')$.
By Theorem \ref{cube-minimal}, $\mathbf{x}''\in \Q^{[2]}(X)$.
Note that the system $(X,G)$ is distal, so is $(X^{[2]},\mathcal{G}^{[2]})$.
Thus we obtain that $\mathbf{x}_0\in \overline{\mathcal{O}(\mathbf{x}'',\mathcal{G}^{[2]})},$
which implies $\mathbf{x}_0\in \Q^{[2]}(X)$.

\medskip

Let $d>2$ be an integer and suppose the statement is true for all $j=2,\ldots,d-1$.
Let $x_i\in X$ with $x_i\in \RP^{[d-1]}[x],i=1,\ldots,2^d$ and
put $\mathbf{x}=(x_1,\ldots,x_{2^d})$.

For $i=1,\ldots,d$ and $s=0,1$,
let
\[
F_i^s=\{\epsilon\in \{0,1\}^d:\epsilon_i=s\}.
\]
Inductively we will construct elements
$\mathbf{c}_{1},\ldots,\mathbf{c}_{d}\in X^{[d]}$ such that for $i=1,\ldots d$:
\begin{enumerate}
\item $\mathbf{c}_{i}(\epsilon)=x_{1}$ for $\epsilon\in  F_{1}^0\cup\ldots\cup F_{i}^0$;
\item $\mathbf{c}_{i}(\epsilon)\in\RP^{[d-1]}[x],\epsilon\in \{0,1\}^d$;
\item $\mathbf{c}_1 \in\overline{\mathcal{O}(\mathbf{x},\mathcal{G}^{[d]})}$;
\item $\mathbf{c}_{i+1} \in\overline{\mathcal{O}(\mathbf{c}_i,\mathcal{G}^{[d]})}$.
\end{enumerate}
Assume this has been achieved, then we have
\[
\mathbf{c}_d \in\overline{\mathcal{O}(\mathbf{c}_{d-1},\mathcal{G}^{[d]})}\subset \ldots
\subset \overline{\mathcal{O}(\mathbf{c}_1,\mathcal{G}^{[d]})}
 \subset\overline{\mathcal{O}(\mathbf{x},\mathcal{G}^{[d]})}.
\]
As $\mathbf{c}_d({\epsilon})=x_1$ for all $\epsilon\in  F_{1}^0\cup\ldots\cup F_{d}^0$,
there exists some point $y\in X$
such that $\mathbf{c}_d(\epsilon)=x_1$ for $\epsilon\neq \vec{1}$
and $\mathbf{c}_d({\vec{1}})=y$.
Property (2) provides that $y\in \RP^{[d-1]}[x]$ which implies
$\mathbf{c}_d\in  \Q^{[d]}(X) $
by Theorem \ref{cube-minimal}.
Note that the system $(X,G)$ is distal, so is $(X^{[d]},\mathcal{G}^{[d]})$.
Thus $\mathbf{x}\in \overline{\mathcal{O}(\mathbf{c}_d,\mathcal{G}^{[d]})},$
which implies $\mathbf{x} \in \Q^{[d]}(X)$.

\medskip

We now return to the inductive constructions of $\mathbf{c}_{1},\ldots,\mathbf{c}_{d}\in X^{[d]}.$

For $i=1,\ldots,d,s=0,1$ and $\mathbf{y}=(y_\epsilon:\epsilon\in \{0,1\}^d)\in X^{[d]}$,
let
\[
\mathbf{y}|_{F_i^s}=(y_\epsilon:\epsilon\in \{0,1\}^d,\epsilon_i=s),
\]
then $\mathbf{y}|_{F_i^s}\in X^{[d-1]}$.
Define a map $\sigma_i $ from $G^{d-1}$ to $G^d$ such that
\[
(g_1,\ldots,g_{d-1})\mapsto (g_1,\ldots,g_{i-1},0,g_i,\ldots,g_{d-1}).
\]

Since $\mathbf{RP}^{[d-1]}(X)\subset \mathbf{RP}^{[d-2]}(X)$,
by inductive hypothesis we have $\mathbf{x}_{|F^0_{1}}\in \Q^{[d-1]}(X)$.
By Theorem \ref{cube-minimal} there exists some sequence
$\{\vec{g}_k^{1}\}_{k\in \N}\subset G^{d-1}$
 such that $\mathbf{g}_k^{(1)}=(\vec{g}_k^{1}\cdot\epsilon:\epsilon\in\{0,1\}^{d-1})$
 and
 \[
 \mathbf{g}_k^{(1)}\mathbf{x}_{|F^0_{1}}\to x_{1}^{[d-1]},\quad k\to \infty.
 \]

 For $k\in \N$,
 put $\mathbf{h}_k^{(1)}=(\sigma_1(\vec{g}_k^1)\cdot \epsilon:\epsilon\in \{0,1\}^d)$,
 then $\mathbf{h}_k^{(1)}\in \mathcal{F}^{[d]}$.
 Let $\mathbf{c}_1\in X^{[d]}$ be a limit point of
the sequence $\{\mathbf{h}_k^{(1)}\mathbf{x}\}_{k\in \N}$,
then $\mathbf{c}_{1}(\epsilon)=x_{1},\epsilon\in F^0_1$.
Notice that
 \[
 \mathbf{g}_k^{(1)}\mathbf{x}_{|F^1_{1}}\to \mathbf{c}_{1|F^1_{1}},\quad k\to \infty,
 \]
by equivalence of $\mathbf{RP}^{[d-1]}(X)$,
we get that $\mathbf{c}_1(\epsilon)\in \RP^{[d-1]}[x],\epsilon\in \{0,1\}^d$.

 Assume we have already constructed $\mathbf{c}_{1},\ldots,\mathbf{c}_{i}\in X^{[d]}$.
As $\mathbf{RP}^{[d-1]}(X)\subset \mathbf{RP}^{[d-2]}(X)$,
by inductive hypothesis $\mathbf{c}_{i|F^0_{i+1}}\in \Q^{[d-1]}(X)$.
By Theorem \ref{cube-minimal}
 there is some sequence
$\{\vec{g}_{k}^{i+1}\}_{k\in \N}\subset G^{d-1}$ such that
$\mathbf{g}_k^{(i+1)}=(\vec{g}_k^{i+1}\cdot\epsilon:\epsilon\in\{0,1\}^{d-1})$ and
\[
\mathbf{g}_k^{(i+1)}\mathbf{c}_{i|F^0_{i+1}}\to x_{1}^{[d-1]},\quad k\to\infty.
\]

For $k\in \N$,
 put $\mathbf{h}_k^{(i+1)}=(\sigma_{i+1}(\vec{g}_k^{i+1})\cdot \epsilon:\epsilon\in \{0,1\}^d)$,
 then $\mathbf{h}_k^{(i+1)}\in \mathcal{F}^{[d]}$.
 Let $\mathbf{c}_{i+1}\in X^{[d]}$ be a limit point of
the sequence $\{\mathbf{h}_k^{(i+1)}\mathbf{c}_i\}_{k\in \N}$, then $\mathbf{c}_{i+1}(\epsilon)=x_{1},\epsilon\in F^0_{i+1}$.
Moreover,
as $\mathbf{c}_{i}(\epsilon)=x_1,\epsilon\in F_1^0\cup \ldots\cup F_{i}^0$,
we get that $\mathbf{c}_{i+1}(\epsilon)=x_1,\epsilon\in F_1^0\cup \ldots \cup F_{i+1}^0$.
Notice that
$\mathbf{c}_i(\epsilon)\in \RP^{[d-1]}[x],\epsilon\in \{0,1\}^d$ and
\[
\mathbf{g}_k^{(i+1)}\mathbf{c}_{i|F^1_{i+1}}
\to \mathbf{c}_{i+1|F^1_{i+1}},\quad  k\to \infty,
\]
by equivalence of $\mathbf{RP}^{[d-1]}(X)$,
we deduce that
$\mathbf{c}_{i+1}(\epsilon)\in \RP^{[d-1]}[x],\epsilon\in \{0,1\}^d$
as was to be shown.

This completes the proof.
\end{proof}

\subsection{Nilpotent groups, nilmanifolds and nilsystems}
Let $L$ be a group.
For $g,h\in L$, we write $[g,h]=ghg^{-1}h^{-1}$ for the commutator of $g$ and $h$,
we write $[A,B]$ for the subgroup spanned by $\{[a,b]:a\in A,b\in B\}$.
The commutator subgroups $L_j,j\geq1$, are defined inductively by setting $L_1=L$
and $L_{j+1}=[L_j,L]$.
Let $k\geq 1$ be an integer.
We say that $L$ is \emph{k-step nilpotent} if $L_{k+1}$ is the trivial subgroup.

\medskip

Let $L$ be a $k$-step nilpotent Lie group and $\Gamma$ a discrete cocompact subgroup of $L$.
The compact manifold $X=L/\Gamma$ is called a \emph{k-step nilmanifold.}
The group $L$ acts on $X$ by left translations and we write this action as $(g,x)\to gx$.
Let $\tau\in L$ and $T$ be the transformation $x\to \tau x$ of $X$.
Then $(X,T)$ is called a \emph{k-step nilsystem}.

We also make use of inverse limits of nilsystems and so we recall the definition of an inverse limit
of systems (restricting ourselves to the case of sequential inverse limits).
If $\{(X_i,T_i)\}_{i\in \N}$ are systems with $\text{diam}(X_i)\leq 1$ and $\phi_i:X_{i+1}\to X_i$
are factor maps, the \emph{inverse limit} of the systems is defined to be the compact subset of $\prod_{i\in \N}X_i$
given by $\{(x_i)_{i\in \N}:\phi_i(x_{i+1})=x_i,i\in \N\}$,
which is denoted by $\lim\limits_{\longleftarrow}\{ X_i\}_{i\in \N}$.
It is a compact metric space endowed with the distance $\rho(x,y)=\sum_{i\in \N}1/ 2^i \rho_i(x_i,y_i)$.
We note that the maps $\{T_i\}$ induce a transformation $T$ on the inverse limit.

The following structure theorem characterizes inverse limits of nilsystems using dynamical cubespaces.

\begin{theorem}[Host, Kra and Maass]\cite[Theorem 1.2]{HKM}\label{description}
  Assume that $(X,T)$ is a minimal system
  and let $d\geq2$ be an integer. The following properties are equivalent:
  \begin{enumerate}
    \item If $\mathbf{x},\mathbf{y}\in \mathbf{Q}^{[d]}(X)$ have $2^d-1$ coordinates in common, then $\mathbf{x}=\mathbf{y}$.
    \item If $x,y\in X$ are such that $(x,y,\ldots,y)\in  \mathbf{Q}^{[d]}(X)$, then $x=y$.
    \item $X$ is an inverse limit of $(d-1)$-step minimal nilsystems.
  \end{enumerate}
\end{theorem}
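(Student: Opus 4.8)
The plan is to prove the cycle of implications $(1)\Rightarrow(2)\Rightarrow(3)\Rightarrow(1)$, with $(2)\Rightarrow(3)$ as the substantial step. The implication $(1)\Rightarrow(2)$ is immediate: if $(x,y,\ldots,y)=(x,y^{[d]}_*)$ lies in $\mathbf{Q}^{[d]}(X)$, compare it with the diagonal point $y^{[d]}\in\Delta^{[d]}(X)\subset\mathbf{Q}^{[d]}(X)$; these two elements of $\mathbf{Q}^{[d]}(X)$ agree on the $2^d-1$ coordinates indexed by $\epsilon\neq\vec{0}$, so $(1)$ forces $x=y$. It is convenient to reformulate $(2)$ at once: by Theorem~\ref{cube-minimal}(1) applied with $d$ replaced by $d-1$, condition $(2)$ says precisely that $\mathbf{RP}^{[d-1]}(X)=\Delta$, i.e.\ that $(X,T)$ is a system of order $d-1$; and since $\mathbf{P}(X)\subset\mathbf{RP}^{[d-1]}(X)$, such a system is automatically minimal distal.

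For $(3)\Rightarrow(1)$ I would first check $(1)$ for a single $(d-1)$-step minimal nilsystem $X=L/\Gamma$, with $T$ the translation by some $\tau\in L$. Here $\mathbf{Q}^{[d]}(X)$ is the closure of the orbit of the diagonal under the Host-Kra cubegroup $\mathcal{HK}^{[d]}(L)$, and the structure of that cubegroup for a $(d-1)$-step nilpotent Lie group (the property recalled in the introduction) shows that, because $L_d=\{e\}$, an element of $\mathcal{HK}^{[d]}(L)$, and hence of $\mathbf{Q}^{[d]}(X)$, is determined by its values at any $2^d-1$ of the $2^d$ vertices. Passing to a sequential inverse limit coordinatewise (and using that $\mathbf{Q}^{[d]}$ of an inverse limit is the inverse limit of the corresponding $\mathbf{Q}^{[d]}$'s) then gives $(1)$ for an arbitrary inverse limit of $(d-1)$-step minimal nilsystems.

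The substantial implication is $(2)\Rightarrow(3)$, which I would prove by induction on $d$. For $d=2$, $(2)$ reads $\mathbf{RP}^{[1]}(X)=\Delta$; since the order-$1$ regionally proximal relation is the classical one, whose quotient is the maximal equicontinuous factor, $(X,T)$ is minimal equicontinuous, hence a rotation on a compact abelian group and therefore an inverse limit of rotations on compact abelian Lie groups, i.e.\ of $1$-step minimal nilsystems. For the inductive step, set $Z=X/\mathbf{RP}^{[d-2]}(X)$; by Theorem~\ref{lift-property}, $\mathbf{RP}^{[d-2]}(Z)=\Delta$, so by the inductive hypothesis $Z$ is an inverse limit of $(d-2)$-step minimal nilsystems. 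It then remains to show that the (distal) extension $\pi\colon X\to Z$ fits, up to inverse limits, into a $(d-1)$-step nilsystem presentation of $X$. The plan here is to exploit the rigidity of $\mathbf{Q}^{[d]}(X)$ supplied by $(1)$ (available precisely because $\mathbf{RP}^{[d-1]}(X)=\Delta$), together with distality and a fiber argument of the type used in the proof of Proposition~\ref{fiber-property}, to reconstruct on each suitable finite-dimensional factor of $X$ a $(d-1)$-step nilpotent Lie group $L$, a cocompact lattice $\Gamma$ and an element $\tau$ realizing that factor, and then to pass to the inverse limit over such factors.

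The main obstacle is exactly this last reconstruction: turning the combinatorial statement that a parallelepiped in $\mathbf{Q}^{[d]}(X)$ is determined by $2^d-1$ of its vertices into an explicit nilmanifold presentation. This is the heart of the Host-Kra-Maass structure theorem and is genuinely long; carrying it out requires the full theory of parallelepiped structures (the closed set $\mathbf{Q}^{[d]}(X)$ together with its invariance under the face maps and the Euclidean permutations, Theorem~\ref{property}), as well as a delicate separation and finite-dimensionality argument to descend from $X$ to its nilsystem factors. In a self-contained treatment one would isolate this as the key proposition and devote the bulk of the work to it.
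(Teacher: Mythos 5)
The paper does not prove this statement at all: it is quoted from Host, Kra and Maass \cite[Theorem 1.2]{HKM} and used as a black box, so there is no internal proof to compare against. Judged on its own terms, your proposal correctly organizes the cycle $(1)\Rightarrow(2)\Rightarrow(3)\Rightarrow(1)$, and the parts you actually carry out are sound: comparing $(x,y^{[d]}_*)$ with the diagonal point $y^{[d]}$ gives $(1)\Rightarrow(2)$; the identification of $(2)$ with $\mathbf{RP}^{[d-1]}(X)=\Delta$ via Theorem~\ref{cube-minimal} is correct, as is the observation that such a system is distal; and the sketch of $(3)\Rightarrow(1)$ from the cube structure of a $(d-1)$-step nilsystem together with the compatibility of $\mathbf{Q}^{[d]}$ with inverse limits is the standard route.

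There is, however, a genuine and decisive gap at $(2)\Rightarrow(3)$, which you yourself flag. Everything up to your final paragraph is a reduction of the theorem to its actual content: producing, from the rigidity of $\mathbf{Q}^{[d]}(X)$, a $(d-1)$-step nilpotent Lie group $L$, a cocompact lattice $\Gamma$ and a translation realizing cofinal factors of $X$. Your text offers no mechanism for this construction: no candidate for the group $L$ (in the Host--Kra--Maass argument it arises from a group of transformations of $X$ built out of the parallelepiped structure, and verifying that this group is locally compact, acts transitively, and is in fact a nilpotent Lie group on suitable finite-dimensional factors is the bulk of the work), no finite-dimensionality argument, and no reason the isometric extension $X\to Z$ sits inside a nilsystem rather than merely being isometric. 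Conceding that one would ``isolate this as the key proposition and devote the bulk of the work to it'' means the theorem has not been proved; as written, the proposal establishes only the easy equivalences and the reduction, not the structure theorem itself.
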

This result shows that a minimal system $(X,T)$ is a system of order $d$
if and only if it is an inverse limit of minimal $d$-step nilsystems.


Thus for a minimal system $(X,T)$, we just call it a \emph{$d$-step pro-nilsystem} if $\mathbf{RP}^{[d]}(X)$ is trivial.
\begin{theorem}\cite[Theorem 3.6]{PD13}\label{system-of-order}
  A minimal system $(X,T)$
  is a system of order $\infty$ if and only if it is an inverse limit of minimal nilsystems.
\end{theorem}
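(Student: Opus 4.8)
The plan is to prove the two implications separately; essentially all the real content is imported from the Host--Kra--Maass structure theorem (Theorem~\ref{description}), the Shao--Ye description of $\mathbf{RP}^{[d]}$ via cubespaces (Theorem~\ref{cube-minimal}), and the lifting property (Theorem~\ref{lift-property}, together with Remark~\ref{infi} for the case $d=\infty$).

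For the implication \emph{inverse limit of nilsystems $\Rightarrow$ system of order $\infty$}, suppose $(X,T)=\lim\limits_{\longleftarrow}\{X_i\}_{i\in\N}$ with each $(X_i,T_i)$ a minimal $k_i$-step nilsystem, and let $\pi_i\colon X\to X_i$ be the projections. I would first note that a minimal $k$-step nilsystem is a system of order $k$: it is trivially an inverse limit of $k$-step minimal nilsystems, so the implication $(3)\Rightarrow(2)$ of Theorem~\ref{description} applied with $d=k+1$, combined with Theorem~\ref{cube-minimal}, yields $\mathbf{RP}^{[k]}=\Delta$. Hence $\mathbf{RP}^{[\infty]}(X_i)\subseteq\mathbf{RP}^{[k_i]}(X_i)=\Delta$ for every $i$. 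Applying the lifting property in its $d=\infty$ form gives $(\pi_i\times\pi_i)\mathbf{RP}^{[\infty]}(X)=\mathbf{RP}^{[\infty]}(X_i)=\Delta$, so $\mathbf{RP}^{[\infty]}(X)\subseteq\bigcap_{i}R_{\pi_i}=\Delta$ since the projections separate points of an inverse limit; thus $(X,T)$ is a system of order $\infty$.

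For the converse, \emph{system of order $\infty$ $\Rightarrow$ inverse limit of nilsystems}, assume $\mathbf{RP}^{[\infty]}(X)=\Delta$. For each $d\ge1$ put $X_d=X/\mathbf{RP}^{[d]}(X)$ with factor map $\pi_d\colon X\to X_d$, which by Theorem~\ref{lift-property} is the maximal factor of order $d$. Because $\mathbf{RP}^{[d+1]}(X)\subseteq\mathbf{RP}^{[d]}(X)$ there is a factor map $q_d\colon X_{d+1}\to X_d$ with $q_d\circ\pi_{d+1}=\pi_d$, so $(X_d,q_d)_{d\in\N}$ is an inverse system and the $\pi_d$ induce a factor map $\Pi\colon X\to\lim\limits_{\longleftarrow}\{X_d\}_{d\in\N}$. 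I would check that $\Pi$ is injective, since $x\neq y$ forces $(x,y)\notin\mathbf{RP}^{[\infty]}(X)=\bigcap_{d\ge1}\mathbf{RP}^{[d]}(X)$ and hence $\pi_d(x)\neq\pi_d(y)$ for some $d$; and that $\Pi$ is surjective by a routine compactness argument, the fibres over a compatible sequence being a nested family of nonempty closed subsets of $X$. Thus $X\cong\lim\limits_{\longleftarrow}\{X_d\}$. Each $X_d$, being a system of order $d$, is an inverse limit of minimal $d$-step nilsystems by Theorem~\ref{description} (again via Theorem~\ref{cube-minimal}), and I would conclude by invoking the elementary fact that a sequential inverse limit whose terms are themselves sequential inverse limits of minimal nilsystems can be rewritten, after a diagonal refinement of the index set that keeps track of the connecting factor maps, as a single sequential inverse limit of minimal nilsystems. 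Therefore $(X,T)$ is an inverse limit of minimal nilsystems.

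The proof has no genuinely hard step once Theorems~\ref{description}, \ref{cube-minimal} and \ref{lift-property} are available; the main obstacle is organizational rather than conceptual, namely the surjectivity of $\Pi$ and the flattening of the double inverse limit in the converse direction. The one point worth stressing is that the approximating nilsystems are allowed to have unbounded nilpotency class --- ``inverse limit of minimal nilsystems'' imposes no uniform bound --- which is exactly what makes the identification $X\cong\lim\limits_{\longleftarrow} X_d$ with the $X_d$ of growing order a legitimate conclusion.
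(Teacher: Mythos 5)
First, a remark on the comparison you asked for: the paper does not prove this statement at all --- it is quoted verbatim from \cite{PD13} (Theorem 3.6 there) and used as a black box --- so there is no in-paper argument to measure your proposal against; what follows assesses the proposal on its own terms.

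The first implication is correct, and in the converse direction the identification $X\cong\lim\limits_{\longleftarrow}\{X/\mathbf{RP}^{[d]}(X)\}_d$ (injectivity from $\mathbf{RP}^{[\infty]}(X)=\Delta$, surjectivity by compactness) is also fine. The genuine gap is the final ``flattening'' step, which you describe as organizational. Writing $X_d=\lim\limits_{\longleftarrow}\{Y_{d,i}\}_i$ with each $Y_{d,i}$ a minimal $d$-step nilsystem, a diagonal choice $Y_{d,i_d}$ does \emph{not} yield an inverse system: Theorem~\ref{description} only asserts the existence of \emph{some} inverse-limit presentation of each $X_d$ separately, and there is no reason for any factor map $Y_{d+1,i_{d+1}}\to Y_{d,i_d}$ to exist, so ``keeping track of the connecting factor maps'' is precisely what cannot be done by reindexing. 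The standard repair is to replace the diagonal terms by joins: let $W_n$ be the image of $X$ in $Y_{1,n}\times\cdots\times Y_{n,n}$ under the product of the composed projections; then $W_n$ is a factor of $W_{n+1}$ and, choosing the $Y_{d,i}$ with sufficiently small fibers, $X\cong\lim\limits_{\longleftarrow}\{W_n\}_n$. But now one must know that $W_n$ --- a minimal subsystem of a finite product of minimal nilsystems --- is itself a nilsystem. That is a genuine theorem about orbit closures of translations on nilmanifolds (Leibman's theorem, see \cite{AL05}; it is also exactly the ingredient \cite{PD13} supplies when proving their Theorem 3.6), not index bookkeeping. Without it your argument only exhibits $X$ as an inverse limit of pro-nilsystems, which is circular. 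So you should either invoke that joining/orbit-closure result explicitly at this point, or simply cite \cite{PD13} for this direction, as the paper itself does.
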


\subsection{The Enveloping semigroups}
The \emph{enveloping semigroup} (or \emph{Ellis semigroup})
$E(X)$ of a dynamical system $(X,G)$ is defined as the closure in $X^X$ of the set $\{g :g\in G\}$
endowed with the product topology.
For an enveloping semigroup $E(X)$, the maps
$E(X)\to E(X),p \mapsto pq$ and $p\mapsto gp$ are continuous for all $g\in G,q\in E(X)$.

This notion was introduced by Ellis and has proved to be a useful tool in studying dynamical systems.
Algebraic properties of $E(X)$ can be translated into dynamical properties of $(X,G)$
and vice versa.
To illustrate this fact, we recall the following theorem.

\begin{theorem}\cite[Chapter 3,4 and 5]{JA}\label{distal-ellis-group}
  Let $(X,G)$ be a minimal system. Then
  \begin{enumerate}
    \item $E(X)$ is a group if and only if $(X,G) $ is distal.
    \item $E(X)$ is an abelian group if and only if $(X,G)$ is equicontinuous if and only if $E(X)$
    is a group of continuous transformations.
  \end{enumerate}
\end{theorem}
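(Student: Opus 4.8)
The plan is to derive both parts from the structure of $E(X)$ as a compact right-topological semigroup, together with one elementary observation linking idempotents to proximality and, for the statements involving equicontinuity, Ellis's joint-continuity theorem for groups. The observation I would record first is: for every idempotent $u=u^2\in E(X)$ and every $x\in X$, the pair $(x,ux)$ is proximal; indeed, choosing a net $g_i\in G$ with $g_i\to u$ in $X^X$ we get $g_ix\to ux$ while $g_i(ux)\to u(ux)=ux$, so $\rho(g_ix,g_i(ux))\to 0$.

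\textbf{Part (1).} If $(X,G)$ is distal, the observation forces $ux=x$ for every idempotent $u$, so $\mathrm{id}$ is the only idempotent of $E(X)$. I would then invoke Ellis's lemma that a compact right-topological semigroup contains minimal left ideals and that every minimal left ideal contains an idempotent: for $p\in E(X)$ the left ideal $E(X)p$ (closed, being the image of the compact set $E(X)$ under the continuous map $q\mapsto qp$) contains a minimal left ideal, hence an idempotent, which must equal $\mathrm{id}$, so $\mathrm{id}=qp$ for some $q\in E(X)$. Thus every element of $E(X)$ has a left inverse, and an associative monoid in which every element has a left inverse is a group; hence $E(X)$ is a group. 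Conversely, if $E(X)$ is a group and $(x,y)\in\mathbf{P}(X)$, pick $g_i\in G$ with $\rho(g_ix,g_iy)\to 0$ and, after passing to a subnet, $g_i\to p\in E(X)$; then $px=py$, and applying $p^{-1}\in E(X)$ together with associativity of the action gives $x=p^{-1}(px)=p^{-1}(py)=y$, so $(X,G)$ is distal.

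\textbf{Part (2).} I would close the cycle (equicontinuous) $\Rightarrow$ ($E(X)$ a group of continuous transformations) $\Rightarrow$ (equicontinuous) $\Rightarrow$ ($E(X)$ an abelian group) $\Rightarrow$ (equicontinuous). If $(X,G)$ is equicontinuous, then $\{g:g\in G\}\subset C(X,X)$ is an equicontinuous family, so by Arzel\`a--Ascoli the set $E(X)$, which is the closure of $\{g:g\in G\}$ in $X^X$, consists of continuous maps and carries one and the same pointwise and uniform topology; an equicontinuous minimal system is distal, so part (1) makes $E(X)$ a group, necessarily of homeomorphisms, and moreover $d(x,y)=\sup_{g\in G}\rho(gx,gy)$ is a compatible $G$-invariant metric realising $E(X)$ as a closed, hence compact, subgroup of the compact topological group $\mathrm{Iso}(X,d)$, which is abelian because $G$ is abelian and dense. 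For ($E(X)$ abelian $\Rightarrow$ equicontinuous) I would use minimality: the orbit map $p\mapsto px_0$ is a continuous bijection $E(X)\to X$ (onto by minimality, injective because $E(X)$ is abelian), hence a homeomorphism intertwining the $G$-action on $X$ with left translations on $E(X)$; since $E(X)$ is abelian its multiplication is separately continuous on both sides, so Ellis's joint-continuity theorem turns $E(X)$ into a compact topological group, and translations on a compact topological group form an equicontinuous family, whence $(X,G)$ is equicontinuous. The same scheme handles ($E(X)$ a group of continuous transformations $\Rightarrow$ equicontinuous): continuity of every element of $E(X)$ makes $p\mapsto qp$ continuous for each $q$ as well, so multiplication is again separately continuous on both sides and Ellis's theorem makes $E(X)$ a compact topological group acting on $X$; a Namioka-type argument upgrades the separately continuous action to a jointly continuous one, and then $d^{*}(x,y)=\sup_{p\in E(X)}\rho(px,py)$ is a compatible $G$-invariant metric, so $(X,G)$ is equicontinuous.

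\textbf{Where the work is.} The routine inputs are the idempotent/proximality observation, the Ellis minimal-left-ideal lemma, and Arzel\`a--Ascoli. The two genuinely nontrivial steps are the implications whose conclusion is equicontinuity: passing from the merely separate continuity of multiplication on $E(X)$ to a compact-topological-group structure, and from a separately continuous action of that group on $X$ to a jointly continuous one. These are exactly Ellis's joint-continuity theorem and a Namioka-type separate-to-joint continuity argument, and this is where I expect the main obstacle to lie.
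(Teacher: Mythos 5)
The paper does not prove this statement at all---it is quoted as a classical result from Auslander's book \cite[Chapters 3, 4 and 5]{JA}---so there is no in-paper argument to compare against; judged on its own, your proof is correct and is essentially the standard one from that source. All the ingredients you flag as nontrivial are exactly the tools the paper itself imports elsewhere (the Ellis--Namakura idempotent lemma, and the separate-to-joint continuity theorem, which is the paper's Theorem~\ref{joint-con} quoted from de Vries), and the one hypothesis your abelian/equicontinuous cycle silently relies on---that $G$ is abelian, for the implication ``equicontinuous $\Rightarrow E(X)$ abelian''---is part of the paper's standing assumptions.
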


\section{Maximal factors of order $\infty$ of dynamical cubespaces}
Let $(X,G)$ be a minimal distal system.
The main goal of this section is to study
the maximal factor of order $\infty$
of the system $(\mathbf{Q}^{[l]}(X),\mathcal{G}^{[l]})$, where $l\in \N$.
It is shown that indeed such factor is
$(\mathbf{Q}^{[l]}(X_\infty),\mathcal{G}^{[l]})$,
where $X_{\infty}=X/\mathbf{RP}^{[\infty]}(X)$.

 We start with some simple observations.

\begin{lemma}\label{RP_d}
  Let $(X,G)$ be a minimal system and $d\in \N$.
  Then $(x,y)\in \mathbf{RP}^{[d]}(X)$ if and only if
  for each neighborhood $V$ of $y$, there exists some $\vec{g}\in G^{d+1}$
  such that $ (\vec{g}\cdot \epsilon)x\in V$ for all $\epsilon \in \{0,1\}^{d+1}\backslash \{\vec{0}\}$.
  \end{lemma}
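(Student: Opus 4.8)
The plan is to translate the definition of $\mathbf{RP}^{[d]}(X)$ (Definition \ref{def-rp}), which quantifies over \emph{perturbations} $x',y'$ of the pair $(x,y)$, into a condition involving only a single point $x$ and one of its neighborhoods, by using minimality to absorb the perturbations. The statement asserts that $(x,y)\in\mathbf{RP}^{[d]}(X)$ iff for each neighborhood $V$ of $y$ there is $\vec g\in G^{d+1}$ with $(\vec g\cdot\epsilon)x\in V$ for all $\epsilon\in\{0,1\}^{d+1}\setminus\{\vec 0\}$. Note the index shift: $d$ translates maps $\{0,1\}^d\to X$, but here we work with the $(d+1)$-dimensional cube and pin down the $\vec 0$-coordinate, which matches Theorem \ref{cube-minimal}(1): $(x,y)\in\mathbf{RP}^{[d]}(X)$ iff $(x,y^{[d+1]}_*)\in\overline{\mathcal F^{[d+1]}}(x^{[d+1]})$.

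For the ``only if'' direction, suppose $(x,y)\in\mathbf{RP}^{[d]}(X)$ and fix a neighborhood $V$ of $y$. By Theorem \ref{cube-minimal}(1), $(x,y,\ldots,y)\in\overline{\mathcal F^{[d+1]}}(x^{[d+1]})$, so there is a sequence $\mathbf h_k\in\mathcal F^{[d+1]}$ with $\mathbf h_k x^{[d+1]}\to(x,y,\ldots,y)$. Since $\mathbf h_k\in\mathcal F^{[d+1]}$, the $\vec 0$-coordinate of $\mathbf h_k$ is the identity (see the representation (\ref{rep-hk}): elements of $\mathcal F^{[l]}$ have $h=0$), so $(\mathbf h_k x^{[d+1]})(\vec 0)=x$ exactly. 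For large $k$, the remaining coordinates $(\mathbf h_k x^{[d+1]})(\epsilon)$ lie in $V$ for all $\epsilon\neq\vec 0$. Writing $\mathbf h_k=(\vec g_k\cdot\epsilon:\epsilon\in\{0,1\}^{d+1})$ for suitable $\vec g_k\in G^{d+1}$, the vector $\vec g=\vec g_k$ does the job. For the ``if'' direction, suppose the neighborhood condition holds; run it over a neighborhood base $\{V_k\}$ of $y$ shrinking to $\{y\}$, obtaining $\vec g_k\in G^{d+1}$ with $(\vec g_k\cdot\epsilon)x\in V_k$ for $\epsilon\neq\vec 0$. Then $\mathbf h_k:=(\vec g_k\cdot\epsilon:\epsilon\in\{0,1\}^{d+1})\in\mathcal F^{[d+1]}$ and $\mathbf h_k x^{[d+1]}\to(x,y,\ldots,y)$ (the $\vec 0$-coordinate is always $x$, the rest converge to $y$), hence $(x,y^{[d+1]}_*)\in\overline{\mathcal F^{[d+1]}}(x^{[d+1]})$, and Theorem \ref{cube-minimal}(1) gives $(x,y)\in\mathbf{RP}^{[d]}(X)$.

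The only genuine subtlety, rather than an obstacle, is making sure one invokes the right characterization: the raw Definition \ref{def-rp} allows arbitrary perturbed pairs $(x',y')$ and arbitrary $\vec g\in G^d$, whereas the lemma wants no perturbation and wants the orbit to return near $y$ under $\mathcal F^{[d+1]}$. Theorem \ref{cube-minimal}(1) already does the heavy lifting of showing these are equivalent, so the proof reduces to unwinding the face-group representation and a routine diagonal/convergence argument. An alternative, if one prefers not to go through $\overline{\mathcal F^{[d+1]}}(x^{[d+1]})$, is to argue directly from Definition \ref{def-rp}: given the neighborhood condition take $x'=x$ and $y'=(\vec g\cdot\vec 1^{\,\prime})x$ for an appropriate hyperface, but threading the index bookkeeping is cleaner via Theorem \ref{cube-minimal}.
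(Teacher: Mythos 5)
Your proof is correct and is essentially the paper's own argument: the paper's proof of Lemma \ref{RP_d} is just the one line ``It follows from Theorem \ref{cube-minimal},'' and your write-up is exactly the unwinding of that citation (identify $\overline{\mathcal F^{[d+1]}}(x^{[d+1]})$ membership with the neighborhood condition via the representation of elements of $\mathcal F^{[d+1]}$ as $(\vec g\cdot\epsilon)_{\epsilon}$, using metrizability so that sequences suffice). No gaps.
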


\begin{proof}
  It follows from Theorem \ref{cube-minimal}.
\end{proof}

\begin{lemma}\label{cube-nil}
   Let $(X,G)$ be a system of order $d$, where $d\in \N\cup \{ \infty\}$.
   Then for every $l\in \N$,
   $(\Q^{[l]}(X),\mathcal{G}^{[l]})$ is also a system of order $d$.
\end{lemma}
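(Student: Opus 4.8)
The idea is to deduce everything from the hypothesis on $(X,G)$ by pushing $\mathbf{RP}^{[d]}$ forward along the $2^l$ coordinate projections of $X^{[l]}$. First note that, since $(X,G)$ is minimal, Theorem~\ref{property} guarantees that $(\mathbf{Q}^{[l]}(X),\mathcal{G}^{[l]})$ is minimal, so it does make sense to ask whether it is a system of order $d$, and by definition the task is precisely to show $\mathbf{RP}^{[d]}(\mathbf{Q}^{[l]}(X))=\Delta$ (and $\mathbf{RP}^{[\infty]}(\mathbf{Q}^{[l]}(X))=\Delta$ in the case $d=\infty$).

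For a fixed $\delta\in\{0,1\}^l$ I would work with the coordinate projection $\pi_\delta:\mathbf{Q}^{[l]}(X)\to X$, $\mathbf{x}\mapsto\mathbf{x}(\delta)$, together with the $\delta$-th coordinate evaluation $\rho_\delta:\mathcal{G}^{[l]}\to G$, $\mathbf{g}\mapsto\mathbf{g}(\delta)$. Since $\mathcal{G}^{[l]}=\mathcal{HK}^{[l]}(G)$ is a subgroup of the maps $\{0,1\}^l\to G$ with the coordinatewise group law and acts coordinatewise on $X^{[l]}$, the map $\rho_\delta$ is a group homomorphism and $\pi_\delta(\mathbf{g}\mathbf{x})=\rho_\delta(\mathbf{g})\pi_\delta(\mathbf{x})$, i.e. $\pi_\delta$ is a continuous surjection that is equivariant over $\rho_\delta$ (it sends $z^{[l]}$ to $z$). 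The one point to check is that such a map does not enlarge the regionally proximal relation of order $d$:
\[
(\pi_\delta\times\pi_\delta)\mathbf{RP}^{[d]}(\mathbf{Q}^{[l]}(X))\subseteq\mathbf{RP}^{[d]}(X)\qquad(d\in\N).
\]
This I would prove directly from Lemma~\ref{RP_d}: given $(\mathbf{x},\mathbf{y})\in\mathbf{RP}^{[d]}(\mathbf{Q}^{[l]}(X))$ and a neighbourhood $W$ of $\mathbf{y}(\delta)$ in $X$, apply Lemma~\ref{RP_d} in $\mathbf{Q}^{[l]}(X)$ to the neighbourhood $\pi_\delta^{-1}(W)$ of $\mathbf{y}$ to obtain $\vec{\mathbf{g}}\in(\mathcal{G}^{[l]})^{d+1}$ with $(\vec{\mathbf{g}}\cdot\epsilon)\mathbf{x}\in\pi_\delta^{-1}(W)$ for all $\epsilon\in\{0,1\}^{d+1}\setminus\{\vec{0}\}$; applying $\pi_\delta$ and using that $\rho_\delta$ is a homomorphism turns this into $(\vec{a}\cdot\epsilon)\mathbf{x}(\delta)\in W$ for all such $\epsilon$, where $\vec{a}=(\rho_\delta(\mathbf{g}_1),\dots,\rho_\delta(\mathbf{g}_{d+1}))\in G^{d+1}$, which is exactly the condition of Lemma~\ref{RP_d} placing $(\mathbf{x}(\delta),\mathbf{y}(\delta))$ in $\mathbf{RP}^{[d]}(X)$.

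Granting the displayed inclusion, the conclusion is immediate. If $d\in\N$ and $(\mathbf{x},\mathbf{y})\in\mathbf{RP}^{[d]}(\mathbf{Q}^{[l]}(X))$, then for every $\delta$ we get $(\mathbf{x}(\delta),\mathbf{y}(\delta))\in\mathbf{RP}^{[d]}(X)=\Delta$ because $(X,G)$ is a system of order $d$; hence $\mathbf{x}(\delta)=\mathbf{y}(\delta)$ for all $\delta$, i.e. $\mathbf{x}=\mathbf{y}$. For $d=\infty$, write $\mathbf{RP}^{[\infty]}=\bigcap_{m\ge1}\mathbf{RP}^{[m]}$: a pair in $\mathbf{RP}^{[\infty]}(\mathbf{Q}^{[l]}(X))$ lies in every $\mathbf{RP}^{[m]}(\mathbf{Q}^{[l]}(X))$, so by the inclusion applied for each $m$ we get $(\mathbf{x}(\delta),\mathbf{y}(\delta))\in\bigcap_{m\ge1}\mathbf{RP}^{[m]}(X)=\mathbf{RP}^{[\infty]}(X)=\Delta$, again forcing $\mathbf{x}=\mathbf{y}$.

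\textbf{Main obstacle.} I do not expect a genuine difficulty here; this is a structural warm-up. The only mild subtlety is that $\pi_\delta$ is not a factor map in the strict sense of Theorem~\ref{lift-property}, since the acting groups $\mathcal{G}^{[l]}$ and $G$ differ — which is why I would run the argument through Lemma~\ref{RP_d} rather than quote that theorem verbatim. One should also be careful to invoke Theorem~\ref{property} at the outset so that ``system of order $d$'' is meaningful for $(\mathbf{Q}^{[l]}(X),\mathcal{G}^{[l]})$.
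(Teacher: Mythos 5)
Your proof is correct and takes essentially the same route as the paper's: both arguments push $\mathbf{RP}^{[d]}$ forward along the coordinate projections $\pi_\delta$ and use minimality of $(\mathbf{Q}^{[l]}(X),\mathcal{G}^{[l]})$ from Theorem~\ref{property}. The only difference is one of bookkeeping: the paper turns $\pi_\delta$ into a genuine factor map by letting $\mathcal{G}^{[l]}$ act on $X$ through the evaluation homomorphism and then cites Theorem~\ref{lift-property}, whereas you verify the needed inclusion $(\pi_\delta\times\pi_\delta)\mathbf{RP}^{[d]}(\mathbf{Q}^{[l]}(X))\subseteq\mathbf{RP}^{[d]}(X)$ by hand via Lemma~\ref{RP_d} --- a harmless variation.
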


\begin{proof}
Let $l\in \N$ and $d\in \N\cup\{\infty\}$.
For $\epsilon\in\{0,1\}^l$,
let $\pi_\epsilon:\Q^{[l]}(X)\to X$ be the projection of $\Q^{[l]}(X)$ on $\epsilon$-component.
Note that every element of $\mathcal{G}^{[l]}$ has form (\ref{rep-hk}).
  We consider the action of the group $\mathcal{G}^{[l]}$ on $\epsilon$-component via the representation
  \[
  (g+\vec{g}\cdot\epsilon:\epsilon \in \{0,1\}^l)
  \mapsto g+\vec{g}\cdot\epsilon.
  \]
  With respect to this action of $\mathcal{G}^{[l]}$ on $X$
  the map $\pi_\epsilon$ is a factor map $\pi_\epsilon:(\Q^{[l]}(X),\mathcal{G}^{[l]})\to (X,\mathcal{G}^{[l]}).$
  By Theorem \ref{lift-property} and Remark \ref{infi}, we have that
  \[
  (\pi_\epsilon\times \pi_\epsilon)\mathbf{RP}^{[d]}(\Q^{[l]}(X))=\mathbf{RP}^{[d]}(X,\mathcal{G}^{[l]})
  =\mathbf{RP}^{[d]}(X)=\Delta_X,
  \]
  as $X$ is a system of order $d$.
This shows that $\mathbf{RP}^{[d]}(\Q^{[l]}(X))$ is trivial and thus
$(\mathbf{Q}^{[l]}(X),\mathcal{G}^{[l]})$ is a system of order $d$.
\end{proof}

\begin{theorem}\label{infinity-step}
Let $(X,G)$ be a minimal distal system and let $ X_{\infty}=X/\mathbf{RP}^{[\infty]}(X).$
Then for every $l\in \N$,
the maximal factor of order $\infty$
of $(\mathbf{Q}^{[l]}(X),\mathcal{G}^{[l]})$ is $(\mathbf{Q}^{[l]}(X_\infty),\mathcal{G}^{[l]})$.
\end{theorem}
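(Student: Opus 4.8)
The plan is to exhibit the natural factor map $\Q^{[l]}(\pi)\colon \Q^{[l]}(X)\to \Q^{[l]}(X_\infty)$ induced by the quotient map $\pi\colon X\to X_\infty$, verify that the target is a system of order $\infty$, and then show that every factor of $\Q^{[l]}(X)$ which is a system of order $\infty$ factors through it; i.e. that the associated equivalence relation $R_{\Q^{[l]}(\pi)}$ equals $\mathbf{RP}^{[\infty]}(\Q^{[l]}(X))$. One inclusion is cheap: by Lemma \ref{cube-nil} applied to the system of order $\infty$ $(X_\infty,G)$, the system $(\Q^{[l]}(X_\infty),\mathcal{G}^{[l]})$ is itself of order $\infty$, so by Theorem \ref{lift-property} (in the $d=\infty$ form guaranteed by Remark \ref{infi}) we get $\mathbf{RP}^{[\infty]}(\Q^{[l]}(X))\subset R_{\Q^{[l]}(\pi)}$. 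The content is the reverse inclusion: if $\mathbf{x},\mathbf{y}\in\Q^{[l]}(X)$ satisfy $\pi(\mathbf{x}(\epsilon))=\pi(\mathbf{y}(\epsilon))$ for every $\epsilon\in\{0,1\}^l$, then $(\mathbf{x},\mathbf{y})\in\mathbf{RP}^{[\infty]}(\Q^{[l]}(X))$.

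To prove the reverse inclusion I would fix $d\in\N$ and show $(\mathbf{x},\mathbf{y})\in\mathbf{RP}^{[d]}(\Q^{[l]}(X))$ for each such $d$; intersecting over $d$ gives the claim. Using Theorem \ref{cube-minimal}(1), it suffices to produce, inside $\mathbf{Q}^{[d+1]}(\Q^{[l]}(X))$, the configuration $(\mathbf{x},\mathbf{y},\ldots,\mathbf{y})$. Here the key structural input is Proposition \ref{fiber-property} applied to the minimal distal system $(X,G)$: since $\pi(\mathbf{x}(\epsilon))=\pi(\mathbf{y}(\epsilon))$ and $X_\infty=X/\mathbf{RP}^{[\infty]}(X)=X/\mathbf{RP}^{[d]}(X)\big/(\cdots)$ — more precisely, $\mathbf{RP}^{[\infty]}(X)\subset\mathbf{RP}^{[d]}(X)$, so $\mathbf{x}(\epsilon)$ and $\mathbf{y}(\epsilon)$ have the same image under $X\to X/\mathbf{RP}^{[d]}(X)$ — the $2^{d+1}$ points obtained by listing $\mathbf{x}(\epsilon)$ once and $\mathbf{y}(\epsilon)$ the remaining $2^{d+1}-1$ times all lie in one fibre of $X\to X/\mathbf{RP}^{[d]}(X)$, hence by Proposition \ref{fiber-property} the tuple $(\mathbf{x}(\epsilon),\mathbf{y}(\epsilon),\ldots,\mathbf{y}(\epsilon))$ lies in $\mathbf{Q}^{[d+1]}(X)$ for each $\epsilon$. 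The task is then to upgrade these $2^l$ separate membership statements, one per coordinate $\epsilon$, into a single membership statement in $\mathbf{Q}^{[d+1]}(\Q^{[l]}(X))\subset (X^{[l]})^{[d+1]}$. Reindexing $(X^{[l]})^{[d+1]}\cong (X^{[d+1]})^{[l]}=X^{[l(d+1)]}$ via the canonical identification $\{0,1\}^{d+1}\times\{0,1\}^l\cong\{0,1\}^{l+d+1}$, this amounts to showing that a point whose $\epsilon$-slices lie in $\mathbf{Q}^{[d+1]}(X)$ and which additionally respects the cube structure in the $\{0,1\}^l$ directions — i.e. the point viewed in $(\Q^{[l]}(X))^{[d+1]}$ has its $\{0,1\}^{d+1}$-vertex-data being genuine elements of $\Q^{[l]}(X)$ — actually lies in $\mathbf{Q}^{[d+1]}(\Q^{[l]}(X))$.

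The main obstacle is exactly this last gluing step: membership in $\mathbf{Q}^{[d+1]}$ of a cubespace is genuinely stronger than membership of each coordinate-slice, so one cannot simply paste. I expect to handle it the way such arguments are handled in \cite{HKM} and \cite{SY-12}: build the desired point as a limit of $\mathcal{F}^{[d+1]}(\mathcal{G}^{[l]})$-translates of a diagonal point $\mathbf{x}^{[d+1]}\in(\Q^{[l]}(X))^{[d+1]}$, by running the same inductive construction that proves Proposition \ref{fiber-property} but carried out inside $\Q^{[l]}(X)$ rather than $X$ — using Lemma \ref{cube-nil} (with $d=\infty$, hence also for each finite $d$) to know $(\Q^{[l]}(X),\mathcal{G}^{[l]})$ has the same order-$d$ structure as $X$, and using distality of $\Q^{[l]}(X)$ (inherited from $X$) to reverse orbit-closure inclusions. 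Concretely, the regional proximality relation on $\Q^{[l]}(X)$ relative to the factor $\Q^{[l]}(X)\to\Q^{[l]}(X_\infty)$ is controlled coordinatewise by the projections $\pi_\epsilon$ as in the proof of Lemma \ref{cube-nil}; combining that with the fibre description of Proposition \ref{fiber-property} — applied now in the bigger system $\Q^{[l]}(X)$, whose relevant $\mathbf{RP}^{[d-1]}$ pulls back from $X$ through the $\pi_\epsilon$'s — should yield $(\mathbf{x},\mathbf{x},\ldots,\mathbf{x},\mathbf{y})\in\mathbf{Q}^{[d+1]}(\Q^{[l]}(X))$ directly, and then permutation-invariance of dynamical cubespaces finishes it. Since this works for every finite $d$, intersecting gives $(\mathbf{x},\mathbf{y})\in\mathbf{RP}^{[\infty]}(\Q^{[l]}(X))$, completing the reverse inclusion and hence the theorem.
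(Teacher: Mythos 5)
Your setup is right: the easy inclusion via Lemma \ref{cube-nil} and Remark \ref{infi} matches the paper, and you correctly isolate the gluing step as the crux. But your proposed resolution of that step is circular. To run Proposition \ref{fiber-property} \emph{inside} $(\Q^{[l]}(X),\mathcal{G}^{[l]})$ you must know that $\mathbf{x}$ and $\mathbf{y}$ lie in the same fibre of $\Q^{[l]}(X)\to \Q^{[l]}(X)/\mathbf{RP}^{[d]}(\Q^{[l]}(X))$, i.e.\ that $(\mathbf{x},\mathbf{y})\in\mathbf{RP}^{[d]}(\Q^{[l]}(X))$ --- which is exactly what you are trying to prove. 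You attempt to discharge this by asserting that the relevant relation on $\Q^{[l]}(X)$ ``pulls back from $X$ through the $\pi_\epsilon$'s,'' but only the inclusion $\pi_\epsilon\times\pi_\epsilon\bigl(\mathbf{RP}^{[d]}(\Q^{[l]}(X))\bigr)\subset \mathbf{RP}^{[d]}(X)$ comes from Theorem \ref{lift-property}; the reverse, coordinatewise characterization of $\mathbf{RP}^{[d]}(\Q^{[l]}(X))$ at a \emph{finite} level $d$ is not available and is essentially the content of the theorem (at level $\infty$). Applying Proposition \ref{fiber-property} coordinatewise only gives that each slice $(\mathbf{x}(\epsilon),\mathbf{y}(\epsilon),\ldots,\mathbf{y}(\epsilon))$ lies in $\mathbf{Q}^{[d+1]}(X)$, and, as you note yourself, that is strictly weaker than membership in $\mathbf{Q}^{[d+1]}(\Q^{[l]}(X))$.

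What is missing is the quantitative degree-shifting device the paper uses (its Claim 2): for $(x,y)\in\mathbf{RP}^{[\infty]}(X)$ and a target order $k$, one starts from $(x,y^{[n]}_*)\in\Q^{[n]}(X)$ with $n=(k+1)l$, and splits the witnessing tuple $\vec g\in G^{n}$ into $k+1$ blocks of length $l$, each block giving an element of $\mathcal{F}^{[l]}$; these $k+1$ elements of $\mathcal{G}^{[l]}$ then witness $(x^{[l]},(x,y^{[l]}_*))\in\mathbf{RP}^{[k]}(\Q^{[l]}(X))$ via Lemma \ref{RP_d}. Note the loss of degree: order $k$ in the cube system costs order $(k+1)l-1$ in the base, which is precisely why the order-$\infty$ hypothesis is essential and why a fixed-$d$ argument of the kind you sketch cannot close on its own. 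The paper then reduces the general pair $(\mathbf{x},\mathbf{y})\in R_{\pi^{[l]}}$ to the diagonal case $\mathbf{x}=x^{[l]}$ by minimality and distality, and handles arbitrary $\mathbf{y}$ with coordinates in $\mathbf{RP}^{[\infty]}[x]$ by induction on the number of coordinates differing from $x$, using Euclidean permutation invariance and the equivalence-relation property of $\mathbf{RP}^{[\infty]}$. Your outline would need to be rebuilt around some version of this block-splitting argument; as written, the central step does not go through.
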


\begin{proof}
Let $l\in \N$ and let $\pi :X\to X_\infty$ be the factor map.

Note that $\pi^{[l]}:\Q^{[l]}(X)\to \Q^{[l]}(X_\infty)$ is a factor map,
where $\pi^{[l]}: X^{[l]}\rightarrow X_\infty^{[l]}$
is defined from $\pi $ coordinatewise.
It is sufficient to show that
\[
\mathbf{RP}^{[\infty]}(\mathbf{Q}^{[l]}(X))=R_{\pi^{[l]}}.
\]
By Lemma \ref{cube-nil}, $(\Q^{[l]}(X_\infty),\mathcal{G}^{[l]})$ is a system of order $\infty$.
By Remark \ref{infi} we have
 \[
 \mathbf{RP}^{[\infty]}(\mathbf{Q}^{[l]}(X))\subset R_{\pi^{[l]}}.
 \]

\medskip

It remains to show that if $(\mathbf{x},\mathbf{y})\in R_{\pi^{[l]}}$,
then $(\mathbf{x},\mathbf{y})\in \mathbf{RP}^{[\infty]}(\mathbf{Q}^{[l]}(X)).$
For convenience, we denote the regionally proximal of order $\infty$ pair $(\mathbf{x},\mathbf{y})$
in $\mathbf{Q}^{[l]}(X)$ by $\mathbf{x}\sim \mathbf{y}$
instead of $(\mathbf{x},\mathbf{y})\in \mathbf{RP}^{[\infty]}(\mathbf{Q}^{[l]}(X)).$

We need following claims.

\medskip

\noindent {\bf Claim 1}:
Let $\mathbf{x},\mathbf{y}\in \Q^{[l]}(X)$ with $\mathbf{x}\sim \mathbf{y}$
and $f$ be an Euclidean permutation, then
we still have $f(\mathbf{x})\sim f(\mathbf{y})$.

\begin{proof}[Proof of Claim 1]
Notice that the Euclidean permutation $f$ leaves $\mathbf{Q}^{[l]}(X)$
and $\mathcal{G}^{[l]}$ invariant.
We obtain Claim 1 by using this fact.
\end{proof}

Fix $x\in X$ and let
\[
\RP^{[\infty]}[x]=\{ y\in X:(x,y)\in  \mathbf{RP}^{[\infty]}(X)  \}.
\]

\medskip

\noindent {\bf Claim 2}:
Let $y\in\RP^{[\infty]}[x]$, then
$x^{[l]}\sim(x,y^{[l]}_*).$

\begin{proof}[Proof of Claim 2]
Let $W$ be a neighbourhood of $x$ and $k\in\N$. Put $n=(k+1)\cdot l$.

It follows from Theorem \ref{cube-minimal} that
 $(x,y^{[n]}_*)\in \Q^{[n]}(X)$,
and then there exists some $\vec{g}=(g_1,\ldots,g_n)\in G^n$ such that
$\mathbf{g}= (\vec{g}\cdot \epsilon:\epsilon\in \{0,1\}^n)$ and
\begin{equation}\label{infity}
\mathbf{g} (x,y^{[n]}_*)\in W^{ [n]}.
\end{equation}

For $i=1,\ldots, k+1$,
let $\vec{g}_i=(g_{(i-1)l+1},\ldots,g_{il})$
and $\mathbf{g}_i=(\vec{g}_i\cdot \epsilon:\epsilon\in\{0,1\}^l)\in \mathcal{F}^{[l]}$.
Then by (\ref{infity}), for every
$\omega\in \{0,1\}^{k+1}\backslash\{\vec{0}\}$ we have
\[
(\sum_{i=1}^{k+1}\mathbf{g}_i\cdot \omega_i)
 (x,y^{[l]}_*) \in W^{[l]},
\]
which implies
$(x^{[l]},(x,y^{[l]}_*))\in \RP^{[k]}(\mathbf{Q}^{[l]}(X))$
by Lemma \ref{RP_d}.
As $k$ is arbitrary, we conclude that $x^{[l]}\sim(x,y^{[l]}_*)$.
This shows Claim 2.
\end{proof}

\medskip

\noindent {\bf Claim 3}:
Let $\mathbf{y}=(y_\epsilon)\in X^{[l]}$ with $y_\epsilon\in \RP^{[\infty]}[x],\epsilon\in\{0,1\}^l$, then
$x^{[l]}\sim  \mathbf{y}.$

\begin{proof}[Proof of Claim 3]
Let $\mathbf{y}=(y_\epsilon)\in X^{[l]}$ with $y_\epsilon\in \RP^{[\infty]}[x],\epsilon\in\{0,1\}^l$.
First, by Proposition \ref{fiber-property} we obtain that $\mathbf{y}\in \Q^{[l]}(X)$.
Let
\[
N_{\mathbf{y}}=|\{  \epsilon\in\{0,1\}^l:x\neq y_\epsilon \}|,
\]
 then $0\leq N_{\mathbf{y}} \leq 2^l$.
  We show Claim 3 by induction on $N_{\mathbf{y}}$.

Assume $N_{\mathbf{y}}=0$, $\mathbf{y}=x^{[l]}$, the result is trivial.

Assume $N_{\mathbf{y}}=1$. Choose an Euclidean permutation $f$ such that
  $f(\mathbf{y})=(y,x^{[l]}_*)$ for some $y\in \RP^{[\infty]}[x]$.
  By Claim 2, $(y,x^{[l]}_*)\sim y^{[l]}$.
 Note that $x^{[l]}  \sim  y^{[l]}$ and so by equivalence
 of $\mathbf{RP}^{[\infty]}(\mathbf{Q}^{[l]}(X))$, we get
 $(y,x^{[l]}_*)\sim x^{[l]}$.
 Again by Claim 1, $x^{[l]}\sim  \mathbf{y}$.

Let integer $1<d\leq 2^l$.
  Suppose the statement is true for all $j=1,\ldots,d-1$.
  That is, if
  $\mathbf{z}=(z_\epsilon)\in X^{[l]}$ with $z_\epsilon\in \RP^{[\infty]}[x],\epsilon\in\{0,1\}^l$
  and $N_{\mathbf{z}}\leq d-1$, then we have $x^{[l]}\sim \mathbf{z}$.

  Let
  $\mathbf{y}=(y_\epsilon)\in X^{[l]}$ with $y_\epsilon\in \RP^{[\infty]}[x],\epsilon\in\{0,1\}^l$
  and $N_{\mathbf{y}}=d$.
 Choose $\eta\in \{0,1\}^l$ with $y_\eta \neq x$.
 Let $\mathbf{u}=(u_\epsilon)\in X^{[l]}$ such that $u_\eta=x$ and $u_\epsilon=y_\epsilon$ otherwise,
 then $N_{\mathbf{u}}=d-1$ and $\mathbf{u}\sim x^{[l]}$ by inductive hypothesis.

 On the other hand, as $\mathbf{y}\in \Q^{[l]}(X)$ and by minimality of the system $(\Q^{[l]}(X),\mathcal{G}^{[l]})$
 there is some sequence $\{\mathbf{g}_i\}_{i\in \N} \subset \mathcal{G}^{[l]}$
 such that $\mathbf{g}_i \mathbf{y}\to x^{[l]},i\to \infty$.
 Without loss of generality,
 assume that $\mathbf{g}_i \mathbf{u}\to \mathbf{u}',i\to \infty$ for some $\mathbf{u}'=(u'_\epsilon)\in X^{[l]}$.
Then $u_\eta'=u'$ for some $u'\in X$ and $u'_\epsilon=x$ otherwise.
By equivalence of $\mathbf{RP}^{[\infty]}(X)$,
we get $u'\in \RP^{[\infty]}[x]$.
So $N_{\mathbf{u'}}=1$ and by inductive hypothesis $\mathbf{u}'\sim x^{[l]}$.
The system $(X,G)$ is distal, so is $(\Q^{[l]}(X)\times \Q^{[l]}(X),\mathcal{G}^{[l]})$.
Thus the point $(\mathbf{u},\mathbf{y})$ also belongs to the orbit
closure of the point $(\mathbf{u}',x^{[l]})$
under $\mathcal{G}^{[l]}$-action.
By equivalence of $\mathbf{RP}^{[\infty]}(\Q^{[l]}(X))$,
we get that $\mathbf{u}\sim \mathbf{y}$ and $x^{[l]}\sim \mathbf{y}$.

This shows Claim 3.
\end{proof}

Now let $\mathbf{x}=(x_\epsilon),\mathbf{y}=(y_\epsilon)\in \Q^{[l]}(X)$
with $(\mathbf{x},\mathbf{y})\in R_{\pi^{[l]}}$.
That is,
$(x_\epsilon,y_\epsilon)\in \mathbf{RP}^{[\infty]}(X),\epsilon\in \{0,1\}^l$.
By minimality of the system $(\Q^{[l]}(X),\mathcal{G}^{[l]})$,
there is some sequence $\{\mathbf{g}_i\}_{i\in \N}\subset \mathcal{G}^{[l]}$
such that $\mathbf{g}_i\mathbf{x}\to x^{[l]},i\to  \infty$.
 Without loss of generality, assume that $\mathbf{g}_i\mathbf{y}\to \mathbf{z},i\to  \infty$
 for some $\mathbf{z}=(z_\epsilon)\in \Q^{[l]}(X)$.
 By equivalence of $\mathbf{RP}^{[\infty]}(X)$,
 we have $z_\epsilon\in \RP^{[\infty]}[x],$ $\epsilon\in\{0,1\}^l$ and so $\mathbf{z}\sim  x^{[l]}$ by Claim 3.
 Moreover we deduce that $\mathbf{x}\sim \mathbf{y}$,
 which means
$R_{\pi^{[l]}}\subset \mathbf{RP}^{[\infty]}(\mathbf{Q}^{[l]}(X))$
as was to be shown.

We conclude that the maximal factor of order $\infty$
of $\mathbf{Q}^{[l]}(X)$ is $\mathbf{Q}^{[l]}(X_\infty)$.
\end{proof}

\section{Host-Kra cubegroups of top-nilpotent Ellis groups}\label{NIL}
Let $E$ be a $d$-step top-nilpotent Ellis group, where $d\in \N$.
For $l\in \N$, endow $E^{[l]}$ with the product topology,
then $E^{[l]}$ is also a $d$-step top-nilpotent Ellis group.
The main goal of this section is to study
the closed subgroup generated by the Host-Kra cubegroup $\mathcal{HK}^{[l]}(E)$,
denoted by $\widetilde{E}^{[l]}$.
To do this, the binary cubegroups $\{\mathcal{C}_j^{[l]}(E)\}_{j=1}^d$ are introduced
 which are much easier to handle.
It is shown that the $j$-step topological commutator subgroup
of $\widetilde{E}^{[l]}$ is included in the group $\mathcal{C}_j^{[l]}(E)$
 for every $j=1,\ldots,d$.
We start by recalling the definition of Ellis groups.

\subsection{Ellis groups and quotients}
A \emph{right topological group} consists of a group $E$ and a topology $\mathcal{T}$ on $E$
such that the map $E\to E, q \mapsto qp$ is continuous for all $p\in E$.
A right topological group $E$ is called an \emph{Ellis group}
if $(E,\mathcal{T})$ is also a compact Hausdorff space.


In the sequel, we assume that $E$ is an Ellis group
and denote by $e$ the unit element.
We remark that
a subset $K$ of $E$ is closed (open)
if and only if $Kg$ is closed (open)
for all $g\in E$  by right-continuity.
 If $H$ is a closed subgroup of $E$,
 then $H$ with the subtopology induced from $\mathcal{T}$ is also an Ellis group.

\medskip

Let $H$ be a closed normal subgroup of $E$.
In this subsection,
we discuss when can the quotient group $E/H$
with the quotient topology be an Ellis group?

\begin{lemma}\label{quotient-group}
Let $E$ be an Ellis group and
let $H$ be a closed normal subgroup of $E$.
If $H$ satisfies that for any neighbourhood $U$ of $H$,
  there is a neighbourhood $V$ of $e$ such that $VH\subset U$,
  then the quotient group $E/H$ with the
  quotient topology is an Ellis group.
\end{lemma}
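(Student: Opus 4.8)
The plan is to verify directly that $E/H$ with the quotient topology satisfies the two defining properties of an Ellis group: it is compact Hausdorff, and the multiplication $\bar q \mapsto \bar q\bar p$ is continuous for each fixed $\bar p$. Let $\pi:E\to E/H$ denote the quotient map. Compactness is immediate since $\pi$ is continuous and surjective and $E$ is compact. So the first two points to establish are (i) the quotient topology on $E/H$ is Hausdorff, and (ii) right multiplication is continuous. I expect (i) to be the main obstacle, and this is precisely where the hypothesis on $H$ is used.

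For Hausdorffness, first I would observe that $\pi$ is an \emph{open} map: for $U\subseteq E$ open, $\pi^{-1}(\pi(U))=UH=\bigcup_{h\in H}Uh$ is open by right-continuity of multiplication in $E$, so $\pi(U)$ is open. Given this, the standard fact is that $E/H$ is Hausdorff if and only if the graph $\{(x,y): \pi(x)=\pi(y)\}=\{(x,y):xy^{-1}\in H\}$ is closed in $E\times E$; equivalently, since $H$ is a subgroup, it suffices that $H$ is closed in $E$ (which is assumed) \emph{together with} enough regularity of the topology near $H$ to separate a point not in a coset from that coset. Concretely, I would argue: take $\bar q_1\neq \bar q_2$ in $E/H$, pick representatives $q_1,q_2$, so $q_1 q_2^{-1}\notin H$; since $H$ is closed and $E$ is compact Hausdorff (hence normal as a topological space), there is an open $U\supseteq H$ with $q_1q_2^{-1}\notin \overline{U}$. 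Now apply the hypothesis to get a neighbourhood $V$ of $e$ with $VH\subseteq U$. Shrinking, one finds a neighbourhood $W$ of $e$ with $WW^{-1}\subseteq$ (something inside $V$) using joint-ish continuity available near $e$ in an Ellis group — here I would have to be a little careful, because in a right topological group multiplication need not be jointly continuous; the intended route is rather to use the sets $VH$ directly, forming the $\pi$-saturated neighbourhoods $\pi(Vq_2)$ and its complement's neighbourhood, and checking $VH\cap VHq_1q_2^{-1}=\emptyset$ shrinks to separate $\bar q_1$ from $\bar q_2$. The key mechanism is: the hypothesis converts an arbitrary neighbourhood of the \emph{set} $H$ into one coming from a neighbourhood of the \emph{point} $e$, which is what lets saturated open sets separate distinct cosets.

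For right-continuity of multiplication on $E/H$: fix $\bar p\in E/H$ with representative $p\in E$. The map $R_p:E\to E$, $x\mapsto xp$, is continuous and descends to the map $E/H\to E/H$, $\bar x\mapsto \overline{xp}$ (well-defined since $H$ normal). Then $\pi\circ R_p = R_{\bar p}\circ \pi$, and since $\pi$ is a continuous open surjection, $R_{\bar p}$ is continuous: for $O\subseteq E/H$ open, $R_{\bar p}^{-1}(O)=\pi\big(R_p^{-1}(\pi^{-1}(O))\big)$ is open because $R_p^{-1}(\pi^{-1}(O))$ is open in $E$ and $\pi$ is open. Hence $E/H$ is a compact Hausdorff right topological group, i.e.\ an Ellis group, completing the proof. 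The only delicate point, as noted, is the Hausdorff separation, where one must handle the absence of joint continuity by working throughout with the saturated neighbourhoods $VH$ supplied by the hypothesis rather than with products of two-sided neighbourhoods of $e$.
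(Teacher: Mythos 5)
Your overall architecture (compactness for free, openness of $\pi$ from right-continuity, Hausdorffness as the crux where the hypothesis enters, then right-continuity of multiplication) matches the paper's, and your right-continuity argument via $R_{\bar p}^{-1}(O)=\pi\bigl(R_p^{-1}(\pi^{-1}(O))\bigr)$ is correct and arguably cleaner than the paper's net argument (one should just note that $R_p^{-1}(\pi^{-1}(O))$ is $\pi$-saturated, which follows from normality of $H$). However, the Hausdorff step --- the only place the hypothesis on $H$ is actually used --- has a genuine gap. You choose an open $U\supseteq H$ with $q_1q_2^{-1}\notin\overline{U}$ and then assert that, after taking $V$ with $VH\subseteq U$, one "checks $VH\cap VHq_1q_2^{-1}=\emptyset$." This does not follow from your choice of $U$: what you need is $VHq_1\cap VHq_2=\emptyset$, equivalently $VH\cap (VH)q_2q_1^{-1}=\emptyset$, and knowing only that $VH\subseteq U$ with $q_1q_2^{-1}\notin\overline{U}$ gives no control over $Uq_2q_1^{-1}$, which can perfectly well meet $U$. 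Separating the single point $q_1q_2^{-1}$ from $H$ is the wrong separation; one must separate the two disjoint closed cosets $Hq_1$ and $Hq_2$ from each other in a way compatible with right translation.

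The correct move, which is what the paper does, is: $Hq_1$ and $Hq_2$ are disjoint closed sets (closed by right-continuity, disjoint since $\pi(q_1)\neq\pi(q_2)$ and $H$ is normal), so by normality of the compact Hausdorff space $E$ there are disjoint open sets $U_1\supseteq Hq_1$ and $U_2\supseteq Hq_2$; then $U:=U_1q_1^{-1}\cap U_2q_2^{-1}$ is an open neighbourhood of $H$ (open by right-continuity), and applying the hypothesis to \emph{this} $U$ gives $V$ with $VH\subseteq U$, whence $VHq_1\subseteq U_1$ and $VHq_2\subseteq U_2$ are disjoint, so $\pi(Vq_1)$ and $\pi(Vq_2)$ are disjoint open neighbourhoods of $\pi(q_1)$ and $\pi(q_2)$. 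The point you correctly identified --- that the hypothesis converts a neighbourhood of the set $H$ into a saturated neighbourhood built from a neighbourhood of $e$ --- is the right mechanism, but it must be fed a neighbourhood of $H$ that already encodes the separation of both cosets simultaneously, via the intersection of the two right-translates. As written, your proof would not go through without this repair.
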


\begin{proof}
Since $H$ is normal in $E$,
we can define
the quotient group $\widetilde{E}=E/H$ and let $\pi:E\to \widetilde{E}$
   be the quotient map.
   Endow $\widetilde{E}$ with the quotient topology such that $\pi$ is continuous.
$\widetilde{E}$ with such topology is compact, as $E$ is compact and the map $\pi$ is continuous and onto.

Let $p,q\in E$ with $\pi(p)\neq \pi(q)$, then $pH\cap qH=\emptyset$.
As $H$ is normal, we have that $pH=Hp,qH=Hq$ and $Hp\cap Hq =\emptyset$.
By right-continuity, $Hp$ and $Hq$ are closed.
We can choose open neighbourhoods $U_1,U_2$ of $Hp,Hq$ respectively such that $U_1\cap U_2=\emptyset$,
as $E$ is a compact Hausdorff space.

Put $U=U_1p^{-1}\cap U_2 q^{-1}$,
by right-continuity $U$ is open and $H\subset U$.
By assumption,
there is a neighbourhood $V$ of $e$ such that
$VH\subset U$,
then $VpH\cap VqH=\emptyset$.
Notice that $\pi$ is open,
so $\pi(Vp),\pi(Vq)$ are neighbourhoods of $\pi(p),\pi(q)$ respectively in $\widetilde{E}$.
Moreover, we have that
$\pi(Vp)\cap \pi(Vq)=\emptyset$
which implies that
 $\widetilde{E}$ with the quotient topology is a Hausforff space.

Fix $q\in E$. Let $\{p_\alpha\}$ be a net in $E$
with $\pi(p_\alpha) \to \pi(p) $ for some $p\in E$.
Without loss of generality, we may assume that $p_\alpha \to p'\in E$,
 then $\pi(p')=\pi(p)$.
By right-continuity in $E$, we obtain that $p_\alpha q\to p' q$.
  As $\pi$ is continuous, we deduce that
  \[
\pi(p_\alpha)\pi(q)=\pi( p_\alpha q)\to \pi(p'q)=\pi(p')\pi(q)=\pi(p)\pi(q).
  \]
  This shows that the multiplication in $\widetilde{E}$ is right-continuous.

We conclude that the group $E/H$
with the quotient topology is an Ellis group.
\end{proof}

By the following classical results,
we can show that
if the closed subgroup $H$ is included in the center of the group $E$,
then it satisfies the condition in Lemma \ref{quotient-group}.

\begin{theorem}\label{joint-con}\cite[Appendix B, Theorem B.17]{JDV}
  Let $T$ be a group with a compact Hausdorff topology such that all right translations
  $t \mapsto ts: T\to T$ for $s\in T$ are continuous.
  Let $Y$ be a compact Hausdorff space and let $\pi: T\times Y\to Y$ be action of $T$ on $Y$
  which is separately continuous, i.e. each translation $\pi^{t}: Y\to Y$ and each motion
  $\pi_y :T\to Y$ is continuous $(t\in T,y\in Y)$.
  Then $\pi$ is jointly continuous.
\end{theorem}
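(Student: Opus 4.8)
The plan is to split the argument into a soft, purely algebraic reduction and a hard analytic core, the latter being exactly a Namioka-type joint-continuity theorem.

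The first step I would carry out is a reduction: it suffices to find a \emph{single} element $s\in T$ such that $\pi$ is continuous at every point of the slice $\{s\}\times Y$. Granting this, let $(t_\alpha,y_\alpha)\to(t,y)$ be an arbitrary net in $T\times Y$ and consider the net $\bigl(t_\alpha t^{-1}s,\;s^{-1}t\,y_\alpha\bigr)$ in $T\times Y$. Its first coordinate converges to $s$ because $q\mapsto q\,(t^{-1}s)$ is a right translation of $T$, hence continuous by hypothesis; its second coordinate converges to $s^{-1}t\,y$ because the map $\pi^{\,s^{-1}t}\colon Y\to Y$ is continuous (one of the separate-continuity hypotheses). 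Therefore, by continuity of $\pi$ at $(s,\,s^{-1}t\,y)$,
\[
t_\alpha y_\alpha=(t_\alpha t^{-1}s)\bigl(s^{-1}t\,y_\alpha\bigr)=\pi\bigl(t_\alpha t^{-1}s,\;s^{-1}t\,y_\alpha\bigr)\longrightarrow \pi(s,\,s^{-1}t\,y)=t\,y,
\]
which is joint continuity at $(t,y)$. I would emphasize that this reduction uses only the two stated hypotheses and says nothing about compactness.

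The second step is to produce such an $s$, and this is where compactness and a Baire-category argument enter. For every $f\in C(Y)$ (the continuous real-valued functions on $Y$) the scalar map $(t,y)\mapsto f(ty)$ on $T\times Y$ is separately continuous, and since $T$, being compact Hausdorff, is \v{C}ech-complete and $Y$ is compact, Namioka's joint-continuity theorem furnishes a dense $G_\delta$ set $D_f\subseteq T$ such that $(t,y)\mapsto f(ty)$ is jointly continuous at every point of $D_f\times Y$. When $Y$ is metrizable one is done quickly: take a countable dense family $\{f_n\}\subseteq C(Y)$, use the Baire property of the compact space $T$ to see that $D:=\bigcap_n D_{f_n}$ is dense, and check that any $s\in D$ works --- for a net $(t_\alpha,y_\alpha)\to(s,y_0)$, a subnet with $t_\alpha y_\alpha\to z$ has $f_n(z)=f_n(sy_0)$ for all $n$, hence $f(z)=f(sy_0)$ for all $f\in C(Y)$ and so $z=sy_0$. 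For general $Y$ one cannot intersect the (possibly uncountably many) sets $D_f$, so I would instead work directly with the continuous map $\Psi\colon T\to C(Y,Y)$, $\Psi(t)=\pi^t$, for the topology of pointwise convergence; its image $K$ is compact, and for each $f\in C(Y)$ the family $\{f\circ g:g\in K\}$ is a bounded, pointwise-compact, hence (by Grothendieck's double-limit criterion) weakly compact subset of $C(Y)$. It is on function families of this type that the category argument behind Namioka's theorem can be run once and for all, yielding a single slice of joint continuity for the evaluation $K\times Y\to Y$, which transports back to $\pi$ because $\Psi$ is a closed, hence quotient, map of the compact space $T$ onto $K$.

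The main obstacle is precisely this last point: the possible non-metrizability of $Y$. The reduction in the first step is elementary and the metrizable case of the second step is a routine application of Namioka's theorem, but in full generality one is forced to replace the coordinatewise argument by one carried out on the single compact ``phase space'' $K\subseteq C(Y,Y)$, using that translated observables remain inside weakly compact sets. This is the content of \cite[Appendix B, Theorem B.17]{JDV}, which in a write-up I would either reproduce along these lines or simply cite.
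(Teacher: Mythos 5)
The paper does not actually prove this statement: it is quoted verbatim from de Vries \cite[Appendix B, Theorem B.17]{JDV} and used as a black box, so there is no in-paper argument to compare yours against. Judged on its own terms, your first step --- the translation trick reducing joint continuity everywhere to joint continuity on a single slice $\{s\}\times Y$ --- is correct and is the standard opening move for Ellis--Lawson type theorems, and your treatment of the metrizable case via Namioka's theorem is also correct.

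The general case, however, is left with a genuine gap as written: the assertion that the evaluation map $K\times Y\to Y$ admits a slice of joint continuity is precisely the hard content of the theorem, and it is only asserted; the Grothendieck/weak-compactness machinery is gestured at but never assembled into an argument. The gap is avoidable, because your two ingredients already suffice for arbitrary compact Hausdorff $Y$ if combined in the opposite order. Fix $(t_0,y_0)$ and a net $(t_\alpha,y_\alpha)\to(t_0,y_0)$. For each fixed $f\in C(Y)$ the scalar map $F_f(t,y)=f(ty)$ is separately continuous, so Namioka's theorem gives a dense $G_\delta$ set $D_f\subset T$ such that $F_f$ is jointly continuous at every point of $D_f\times Y$; now choose $s\in D_f$ \emph{depending on $f$} and run your translation trick on $F_f$ rather than on $\pi$: the net $\bigl(t_\alpha t_0^{-1}s,\ s^{-1}t_0 y_\alpha\bigr)$ converges to $(s,\,s^{-1}t_0y_0)\in D_f\times Y$ and $F_f$ takes the value $f(t_\alpha y_\alpha)$ along it, whence $f(t_\alpha y_\alpha)\to f(t_0y_0)$. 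Since this holds for every $f\in C(Y)$ and $C(Y)$ determines the topology of the compact Hausdorff space $Y$, it follows that $t_\alpha y_\alpha\to t_0y_0$. This dispenses with the need for a single $s$ working for all $f$ simultaneously, which is what forced you into the unfinished weak-compactness detour.
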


\begin{cor}\label{top-group}\cite[Appendix B, Corollary B.18]{JDV}
  Let $G$ be a group with a compact Hausdorff topology such that the multiplication in $G$
  is separately continuous, i.e. all left translations $g\mapsto  hg$ and all right translations $h \mapsto hg$
  are continuous. Then $G$ is a topological group.
\end{cor}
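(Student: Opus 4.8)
The plan is to derive the corollary from Theorem~\ref{joint-con} together with a routine compactness argument for the continuity of inversion.

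First I would establish joint continuity of the multiplication map $m\colon G\times G\to G$. I would apply Theorem~\ref{joint-con} with $T=G$ (which by hypothesis is a group carrying a compact Hausdorff topology all of whose right translations are continuous) acting on $Y=G$ through the left-multiplication action $\pi(t,y)=ty$. For each fixed $t\in G$ the translation $\pi^{t}\colon y\mapsto ty$ is a left translation of $G$, hence continuous; for each fixed $y\in G$ the motion $\pi_{y}\colon t\mapsto ty$ is a right translation of $G$, hence continuous. Thus $\pi$ is a separately continuous action, and Theorem~\ref{joint-con} forces $\pi=m$ to be jointly continuous.

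Next I would deduce that inversion $g\mapsto g^{-1}$ is continuous. Let $(g_{\alpha})$ be a net in $G$ with $g_{\alpha}\to g$. Since $G$ is compact Hausdorff, to conclude $g_{\alpha}^{-1}\to g^{-1}$ it is enough to check that every subnet of $(g_{\alpha}^{-1})$ has a sub-subnet converging to $g^{-1}$; so pass to a subnet (still indexed by $\alpha$) with $g_{\alpha}^{-1}\to h$ for some $h\in G$. By joint continuity of $m$ we then get $e=g_{\alpha}g_{\alpha}^{-1}\to gh$, whence $gh=e$ and $h=g^{-1}$. Combining the continuity of inversion with the joint continuity of $m$ shows that $G$ is a topological group.

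The only substantive input here is Theorem~\ref{joint-con}; the continuity of inversion is the standard Hausdorff-compactness trick and poses no real difficulty, and the sole point requiring any care is matching the left/right-translation conventions when verifying the separate-continuity hypotheses, which is pure bookkeeping. Accordingly I would not expect any genuine obstacle: this argument would be included (if at all) only for completeness, in the same spirit as the proof of Proposition~\ref{fiber-property} above.
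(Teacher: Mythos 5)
Your argument is correct: applying Theorem~\ref{joint-con} to the left-multiplication action of $G$ on itself (right translations of $T=G$ continuous by hypothesis, translations $\pi^t$ and motions $\pi_y$ continuous because they are left and right translations respectively) gives joint continuity of multiplication, and the cluster-point argument for inversion in a compact Hausdorff space is sound. The paper offers no proof of this corollary — it is quoted from de Vries — but your derivation is exactly the standard route by which the cited Corollary B.18 follows from the cited Theorem B.17, so there is nothing to compare beyond noting that you have filled in the omitted argument correctly.
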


\begin{lemma}\label{joint}
Let $E$ be an Ellis group and
let $H$ be a closed subgroup of $E$.
If $H$ is included in the center of the group $E$,
then the quotient
group $E/H$ with the quotient topology is an Ellis group.
\end{lemma}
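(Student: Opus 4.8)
The plan is to deduce the statement from Lemma~\ref{quotient-group}. Since $H$ lies in the center of $E$ it is in particular normal, so the quotient group $E/H$ is defined, and it suffices to verify the hypothesis of Lemma~\ref{quotient-group}: for every open set $U\subseteq E$ with $H\subseteq U$ there is an open neighbourhood $V$ of $e$ with $VH\subseteq U$. The crux is that, although left multiplication in a general Ellis group need not be continuous, centrality of $H$ repairs this for translations by elements of $H$, and then the passage from separate to joint continuity via Theorem~\ref{joint-con} does the rest.

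Concretely, I would consider the map $\lambda\colon H\times E\to E$, $(h,g)\mapsto hg$. Because $H$ is central, $hg=gh$ for all $h\in H$ and $g\in E$, so $\lambda$ is a left action of $H$ on $E$ which is separately continuous: for fixed $h\in H$ the map $g\mapsto hg=gh$ is right multiplication by $h$, hence continuous; for fixed $g\in E$ the map $h\mapsto hg$ is the restriction to $H$ of right multiplication by $g$, hence continuous. Moreover $H$, being a closed subgroup of the Ellis group $E$, is a compact Hausdorff group whose right translations are continuous (and in fact, by Corollary~\ref{top-group}, $H$ is a topological group, since in $H$ left and right translations coincide), and $E$ is a compact Hausdorff space. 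Thus Theorem~\ref{joint-con} applies and shows that $\lambda$ is jointly continuous.

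Granting joint continuity, the verification is a routine compactness argument: given an open $U$ with $H\subseteq U$, the set $\lambda^{-1}(U)$ is open in $H\times E$ and contains $H\times\{e\}$, since $\lambda(h,e)=h\in U$. For each $h\in H$ pick basic open sets $W_h\ni h$ in $H$ and $V_h\ni e$ in $E$ with $W_h\times V_h\subseteq\lambda^{-1}(U)$; by compactness of $H$ choose $h_1,\dots,h_n$ with $H=\bigcup_{i=1}^n W_{h_i}$, and set $V=\bigcap_{i=1}^n V_{h_i}$. Then for $h\in H$ and $v\in V$, picking $i$ with $h\in W_{h_i}$ gives $hv=\lambda(h,v)\in U$, so $HV\subseteq U$, and since $H$ is central $VH=HV\subseteq U$. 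Hence $H$ satisfies the hypothesis of Lemma~\ref{quotient-group}, which yields that $E/H$ with the quotient topology is an Ellis group. I expect the only genuine obstacle to be the failure of left multiplication to be continuous in a general Ellis group; centrality of $H$ is precisely what restores separate continuity of $\lambda$, after which Theorem~\ref{joint-con} upgrades it to the joint continuity that the compactness argument needs.
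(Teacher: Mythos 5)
Your proof is correct and follows essentially the same route as the paper: both reduce to Lemma~\ref{quotient-group}, use centrality of $H$ to make the map $(h,g)\mapsto hg$ separately continuous, upgrade to joint continuity via Theorem~\ref{joint-con}, and finish with the same compactness argument producing the neighbourhood $V$ with $VH\subseteq U$.
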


\begin{proof}
 Note that the group $H$ with the subtopology 
 is also an Ellis group.
 Consider the map $\sigma$ from $H\times E$ to $E$ given by $(h,g) \mapsto  hg$.
 As $H$ is included in the center of the group $E$,
 we obtain that the map $\sigma$ is separately continuous.
 By Theorem \ref{joint-con} it is jointly continuous.

 Let $W\subset E$ be a neighbourhood of $H$.
For $h\in H$,
by joint-continuity of the map $\sigma$
there exist open subsets $U_h,V_h\subset E$ such that
 $h\in U_h, e\in V_h$ and
 \[
 V_h(U_h\cap H) =(U_h\cap H) V_h\subset W.
 \]
 As $H$ is compact and $\cup_{h\in H}(U_h\cap H)=H$, we can choose $n\in \N$ and $h_1,\ldots,h_n\in H$
 with $\cup_{i=1}^n(U_{h_i}\cap H)=H$.
 Put $V=\cap_{i=1}^n V_{h_i}$, then $V$ is open and $e\in V$.

Let $g\in H$, then $g\in U_{h_j}\cap H$ for some $j\in \{1,\ldots,n\}$
 and
 \[
 Vg\subset V(U_{h_j}\cap H)\subset V_{h_j}(U_{h_j}\cap H) \subset W.
 \]
This shows that $VH=\cup_{g\in H} Vg \subset W$.
By Lemma \ref{quotient-group},
 the quotient group $E/H$ with the quotient topology is an Ellis group.
\end{proof}

\subsection{Filtered Ellis groups and binary cubegroups}

A \emph{filtration} on an Ellis group $E$ is a descending sequence $\{ E_j\}_{j\in \N}$
of closed subgroups of $E$ such that $E=E_1\supset E_2\supset \ldots$
and for any $j\in \N$ we have $[E_j,E]\subset E_{j+1}$.\footnote{It is easy to check that every $E_i$ is a normal subgroup of $E$.}
We always assume that the filtration terminates at some point,
in the sense that for some $d$ we have $E_{d+1}=\{e\}$.
The least such $d$ is the \emph{length} of the filtration.
We just call $E$ a \emph{d-filtered Ellis group} if it is an Ellis group with a filtration of length $d$.

\begin{definition}\label{def-cube-group}
  Let $d\in \N$ and let $E$ be a $d$-filtered Ellis group.
  For integer $l\geq 2^d$ and $j=1,\ldots, d$,
let $\mathcal{C}_j^{[l]}(E)$ be the subgroup of $E^{[l]}$ spanned by
\begin{equation*}
\{g^{(F)}:g\in E_k, \;F\subset \{0,1\}^l ,\text{ codim$(F)=2^k$},\;k=j,\ldots, d \}.
\end{equation*}
The set
$\{\mathcal{C}_j^{[l]}(E)\}_{j=1}^d$ is called the \emph{binary cubegroups of dimension l} of $E$.
\end{definition}

Endow $E^{[l]}$ with the product topology.
It is clear that $E^{[l]}$ is also a $d$-filtered Ellis group.
In this subsection, we will show the following theorem.
\begin{theorem}\label{filtered-cube}
Let $d\in \N$ and let $E$ be a $d$-filtered Ellis group.
For every integer $l\geq 2^d$, we have
\begin{enumerate}
\item $[\mathcal{C}_j^{[l]}(E),\mathcal{C}_1^{[l]}(E)]\subset \mathcal{C}_{j+1}^{[l]}(E),j=1,\ldots,d-1$;
  \item $\mathcal{C}_j^{[l]}(E), j=1,\ldots, d$ are all closed in $E^{[l]}$.
\end{enumerate}
Moreover $\mathcal{C}_1^{[l]}(E)$ is a filtered Ellis group and the sequence
$\{\mathcal{C}_j^{[l]}(E)\}_{j=1}^d$ forms a filtration of length $d$ on $\mathcal{C}_1^{[l]}(E)$.
\end{theorem}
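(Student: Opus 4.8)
The plan is to prove the two numbered statements more or less simultaneously by a downward induction on $j$, since the hardest information (closedness and the commutator inclusion) is most readily obtained at the top of the filtration, where $\mathcal{C}_j^{[l]}(E)$ is generated by elements $g^{(F)}$ with $g$ central. First I would record the basic algebraic identities for the generators $g^{(F)}$: the product rule $g^{(F)}h^{(F)}=(gh)^{(F)}$, and — crucially — the commutator rule $[g^{(F)},h^{(F')}]=[g,h]^{(F\cap F')}$, valid because $g^{(F)}$ and $h^{(F')}$ act coordinatewise and on a coordinate $\epsilon$ one gets $[g,h]$ exactly when $\epsilon\in F\cap F'$ and $e$ otherwise. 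The key arithmetic point is that $\mathrm{codim}(F\cap F')\le \mathrm{codim}(F)+\mathrm{codim}(F')$ in general, but when $\mathrm{codim}(F)=2^k$ and $\mathrm{codim}(F')=2^1$ (the relevant case for computing $[\mathcal{C}_j^{[l]},\mathcal{C}_1^{[l]}]$, using that $\mathcal{C}_1^{[l]}$ is generated in part by codimension-$2$ faces attached to $E=E_1$) we can, by choosing a face of codimension exactly $2^{k+1}$ inside $F\cap F'$ that still carries the commutator $[E_k,E]\subset E_{k+1}$, land back inside $\mathcal{C}_{j+1}^{[l]}(E)$. Here the hypothesis $l\ge 2^d$ is exactly what guarantees enough coordinates to realize all these faces of codimension up to $2^d$. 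This would give statement (1), modulo the bookkeeping of reducing an arbitrary commutator of products of generators to commutators of single generators, which is routine using $[ab,c]=a[b,c]a^{-1}[a,c]$ and the inclusions already being proved for higher $j$.

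For statement (2), the closedness of $\mathcal{C}_j^{[l]}(E)$, I would argue by downward induction on $j$ using the quotient machinery of Section 4.1. The base case is $j=d$: here every generator $g^{(F)}$ has $g\in E_d$, which lies in the center of $E$ (since $[E_d,E]\subset E_{d+1}=\{e\}$), so $\mathcal{C}_d^{[l]}(E)$ is a subgroup of the center of $E^{[l]}$; a subgroup of the center of a compact right topological group that is generated by a compact set of central elements — indeed $\{g^{(F)}:g\in E_d,\ \mathrm{codim}(F)=2^d\}$ is a continuous image of a compact set, and finitely many such multiply continuously by joint continuity on the center (Theorem 4.8) — should be shown to be closed; more carefully, one shows it equals a finite product of the compact central pieces, hence is compact, hence closed. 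For the inductive step, assuming $\mathcal{C}_{j+1}^{[l]}(E)$ is closed (and, by (1), normal in $\mathcal{C}_1^{[l]}(E)$, in fact in $E^{[l]}$), I would pass to the quotient $E^{[l]}/\mathcal{C}_{j+1}^{[l]}(E)$. By (1) again, in this quotient the images of the generators $g^{(F)}$ with $g\in E_j$ become central, so the same "finite product of compact central pieces" argument shows the image of $\mathcal{C}_j^{[l]}(E)$ is closed in the quotient; pulling back along the (continuous) quotient map gives that $\mathcal{C}_j^{[l]}(E)$ is closed in $E^{[l]}$. To make the quotient an Ellis group I would invoke Lemma 4.7 (resp. Lemma 4.5), which is legitimate once $\mathcal{C}_{j+1}^{[l]}(E)$ is known to be central in an appropriate sub-quotient — and here I expect the main obstacle.

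The main obstacle, concretely, is ensuring at each stage that the subgroup by which we quotient satisfies the hypothesis of Lemma 4.5 (for any neighborhood $U$ of $H$ there is a neighborhood $V$ of $e$ with $VH\subset U$), i.e. that it is "nicely" central in the relevant ambient group so that joint continuity (Theorem 4.8) applies; one cannot simply assert centrality of $\mathcal{C}_{j+1}^{[l]}(E)$ in all of $E^{[l]}$, only in $\mathcal{C}_1^{[l]}(E)$ modulo $\mathcal{C}_{j+2}^{[l]}(E)$, so the quotients have to be taken in the right order and one must track which group plays the role of "$E$" in Lemma 4.7 at each step. I would organize this as: first build the chain of Ellis-group quotients $\mathcal{C}_1^{[l]}(E)\to \mathcal{C}_1^{[l]}(E)/\mathcal{C}_d^{[l]}(E)\to\cdots$, verifying at each step via (1) that the kernel just quotiented out is central in the previous quotient, hence (by Lemma 4.7 applied inside $\mathcal{C}_1^{[l]}(E)$ or the appropriate quotient) the next quotient is again an Ellis group and the next $\mathcal{C}_j^{[l]}$ maps to a closed central subgroup there. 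Once all the $\mathcal{C}_j^{[l]}(E)$ are closed and (1) holds, the "moreover" is immediate: $\mathcal{C}_1^{[l]}(E)=\mathcal{C}_1^{[l]}(E)\supset\mathcal{C}_2^{[l]}(E)\supset\cdots\supset\mathcal{C}_d^{[l]}(E)\supset\mathcal{C}_{d+1}^{[l]}(E)=\{e^{[l]}\}$ is a descending chain of closed subgroups with $[\mathcal{C}_j^{[l]}(E),\mathcal{C}_1^{[l]}(E)]\subset\mathcal{C}_{j+1}^{[l]}(E)$ by (1), which is exactly a filtration of length $d$ on the Ellis group $\mathcal{C}_1^{[l]}(E)$.
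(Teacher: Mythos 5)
Your treatment of (1) matches the paper's: the identity $[g^{(F)},h^{(S)}]=[g,h]^{(F\cap S)}$ together with $\mathrm{codim}(F\cap S)\le \mathrm{codim}(F)+\mathrm{codim}(S)\le 2^{j+1}$ and $[E_j,E]\subset E_{j+1}$ is exactly how the paper argues, and your base case for (2) (that $\mathcal{C}_d^{[l]}(E)$ is a bounded product of compact central pieces multiplied by a jointly continuous map, hence compact) is a correct variant of the paper's Lemma~\ref{case-abelian}. The genuine gap is in the inductive step of (2). First, $\mathcal{C}_{j+1}^{[l]}(E)$ is \emph{not} normal in $E^{[l]}$: conjugating a generator $g^{(F)}$ (with $g\in E_k$, $\mathrm{codim}(F)=2^k$) by an arbitrary $\mathbf{h}=(h_\epsilon)\in E^{[l]}$ yields the element whose $\epsilon$-coordinate is $[h_\epsilon,g]\,g$ for $\epsilon\in F$, and since the correction terms $[h_\epsilon,g]\in E_{k+1}$ vary with $\epsilon$, the resulting tuple cannot in general be assembled from generators $u^{(F')}$ with $\mathrm{codim}(F')=2^{k+1}$ (singletons are not faces of the required codimension). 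So the quotient $E^{[l]}/\mathcal{C}_{j+1}^{[l]}(E)$ you propose does not exist. Second, retreating to $\mathcal{C}_1^{[l]}(E)$, where normality does hold by (1), does not repair this: the quotient machinery of Section 4.1 (Lemmas~\ref{quotient-group}, \ref{joint} and \ref{normal-center-series}) applies only to \emph{compact} right topological groups, and the compactness (i.e.\ closedness in $E^{[l]}$) of $\mathcal{C}_1^{[l]}(E)$ is precisely the last and hardest instance of statement (2) in your downward induction. Your proposed chain $\mathcal{C}_1^{[l]}(E)\to\mathcal{C}_1^{[l]}(E)/\mathcal{C}_d^{[l]}(E)\to\cdots$ therefore cannot get started, and working directly in $E^{[l]}$ fails because multiplication there is only right-continuous, so limits of products of nets acquire error terms (cf.\ Lemma~\ref{product}).

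The missing idea is an \emph{outer} induction on the length $d$ of the filtration. The paper passes to $G=E/E_d$, a $(d-1)$-filtered Ellis group by Lemma~\ref{normal-center-series}; by that outer induction $\mathcal{C}_j^{[l]}(G)$ is closed, so its preimage $(\pi^{[l]})^{-1}\mathcal{C}_j^{[l]}(G)=\mathcal{C}_j^{[l]}(E)\cdot E_d^{[l]}$ is a \emph{closed}, hence compact, subgroup of $E^{[l]}$ which contains $\mathcal{C}_j^{[l]}(E)$ and carries the filtration $\mathcal{C}_j^{[l]}(E)\cdot E_d^{[l]}\supset\mathcal{C}_{j+1}^{[l]}(E)\supset\cdots\supset\{e^{[l]}\}$. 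Only inside this compact ambient group can one legitimately form the Ellis-group quotient $W=\bigl(\mathcal{C}_j^{[l]}(E)\cdot E_d^{[l]}\bigr)/\mathcal{C}_{j+1}^{[l]}(E)$, an abelian topological group, and there run exactly the limit argument you sketch (decompose each term of the net into boundedly many face pieces, extract convergent subnets in the compact $E_j$, multiply continuously) to conclude that the limit lies in $\mathcal{C}_j^{[l]}(E)$. Without this device your inductive step does not close.
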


We first show Theorem \ref{filtered-cube} for $d=1$.

\begin{lemma}\label{case-abelian}
Let $G$ be an abelian Ellis group.
Let $l\geq2$ be an integer, for $d=1,\ldots,l-1$,
let $\mathcal{F}_d^{[
l]}$ be the
subgroup of $G^{[l]}$ spanned by
\[
\{g^{(F)}:g\in G, \;F\subset \{0,1\}^l , \;\mathrm{codim}(F)=d \}.
\]
Endow $G^{[l]}$ with the product topology,
then $\mathcal{F}_d^{[l]}$ is closed in $G^{[l]}$.
\end{lemma}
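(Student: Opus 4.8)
The plan is to exhibit $\mathcal{F}_d^{[l]}$ as the continuous image of a compact group inside the Hausdorff space $G^{[l]}$; such an image is automatically compact, hence closed.

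The first step is to upgrade the algebraic structure of $G$. Since $G$ is abelian, every left translation $q\mapsto pq=qp$ coincides with a right translation and is therefore continuous, so multiplication on $G$ is separately continuous; by Corollary \ref{top-group}, $G$ is in fact a compact Hausdorff topological group. Consequently $G^{[l]}=G^{2^l}$, endowed with the product topology, is again a compact Hausdorff topological group, and in particular any fixed finite ``sum of coordinates'' map $G^{N}\to G$ is jointly continuous.

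Next, enumerate the codimension-$d$ faces of $\{0,1\}^l$ as $F_1,\dots,F_N$ (there are only finitely many), write $G$ additively, and define the homomorphism
\[
\Phi: G^{N}\longrightarrow G^{[l]},\qquad
\bigl(\Phi(g_1,\dots,g_N)\bigr)(\epsilon)=\sum_{i:\,\epsilon\in F_i} g_i\quad(\epsilon\in\{0,1\}^l),
\]
that is, $\Phi(g_1,\dots,g_N)=\prod_{i=1}^{N}g_i^{(F_i)}$. Each coordinate of $\Phi$ is a fixed finite sum of some of the inputs, so $\Phi$ is continuous by the first step. Because $G$ is abelian, for each face $F$ the map $g\mapsto g^{(F)}$ is an injective homomorphism, the images of distinct faces commute with one another, and $(g^{(F)})^{-1}=(g^{-1})^{(F)}$; hence any finite product of generators $g^{(F)}$ with $\mathrm{codim}(F)=d$ and their inverses can be reorganized, merging the factors attached to each $F_i$, into a product $\prod_{i=1}^{N}h_i^{(F_i)}$ with $h_i\in G$. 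This shows $\mathcal{F}_d^{[l]}=\Phi(G^{N})$.

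Finally, $G^{N}$ is a finite product of copies of the compact group $G$, hence compact, so $\mathcal{F}_d^{[l]}=\Phi(G^{N})$ is a compact subset of the Hausdorff space $G^{[l]}$ and therefore closed. The argument is uniform in $d\in\{1,\dots,l-1\}$, so no case distinction is needed. The only genuinely nontrivial ingredient is passing from separate to joint continuity of the multiplication on the abelian Ellis group via Corollary \ref{top-group}; once $G$ is known to be a topological group, what remains is the standard fact that a continuous image of a compact space in a Hausdorff space is closed, together with routine bookkeeping using commutativity of the generators.
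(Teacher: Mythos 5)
Your proof is correct, and it takes a genuinely shorter route than the paper's. The paper first builds a rewriting machinery: it singles out a canonical subfamily $\Omega_d^l$ of codimension-$d$ faces, proves (Proposition \ref{reppppp}) that every element of $\mathcal{F}_d^{[l]}$ has a representation $\prod_{i=1}^{n}g_i^{(F_i)}$ over $\Omega_d^l$ and that this representation is \emph{unique}, and then closes $\mathcal{F}_d^{[l]}$ by taking a convergent net, extracting subnets so that each canonical coordinate $g_{\alpha,i}$ converges, and multiplying the limits using Corollary \ref{top-group}. You bypass all of this by enumerating \emph{all} codimension-$d$ faces, observing that commutativity lets you merge any word in the generators into $\prod_{i=1}^{N}h_i^{(F_i)}$, and presenting $\mathcal{F}_d^{[l]}$ as the continuous image of the compact group $G^N$ under the homomorphism $\Phi$; compactness of the image in the Hausdorff space $G^{[l]}$ then gives closedness at once. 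The two arguments rest on the same nontrivial ingredient (the upgrade of an abelian Ellis group to a topological group via Corollary \ref{top-group}), and the paper's subnet extraction is really the same compactness being used by hand; the uniqueness of the canonical representation is not actually needed for closedness. What the paper's heavier machinery buys is reuse: the order $\succ$ on $\Omega_d^l$ and the rewriting over canonical faces are invoked again in the proof of Theorem \ref{filtered-cube} (see the representation (\ref{QQQQ}) modulo $\mathcal{C}_{j+1}^{[l]}(E)$), where the group is no longer abelian and the bookkeeping matters. For the lemma as stated, your argument is complete and cleaner.
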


Before proving, we need say something about the elements of $\mathcal{F}_d^{[l]}$.
It is easy to see that for some elements of $\mathcal{F}_d^{[l]}$
there may be different representations.

For example, let $l=2,d=1$ and let
\[
F_i=\{\epsilon\in \{0,1\}^2:\epsilon_1=i\},
\quad S_i=\{\epsilon\in \{0,1\}^2:\epsilon_2=i\},\quad i=0,1,
\]
we have
$g^{[2]}=g^{\{0,1\}^2}=g^{(F_1)}\cdot g^{(F_2)}=g^{(S_1)}\cdot g^{(S_2)}$ for any $g\in G$.

Therefore,
we need rewrite all elements of $\mathcal{F}_d^{[l]}$ along a uniform order.
That is, there exist faces $F_i,i=1,\ldots,n$ of codimension $d$
where $n=n(l,d)\in \N$
such that
for any $\mathbf{g}\in \mathcal{F}_d^{[l]} $
there exist $g_1,\ldots,g_n\in G$ such that $\mathbf{g}=\prod_{i=1}^{n}g_i^{(F_i)}$
and $g_i$ is determined by $\mathbf{g},g_1,\ldots,g_{i-1},i=1,\ldots,n$,
where $g_0=e$.

\bigskip

\noindent {\bf Rewriting $\mathcal{F}_d^{[l]}$.}
Let $F\subset\{0,1\}^l$ be a face of codimension $d$ with
\[
  F=\{ \epsilon \in \{0,1\}^l: \epsilon_{i_1}=a_1,\ldots ,\epsilon_{i_d}=a_d \},
  \]
  where $1\leq {i_1}<\ldots<{i_d}\leq l$ and $a_i\in \{0,1\}$
such that if there exists some $j_0$ such that
$i_{j_0}>j_0$, then $i_j=0$ for all $j\geq j_0$.
Let $\Omega_d^l$ be the
collection of all such faces.

For a face $F\subset \{0,1\}^l$, let $\gamma^F$ be the maximal element of $F$,
which meaning
\[
\gamma^F\in F \text{ and } \gamma^F_i\geq \epsilon_i \text{ for all  } i=1,\ldots,l, \text{ and all  }\epsilon\in F.
\]
It is easy to see that
the maximal elements of different faces of $\Omega_d^l$ are different.

We define an order $\succ$ on $\{0,1\}^l$.
For different elements $\epsilon,\epsilon'\in\{0,1\}^l$,
put
\[
i=i(\epsilon,\epsilon')=\min\{ j=1,\ldots,l: \epsilon_j\neq \epsilon'_j\},
\]
we say
\[
\epsilon\succ\epsilon'\text{ if and only if } \epsilon_i> \epsilon'_i
.
\]

Following this, we
define an order still denoted by $\succ $ on $\Omega_d^l$.
For different faces $F,F'\in \Omega_d^l$, we say
\[
F\succ F' \text{ if and only if } \gamma^F\succ\gamma^{F'}
.
\]

Let
\[
\Omega_{d}^l=\{F_1\succ\ldots\succ F_{n}\},
\]
where $n=|\Omega_{d}^l|$.
Let $\gamma_k$ be the maximal element of $F_k$,
then $\gamma_k\notin \cup_{i=k+1}^{n} F_i$.

We claim that the faces of $\Omega_d^l$ meet the requirement.

\begin{prop}
  For any $F\in \Omega_{d}^l$, there exist $F^+,F^-\in \Omega_{d+1}^l$ such that
  \[
  F=F^+\cup F^-.
  \]
\end{prop}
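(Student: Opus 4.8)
The plan is to realize each $F\in\Omega_d^l$ as the disjoint union of its two slices obtained by fixing one further coordinate, and to show that if this coordinate is chosen to be the \emph{smallest} index not already constrained by $F$, then both slices again satisfy the normalization condition defining $\Omega_{d+1}^l$. Write $F=\{\epsilon:\epsilon_{i_1}=a_1,\ldots,\epsilon_{i_d}=a_d\}$ with $1\le i_1<\cdots<i_d\le l$ as in the definition of $\Omega_d^l$, and let $m\ge 0$ be the largest integer with $i_j=j$ for all $1\le j\le m$ (so $m=0$ when $i_1>1$, and $m=d$ when $i_j=j$ for every $j$). Since $d<l$ we have $m+1\le l$, and $m+1\notin\{i_1,\ldots,i_d\}$: the first $m$ of these indices are exactly $1,\ldots,m$, and when $m<d$ one has $i_{m+1}>m+1$. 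Set $i=m+1$, $F^+=F\cap\{\epsilon:\epsilon_i=1\}$ and $F^-=F\cap\{\epsilon:\epsilon_i=0\}$. Then $F=F^+\cup F^-$ with the union disjoint, and $F^\pm$ are faces of codimension $d+1$.

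It remains to check $F^\pm\in\Omega_{d+1}^l$, for which the key input is the consequence of $F\in\Omega_d^l$: when $m<d$, the first index $j_0$ with $i_{j_0}>j_0$ is $j_0=m+1$, so normalization forces $a_{m+1}=\cdots=a_d=0$; moreover $i_{m+k}\ge m+1+k$ for every $k\ge 1$. Sorting the enlarged set $\{1,\ldots,m\}\cup\{m+1\}\cup\{i_{m+1},\ldots,i_d\}$ gives new constrained indices $i'_j=j$ for $1\le j\le m+1$ and $i'_{m+1+k}=i_{m+k}\ge m+1+k$ for $k\ge 1$, carrying values $a_1,\ldots,a_m,s,a_{m+1},\ldots,a_d$, where $s=1$ for $F^+$ and $s=0$ for $F^-$. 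Consequently the first index $j_0'$ with $i'_{j_0'}>j_0'$, if any, satisfies $j_0'\ge m+2$, and every value in a position $\ge m+2$ lies among $a_{m+1},\ldots,a_d$, hence is $0$. So the normalization condition for $\Omega_{d+1}^l$ holds irrespective of the value $s$ placed in position $m+1<j_0'$, giving $F^\pm\in\Omega_{d+1}^l$. (When $m=d$, or when $i_{m+k}=m+1+k$ for all $k$, there is no such $j_0'$ and the condition is vacuous.)

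The verifications that $F^\pm$ are faces of codimension $d+1$ and that $F=F^+\sqcup F^-$ are immediate. The one genuine subtlety — and the step I expect to be the main obstacle — is the choice of splitting coordinate: fixing an \emph{arbitrary} free coordinate can push a value-$1$ constrained coordinate past the first gap index and violate normalization, whereas fixing the smallest free index $m+1$ is exactly what keeps the first gap index of the enlarged tuple at position $\ge m+2$, beyond every coordinate that can carry a nonzero value.
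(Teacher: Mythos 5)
Your proof is correct and follows essentially the same route as the paper: the paper also splits $F$ along the smallest unconstrained coordinate (namely $d+1$ when $i_d=d$, and otherwise the coordinate $j_0=\min\{j:i_j>j\}$, which is exactly your $m+1$). Your verification that both slices still satisfy the normalization condition defining $\Omega_{d+1}^l$ is in fact more detailed than the paper's, which merely asserts it.
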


\begin{proof}
Let $F\in \Omega_d^l$, then we have
  \[
  F=\{ \epsilon \in \{0,1\}^l: \epsilon_{i_1}=a_1,\ldots ,\epsilon_{i_d}=a_d \},
  \]
  where $1\leq {i_1}<\ldots<{i_d}\leq l$ and $a_i\in \{0,1\}$.

  If $i_d=d$,
  put
   \[
  F^+=\{ \epsilon \in \{0,1\}^l: \epsilon_{i_1}=a_1,\ldots ,\epsilon_{i_d}=a_d,\epsilon_{{d+1}}=1 \},
  \]
  and
  \[
  F^-=\{ \epsilon \in \{0,1\}^l: \epsilon_{i_1}=a_1,\ldots ,\epsilon_{i_d}=a_d,\epsilon_{{d+1}}=0 \},
  \]
  then we have $F^+,F^-\in \Omega_{d+1}^l$ and $F=F^+\cup F^-.$

  If $i_d>d$, let $j_0=\min\{ j=1,\ldots,d:i_j>j\}$.
  Put
   \[
  F^+=\{ \epsilon \in \{0,1\}^l: \epsilon_{i_1}=a_1,\ldots ,\epsilon_{i_d}=a_d,\epsilon_{i_{j_0}}=1 \},
  \]
  and
  \[
  F^-=\{ \epsilon \in \{0,1\}^l: \epsilon_{i_1}=a_1,\ldots ,\epsilon_{i_d}=a_d,\epsilon_{i_{j_0}}=0 \},
  \]
  then we have $F^+,F^-\in \Omega_{d+1}^l$ and $F=F^+\cup F^-.$
\end{proof}

Recall that we are considering  faces in $\Omega_d^l$.
\begin{prop}\label{reppppp}
For any $\mathbf{g}\in \mathcal{F}_d^{[l]}$,
there exist elements $g_1,\ldots,g_{n}\in G$ such that
\begin{equation}\label{formA}
\mathbf{g}=\prod_{i=1}^{n}g_i^{(F_i)}.
\end{equation}
Moreover, 
if there exist other elements $h_1,\ldots,h_{n}\in G$ such that
\[
\mathbf{g}=\prod_{i=1}^{n}h_i^{(F_i)},
\]
then $g_i=h_i,i=1,\ldots,n$.
\end{prop}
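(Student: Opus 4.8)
The plan is to establish the existence of the representation (\ref{formA}) and its uniqueness by two quite different arguments. Uniqueness is the short direction and can be disposed of immediately. Suppose $\prod_{i=1}^{n}g_i^{(F_i)}=\prod_{i=1}^{n}h_i^{(F_i)}$; since $G$ is abelian, so is $G^{[l]}$, and writing $k_i=g_ih_i^{-1}$ we get $\prod_{i=1}^{n}k_i^{(F_i)}=e^{[l]}$. If the $k_i$ are not all trivial, let $i_0$ be the least index with $k_{i_0}\neq e$ and evaluate the product at the coordinate $\gamma_{i_0}$, the maximal element of $F_{i_0}$: the factors with $i<i_0$ contribute $e$, those with $i>i_0$ contribute $e$ because $\gamma_{i_0}\notin\bigcup_{i>i_0}F_i$, and since $\gamma_{i_0}\in F_{i_0}$ the $\gamma_{i_0}$-coordinate of the product is $k_{i_0}\neq e$, contradicting $e^{[l]}$. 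Hence $g_i=h_i$ for all $i$.

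For existence, the first step is a reduction to a purely combinatorial statement about $\Z$-linear combinations of indicator vectors. Since $G$ is abelian, $\mathcal{F}_d^{[l]}$ is generated as an abelian group by the $g^{(F)}$ with $g\in G$ and $\mathrm{codim}(F)=d$; and comparing coordinates, using that an indicator $\mathbf{1}_F$ takes values in $\{0,1\}$ and that $0\cdot g=e$, any integer identity $\mathbf{1}_F=\sum_{i=1}^{n}c_i\mathbf{1}_{F_i}$ in $\Z^{\{0,1\}^l}$ transfers to the identity $g^{(F)}=\prod_{i=1}^{n}(c_ig)^{(F_i)}$ in $G^{[l]}$ for every $g\in G$. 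So, collecting terms and using commutativity, the representation (\ref{formA}) will exist for every $\mathbf{g}\in\mathcal{F}_d^{[l]}$ as soon as one knows that $\mathbf{1}_F\in\sum_{i=1}^{n}\Z\,\mathbf{1}_{F_i}$ for every face $F$ of codimension $d$.

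To prove the latter I would induct on $\gamma^F$ along the order $\succ$, over the finite set of elements of $\{0,1\}^l$ that occur as maximal elements of codimension-$d$ faces; these are exactly the $\gamma$ with $|\mathrm{supp}(\gamma)|\geq l-d$, and a direct check shows that each of them is the maximal element of exactly one face in $\Omega_d^l$ (this complements the already noted injectivity of $F\mapsto\gamma^F$ on $\Omega_d^l$). The inductive step rests on a swap lemma: if $F,F'$ are codimension-$d$ faces with $\gamma^F=\gamma^{F'}=\gamma$, then $\mathbf{1}_F-\mathbf{1}_{F'}$ is a $\Z$-combination of indicators of codimension-$d$ faces with maximal element $\prec\gamma$. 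Granting this, for the $\succ$-least admissible $\gamma$ there is a unique codimension-$d$ face and it lies in $\Omega_d^l$; for a general $\gamma$ write $\mathbf{1}_F=\mathbf{1}_{F(\gamma)}+(\mathbf{1}_F-\mathbf{1}_{F(\gamma)})$ where $F(\gamma)\in\Omega_d^l$ has maximal element $\gamma$, and the error term lies in $\sum_i\Z\,\mathbf{1}_{F_i}$ by the inductive hypothesis, so $\mathbf{1}_F$ does too. For the swap lemma one reduces, since any two such faces are joined by a chain of single coordinate exchanges, to the case where $F'$ is obtained from $F$ by trading one constrained coordinate $a\in\mathrm{supp}(\gamma)$ for another $b\in\mathrm{supp}(\gamma)$: introducing the codimension-$(d-1)$ face $F^{\circ}$ got by dropping the constraint at $a$ and splitting it first at $a$ and then at $b$ gives $\mathbf{1}_{F^{\circ}}=\mathbf{1}_F+\mathbf{1}_{F^0_a}=\mathbf{1}_{F'}+\mathbf{1}_{F^0_b}$, where $F^0_a$ and $F^0_b$ fix $a$, resp. $b$, to $0$ and hence have maximal element $\prec\gamma$; subtracting yields $\mathbf{1}_F-\mathbf{1}_{F'}=\mathbf{1}_{F^0_b}-\mathbf{1}_{F^0_a}$. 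This is the same mechanism as the decomposition $F=F^+\cup F^-$ recorded just before the proposition.

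The main obstacle will be the combinatorial bookkeeping in the existence half: proving cleanly that $\Omega_d^l$ contains exactly one face for each admissible maximal element (so that $F(\gamma)$ is well defined and the $F_i$ genuinely form a "basis" of the relevant $\Z$-module), and arranging the swap lemma so that every error term provably has strictly $\succ$-smaller maximal element. Once those combinatorial facts are in hand, the transfer from integer indicator identities to identities in $G^{[l]}$ and the uniqueness argument are routine.
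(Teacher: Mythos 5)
Your proof is correct, and while the uniqueness half coincides with the paper's argument (both rest on the fact that $\gamma_k\notin\bigcup_{i>k}F_i$ and then peel off coordinates, yours phrased as a minimal-counterexample, the paper's as a finite induction), the existence half takes a genuinely different route. The paper proves existence by induction on the codimension $d$: it strips the last constraint from $F$ to get a codimension-$(d-1)$ face $F^U$, decomposes $g^{(F^U)}$ over $\Omega_{d-1}^l$ by the inductive hypothesis, re-imposes the constraint, and repairs any face that falls outside $\Omega_d^l$ via the splitting $F=F^+\cup F^-$ together with the identity $g^{(F)}=g^{(F^+)}g^{(F^-)}(g^{-1})^{(S)}$. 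You instead linearize: since $G$ is abelian, everything reduces to showing $\mathbf{1}_F\in\sum_i\Z\,\mathbf{1}_{F_i}$ in $\Z^{\{0,1\}^l}$, which you prove by induction along the order $\succ$ on maximal elements using a swap lemma whose mechanism (drop a constraint, re-split at two different coordinates, subtract) is the same algebraic identity the paper uses, but deployed per maximal element rather than per codimension. Your reduction cleanly separates the trivial group theory from the combinatorics and in effect shows the $\mathbf{1}_{F_i}$ form a $\Z$-basis of the module spanned by all codimension-$d$ face indicators, which makes uniqueness transparent as well; the price is that you lean on two combinatorial facts the paper never needs in this form, namely that $F\mapsto\gamma^F$ maps $\Omega_d^l$ \emph{onto} (not just injectively into) the set of $\gamma$ with $|\mathrm{supp}(\gamma)|\geq l-d$, and that any two codimension-$d$ faces with the same maximal element are joined by a chain of single-coordinate swaps. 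Both facts are true and elementary (the first follows from a greedy construction: constrain an initial segment of coordinates to the values of $\gamma$ and the remaining zeros of $\gamma$ to $0$), but you should write them out, since you correctly identify them as the only real content of your argument.
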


\begin{proof}

We first show that such decomposition exists.

Notice that since $G$ is abelian, it suffices to show that for
any $g\in G$ and any face $F$ of codimension $d$,
$g^{(F)}$ can be decomposed as form (\ref{formA}).
\medskip

\noindent {\bf Existence.}
For any $g\in G$ and any face $F$ of codimension $d$,
there exist $N\in \N$ and $F_k\in \Omega_d^l,g_k\in \{g,g^{-1}\},k=1,\ldots,N $
  such that
  \[
  g^{(F)}= \prod_{k=1}^N g_k^{(F_i)}.
  \]

 \begin{proof}[Proof of Existence]

We show the statement by induction on $d$.

Assume $d=1$.
Let $F$ be a face of codimension 1,
then $F$ can be written as
  \[
  F=\{ \epsilon \in \{0,1\}^l: \epsilon_{i}=a \},
  \]
  where $i\in \{1,\ldots,l\}$ and $a\in \{0,1\}$.

  If $a=0$ or $i=1$, then $F\in \Omega_1^l$.

  If $a=1$ and $i>1$. Let $S= \{\epsilon \in \{0,1\}^l: \epsilon_{i}=0 \},$ then $S\in \Omega_1^l$ and
  \[
  g^{(F)}=  g^{(F_0)}  g^{(F_1)}  (g^{-1})^{(S)},
  \]
  where $F_i= \{ \epsilon \in \{0,1\}^l: \epsilon_{1}=i \},i=0,1$.
  This shows the statement for $d=1$.

\medskip

  Let $d>1$ be an integer and assume that the statement is true for all $i=1,\ldots,d-1$.
 Let $F$ be a face of codimension $d$,
then $F$ can be written as
\[
  F=\{ \epsilon \in \{0,1\}^l: \epsilon_{i_1}=a_1,\ldots ,\epsilon_{i_d}=a_d \},
  \]
  where $1\leq {i_1}<\ldots<{i_d}\leq l$ and $a_i\in \{0,1\}$.
Put
  \[
  F^U=\{ \epsilon \in \{0,1\}^l: \epsilon_{i_1}=a_1,\ldots ,\epsilon_{i_{d-1}}=a_{d-1} \},
  \]
  then $F^U$ is a face of codimension $d-1$.
  By inductive hypothesis, there exist $N\in \N$ and
$F_k\in \Omega_{d-1}^l,g_k\in \{g,g^{-1}\},k=1,\ldots,N$ such that
  \begin{equation}\label{re}
 g^{(F^U)}=\prod_{k=1}^N g_k^{(F_k)}.
  \end{equation}
  For every $ k$, $F_k$ can be written as
  \[
  F_k=\{ \epsilon \in \{0,1\}^l: \epsilon_{i_1^{(k)}}=a_1^{(k)},
  \ldots ,\epsilon_{i_{d-1}^{(k)}}=a_{d-1}^{(k)} \},
\]
  where $1\leq {i_1^{(k)}}<\ldots<{i_{d-1}^{(k)}}\leq l$ and $a_i^{(k)}\in \{0,1\}$.
  By induction, we may assume additionally that ${i_{d-1}^{(k)}}\leq i_{d-1}$.
Put
 \[
  F_k^{(d)}=\{ \epsilon \in \{0,1\}^l: \epsilon_{i_1^{(k)}}=a_1^{(k)},
  \ldots ,\epsilon_{i_{d-1}^{(k)}}=a_{d-1}^{(k)} ,\epsilon_{i_d}=a_d\},
\]
then by (\ref{re})
we have
  \[
  g^{(F)}=\prod_{k=1}^Ng_k^{(F_k^{(d)})}.
  \]
If $i_d=d$ or $a_d=0$, then $F_k^{(d)}\in \Omega_{d}^l$ for every $k$.\\
If $i_d>d$ and $a_d=1$.
As $F_k\in \Omega_{d-1}^l$, 
we can choose $F_k^{(d)+},F_k^{(d)-}\in \Omega_{d}^l$ with
\[
F_k^{(d)+}\cup F_k^{(d)-}=F_k.
\] Put
  \[
  S_k^{(d)}=\{ \epsilon \in \{0,1\}^l: \epsilon_{i_1^{(k)}}=a_1^{(k)},
  \ldots ,\epsilon_{i_{d-1}^{(k)}}=a_{d-1}^{(k)} ,\epsilon_{i_d}=0\},
\]
then $ S_k^{(d)}\in  \Omega_{d}^l$ and
\[
g_k^{(F_k^{(d)})}=g_k^{(F_k^{(d)+})} g_k^{(F_k^{(d)-})}(g_k^{-1})^{(  S_k^{(d)})}.
\]
Hence we have
\[
 g^{(F)}=\prod_{k=1}^Ng_k^{(F_k^{(d)})} =\prod_{k=1}^N g_k^{(F_k^{(d)+})}
  g_k^{(F_k^{(d)-})}(g_k^{-1})^{(  S_k^{(d)})}
\]
as was to be shown.
 \end{proof}

Let $\mathbf{g}\in \mathcal{F}_d^{[l]}$.
Assume there exist elements $g_k,h_k\in G,k=1,\ldots,{n}$ such that
\[
\prod_{k=1}^{n}g_k^{(F_k)}=\mathbf{g}=\prod_{k=1}^{n}h_k^{(F_k)},
\]
we claim that $g_k=h_k$ for every $k$.

Indeed,
for every $k$
as $\gamma_k\notin \cup_{i=k+1}^{n} F_i$,
we get that
\[
\prod_{\substack{  1\leq j\leq k\\  \gamma_{k} \in F_j \;}} g_j=
\mathbf{g}({\gamma_k})=
\prod_{\substack{  1\leq j\leq k\\  \gamma_{k} \in F_j \;}} h_j.
\]
By induction
we deduce that $g_k=h_k$ for every $k$.
\end{proof}

Now let us show Lemma \ref{case-abelian}.

\begin{proof}[Proof of Lemma \ref{case-abelian}]
Let $\{\mathbf{g}_\alpha\}$ be a net in $\mathcal{F}_d^{[l]}$ with $\mathbf{g}_\alpha\to \mathbf{g}$
for some $\mathbf{g}\in G^{[l]}$.
That is,
$\mathbf{g}_\alpha(\epsilon)\to \mathbf{g}(\epsilon)$ for every $\epsilon\in \{0,1\}^l.$
We will show that $\mathbf{g}\in \mathcal{F}_d^{[l]}$.

By Proposition \ref{reppppp}, for every $\alpha$
there exist $g_{\alpha,i}\in G,i=1,\ldots, n$ such that
\[
\mathbf{g}_{\alpha}=\prod_{i=1}^{n}g_{\alpha,i}^{(F_i)}.
\]
Without loss of generality, by taking subnet we may assume that
$g_{\alpha,i}\to g_i\in G,i=1,\ldots,n$.
As $G$ is an abelian Ellis group,
by Corollary \ref{top-group} it is also a topological group.
From this,
we deduce that
\[
\mathbf{g}_{\alpha}=\prod_{i=1}^{n}g_{\alpha,i}^{(F_i)}\to
 \prod_{i=1}^{n}g_i^{(F_i)}
\]
and thus $\mathbf{g}=\prod_{i=1}^{n}g_i^{(F_i)}\in \mathcal{F}_d^{[l]}$
as was to be shown.
\end{proof}

Following the ideas in the proof of Lemma \ref{case-abelian},
we are able to show Theorem \ref{filtered-cube}.
Before giving the proof, we need some preparations.
\begin{lemma}\label{normal-center-series}
 Let $d\geq 2$ be an integer and let $E$ be a $d$-filtered Ellis group.
 Then for every $j=2,\ldots,d$,
 the quotient group $E/E_j$ with the quotient topology is a $(j-1)$-filtered Ellis group.
\end{lemma}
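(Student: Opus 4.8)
The statement to prove is Lemma~\ref{normal-center-series}: for a $d$-filtered Ellis group $E$ and $j=2,\ldots,d$, the quotient $E/E_j$ with the quotient topology is a $(j-1)$-filtered Ellis group.

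\textbf{Plan of proof.} The plan is to induct downward on $j$, peeling off one central layer at a time. The key observation is that in a $d$-filtered Ellis group, the bottom subgroup $E_d$ is central: since $[E_d,E]\subset E_{d+1}=\{e\}$, every element of $E_d$ commutes with every element of $E$. Hence by Lemma~\ref{joint} the quotient $\bar E := E/E_d$ with the quotient topology is an Ellis group. I would then check that $\bar E$ carries an induced filtration of length $d-1$: set $\bar E_i := \pi(E_i) = E_iE_d/E_d$ for $i=1,\ldots,d-1$, where $\pi:E\to\bar E$ is the quotient map. Each $\bar E_i$ is a subgroup, and it is closed because $\pi$ is a closed map (a continuous surjection from a compact space to a Hausdorff space) applied to the closed set $E_i$ — here one uses that $E_i\supset E_d$ for $i\le d-1$, so $E_i = \pi^{-1}(\bar E_i)$ and closedness of $E_i$ gives closedness of $\bar E_i$ directly as well. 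The containments $\bar E_1\supset\cdots\supset\bar E_{d-1}$ and $[\bar E_i,\bar E]\subset\bar E_{i+1}$ follow by applying the homomorphism $\pi$ to the corresponding relations in $E$, and $\bar E_d = \pi(E_d) = \{\bar e\}$, so the filtration has length exactly $d-1$ (or less, which is harmless — one can always pad, but since $E_d$ was the true bottom, length $d-1$ is attained unless $E_{d-1}=E_d$). This establishes the case $j=d$.

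For general $j$, I would iterate: having produced that $E/E_{j+1}$ is a $(j)$-filtered Ellis group with filtration $\{E_i/E_{j+1}\}_{i=1}^{j}$, its bottom term $E_j/E_{j+1}$ is central in $E/E_{j+1}$ (since $[E_j,E]\subset E_{j+1}$), so applying Lemma~\ref{joint} again to this Ellis group and its central subgroup yields that $(E/E_{j+1})\big/(E_j/E_{j+1})$ is an Ellis group; by the third isomorphism theorem this group is topologically isomorphic to $E/E_j$ (one should verify the isomorphism is a homeomorphism, which is routine since both quotient topologies are the final topologies for the respective projections and the maps are compatible). The induced filtration $\{E_i/E_j\}_{i=1}^{j-1}$ then has length $j-1$, completing the induction step. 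Alternatively — and perhaps cleaner — one can argue directly without iterating the isomorphism theorem: show $E_j$ itself satisfies the hypothesis of Lemma~\ref{quotient-group} (for any neighbourhood $U$ of $E_j$ there is a neighbourhood $V$ of $e$ with $VE_j\subset U$) by combining the central-layer argument across all the layers $E_j\supset E_{j+1}\supset\cdots\supset E_{d+1}=\{e\}$, or simply note that $E_j$ is normal and apply the abstract fact that a normal closed subgroup that is "topologically well-positioned" gives an Ellis quotient.

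\textbf{Main obstacle.} The genuinely delicate point is not the algebra (filtration relations pass through homomorphisms trivially) but the \emph{topology}: one must be sure that the successive quotient topologies behave correctly, i.e. that $(E/E_{j+1})/(E_j/E_{j+1})$ is really homeomorphic — not just algebraically isomorphic — to $E/E_j$, and that at each stage the hypothesis needed to invoke Lemma~\ref{joint} (namely that the relevant subgroup is closed and central in the \emph{current} Ellis group) is actually verified. The centrality is fine, but one needs to confirm that $E_j/E_{j+1}$ is closed in $E/E_{j+1}$, which follows because its preimage $E_j$ is closed in $E$ and $E/E_{j+1}$ has the quotient topology. I expect the cleanest writeup inducts on $j$ from $d$ downward, at each step quoting Lemma~\ref{joint} for the Ellis-group structure and verifying the filtration axioms by pushing forward along the quotient homomorphism, with the compatibility of iterated quotient topologies being the one spot that deserves an explicit sentence rather than being left to the reader.
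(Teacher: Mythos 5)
Your proposal is correct and follows essentially the same route as the paper: downward induction on $j$, using centrality of the bottom layer ($[E_j,E]\subset E_{j+1}$) to invoke Lemma~\ref{joint} at each stage, and transporting the Ellis-group structure back to $E/E_j$ via the third isomorphism theorem. The one delicate point you flag — that $(E/E_{j+1})/(E_j/E_{j+1})$ is homeomorphic, not merely algebraically isomorphic, to $E/E_j$ — is exactly the point the paper spells out, by checking the induced bijection is continuous from the definition of the quotient topology and then concluding it is a homeomorphism (continuous bijection from a compact space onto a Hausdorff space).
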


\begin{proof}
Let $d\geq 2$ be an integer.
Note that $E_j$ is normal in $E$ for every $ j$,
we can define the quotient group $E/E_j$.
By the argument in Lemma \ref{quotient-group},
it suffices to show that
the quotient group $E/E_j$ with the quotient topology is a Hausdorff space.

We show it by induction on $j$.

As $[E_d,E]\subset E_{d+1}=\{e\}$, we get that
$E_d$ is included in the center of $E$.
When $j=d$, it follows from
Lemma \ref{joint} immediately.
 %

Let integer $1\leq j\leq d-1$.
Suppose the statement is true for all $i=j+1,\ldots , d$,
then the quotient group
 $\widetilde{E}=E/E_{j+1}$ with the quotient topology induced from $E$ is an Ellis group.
Let $\phi_1:E\to \widetilde{E}$ be the quotient map,
then $\phi_1$ is continuous.
As $\phi_1$ is a continuous map from a compact space to a Hausdorff space,
it is closed.
Thus $\widetilde{E_j}=\phi_1(E_j)$ is a closed subset of $\widetilde{E}$.
By the condition $[E,E_j]\subset E_{j+1}$,
$\widetilde{E_j}$ is included in the center of $\widetilde{E}$.
Notice that $\widetilde{E_j}$ is also a group.
By Lemma \ref{joint}, the quotient group
$\widetilde{E}/\widetilde{E_j}$ with the quotient topology induced from $\widetilde{E}$ is
an Ellis group.
 Let $\phi_2 : \widetilde{E}\to  \widetilde{E}/\widetilde{E_j}$ be the quotient map,
 then $\phi_2$ is continuous,
 and so is the map $\phi=\phi_2\circ \phi_1:E\to \widetilde{E}/\widetilde{E_j}$.

On the other hand,
 the maps $\phi_1,\phi_2$ are also group homomorphisms,
so is $\phi$.
We obtain that the quotient group $E/\ker(\phi)$ is isomorphic to the group $\widetilde{E}/\widetilde{E_j}$
by group isomorphism theorem.
Clearly $\ker(\phi)=E_j$, so the quotient group
$E/E_j$ is isomorphic to $\widetilde{E}/\widetilde{E_j}$ and we denote this map by $f$.
Then $f$ is one to one.
Endow $E/E_j$ with the quotient topology induced from $E$
and let $\pi:E\to E/E_j$ be the quotient map,
then $\phi=f \circ \pi$.
We claim that $f$ is continuous.
Indeed, let $V$ be an open subset of $\widetilde{E}/\widetilde{E_j}$,
then $\pi^{-1}(f^{-1}(V))=\phi^{-1}(V)$ is open.
By the definition of the quotient topology,
$f^{-1}(V)$ is open in $E/E_j$.
This shows that $f$ is continuous.
Moreover,
$f$ is homeomorphic.
As the group $\widetilde{E}/\widetilde{E_j}$ is a Hausdorff space,
so is $E/E_j$.

By the argument above,
it is easy to see that
\[
E/E_j= E_1/E_j\supset E_2/E_j\supset \ldots\supset E_{j-1}/E_j\supset E_j/E_j=\{\tilde{e}\}
\]
is a filtration of length $j-1$ on $E/E_j$.

 This completes the proof.
\end{proof}

 As a consequence of Lemma \ref{normal-center-series}, we can easily get the following lemma.
\begin{lemma}\label{product}
 Let $E$ be a filtered Ellis group.
 Let $\{ p_\alpha\}$ be a net in $E$ and let $\{q_\alpha\}$ be a net in $E_j$
such that $p_\alpha \to p$ and $q_\alpha\to q$, where $p\in E,q\in E_j$.
Then every limit point of the net $\{ p_\alpha q_\alpha\}$ belongs to $pqE_{j+1}$.

\end{lemma}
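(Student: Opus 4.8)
The plan is to reduce everything to the quotient Ellis group $\widetilde{E}=E/E_{j+1}$, where multiplication behaves far better than in $E$ itself. First I would observe that by Lemma \ref{normal-center-series} (applied with index $j+1$ when $j\le d-1$, and trivially with $\widetilde{E}=E$ when $j=d$, since then $E_{j+1}=\{e\}$) the quotient group $\widetilde{E}$, equipped with the quotient topology, is a filtered Ellis group; in particular it is compact Hausdorff and the quotient map $\pi:E\to\widetilde{E}$ is continuous. Writing $\widetilde{E_j}=\pi(E_j)$, the filtration condition $[E_j,E]\subset E_{j+1}$ says exactly that $\widetilde{E_j}$ lies in the center of $\widetilde{E}$; moreover $\widetilde{E_j}$ is a closed subgroup, being the continuous image of the compact set $E_j$ in the Hausdorff space $\widetilde{E}$.

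Next I would extract joint continuity. Consider $\sigma:\widetilde{E_j}\times\widetilde{E}\to\widetilde{E}$, $(h,g)\mapsto hg$. For fixed $g$ the map $h\mapsto hg$ is a right translation in the Ellis group $\widetilde{E}$, hence continuous; for fixed $h\in\widetilde{E_j}$, centrality gives $hg=gh$, so $g\mapsto hg$ coincides with the right translation $g\mapsto gh$ and is continuous. Thus $\sigma$ is separately continuous, and Theorem \ref{joint-con} (with $T=\widetilde{E_j}$, $Y=\widetilde{E}$) shows that $\sigma$ is in fact jointly continuous. This is the same mechanism already used in the proof of Lemma \ref{joint}.

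Now I would push the nets through $\pi$. By continuity $\pi(p_\alpha)\to\pi(p)$ in $\widetilde{E}$ and $\pi(q_\alpha)\to\pi(q)$ in $\widetilde{E_j}$. Since each $\pi(q_\alpha)$ is central in $\widetilde{E}$, we have $\pi(p_\alpha q_\alpha)=\pi(p_\alpha)\pi(q_\alpha)=\pi(q_\alpha)\pi(p_\alpha)=\sigma(\pi(q_\alpha),\pi(p_\alpha))$, and joint continuity of $\sigma$ yields
\[
\pi(p_\alpha q_\alpha)=\sigma(\pi(q_\alpha),\pi(p_\alpha))\longrightarrow \sigma(\pi(q),\pi(p))=\pi(q)\pi(p)=\pi(pq).
\]
Finally, if $r\in E$ is any limit point of $\{p_\alpha q_\alpha\}$, then along a subnet $p_\alpha q_\alpha\to r$, whence $\pi(r)$ is a limit of the corresponding subnet of $\pi(p_\alpha q_\alpha)$; since $\widetilde{E}$ is Hausdorff this forces $\pi(r)=\pi(pq)$, i.e. $r\in pq\,E_{j+1}$.

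The one genuine subtlety is the reduction itself: in a general Ellis group multiplication is only right-continuous, so $p_\alpha q_\alpha\to pq$ in $E$ is false in general, and the whole content is that passing to the central quotient $E_j/E_{j+1}\triangleleft E/E_{j+1}$ restores joint continuity via Theorem \ref{joint-con}. The remaining points — continuity and openness properties of $\pi$, the degenerate case $j=d$, and reading off the statement for limit points from Hausdorffness — are routine.
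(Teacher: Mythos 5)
Your argument is correct and is essentially the paper's own proof: both pass to the quotient $E/E_{j+1}$ via Lemma \ref{normal-center-series}, use centrality of $E_j/E_{j+1}$ together with Theorem \ref{joint-con} to get joint continuity of the multiplication map, and then read off the conclusion for limit points from the Hausdorffness of the quotient. Nothing further is needed.
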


\begin{proof}
Let $j\in \N$.
We may assume that the group $E_j$ is nontrivial,
otherwise there is nothing to prove.
Let $u\in E$ be some limit point of the net $\{ p_\alpha q_\alpha\}$.
Without loss of generality, we may assume that $p_\alpha q_\alpha \to u$.
  By Lemma \ref{normal-center-series}, the quotient group $E/E_{j+1}$ with the quotient topology
  is an Ellis group.
  Let $\pi:E\to E/E_{j+1}$ be the quotient map,
  then $\pi$ is continuous.
  Also $E_j/E_{j+1}=\pi(E_j)$ is a closed subgroup of $E/E_{j+1}$ which is
 included in the center of $E/E_{j+1}$.

Let $\sigma$ be the map from $E_j/E_{j+1}\times E/E_{j+1}$
to $E/E_{j+1}$ given by $(\pi(h),\pi(g)) \mapsto \sigma(\pi(h),\pi(g) )= \pi(hg)$.
   Clearly the map $\sigma$ is separately continuous.
 By Theorem \ref{joint-con} it is jointly continuous.
It follows that
\[
\pi(p_\alpha q_\alpha)=\pi(p_\alpha) \pi(q_\alpha)\to \pi(p)\pi(q)=\pi(pq)
\]
 which implies that $u= pqr$ for some $r\in E_{j+1}$.

  This completes the proof.
\end{proof}

Now we give a proof of Theorem \ref{filtered-cube}.

\begin{proof}[Proof of Theorem \ref{filtered-cube}]

Let $d,l\in \N$ with $l\geq 2^d$ and let $E$ be an Ellis group with a filtration
$E=E_1\supset E_2\supset\ldots\supset E_d\supset E_{d+1}=\{e\}.$
Let $\mathcal{C}_{d+1}^{[l]}(E)=\{e^{[l]}\}$.

We claim that for $j=1,\ldots,d$,
$[\mathcal{C}_j^{[l]}(E),\mathcal{C}_1^{[l]}(E)]\subset \mathcal{C}_{j+1}^{[l]}(E)$
and $\mathcal{C}_j^{[l]}(E)$ is closed in $E^{[l]}$.
We show it by induction on $d$.

When $d=1$.
$E$ is an abelian group, so is $E^{[l]}$,
we have $[\mathcal{C}_1^{[l]}(E),\mathcal{C}_1^{[l]}(E)]=\{e^{[l]} \}$.
It follows Lemma \ref{case-abelian} that $\mathcal{C}_1^{[l]}(E)$ is closed in $E^{[l]}$.

Let $d\geq 2$ be an integer and and suppose the statement is true for all $i=1,\ldots,d-1$.
As $\mathcal{C}_d^{[l]}(E)$ is included in the center of the group
$\mathcal{C}_1^{[l]}(E)$, we have that
$[\mathcal{C}_d^{[l]}(E),\mathcal{C}_1^{[l]}(E)]=\{e^{[l]}\}=\mathcal{C}_{d+1}^{[l]}(E)$.
It follows from Lemma \ref{case-abelian} that $\mathcal{C}_d^{[l]}(E)$ is closed in $E_d^{[l]}$
and thus it is closed in $E^{[l]}$.

Let $j<d$ be an integer and assume that we have already show that for $i=j+1,\ldots,d,$
$[\mathcal{C}_i^{[l]}(E),\mathcal{C}_1^{[l]}(E)]\subset \mathcal{C}_{i+1}^{[l]}(E)$
and $\mathcal{C}_i^{[l]}(E)$ is closed.

Let $\mathbf{g}\in \mathcal{C}_1^{[l]}(E)$ and $\mathbf{h}\in \mathcal{C}_{j+1}^{[l]}(E)$,
then $\mathbf{g}^{-1}\mathbf{h}\mathbf{g}=\mathbf{h}[\mathbf{h}^{-1},\mathbf{g}^{-1}]\in \mathcal{C}_{j+1}^{[l]}(E)$.
This implies that $\mathcal{C}_{j+1}^{[l]}(E)$ is a normal subgroup of $\mathcal{C}_{1}^{[l]}(E)$.
We deduce that
$\mathcal{C}_{i}^{[l]}(E)/\mathcal{C}_{j+1}^{[l]}(E),i=1,\ldots,j$ is the group
spanned by
\[
\{g^{(F)}\mathcal{C}_{j+1}^{[l]}(E):g\in E_k, \;F\subset \{0,1\}^l ,\text{ codim$(F)=2^k$},\;k=i,\ldots, j \}.
\]

For any $g\in E_j,h\in E_s$ and any faces
$F,S$
with codim$(F)=2^{j}$, codim$(S)= 2^{s}$ where $s\in \{1,\ldots,j\}$,
we have
\[
[g^{(F)},h^{(S)}]=[g,h]^{(F\cap S)}.
\]
Notice that
$\text{codim}(F\cap S)\leq \text{codim}(F)+\text{codim}( S)\leq2^{j+1}
$
and $[g,h]\in[E_{j},E]\subset E_{j+1}$, we get that
$[g^{(F)},h^{(S)}]=[g,h]^{(F\cap S)}\in \mathcal{C}_{j+1}^{[l]}(E)$.

This shows that
$\mathcal{C}_{j}^{[l]}(E)/\mathcal{C}_{j+1}^{[l]}(E)$ is included in the
center of the group $\mathcal{C}_{1}^{[l]}(E)/\mathcal{C}_{j+1}^{[l]}(E)$
and thus $[\mathcal{C}_j^{[l]}(E),\mathcal{C}_1^{[l]}(E)]\subset \mathcal{C}_{j+1}^{[l]}(E)$.
We next show that $\mathcal{C}_j^{[l]}(E)$ is closed.

Let $G=E/E_d$.
It follows from Lemma \ref{normal-center-series}
that $G$ is a $(d-1)$-filtered Ellis group.
By inductive hypothesis,
$\mathcal{C}^{[l]}_{i}(G),i=1,\ldots,d-1$ are all closed in $G^{[l]}$.
Let $\pi: E \to G$ be the quotient map.
The map $\pi^{[l]}: E^{[l]}\to G^{[l]}$
is defined from $\pi $ coordinatewise. Then $\pi^{[l]}$ is continuous.
As $\mathcal{C}^{[l]}_{j}(G)$ is closed in $G^{[l]}$,
we get that
   \[
   (\pi^{[l]})^{-1} \mathcal{C}^{[l]}_{j}(G)
= \mathcal{C}^{[l]}_{j}(E)\cdot E_d^{[l]}
\]
is closed in $E^{[l]}$.
Notice that $E_d$ is included in the center of the group $E$,
then $\mathcal{C}^{[l]}_{j}(E)\cdot E_d^{[l]}$ is a group.
Now the group $\mathcal{C}^{[l]}_{j}(E)\cdot E_d^{[l]}$ with the subtopology
induced from $E^{[l]}$ is an Ellis group.
Moreover,
for $i=j+1,\ldots,d$, we have
\[
[\mathcal{C}^{[l]}_{i}(E),\mathcal{C}^{[l]}_{j}(E)\cdot E_d^{[l]}]
\subset [\mathcal{C}^{[l]}_{i}(E),\mathcal{C}^{[l]}_{1}(E)]
\subset \mathcal{C}^{[l]}_{i+1}(E).
\]
This shows that
\[
\mathcal{C}^{[l]}_{j}(E)\cdot E_d^{[l]}\supset \mathcal{C}^{[l]}_{j+1}(E)\supset \ldots \supset
 \mathcal{C}^{[l]}_{d}(E)\supset \{e^{[l]}\}
 \]
 is a filtration on $\mathcal{C}^{[l]}_{j}(E)\cdot E_d^{[l]}$.
 By Lemma \ref{normal-center-series},
the quotient group
\[
W=\mathcal{C}^{[l]}_{j}(E)\cdot E_d^{[l]}/\mathcal{C}^{[l]}_{j+1}(E)
\]
with the quotient topology is an Ellis group. Moreover,
it is an abelian topological group by Corollary \ref{top-group}.

Let
\[
\Omega_{2^j}^l=\{F_1\succ\ldots\succ F_{n}\},
\]
where $n=|\Omega_{2^j}^l|$.
By the argument in Lemma \ref{case-abelian},
we deduce that for any $\mathbf{h}\in \mathcal{C}_j^{[l]}(E)$,
there exist $h_1,\ldots,h_n\in E_j$ such that
\begin{equation}\label{QQQQ}
\mathbf{h}\mathcal{C}_{j+1}^{[l]}(E)=\prod_{i=1}^{n}h_i^{(F_i)}\mathcal{C}_{j+1}^{[l]}(E).
\end{equation}

Let $\{\mathbf{g}_\alpha\}$ be a net in $\mathcal{C}_j^{[l]}(E)$ with
$\mathbf{g}_\alpha\to \mathbf{g}$, where $\mathbf{g}\in E^{[l]}$.
As $\mathcal{C}_j^{[l]}(E)\cdot E_d^{[l]}$ is closed in $E^{[l]}$ and it also contains
$\mathcal{C}_j^{[l]}(E)$, we get that $\mathbf{g}\in \mathcal{C}_j^{[l]}(E)\cdot E_d^{[l]}$
and
\[
\mathbf{g}_\alpha\mathcal{C}_{j+1}^{[l]}(E)\to \mathbf{g}\mathcal{C}_{j+1}^{[l]}(E) \quad \text{in }W.
\]
By (\ref{QQQQ}),
for every $\alpha$ there exist $g_{\alpha,i}\in E_j,i=1,\ldots,n$
such that
\[
\mathbf{g}_{\alpha}\mathcal{C}_{j+1}^{[l]}(E)=\prod_{i=1}^{n}
g_{\alpha,i}^{(F_i)}\mathcal{C}_{j+1}^{[l]}(E).
\]
Without loss of generality, by taking subnet we may assume that
\[
g_{\alpha,i}\to g_i,\quad \text{in  } E_j,
\]
for $g_i\in E_j,i=1,\ldots,n$.
Then for every $i$,
\[
g_{\alpha,i}^{(F_i)}\to g_i^{(F_i)}\quad \text{in  } \mathcal{C}_j^{[l]}(E)\cdot E_d^{[l]}.
\]
As $W$ is a topological group,
by considering the projection on $W$,
we have
\[
\mathbf{g}_{\alpha}\mathcal{C}_{j+1}^{[l]}(E)=\prod_{i=1}^{n}
g_{\alpha,i}^{(F_i)}\mathcal{C}_{j+1}^{[l]}(E)
\to \prod_{i=1}^{n}g_{i}^{(F_i)}\mathcal{C}_{j+1}^{[l]}(E)\quad \text{in  } W,
\]
which implies that
\[
\mathbf{g}\mathcal{C}_{j+1}^{[l]}(E)=\prod_{i=1}^{n}g_{i}^{(F_i)}\mathcal{C}_{j+1}^{[l]}(E).
\]
From this, we deduce that $\mathbf{g}\in\mathcal{C}_j^{[l]}(E) $
and thus $\mathcal{C}_j^{[l]}(E)$ is closed.

By induction we conclude that
$[\mathcal{C}_j^{[l]}(E),\mathcal{C}_1^{[l]}(E)]\subset \mathcal{C}_{j+1}^{[l]}(E)$
and $\mathcal{C}_j^{[l]}(E)$ is closed in $E^{[l]}$ for $j=1,\ldots,d$.

This completes the proof.
\end{proof}

\subsection{Top-nilpotent Ellis groups}

 Let $E$ be an Ellis group.
  For $A,B \subset E$, define $[A,B]_{\text{top}}$
as the closure of the subgroup $[A,B]$ spanned by $\{[a,b]:a\in A,b\in B\}$.
The topological commutators subgroups $E_j^{\text{top}},j\geq 1$,
are defined by setting $E_1^{\text{top}}=E$ and $E_{j+1}^{\text{top}}=[E_j^{\text{top}},E]_{\text{top}}$.
It is easy to see that $  E_{j}^{\text{top}} \supset E_{j+1}^{\text{top}}, j\geq 1.$
Let $d\in \N$.
We say that $E$ is
\emph{d-step top-nilpotent} if $E_{d+1}^{\text{top}}$ is the trivial subgroup.

\begin{lemma}\label{subgroup}
Let $E$ be an Ellis group, then
 $E_d^{\mathrm{top}}$
is a subgroup of $E$ for all $d\in \N$.
\end{lemma}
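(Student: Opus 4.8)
The statement is purely a fact about the interplay of topology and the algebraic (commutator) structure in an Ellis group, so the work is topological-algebraic rather than dynamical. The plan is to prove, by induction on $d$, the slightly stronger assertion that $E_d^{\mathrm{top}}$ is a \emph{closed normal} subgroup of $E$. For $d=1$ this is trivial, and for $d\geq 2$ the set $E_d^{\mathrm{top}}$ is closed by definition (it is a closure), so the real content is the group and normality properties. Assume $H:=E_j^{\mathrm{top}}$ is a closed normal subgroup. Then the subgroup $N:=[H,E]$ generated by all $[a,b]$ with $a\in H$, $b\in E$ is itself normal in $E$, since a conjugate of a generator is $g[a,b]g^{-1}=[gag^{-1},gbg^{-1}]$ and $gag^{-1}\in H$ by normality of $H$. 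Since $E_{j+1}^{\mathrm{top}}=\overline{N}$, the whole inductive step reduces to the following standalone claim: \emph{the closure of a normal subgroup of an Ellis group is again a closed normal subgroup.}

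\textbf{The subgroup part.} Here I would use only right-continuity together with the normality of $N$. First, for $n\in N$ the map $R_n$ is a homeomorphism of $E$ fixing $N$ setwise, so $\overline{N}n=\overline{N}$; hence $\overline{N}\cdot N=\overline{N}$. Next, if $x\in\overline{N}$, pick a net $m_\alpha\in N$ with $m_\alpha\to x$; then for $n\in N$ continuity of $R_n$ gives $xn=\lim_\alpha m_\alpha n\in\overline{N}$, so $xN\subseteq\overline{N}$. Using normality of $N$ we deduce, for $x\in\overline{N}$, that $\overline{N}x=\overline{Nx}=\overline{xN}\subseteq\overline{N}$, and therefore $\overline{N}\cdot\overline{N}\subseteq\overline{N}$. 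Thus $\overline{N}$ is a compact subsemigroup of $E$ containing $e$. For $x\in\overline{N}$, the set $\overline{N}x=R_x(\overline{N})$ is again a compact (right-topological) subsemigroup, so by the Ellis--Numakura lemma it contains an idempotent $u=vx$ with $v\in\overline{N}$; applying cancellation in the group $E$ to $vxvx=vx$ yields $vx=e$, so $x$ has a left inverse in $\overline{N}$. Since $e$ is a two-sided identity, $\overline{N}$ is then a group, i.e.\ a closed subgroup of $E$.

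\textbf{The normality part — the main obstacle.} The reason normality of $\overline{N}$ is delicate is exactly that left translations in an Ellis group need not be continuous, so one cannot push conjugation through a closure by a naive limiting argument. The approach I would take is to pass to the abstract quotient $q\colon E\to E/N$, which with the quotient topology is a compact (generally non-Hausdorff) right-topological group for which $q$ is open and continuous, so that $q^{-1}\bigl(\overline{\{\bar e\}}\bigr)=\overline{q^{-1}(\{\bar e\})}=\overline{N}$. It then suffices to show that $\overline{\{\bar e\}}$ is a normal subgroup of $E/N$: the subgroup property follows as above (since $\overline{\{\bar e\}}\,y=\overline{\{y\}}\subseteq\overline{\{\bar e\}}$ for $y\in\overline{\{\bar e\}}$, together with the compact-semigroup argument), and one then verifies that the closure of the identity is normal. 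Pulling back through $q$ gives that $E_{j+1}^{\mathrm{top}}=\overline{N}$ is a closed normal subgroup of $E$, completing the induction. The only place where genuine care is needed throughout is this reconciliation of the topological closure with the conjugation structure in the absence of two-sided (indeed even left) continuity; the rest is routine bookkeeping with commutator identities.
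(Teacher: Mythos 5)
Your subgroup argument is correct, and it is a cleaner, slightly more general packaging of what the paper does. The paper proves $pq\in E_d^{\mathrm{top}}$ by writing $p_\alpha q=q\bigl(p_\alpha[p_\alpha^{-1},q^{-1}]\bigr)$ for a net $p_\alpha\to p$ in $N=[E_{d-1}^{\mathrm{top}},E]$, i.e.\ it uses the specific commutator structure of $N$ to move the limit to the left of a fixed element; you instead use only that $N$ is normal in $E$, via $\overline{N}x=\overline{Nx}=\overline{xN}\subseteq\overline{N}$, which shows in general that the closure of a normal subgroup of an Ellis group is a closed subsemigroup. For inverses both arguments are the same: the paper runs the Zorn's-lemma idempotent argument on $E_d^{\mathrm{top}}p$, you invoke Ellis--Numakura on $\overline{N}x$; cancellation in the group $E$ then forces the idempotent to be $e$. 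What your version buys is a statement that does not depend on $N$ being a commutator subgroup; what it costs is that normality of $N=[E_j^{\mathrm{top}},E]$, hence of $E_j^{\mathrm{top}}$, must now be carried along in the induction, which the paper's product step does not literally require.

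That is where the gap is. Your justification of the normality step --- pass to the compact right-topological quotient $E/N$ and ``verify that the closure of the identity is normal'' --- is exactly the kind of claim that is not available in an Ellis group: normality of $\overline{\{\bar e\}}$ would follow from continuity of conjugation $x\mapsto \bar g x\bar g^{-1}=L_{\bar g}(R_{\bar g^{-1}}(x))$, but $L_{\bar g}$ need not be continuous, so one cannot conclude $\bar g\,\overline{\{\bar e\}}\,\bar g^{-1}\subseteq\overline{\{\bar g\bar e\bar g^{-1}\}}=\overline{\{\bar e\}}$. As written this step would not survive scrutiny. Fortunately the detour through the quotient is unnecessary, because normality of $\overline{N}=E_{j+1}^{\mathrm{top}}$ is purely algebraic once the subgroup property is in hand: for $a\in\overline{N}$ and $g\in E$ one has $gag^{-1}=[g,a]\,a$, and since $a\in\overline{N}\subseteq E_j^{\mathrm{top}}$ (using the inductive normality of $E_j^{\mathrm{top}}$ to see $[E_j^{\mathrm{top}},E]\subseteq E_j^{\mathrm{top}}$), the element $[g,a]=[a,g]^{-1}$ lies in $[E_j^{\mathrm{top}},E]=N\subseteq\overline{N}$, whence $gag^{-1}\in\overline{N}\cdot\overline{N}\subseteq\overline{N}$. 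With this replacement your induction closes, and the ``main obstacle'' you identified evaporates; the paper itself never proves normality inside this lemma but relies on essentially this commutator identity elsewhere (cf.\ its footnote that every term of a filtration is normal).
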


\begin{proof}
$E_1^{\mathrm{top}}=E$ is a group.
Let $d\geq2 $ be an integer
 and let $p,q\in E_d^{\text{top}}$.

 We will show that $pq,p^{-1} \in E_d^{\text{top}}$ and thus $E_d^{\text{top}}$ is a group.

Note that $E_d^{\text{top}}$ is the closure of the subgroup $[E_{d-1}^{\text{top}},E]$,
let $\{ p_\alpha\},\{q_\beta\}$ be nets in $[E_{d-1}^{\text{top}},E]$ with $p_\alpha\to p$ and $q_\beta\to q$.
For $r\in[E_{d-1}^{\text{top}},E]$, by right-continuity we have $q_\beta r\to q r$,
  which implies $q r\in E_d^{\text{top}}$.
 Since for every $\alpha$,
 $p_\alpha[p_\alpha^{-1},q^{-1}]\in [E_{d-1}^{\text{top}},E]$
and $p_\alpha q=q(p_\alpha [p_\alpha^{-1},q^{-1}])$,
we get that $p_\alpha q\in  E_d^{\text{top}}$.
By taking limit, we obtain $pq\in E_d^{\text{top}}$.

  Put $H= E_d^{\text{top}} p$.
  Then $H\subset  E_d^{\text{top}}$ and $H$ is closed.
  Let $B$ be a minimal closed subset of $H$ satisfying $B\cdot B\subset B$.
Such  $B$ exists by Zorn's lemma. Now
if $g\in B$, then $Bg$ is again closed and $Bg \cdot Bg \subset  Bg$. So $Bg=B$. Therefore
there exists $u\in B$ with $ug=g$. Then $u=e$ and $e\in H$,
which implies $p^{-1}\in E_d^{\text{top}}$.

We conclude that $E_d^{\text{top}}$
is a subgroup of $E$ for all $d\in \N$.
\end{proof}

We give a summary of what we will need in future to end this section.

\begin{prop}\label{cube-group-closed}
 Let $d,l\in \N$ with $l\geq 2^d$.
Let $E$ be a $d$-step top-nilpotent Ellis group.
For $j=1,\ldots,d$,
let $\mathcal{C}_j^{[l]}(E)$ be the subgroup of $E^{[l]}$ spanned by
\[
\{g^{(F)}:g\in E_i^{\text{top}}, F\subset \{0,1\}^l, \text{ codim$(F)=2^i$},\; i=j,\ldots, d \}.
\]
Then we have the following properties:

(1) $[\mathcal{C}_j^{[l]}(E),\mathcal{C}_1^{[l]}(E)]\subset \mathcal{C}_{j+1}^{[l]}(E),j=1,\ldots,d-1$.

(2) Endow $E^{[l]}$ with the product topology, then
$\mathcal{C}_j^{[l]}(E),j=1,\ldots,d$, are all closed.

(3) Let $\{p_\alpha\}$ be a net in $E$ and $\{q_\alpha\}$ be a net in $E_j^{\mathrm{top}}$
such that $p_\alpha \to p$ and $q_\alpha\to q$,
then every limit point of the net $\{ p_\alpha q_\alpha\}$ belongs to $pqE_{j+1}^{\mathrm{top}}$.

(4)  Let $\mathbf{g}=(g_\epsilon:\epsilon\in \{0,1\}^l)\in \mathcal{C}_d^{[l]}(E)$, then
 \[
\prod_{\epsilon\in \{0,1\}^l}g_\epsilon^{(-1)^{|\epsilon|}}=e,
\]
where $|\epsilon|$ denotes the sum $\sum_{i=1}^{l}\epsilon_i$ for $\epsilon=(\epsilon_1,\ldots,\epsilon_l)$.

(5) Let $\widetilde{E}^{[l]}$ be the closed subgroup generated by the Host-Kra cubegroup
 $\mathcal{HK}^{[l]}(E)$,
 then $\widetilde{E}^{[l]}$ is also $d$-step top-nilpotent and for every $j=1,\ldots,d$,
 we have
 \[
 (\widetilde{E}^{[l]})_j^{\text{top}}\subset \mathcal{C}_j^{[l]}(E).
 \]
\end{prop}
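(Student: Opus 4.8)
The plan is to obtain Proposition~\ref{cube-group-closed} by feeding the top-nilpotent structure of $E$ into the results already established for filtered Ellis groups. The key preliminary observation is that, since $E$ is $d$-step top-nilpotent, the descending sequence $E=E_1^{\mathrm{top}}\supset E_2^{\mathrm{top}}\supset\ldots\supset E_d^{\mathrm{top}}\supset E_{d+1}^{\mathrm{top}}=\{e\}$ is a filtration of length $d$ on $E$ in the sense of Section~\ref{NIL}: each $E_j^{\mathrm{top}}$ is closed because it is a closure, it is a subgroup by Lemma~\ref{subgroup}, and $[E_j^{\mathrm{top}},E]\subset[E_j^{\mathrm{top}},E]_{\mathrm{top}}=E_{j+1}^{\mathrm{top}}$ by definition. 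With $E_k:=E_k^{\mathrm{top}}$, the binary cubegroups $\mathcal{C}_j^{[l]}(E)$ appearing in the proposition are precisely those of Definition~\ref{def-cube-group} for this filtration, so item (1) and item (2) follow at once from Theorem~\ref{filtered-cube}, and item (3) follows at once from Lemma~\ref{product} (here $q\in E_j^{\mathrm{top}}$ automatically, $E_j^{\mathrm{top}}$ being closed).

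For item (4), I would first note that $[E_d^{\mathrm{top}},E]\subset E_{d+1}^{\mathrm{top}}=\{e\}$, so $E_d^{\mathrm{top}}$ is central, hence abelian; thus $\mathcal{C}_d^{[l]}(E)$ is a subgroup of the abelian group $(E_d^{\mathrm{top}})^{[l]}$. On an abelian group the map $\mathbf{g}=(g_\epsilon:\epsilon\in\{0,1\}^l)\mapsto\prod_{\epsilon\in\{0,1\}^l}g_\epsilon^{(-1)^{|\epsilon|}}$ is a well-defined homomorphism (the order in the product is irrelevant), so it is enough to check that it annihilates every generator $g^{(F)}$, where $g\in E_d^{\mathrm{top}}$ and $F$ is a face of codimension $2^d$. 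For such a generator the product collapses to $g^{\,c}$ with $c=\sum_{\epsilon\in F}(-1)^{|\epsilon|}$, and since $F$ leaves at least one coordinate free (its codimension $2^d$ being smaller than $l$), summing over that coordinate gives $c=0$. Hence $\prod_\epsilon g_\epsilon^{(-1)^{|\epsilon|}}=e$ for every $\mathbf{g}\in\mathcal{C}_d^{[l]}(E)$.

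For item (5) the main point is the inclusion $\widetilde{E}^{[l]}\subset\mathcal{C}_1^{[l]}(E)$. Given a hyperface $F=\{\epsilon\in\{0,1\}^l:\epsilon_i=a\}$ and $h\in E=E_1^{\mathrm{top}}$, I would choose a coordinate $j\neq i$ (possible since $l\geq 2$) and write $F=F_0\sqcup F_1$ with $F_b=\{\epsilon\in F:\epsilon_j=b\}$; then $F_0,F_1$ are faces of codimension $2=2^1$ with disjoint supports, so $h^{(F)}=h^{(F_0)}h^{(F_1)}\in\mathcal{C}_1^{[l]}(E)$. Therefore $\mathcal{HK}^{[l]}(E)\subset\mathcal{C}_1^{[l]}(E)$, and since $\mathcal{C}_1^{[l]}(E)$ is closed by item (2), the closed subgroup $\widetilde{E}^{[l]}$ that $\mathcal{HK}^{[l]}(E)$ generates is contained in it; in particular $(\widetilde{E}^{[l]})_1^{\mathrm{top}}=\widetilde{E}^{[l]}\subset\mathcal{C}_1^{[l]}(E)$. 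An induction then finishes the job: if $(\widetilde{E}^{[l]})_j^{\mathrm{top}}\subset\mathcal{C}_j^{[l]}(E)$, then by items (1) and (2),
\[
(\widetilde{E}^{[l]})_{j+1}^{\mathrm{top}}=\overline{[(\widetilde{E}^{[l]})_j^{\mathrm{top}},\widetilde{E}^{[l]}]}\subset\overline{[\mathcal{C}_j^{[l]}(E),\mathcal{C}_1^{[l]}(E)]}\subset\mathcal{C}_{j+1}^{[l]}(E).
\]
Running this up to $j=d$ and setting $\mathcal{C}_{d+1}^{[l]}(E)=\{e^{[l]}\}$ yields $(\widetilde{E}^{[l]})_{d+1}^{\mathrm{top}}=\{e^{[l]}\}$; since $\widetilde{E}^{[l]}$ is a closed subgroup of the Ellis group $E^{[l]}$ it is itself an Ellis group, and so it is $d$-step top-nilpotent, completing (5).

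I do not expect a serious obstacle: the substantive work was already carried out in Theorem~\ref{filtered-cube} and in the lemmas of Section~\ref{NIL} (Lemma~\ref{normal-center-series} and Lemma~\ref{product} in particular), and this proposition is in effect a bookkeeping statement assembling their consequences for the top-nilpotent filtration. The two places that need a moment's care are verifying that the top-nilpotent lower central series is genuinely a filtration (so that Theorem~\ref{filtered-cube} applies without change) and the hyperface-splitting step in (5), which is exactly what lets one compare $\mathcal{HK}^{[l]}(E)$, built from codimension-$1$ faces, with $\mathcal{C}_1^{[l]}(E)$, built from codimension-$2$ faces.
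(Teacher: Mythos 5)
Your proposal is correct and follows essentially the same route as the paper: the top-nilpotent series is verified to be a filtration (via Lemma \ref{subgroup}), items (1)--(3) are read off from Theorem \ref{filtered-cube} and Lemma \ref{product}, item (4) uses centrality of $E_d^{\mathrm{top}}$ to reduce to generators, and item (5) is the inclusion $\mathcal{HK}^{[l]}(E)\subset\mathcal{C}_1^{[l]}(E)$ followed by induction on the topological commutator series. Your hyperface-splitting step $h^{(F)}=h^{(F_0)}h^{(F_1)}$ makes explicit a detail the paper only asserts, and note that in (4) both your argument and the paper's implicitly need $l>2^d$ (a codimension-$2^d$ face must leave a coordinate free), which is how the proposition is actually applied later.
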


\begin{proof}

By Lemma \ref{subgroup}
we get that for a $d$-step top-nilpotent Ellis group
$E$, its
topological commutators subgroups form a filtration.
That is,
\[
E=E_1^{\text{top}}\supset
E_{2}^{\text{top}}\supset \ldots \supset
E_{d}^{\text{top}} \supset E_{d+1}^{\text{top}}=\{e\}.
\]

So,
properties (1) and (2) follow from Theorem \ref{filtered-cube},
and property (3) follows from Lemma \ref{product}.
It remains to show properties (4) and (5).

Let $\mathbf{g}=(g_\epsilon:\epsilon\in \{0,1\}^l)\in \mathcal{C}_d^{[l]}(E)$, then
there exist $n\in \N$ and $g_i\in E_d^{\text{top}}$,
faces $F_i\subset \{0,1\}^l$ of codimension $2^d,i=1,\ldots,n$
such that
\[
\mathbf{g}=\prod_{i=1}^{n}g_i^{(F_i)}.
\]
Notice that $E_d^{\text{top}}$ is included in the center of the group of $E$,
thus
 \[
\prod_{\epsilon\in \{0,1\}^l}g_\epsilon^{(-1)^{|\epsilon|}}=
\prod_{\epsilon\in \{0,1\}^l}\prod_{\epsilon\in F_i,1\leq i\leq n}g_i^{(-1)^{|\epsilon|}}=
\prod_{i=1}^{n}
\prod_{\epsilon\in F_i}g_i^{(-1)^{|\epsilon|}}=e.
\]
This shows property (4).

Recall that the Host-Kra cubegroup $\mathcal{HK}^{[l]}(E)$ is the subgroup spanned by
\[
\{g^{(F)}:g\in E, \;F\subset \{0,1\}^l, \text{ codim$(F)=1$} \},
\]
then $\mathcal{HK}^{[l]}(E)\subset \mathcal{C}_1^{[l]}(E)$.
By property (2), $\mathcal{C}_1^{[l]}(E)$ is a closed subgroup and thus we have that
$\widetilde{E}^{[l]} \subset\mathcal{C}_1^{[l]}(E)$.
By induction and properties (1) and (2), we can easily get
\[
 (\widetilde{E}^{[l]})_j^{\text{top}}\subset \mathcal{C}_j^{[l]}(E),\quad j=1,\ldots,d.
 \]

 This completes the proof.
\end{proof}

\section{Minimal systems with top-nilpotent enveloping semigroups}

In this section, we discuss minimal systems
 with top-nilpotent enveloping semigroups and show that such systems
are indeed systems of order $\infty$.

For a distal system $(X,G)$,
let $\{  E_d^{\text{top}}(X)\}_{d\in \N}$ denote the sequence of topological commutators of $E(X)$.
Let $\pi :X\to Y$ be the factor map between systems $(X,G)$ and $(Y,G)$.
There is a unique continuous semigroup homomorphism $\pi^*:E(X)\to E(Y)$
such that $\pi(ux)=\pi^*(u)\pi(x)$ for all $x\in X$ and $u\in E(X)$.

Note that if $\pi :X\to Y$ is a factor map between distal systems, we have that
$\pi^*(E_d^{\text{top}}(X))=E_d^{\text{top}}(Y)$ for every $d\in \N$.

\subsection{Enveloping
semigroups of systems of order $d$}
The results in this subsection
have been proven in \cite{SD14} for $G=\Z$.
When $G$ is abelian, it is easy to generalize the proof.
So we refer \cite{SD14} for the proofs.

\begin{lemma}\label{ellisgroup-of-order-d}
Let $(X,G)$ be a system of order $d$.
  Then, its enveloping semigroup is $d$-step top-nilpotent.
\end{lemma}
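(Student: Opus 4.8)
The plan is to reduce the statement to Donoso's theorem \cite{SD14} that the enveloping semigroup of a $d$-step pro-nilsystem is $d$-step top-nilpotent, and then to check that the reduction is insensitive to replacing $\mathbb{Z}$ by a countable abelian group. First I would invoke the structure theorem (Theorem \ref{description} and its abelian analogue): a minimal system of order $d$ is an inverse limit of minimal $d$-step nilsystems $(X_i)_{i\in\N}$, and in particular it is distal, so $E(X)$ is a group (Theorem \ref{distal-ellis-group}). The natural map $E(X)\to \prod_{i\in\N}E(X_i)$ induced by the projections is an injective continuous semigroup homomorphism with compact (hence closed) image, so $E(X)$ is a closed subgroup of $\prod_{i\in\N}E(X_i)$. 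It then suffices to observe that a closed subgroup $H$ of a product of $d$-step top-nilpotent Ellis groups is again $d$-step top-nilpotent: by induction on $j$ one checks $H_j^{\mathrm{top}}\subset \prod_{i\in\N}E_j^{\mathrm{top}}(X_i)$, which is trivial for $j=d+1$. This reduces the lemma to the case of a single minimal $d$-step nilsystem $X=L/\Gamma$.

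For a single nilsystem I would first carry out the standard reduction to the case in which $L$ is spanned by its identity component $L_0$ and the translation element $\tau$; this changes neither the system nor the $d$-step nilpotency of $L$, and each $L_{j+1}=[L_j,L]$ remains a closed subgroup, contained in $L_0$ for $j\geq1$. Next I would use the description of the enveloping semigroup of a minimal nilsystem from \cite{SD14} (building on work of Glasner): $E(X)$ is a quotient of a $d$-step nilpotent group $\widehat{L}$ assembled from $L_0$ together with a suitable compactification of $\langle\tau\rangle$, whose commutator subgroups satisfy $\widehat{L}_j=L_j$ for every $j\geq2$; in particular these are closed and $\widehat{L}_{d+1}=\{e\}$. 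Transporting this to $E(X)$, the ordinary lower central series of $E(X)$ is closed, hence coincides with its topological lower central series, and terminates at step $d+1$, so $E(X)$ is $d$-step top-nilpotent.

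Finally I would point out that every ingredient used — the structure theorem for systems of order $d$, the identification of the enveloping semigroup of an inverse limit with a closed subgroup of the product, the reduction for nilsystems, and the description of $E(X)$ for a nilsystem — holds with the same proofs for a countable abelian acting group $G$ rather than $\mathbb{Z}$; this is exactly the remark already made for the other results of this subsection, so I would simply refer to \cite{SD14}. The step I expect to be the main obstacle is entirely in the single-nilsystem case: one has to understand $E(X)$ finely enough to see that passing to the \emph{closures} of the commutator subgroups does not enlarge them — equivalently, that the "completion part" of $\widehat{L}$ is compatible with the lower central series of $L$ — and this is the heart of Donoso's argument, which we do not reproduce here. (An alternative route would be to induct on $d$ using the factor map $X\to X/\mathbf{RP}^{[d-1]}(X)$ and the induced surjection $E(X)\to E(X/\mathbf{RP}^{[d-1]}(X))$, reducing the problem to showing that the kernel of this surjection is topologically central in $E(X)$; but making this precise would require knowing in advance that the extension $X\to X/\mathbf{RP}^{[d-1]}(X)$ is an extension by a compact abelian group, which is itself substantial.)
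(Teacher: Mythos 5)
The paper gives no proof of this lemma at all: Section 5.1 simply states that the results of that subsection were proven in \cite{SD14} for $G=\Z$ and that the abelian generalization is routine. So the only meaningful comparison is with Donoso's actual argument, which is a direct one with dynamical cubes: for $p,q\in E(X)$ and faces $F,F'$ one has $[p^{(F)},q^{(F')}]=[p,q]^{(F\cap F')}$, the relevant face semigroups and their closures preserve $\mathbf{Q}^{[k]}(X)$, and the unique-completion property of $\mathbf{Q}^{[d+1]}(X)$ (item (1) of Theorem \ref{description}, via Theorem \ref{cube-minimal}) forces every $(d+1)$-fold topological commutator to act as the identity. Nothing there uses the inverse-limit structure theorem, which is exactly why the argument transfers to countable abelian $G$ (using \cite{GGY} for the cube machinery).

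Your route is genuinely different, and it has two real gaps. First, you open by invoking ``the abelian analogue'' of Theorem \ref{description} to write a system of order $d$ as an inverse limit of minimal $d$-step nilsystems. No such analogue is available: Theorems \ref{description} and \ref{system-of-order} are stated only for a single transformation, \cite{GGY} generalizes only the equivalence-relation and lifting properties of $\mathbf{RP}^{[d]}$, and the structure theorem for general abelian actions is emphatically not something that ``holds with the same proofs.'' This is fatal for the intended use of the lemma, which the paper applies to the systems $(\mathbf{Q}^{[l]}(X),\mathcal{G}^{[l]})$ whose acting group is generated by many commuting transformations. Second, even for $G=\Z$ your single-nilsystem step rests on a claimed description from \cite{SD14} of $E(X)$ as a quotient of a $d$-step nilpotent group $\widehat{L}$ with $\widehat{L}_j=L_j$ for $j\geq 2$; no such description is in \cite{SD14}, and it is not true in any straightforward sense (the enveloping semigroup of a nontrivial nilsystem contains discontinuous maps and is much larger than the translation group), so the ``heart of Donoso's argument'' that you defer to the reference is not what the reference actually supplies. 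Your intermediate reduction — that a closed subgroup of a product of $d$-step top-nilpotent Ellis groups is again $d$-step top-nilpotent — is fine; it is the two endpoints of the reduction that fail.
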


\begin{lemma}\label{uniform}
  Let $(X,G)$ be a minimal distal system.
If $E(X)$ is $d$-step top-nilpotent,
then $E_{d}^{\text{top}}(X)$ is a compact group of automorphisms of $(X,G)$
in the uniform topology.
\end{lemma}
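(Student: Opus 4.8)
The plan is to realize $E_d^{\mathrm{top}}(X)$ as a closed central subgroup of $E(X)$, then to show that each of its elements acts as an automorphism of $(X,G)$, and finally to use joint continuity of the resulting action to identify the pointwise topology with the uniform one.

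First I would note that since $E(X)$ is $d$-step top-nilpotent, the group $E_{d+1}^{\mathrm{top}}(X)=[E_d^{\mathrm{top}}(X),E(X)]_{\mathrm{top}}$ is trivial; as $[E_d^{\mathrm{top}}(X),E(X)]\subset E_{d+1}^{\mathrm{top}}(X)$, already the plain commutator subgroup vanishes, so $E_d^{\mathrm{top}}(X)$ lies in the center of $E(X)$, and in particular commutes with $G\subset E(X)$. By Lemma \ref{subgroup} it is a subgroup, and it is closed in the compact space $E(X)$ (for $d\ge2$ it is a closure by definition, and for $d=1$ it is all of $E(X)$), hence compact.

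Next I would show that every $p\in E_d^{\mathrm{top}}(X)$ defines a continuous self-map of $X$. Given a net $x_\alpha\to x$, minimality of $(X,G)$ makes the evaluation $q\mapsto qx$ a continuous surjection $E(X)\to X$, so I may write $x_\alpha=q_\alpha x$ for suitable $q_\alpha\in E(X)$; passing to a subnet with $q_\alpha\to q$, continuity of evaluation gives $qx=\lim q_\alpha x=\lim x_\alpha=x$. Using centrality of $p$ to slide it past $q_\alpha$,
\[
px_\alpha=p(q_\alpha x)=(q_\alpha p)x=q_\alpha(px)\longrightarrow q(px)=(pq)x=p(qx)=px,
\]
the convergence being continuity of the evaluation $q\mapsto q(px)$. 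Hence every subnet of $(px_\alpha)$ has a subnet converging to $px$, so $px_\alpha\to px$; thus $x\mapsto px$ is continuous, and since $(X,G)$ distal forces $E(X)$ to be a group, $p$ is a bijection of the compact Hausdorff space $X$, hence a homeomorphism commuting with $G$. Therefore $E_d^{\mathrm{top}}(X)$ is a group of automorphisms of $(X,G)$.

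Finally I would equip $E_d^{\mathrm{top}}(X)$ with the pointwise-convergence topology inherited from $E(X)$: it is then compact Hausdorff, its right translations are continuous, and the action $(p,x)\mapsto px$ is separately continuous (each $p$ acts continuously by the previous step, and for fixed $x$ the map $p\mapsto px$ is a coordinate evaluation). Theorem \ref{joint-con} then yields that this action is jointly continuous, and a routine net argument (if $p_\alpha\to p$ pointwise but $\sup_x\rho(p_\alpha x,px)\not\to0$, pick $x_\alpha$ with $\rho(p_\alpha x_\alpha,px_\alpha)\ge\varepsilon$, pass to a subnet with $x_\alpha\to x_*$, and combine joint continuity with continuity of $p$ to force $\rho(p_\alpha x_\alpha,px_\alpha)\to0$, a contradiction) shows that the pointwise and uniform topologies coincide on $E_d^{\mathrm{top}}(X)$. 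Hence it is a compact group of automorphisms of $(X,G)$ in the uniform topology. The main obstacle is the continuity step: the only handle on a general element of $E(X)$ is right-continuity of multiplication, and the device that makes it work is to push the central element $p$ past the moving factor $q_\alpha$ so that $q_\alpha$ is ever only evaluated at the fixed points $x$ and $px$, where evaluation is continuous; one must also recall that the subnet extractions still force convergence of the original nets, since $X$ is compact Hausdorff.
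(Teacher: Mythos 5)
Your proof is correct, and it takes essentially the same route as the source the paper relies on: the paper does not prove this lemma itself but defers to \cite{SD14}, where the argument is exactly your chain of steps --- $E_{d}^{\text{top}}(X)$ is central because $[E_d^{\mathrm{top}}(X),E(X)]\subset E_{d+1}^{\mathrm{top}}(X)=\{e\}$, central elements of the enveloping semigroup of a minimal system are continuous via the write-$x_\alpha=q_\alpha x$-and-slide-$p$-past-$q_\alpha$ subnet trick, and Ellis's joint continuity theorem (Theorem \ref{joint-con}) upgrades pointwise to uniform convergence on the compact group. The only cosmetic gap is that you do not explicitly note that the resulting compact group is a topological group, but this follows at once from Corollary \ref{top-group} since left translations $q\mapsto pq$ are continuous once each $p$ is known to act continuously on $X$.
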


\subsection{Equivalence relations generated by enveloping semigroups}
In this subsection,
let $d\geq2$ be an integer and let $(X,G)$ be a minimal system
with a $d$-step top-nilpotent enveloping semigroup $E(X)$.
For $j=1,\ldots,d$,
let
\[
\mathbf{R}_j(X)=\{(x,px):x\in X,p\in E_{j+1}^{\mathrm{top}}(X)\}.
\]
\begin{lemma}\label{euqi}

$\mathbf{R}_j(X)$ is a closed invariant equivalence relation,
and the factor $X_{j}=X/\mathbf{R}_j(X)$ has a $j$-step top-nilpotent
enveloping semigroup.
  Moreover it is the maximal factor of $X$ with this property and consequently
  $X_j$ is an extension of $X/\mathbf{RP}^{[j]}(X)$.
\end{lemma}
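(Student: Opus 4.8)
The plan is to verify the claimed properties of $\mathbf{R}_j(X)$ in four stages, using that $E(X)$ is $d$-step top-nilpotent (so $(X,G)$ is distal by Theorem \ref{distal-ellis-group}, $E(X)$ is a group, and by Lemma \ref{subgroup} the sequence $\{E_k^{\text{top}}(X)\}$ is an actual filtration). First I would show $\mathbf{R}_j(X)$ is a closed invariant equivalence relation. Reflexivity is clear ($e\in E_{j+1}^{\text{top}}(X)$); symmetry and transitivity follow because $E_{j+1}^{\text{top}}(X)$ is a group (Lemma \ref{subgroup}): if $y=px$ and $z=qy$ with $p,q\in E_{j+1}^{\text{top}}(X)$ then $x=p^{-1}y$ and $z=qpx$. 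Invariance under $G$ holds since each element of $G$ commutes (in $E(X)$) with everything acting on the left, so $g(px)=p(gx)$. For closedness, the key input is Lemma \ref{uniform}: since $E(X)$ is $d$-step top-nilpotent, $E_d^{\text{top}}(X)$ — and, applying the same lemma to appropriate quotients or arguing directly, each $E_{j+1}^{\text{top}}(X)$ — is a \emph{compact} group of homeomorphisms of $X$ in the uniform topology; hence $\mathbf{R}_j(X)=\{(x,px):x\in X,\ p\in E_{j+1}^{\text{top}}(X)\}=R_{K}$ for the compact group $K=E_{j+1}^{\text{top}}(X)$, which is a closed relation, and in fact $X_j=X/\mathbf{R}_j(X)$ is a \emph{group extension} of the quotient (hence isometric, anticipating the remark after the lemma).

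Second, I would identify the enveloping semigroup of $X_j$. Let $\pi_j:X\to X_j$ be the factor map and $\pi_j^*:E(X)\to E(X_j)$ the induced continuous surjective homomorphism. Because $X_j=X/R_K$ with $K=E_{j+1}^{\text{top}}(X)$ acting by automorphisms, one checks that $\ker\pi_j^*$ is exactly $E_{j+1}^{\text{top}}(X)$: an element $p\in E(X)$ collapses to the identity on $X_j$ iff $px$ and $x$ are $\mathbf{R}_j$-related for every $x$, which, using minimality and distality, forces $p\in E_{j+1}^{\text{top}}(X)$ (the inclusion $E_{j+1}^{\text{top}}(X)\subset\ker\pi_j^*$ is immediate from the definition of $\mathbf{R}_j$). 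Therefore $E(X_j)\cong E(X)/E_{j+1}^{\text{top}}(X)$, and since $\pi_j^*$ maps topological commutator subgroups onto topological commutator subgroups, $E_{j+1}^{\text{top}}(X_j)=\pi_j^*(E_{j+1}^{\text{top}}(X))=\{e\}$; that is, $E(X_j)$ is $j$-step top-nilpotent.

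Third, maximality: suppose $\rho:X\to Z$ is a factor with $E(Z)$ $j$-step top-nilpotent. Then $\rho^*:E(X)\to E(Z)$ is a continuous surjective homomorphism, so $\rho^*(E_{j+1}^{\text{top}}(X))=E_{j+1}^{\text{top}}(Z)=\{e\}$, i.e. $E_{j+1}^{\text{top}}(X)\subset\ker\rho^*$. Hence for any $x\in X$ and $p\in E_{j+1}^{\text{top}}(X)$ we have $\rho(px)=\rho^*(p)\rho(x)=\rho(x)$, so $\mathbf{R}_j(X)\subset R_\rho$ and $\rho$ factors through $\pi_j$. This gives $X_j$ as the maximal factor with a $j$-step top-nilpotent enveloping semigroup. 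Finally, the maximal factor of order $j$, namely $X/\mathbf{RP}^{[j]}(X)$, is itself a system of order $j$, hence a $j$-step pro-nilsystem (Theorem \ref{description}); by Donoso's result cited in the introduction (\cite{SD14}) its enveloping semigroup is $j$-step top-nilpotent, so by the maximality just proved $X/\mathbf{RP}^{[j]}(X)$ is a factor of $X_j$, i.e. $\mathbf{R}_j(X)\subset\mathbf{RP}^{[j]}(X)$ and $X_j$ is an extension of $X/\mathbf{RP}^{[j]}(X)$.

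The main obstacle I anticipate is the precise identification $\ker\pi_j^*=E_{j+1}^{\text{top}}(X)$ — in particular the inclusion $\ker\pi_j^*\subset E_{j+1}^{\text{top}}(X)$, which needs more than formal nonsense: one must use that $E_{j+1}^{\text{top}}(X)$ acts as a compact automorphism group (Lemma \ref{uniform}), that $(X,G)$ is minimal distal, and a careful analysis of how elements of $E(X)$ act on fibers of $\pi_j$. The rest is routine manipulation of topological commutator subgroups and the functoriality of $p\mapsto\pi^*(p)$ under factor maps.
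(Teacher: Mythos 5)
Most of your proposal tracks the paper's argument: invariance and the equivalence-relation property via Lemma \ref{subgroup}, triviality of $E_{j+1}^{\mathrm{top}}(X_j)$ from $\pi_j^*(v)=\mathrm{id}$ together with $\pi_j^*(E_{j+1}^{\mathrm{top}}(X))=E_{j+1}^{\mathrm{top}}(X_j)$, maximality via $\phi^*(E_{j+1}^{\mathrm{top}}(X))=E_{j+1}^{\mathrm{top}}(Z)=\{e\}$, and the last claim from Lemma \ref{ellisgroup-of-order-d}. (Your worry about identifying $\ker\pi_j^*$ exactly is a red herring: the lemma only needs the inclusion $E_{j+1}^{\mathrm{top}}(X)\subset\ker\pi_j^*$ and surjectivity of $\pi_j^*$ on topological commutator subgroups, both of which are immediate.)

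The genuine gap is in your closedness argument. You invoke Lemma \ref{uniform} to claim that \emph{each} $E_{j+1}^{\mathrm{top}}(X)$ is a compact group of homeomorphisms of $X$ in the uniform topology, so that $\mathbf{R}_j(X)=R_K$ for a compact transformation group $K$. But Lemma \ref{uniform} applies only to the top layer $E_d^{\mathrm{top}}(X)$, and the reason its elements are continuous is precisely that $[E_d^{\mathrm{top}},E]\subset E_{d+1}^{\mathrm{top}}=\{e\}$ makes it central in $E(X)$. For $j+1<d$ the group $E_{j+1}^{\mathrm{top}}(X)$ is not central, its elements are in general discontinuous self-maps of $X$, and there is no compact transformation group $K$ with $\mathbf{R}_j(X)=R_K$; your parenthetical ``group extension'' claim fails for the same reason. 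The suggestion of ``applying the same lemma to appropriate quotients'' could in principle be turned into an induction (showing $\mathbf{R}_j(X)$ is the preimage of $\mathbf{R}_j(X_{d-1})$ under $\pi_{d-1}\times\pi_{d-1}$), but that is not what you assert and it is not carried out. The paper instead proves closedness directly: given $(x_n,p_nx_n)\to(x,y)$, write $x_n=q_nx$ by minimality, pass to subnets with $p_n\to p$, $q_n\to q$, $qx=x$, and use property (3) of Proposition \ref{cube-group-closed} --- limit points of $p_nq_n$ lie in $pqE_{j+2}^{\mathrm{top}}(X)$, since multiplication in an Ellis group is only right-continuous --- to get $y=ux$ with $u=p[q,r]r\in E_{j+1}^{\mathrm{top}}(X)$. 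That step is the real content of the lemma and is missing from your proposal.
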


\begin{proof}
The result is trivial for $j=d$ as $\mathbf{R}_d(X)=\Delta$.

Let $j\in \{1,\ldots,d-1\}$.
We first show that
$\mathbf{R}_j(X)$ is a closed invariant equivalence relation.
The relation $\mathbf{R}_j(X)$ is $G$-invariant
  as every element of $E(X)$ commutes with $G$.

  Let $p_1,p_2\in E_{j+1}^{\mathrm{top}}(X)$ and $x\in X$, then $(x,p_1 x),(x,p_2x)\in \mathbf{R}_j(X)$.
  By Lemma \ref{subgroup} $E_{j+1}^{\mathrm{top}}(X)$ is a group,
  thus $p_1p_2^{-1}\in E_{j+1}^{\mathrm{top}}(X)$
  and $(y,p_1p_2^{-1} y)\in \mathbf{R}_j(X)$ for any $y\in X$.
  Particularly, let $y=p_2x$, then $(p_1x,p_2x)\in \mathbf{R}_j(X)$.
  This shows that $\mathbf{R}_j(X)$ is an equivalence relation.

We next show that $\mathbf{R}_j(X)$ is closed.

  Let $\{(x_n,p_n x_n)\}_{n\in \N}\subset \mathbf{R}_j(X)$
with $(x_n,p_n x_n) \to (x,y)$ as $n\to \infty$ for some $y\in X$.
  It suffices to show $(x,y)\in \mathbf{R}_j(X)$.
  Without loss of generality, by taking subsequence
we may assume that $p_n\to p$ as $n\to \infty$ for some $p\in E_{j+1}^{\mathrm{top}}(X) $.
  As $(X,G)$ is minimal, for every $n\in \N$
  there is some $q_n\in E(X)$ with $x_n=q_n x$.
  Without loss of generality, assume that $q_n \to q$,
  then $q_n x\to qx$ and $qx=x$.
  By property (3) of Proposition \ref{cube-group-closed},
  there is some element $r\in E_{j+2}^{\mathrm{top}}(X)$ such that $p_n q_n \to pqr.$
  Let $u=pqrq^{-1}=p[q,r]r$, then $u\in  E_{j+1}^{\mathrm{top}}(X)$ and
  \[
  (x_n,p_n x_n)=(q_nx,p_nq_nx)\to (qx,pqrx)=(qx,uqx)=(x,ux),
  \]
  as $n\to \infty$.
  This implies that $y=ux$ and $(x,y)\in \mathbf{R}_j(X)$ as was to be shown.

Now we can build factors by letting $X_j=X/\mathbf{R}_j(X),j=1,\ldots,d$.
  Let $\pi_j: X\to X_{j}$ be the factor map.
Let $v\in E_{j+1}^{\text{top}}(X)$, then we have $\pi_j(x)=\pi_j(vx)=\pi_j^*(v)\pi_j(x)$ for every $x\in X,$
  and thus $\pi_j^*(v)=\mathrm{id}_{X_j}$.
  This shows that $E_{j+1}^{\text{top}}(X_{j})$ is trivial.

  Let $(Z,G)$ be a factor of $(X,G)$ with a
  $j$-step top-nilpotent enveloping semigroup
  and let $\phi:X\to Z$ be the factor map.
 As $\phi^*(E_{j+1}^{\text{top}}(X))=\mathrm{id}_Z$,
 then for $u\in E_{j+1}^{\text{top}}(X)$, we have $\phi(ux)=\phi^*(u)\phi(x)=\phi(x)$
  and therefore $\phi$ can be factorized through $X_{j}$.

By Lemma \ref{ellisgroup-of-order-d},
 $X/\mathbf{RP}^{[j]}(X)$ has a $j$-step top-nilpotent
  enveloping semigroup, we deduce that
  $X_{j}$ is an extension of $X/\mathbf{RP}^{[j]}(X)$.
 \end{proof}

\begin{lemma}\label{iso--}
For $j=2,\ldots,d$, the extension $X_{j}\to X_{j-1}$ is isometric.
\end{lemma}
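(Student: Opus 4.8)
The plan is to show that the extension $X_j \to X_{j-1}$ is a group extension, which will immediately give that it is isometric. The key structural fact to exploit is that, by Lemma \ref{uniform} applied to the factor $X_j$ (which by Lemma \ref{euqi} has a $j$-step top-nilpotent enveloping semigroup), the group $E_j^{\mathrm{top}}(X_j)$ is a \emph{compact group of automorphisms of $(X_j,G)$ in the uniform topology}. First I would identify the fibers of $X_j \to X_{j-1}$: since $X_{j-1} = X/\mathbf{R}_{j-1}(X)$ and $X_j = X/\mathbf{R}_j(X)$, and since $\pi_j^*$ maps $E_j^{\mathrm{top}}(X)$ onto $E_j^{\mathrm{top}}(X_j)$, the relation $\mathbf{R}_{j-1}(X_j) = \{(z, pz) : z \in X_j, p \in E_j^{\mathrm{top}}(X_j)\}$ is exactly the relation collapsing $X_j$ down to $X_{j-1}$. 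So two points of $X_j$ lie in the same fiber over $X_{j-1}$ precisely when they differ by an element of $K := E_j^{\mathrm{top}}(X_j)$.

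Next I would check that $K$ is a compact group of homeomorphisms of $X_j$ that commutes with the $G$-action (each element of the enveloping semigroup commutes with $G$, and by Lemma \ref{uniform} the topology on $K$ is the uniform one, so $K$ is a compact topological transformation group). Setting $R_K = \{(z, kz) : z \in X_j, k \in K\}$, the previous paragraph shows $R_K = R_{\pi}$ where $\pi : X_j \to X_{j-1}$ is the factor map. By the discussion in Section 2 (``Suppose that we have a compact group $K$ of homeomorphisms of $X$ commuting with $G$\ldots''), $X_{j-1} = X_j/R_K$ exhibits $X_j$ as an extension of $X_{j-1}$ by the group $K$. Since any group extension is isometric (as noted after that discussion in the preliminaries), we conclude that $X_j \to X_{j-1}$ is isometric.

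The main obstacle I anticipate is the careful bookkeeping in the first step: verifying that $R_{\pi_j \circ (\text{identification})}$, i.e. the relation on $X_j$ induced by the factor map $X_j \to X_{j-1}$, really equals $\{(z,pz) : p \in E_j^{\mathrm{top}}(X_j)\}$ rather than something a priori larger or smaller. This requires knowing that $\pi_j^*\big(E_j^{\mathrm{top}}(X)\big) = E_j^{\mathrm{top}}(X_j)$ (the general fact recorded before Section 5.1 that factor maps between distal systems push topological commutator subgroups onto topological commutator subgroups) together with the fact that $\mathbf{R}_{j-1}(X)$ is the preimage under $\pi_j \times \pi_j$ of the corresponding relation $\mathbf{R}_{j-1}(X_j)$ on $X_j$ — which follows because $x$ and $px$ ($p \in E_j^{\mathrm{top}}(X)$) map into $X_j$ to $\pi_j(x)$ and $\pi_j^*(p)\pi_j(x)$, and $\mathbf{R}_j(X) \subset \mathbf{R}_{j-1}(X)$ so nothing new is created or destroyed modulo $\mathbf{R}_j$. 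Once this identification is in place, the rest is a direct application of the group-extension criterion and the remark that group extensions are isometric; no nontrivial estimate is needed.
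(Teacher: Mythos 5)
Your proposal is correct and follows essentially the same route as the paper: both arguments reduce to the fact (Lemma \ref{uniform} applied to $X_j$) that $E_j^{\mathrm{top}}(X_j)$ is a compact group of automorphisms acting uniformly, so that $X_j\to X_j/\mathbf{R}_{j-1}(X_j)$ is a group extension and hence isometric. The only cosmetic difference is that you identify $X_{j-1}$ with $X_j/\mathbf{R}_{j-1}(X_j)$ outright via the surjectivity of $\pi_j^*$ on topological commutators, whereas the paper merely notes that $X_j/\mathbf{R}_{j-1}(X_j)$ is a factor of $X_{j-1}$ and then invokes Lemma \ref{intersection} to transfer the isometry to $X_j\to X_{j-1}$.
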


\begin{proof}
 Let $j\in \{2,\ldots,d\}$.
 By Lemma \ref{euqi}, $X_j$ has
 a $j$-step top-nilpotent enveloping semigroup.
 Let $X_{j-1}'=X_j/\mathbf{R}_{j-1}(X_j)$,
 then $X_{j-1}'$ has
 a $(j-1)$-step top-nilpotent enveloping semigroup and
 it is also a factor of $X_{j-1}$.

 By Lemma \ref{uniform},
 the extension $X_j\to X_{j-1}'$ is a group extension and thus it is an isometric extension.
 From this, we deduce that the extension $X_j\to X_{j-1}$ is also an isometric extension.
\end{proof}

\subsection{Proof of Theorem \ref{main-thm1}}
In this subsection, we will show Theorem \ref{main-thm1}.
Before giving the proof, we need some preparations.
\begin{lemma}\label{equal-ellis}
  Let $(X,G)$ be a dynamical system with a $d$-step top-nilpotent enveloping semigroup $E(X)$.
  For integer $l\geq 2^d$, let $E(X^{[l]},\mathcal{G}^{[l]})$ be
  the enveloping semigroup of the system $(X^{[l]},\mathcal{G}^{[l]})$.
 Then we have
 \[
 E_j^{\mathrm{top}}(X^{[l]},\mathcal{G}^{[l]}) \subset \mathcal{C}_j^{[l]}(E(X)), \; j=1,\ldots,d,
 \]
 where $\mathcal{C}_j^{[l]}(E(X)) $ is the group spanned by
\[
\{g^{(F)}:g\in E_i^{\mathrm{top}}(X), F\subset \{0,1\}^l,  \mathrm{codim}(F)=2^i,\;  i=j,\ldots, d \}.
\]
  In particular, $E(X^{[l]},\mathcal{G}^{[l]})$ is $d$-step top-nilpotent.
\end{lemma}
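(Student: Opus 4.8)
The plan is to realize $E(X^{[l]},\mathcal{G}^{[l]})$ as a subgroup of $E(X)^{[l]}$ and then invoke the structural results of Section \ref{NIL}. First I would observe that since $(X,G)$ has a $d$-step top-nilpotent, hence distal, enveloping semigroup, $E(X)$ is a group, and by Lemma \ref{subgroup} its topological commutator subgroups $\{E_j^{\mathrm{top}}(X)\}_{j=1}^{d}$ form a filtration of length $d$ on $E(X)$ in the sense of Section \ref{NIL}. The natural coordinatewise action identifies $E(X)^{[l]}$ with a subsemigroup of $X^{[l]X^{[l]}}$, and I claim that $E(X^{[l]},\mathcal{G}^{[l]})$ sits inside the closed subgroup $\widetilde{E(X)}^{[l]}$ generated by $\mathcal{HK}^{[l]}(E(X))$. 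Indeed, the group $\mathcal{G}^{[l]}=\mathcal{HK}^{[l]}(G)$ acts on $X^{[l]}$ through elements of the form $h^{(F)}$ with $h\in G$ and $F$ a hyperface, each of which acts on the $\epsilon$-coordinate either trivially or as $h\in G\subset E(X)$; taking closure in $(X^{[l]})^{X^{[l]}}$ and using that coordinatewise limits of products of such generators land in $\widetilde{E(X)}^{[l]}$ (here property (2) of Proposition \ref{cube-group-closed} gives that $\mathcal{C}_1^{[l]}(E(X))$ is closed and contains $\mathcal{HK}^{[l]}(E(X))$, hence contains $\widetilde{E(X)}^{[l]}$), we get $E(X^{[l]},\mathcal{G}^{[l]})\subseteq \widetilde{E(X)}^{[l]}\subseteq \mathcal{C}_1^{[l]}(E(X))$.

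Next I would upgrade this to the filtration statement. The inclusion $E(X^{[l]},\mathcal{G}^{[l]})\subseteq \widetilde{E(X)}^{[l]}$ is a continuous embedding of Ellis groups, and it is standard that for a factor/subobject relationship the topological commutator subgroups behave monotonically; more precisely, since $E(X^{[l]},\mathcal{G}^{[l]})$ is a closed subgroup of $\widetilde{E(X)}^{[l]}$ containing the generators $h^{(F)}$ ($h\in G$, $F$ a hyperface) and in fact it is exactly the closure of the subgroup they generate together with their $G^{[l]}$-translates, one checks
\[
E_j^{\mathrm{top}}(X^{[l]},\mathcal{G}^{[l]}) \subseteq (\widetilde{E(X)}^{[l]})_j^{\mathrm{top}}
\]
by induction on $j$, using that the commutator bracket and the closure operation are compatible with the inclusion. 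Then property (5) of Proposition \ref{cube-group-closed} gives $(\widetilde{E(X)}^{[l]})_j^{\mathrm{top}}\subseteq \mathcal{C}_j^{[l]}(E(X))$ for $j=1,\ldots,d$, and chaining the two inclusions yields the desired $E_j^{\mathrm{top}}(X^{[l]},\mathcal{G}^{[l]})\subseteq \mathcal{C}_j^{[l]}(E(X))$. Taking $j=d+1$, we have $\mathcal{C}_{d+1}^{[l]}(E(X))=\{e^{[l]}\}$, so $E_{d+1}^{\mathrm{top}}(X^{[l]},\mathcal{G}^{[l]})$ is trivial and $E(X^{[l]},\mathcal{G}^{[l]})$ is $d$-step top-nilpotent.

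The main obstacle I anticipate is the first inclusion $E(X^{[l]},\mathcal{G}^{[l]})\subseteq \widetilde{E(X)}^{[l]}$: one must be careful that elements of the enveloping semigroup are limits of \emph{arbitrary} nets of group elements $\mathbf{g}_\alpha\in\mathcal{G}^{[l]}$ acting on $X^{[l]}$, and that such a limit, viewed as a map $X^{[l]}\to X^{[l]}$, really does agree coordinatewise with an element of the closed group $\widetilde{E(X)}^{[l]}\subseteq E(X)^{[l]}$ rather than merely with some map not of product form. This is where the closedness in Proposition \ref{cube-group-closed}(2) and the representation \eqref{rep-hk} of Host-Kra cube elements do the real work: each $\mathbf{g}_\alpha$ acts on the $\epsilon$-coordinate via $g+\vec g\cdot\epsilon\in G$, so passing to a subnet along which each coordinate action converges in $E(X)$ shows the limiting map is coordinatewise an element of $E(X)^{[l]}$ lying in the closure of $\mathcal{HK}^{[l]}(E(X))$, as required. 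The induction on $j$ for the commutator inclusion is then routine given that $\mathcal{C}_j^{[l]}(E(X))$ is closed and normal in $\mathcal{C}_1^{[l]}(E(X))$.
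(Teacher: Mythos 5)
Your proposal is correct and takes essentially the same approach as the paper: identify $E(X^{[l]},\mathcal{G}^{[l]})$ with a closed subgroup of $E(X)^{[l]}$ (using distality so that the product of enveloping semigroups is a closed group of maps on $X^{[l]}$), trap it inside $\mathcal{C}_1^{[l]}(E(X))$ via closedness, and push the topological commutator series down the binary cubegroups. The only difference is cosmetic: the paper bypasses the intermediate group $\widetilde{E(X)}^{[l]}$ and applies properties (1) and (2) of Proposition \ref{cube-group-closed} directly by induction, whereas you route through property (5), which is itself derived from (1) and (2) by the same induction.
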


\begin{proof}
By Theorem \ref{distal-ellis-group} the system $(X,G)$ is distal, so is
the product system
\[
(X,G)^{[l]}=\underbrace{(X,G)\times \cdots \times (X,G)}_{2^l \;\mathrm{times}}.
\]
Hence $E((X,G)^{[l]})=E(X)^{[l]}$ is
 a closed group in $(X^{[l]})^{X^{[l]}}$.
It follows from property (2) of Proposition \ref{cube-group-closed} that
 $\mathcal{C}_j^{[l]}(E(X))$ is closed in $E(X)^{[l]}$ for every $j$ and
 thus it is also closed in $(X^{[l]})^{X^{[l]}}$.

 Recall that $E(X^{[l]},\mathcal{G}^{[l]})$ is the closure of $\mathcal{G}^{[l]}$ in $(X^{[l]})^{X^{[l]}}$
and $\mathcal{G}^{[l]}\subset \mathcal{C}_1^{[l]}(E(X))$
by the definition of the Host-Kra cubegroups,
we have
\[
E(X^{[l]},\mathcal{G}^{[l]}) \subset \mathcal{C}_1^{[l]}(E(X)).
\]
By induction and property (1) of Proposition \ref{cube-group-closed}, we obtain that
  \[
 E_j^{\mathrm{top}}(X^{[l]},\mathcal{G}^{[l]}) \subset \mathcal{C}_j^{[l]}(E(X)), \; j=1,\ldots,d,
 \]
 hence
  $ E(X^{[l]},\mathcal{G}^{[l]})$ is $d$-step top-nilpotent.
  \end{proof}

The following lemma is a simple observation.

\begin{lemma}\label{reduction}
  Let $(X,G)$ be a distal system and $d\in \N$.
  Let $Y$ be a subsystem of $X$, then for any $p\in E_d^{\mathrm{top}}(Y)$,
  there exists $p'\in E_d^{\mathrm{top}}(X)$ such that $p'|_Y=p$.
  That is, $p'(y)=p(y)$ for every $y\in Y$.
\end{lemma}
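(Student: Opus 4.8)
\textbf{Proof proposal for Lemma \ref{reduction}.}

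The plan is to reduce the statement about the single element $p$ to a density-type argument using the defining approximation of $E_d^{\mathrm{top}}(Y)$ and the fact that restriction to $Y$ is a continuous semigroup homomorphism on enveloping semigroups. First I would recall that since $Y$ is a closed invariant subset of $X$ and $X$ is distal, so is $Y$, hence both $E(X)$ and $E(Y)$ are Ellis groups by Theorem \ref{distal-ellis-group}. The inclusion $\iota\colon Y\hookrightarrow X$ induces a continuous semigroup homomorphism $\iota^*\colon E(X)\to E(Y)$ with $\iota^*(u)=u|_Y$, which is onto because the generators $g\in G$ on $X$ restrict to the generators on $Y$ and $\{g|_Y:g\in G\}$ is dense in $E(Y)$; by compactness of $E(X)$ its image is all of $E(Y)$. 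This is the standard restriction map $\pi^*$ already used in the text.

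Next I would verify that $\iota^*$ carries the filtration of topological commutator subgroups of $E(X)$ onto that of $E(Y)$, i.e. $\iota^*(E_d^{\mathrm{top}}(X))=E_d^{\mathrm{top}}(Y)$ for every $d$. This is precisely the remark stated just before Subsection 5.1 ("if $\pi\colon X\to Y$ is a factor map between distal systems, then $\pi^*(E_d^{\mathrm{top}}(X))=E_d^{\mathrm{top}}(Y)$"), applied here with the restriction homomorphism playing the role of $\pi^*$; one proves it by induction on $d$, using that $\iota^*$ is a continuous surjective homomorphism, hence a closed map (continuous map from a compact space to a Hausdorff space), so it sends $[E_{d-1}^{\mathrm{top}}(X),E(X)]_{\mathrm{top}}$ onto $[E_{d-1}^{\mathrm{top}}(Y),E(Y)]_{\mathrm{top}}$: it sends the algebraically generated commutator subgroup onto the corresponding one, and being closed it sends closures onto closures.

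Granting this, the conclusion is immediate: given $p\in E_d^{\mathrm{top}}(Y)$, surjectivity of $\iota^*|_{E_d^{\mathrm{top}}(X)}\colon E_d^{\mathrm{top}}(X)\to E_d^{\mathrm{top}}(Y)$ produces $p'\in E_d^{\mathrm{top}}(X)$ with $p'|_Y=\iota^*(p')=p$, which is exactly the assertion. The main obstacle, such as it is, is the equality $\iota^*(E_d^{\mathrm{top}}(X))=E_d^{\mathrm{top}}(Y)$ rather than the weaker inclusion $\supset$: one must be careful that a continuous surjective homomorphism between Ellis groups commutes with taking closures of commutator subgroups, which relies on the closedness of $\iota^*$ (guaranteed by compactness of $E(X)$ and Hausdorffness of $E(Y)$) together with the elementary fact that $\iota^*$ maps the subgroup generated by a set $S$ onto the subgroup generated by $\iota^*(S)$. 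Everything else is bookkeeping with the definitions of $E_d^{\mathrm{top}}$ from the previous section.
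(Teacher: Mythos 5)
Your proposal is correct, and since the paper omits the proof entirely (it calls the lemma a ``simple observation''), your argument is exactly the natural one the authors intend: the restriction map $u\mapsto u|_Y$ is a continuous surjective homomorphism from the compact group $E(X)$ onto $E(Y)$, hence closed, hence carries each $E_d^{\mathrm{top}}(X)$ onto $E_d^{\mathrm{top}}(Y)$ by induction, which is the remark the paper records for factor maps just before Subsection 5.1. The only point worth flagging is that $Y$ here is a subsystem rather than a factor, so one should (as you do) check directly that restriction is well defined ($uY\subset Y$ because $Y$ is closed and invariant) and surjective onto $E(Y)$ (the image is compact, hence closed, and contains the dense set $\{g|_Y:g\in G\}$); with that noted, the argument is complete.
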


As a consequence of Lemmas \ref{equal-ellis} and \ref{reduction}, we have the following corollary.
\begin{cor}\label{subsystem-nilpotent}
  Let $d\in \N$ and let $(X,G)$ be a
  topological system with a $d$-step top-nilpotent enveloping semigroup.
  If $Y$ is a subsystem of $X$,
  then the enveloping semigroup of $Y$ is also $d$-step top-nilpotent.
\end{cor}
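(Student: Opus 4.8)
The plan is to transfer $d$-step top-nilpotency from $E(X)$ to $E(Y)$ through the natural restriction map. First I would observe that since $E(X)$ is $d$-step top-nilpotent it is in particular a group, so $(X,G)$ is distal; this is exactly the distality already used in the proof of Lemma \ref{equal-ellis} (via Theorem \ref{distal-ellis-group}). Then the subsystem $(Y,G)$ is distal too, so $E(Y)$ is an Ellis group and the topological commutator subgroups $E_j^{\mathrm{top}}(Y)$ make sense.

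Next I would record that restriction $\rho\colon E(X)\to E(Y)$, $p\mapsto p|_Y$, is a well-defined, continuous, surjective semigroup homomorphism: it is continuous, it sends $\{g\colon g\in G\}$ onto $\{g|_Y\colon g\in G\}$, hence sends $E(X)=\overline{\{g\colon g\in G\}}$ onto $E(Y)$; and, being a continuous map from a compact space to a Hausdorff space, it is also closed. From this one deduces by induction on $j$ that $\rho$ maps $E_j^{\mathrm{top}}(X)$ onto $E_j^{\mathrm{top}}(Y)$: the base case $j=1$ is the surjectivity of $\rho$, and in the inductive step $\rho$ being a surjective homomorphism gives $\rho\big([E_j^{\mathrm{top}}(X),E(X)]\big)=[E_j^{\mathrm{top}}(Y),E(Y)]$, while $\rho$ being continuous and closed lets one pass to the closures. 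Alternatively this surjectivity at the level of $E_j^{\mathrm{top}}$ is precisely Lemma \ref{reduction}, which supplies it directly.

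Granting this, the conclusion is immediate. Since $E(X)$ is $d$-step top-nilpotent, $E_{d+1}^{\mathrm{top}}(X)=\{\mathrm{id}_X\}$, and therefore
\[
E_{d+1}^{\mathrm{top}}(Y)=\rho\big(E_{d+1}^{\mathrm{top}}(X)\big)=\rho(\{\mathrm{id}_X\})=\{\mathrm{id}_Y\},
\]
so $E(Y)$ is $d$-step top-nilpotent. One may also phrase this without mentioning $\rho$ at all: given $p\in E_{d+1}^{\mathrm{top}}(Y)$, Lemma \ref{reduction} yields $p'\in E_{d+1}^{\mathrm{top}}(X)=\{\mathrm{id}_X\}$ with $p'|_Y=p$, whence $p=\mathrm{id}_Y$.

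I expect the only genuinely delicate point to be the surjectivity of $\rho$ onto $E_j^{\mathrm{top}}(Y)$ --- not merely onto a dense subgroup of it --- since a continuous homomorphism only automatically sends generators to generators and respects closures up to one inclusion; this is exactly what Lemma \ref{reduction} is for, and with it in hand the remainder is routine.
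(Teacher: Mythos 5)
Your proposal is correct and follows essentially the same route as the paper: the corollary is obtained by applying Lemma \ref{reduction} to an element $p\in E_{d+1}^{\mathrm{top}}(Y)$, lifting it to some $p'\in E_{d+1}^{\mathrm{top}}(X)=\{\mathrm{id}_X\}$ and concluding $p=\mathrm{id}_Y$, which is exactly your final phrasing. The longer first part of your argument (the restriction map being a continuous, closed, surjective homomorphism carrying $E_j^{\mathrm{top}}(X)$ onto $E_j^{\mathrm{top}}(Y)$) is in effect a proof of Lemma \ref{reduction} itself, which the paper states without proof as ``a simple observation.''
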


Now we are able to show Theorem \ref{main-thm1}.

\begin{proof}[Proof of Theorem \ref{main-thm1}]
We show it by induction on the nilpotency class $d$.

When $d=1$, it follows from Theorem \ref{distal-ellis-group} immediately.

Let $d>1$ be an integer and
suppose the statement is true for every $j=1,\ldots,d-1$.
Now let $(X,G)$ be a minimal system with a $d$-step
top-nilpotent enveloping semigroup and let $E(X)$ be its enveloping semigroup.
By Lemma \ref{euqi}, the factor $X_{d-1}=X/\mathbf{R}_{d-1}(X)$ has
a $(d-1)$-step top-nilpotent enveloping semigroup.
By inductive hypothesis, $X_{d-1}$ is a system of order $\infty$.
Thus by Remark \ref{infi}, we obtain that
\begin{equation}\label{include}
\mathbf{RP}^{[\infty]}(X)  \subset \mathbf{R}_{d-1}(X)=\{(x,px):x\in X, p\in E_d^{\mathrm{top}}(X)  \}.
\end{equation}

Suppose for a contradiction that $\mathbf{RP}^{[\infty]}(X)$ is nontrivial.
Choose $(x,y)\in \mathbf{RP}^{[\infty]}(X)$ with $x\neq y$.
By (\ref{include}), there is some element $p\in E_d^{\mathrm{top}}(X)$ with $y=px$.

For integer $l>2^d$,
let $\widetilde{E}^{[l]}$ be the enveloping semigroup of the system $(\mathbf{Q}^{[l]}(X),\mathcal{G}^{[l]})$.
It follows from Lemma \ref{equal-ellis} and Corollary \ref{subsystem-nilpotent} that $\widetilde{E}^{[l]}$ is also $d$-step
top-nilpotent.
As the system $(\mathbf{Q}^{[l]}(X),\mathcal{G}^{[l]})$ is minimal,
by Lemma \ref{euqi}
we obtain that the factor
\[
\mathbf{Q}^{[l]}(X)/\mathbf{R}_{d-1}(\mathbf{Q}^{[l]}(X))
\]
has a $(d-1)$-step top-nilpotent enveloping semigroup.
Again by inductive hypothesis, it is a system of order $\infty$.
It follows from Theorem \ref{infinity-step}
 that the maximal factor of order $\infty$ of $(\mathbf{Q}^{[l]}(X),\mathcal{G}^{[l]})$
is $(\mathbf{Q}^{[l]}(X_\infty),\mathcal{G}^{[l]})$, where $X_\infty= X/\mathbf{RP}^{[\infty]}(X)$.
So we have the following factor maps:
\[
\Q^{[l]}(X)\to \Q^{[l]}(X_\infty)\to \mathbf{Q}^{[l]}(X)/\mathbf{R}_{d-1}(\mathbf{Q}^{[l]}(X)).
\]

Let $\mathbf{x}=(y,x^{[l]}_*)=(px,x^{[l]}_*)$, then $\mathbf{x}\in \Q^{[l]}(X)$.
Moreover the images of the points $x^{[l]}$ and $\mathbf{x}$
in $\Q^{[l]}(X_\infty)$ are equal.
Recall that
\[
\mathbf{R}_{d-1}(\mathbf{Q}^{[l]}(X))=\{(\mathbf{y},\mathbf{q}\mathbf{y}):
\mathbf{y}\in\mathbf{Q}^{[l]}(X),\mathbf{q}\in (\widetilde{E}^{[l]})_d^{\mathrm{top}}\}.
\]
Hence there exists some element $\mathbf{q}\in (\widetilde{E}^{[l]})_d^{\mathrm{top}}$
such that
\begin{equation}\label{eee}
\mathbf{x}=\mathbf{q} x^{[l]}.
\end{equation}
As $(\Q^{[l]}(X),\mathcal{G}^{{l}})$ is a subsystem of $(X^{[l]},\mathcal{G}^{[l]})$,
by Lemma \ref{reduction} there is some element
 $\mathbf{p}\in  E_d^{\mathrm{top}}(X^{[l]},\mathcal{G}^{[l]})$
such that $\mathbf{q}\mathbf{y}=\mathbf{p}\mathbf{y}$ for all $\mathbf{y}\in \Q^{[l]}(X)$.
In particular, $\mathbf{q}x^{[l]}=\mathbf{p}x^{[l]}$.
So by (\ref{eee}), we have that
\begin{equation}\label{equal}
(px,x^{[l]}_*)=\mathbf{x}=
\mathbf{q}x^{[l]}=\mathbf{p} x^{[l]}.
\end{equation}
It follows from Lemma \ref{equal-ellis} that
  \[
 E_d^{\mathrm{top}}(X^{[l]},\mathcal{G}^{[l]}) \subset \mathcal{C}_d^{[l]}(E(X)).
 \]
Let $\mathbf{p}=(p_\epsilon:\epsilon\in \{0,1\}^l)$, then $\mathbf{p}\in \mathcal{C}_d^{[l]}(E(X))$.
By property (4) of Proposition \ref{cube-group-closed}
\begin{equation}\label{prop3}
\prod_{\epsilon\in \{0,1\}^l}p_\epsilon^{(-1)^{|\epsilon|}}=\mathrm{id}.
\end{equation}

Let us return to (\ref{equal}).
For every $\epsilon\in \{0,1\}^l$,
by considering the projection on the $\epsilon$-component,
we get that $p_{\vec{0}}x=px$ and $p_\epsilon x=x$ otherwise.
Notice that elements $p,p_\epsilon,\epsilon\in\{0,1\}^l$ all belong to $ E_d^{\mathrm{top}}(X)$
which is included in the center of $E(X)$,
we deduce that
$p_{\vec{0}}=p$ and $p_\epsilon=\mathrm{id}$ otherwise.
It is a contradiction as on this case the element $\mathbf{p}$ does not satisfy (\ref{prop3}).
We conclude that $X$ is a system of order $\infty$.

This completes the proof.
\end{proof}

\section{Furstenberg tower of minimal nilsystems}
Let $\pi:X\to Y$ be a factor map between minimal systems.
Then the family of factors of $X$ above $Y$ that are isometric extensions of $Y$
admits a maximal element, called the \emph{maximal isometric extension} of $Y$ below $X$.
In this section,
we show that for integer $s\geq 3$ and a minimal $s$-step nilsystem $(X,T)$,
the maximal isometric extension of $X/\mathbf{RP}^{[d]}(X)$ below $X$
is $X/\mathbf{RP}^{[d+1]}(X),d=1,\ldots,s-2$.


\subsection{Maximal isometric extensions}
We start with the following characterizations of maximal isometric extension.

\begin{definition}\label{def: maximal isometric extension}
  Let $(X,T)$ be a minimal system and let $R\subset X\times X$ be a closed invariant equivalence relation,
  we define
  \[
  Q(R)=\bigcap_{k=1}^\infty  \overline{\bigcup_{n=1}^\infty (T\times T)^{-n}\Delta_{\frac{1}{k}}\cap R},
  \]
  where $\Delta_{\frac{1}{k}}=\{(x,y)\in X\times X: \rho(x,y)<\frac{1}{k}\}$.
\end{definition}

\begin{lemma}\label{intersection}
  Let
$\xymatrix{X_1 \ar@/^1pc/[rr]^{\pi}  \ar[r]_{\phi}& X_2  \ar[r]&  X_3}$
 be factor maps between minimal systems $(X_i,T),i=1,2,3$.
 If $\pi$ is isometric, so is $\phi$.
\end{lemma}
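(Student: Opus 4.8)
The plan is to unwind the definition of isometric extension and use the obvious inclusion of fibers. Write $\psi\colon X_2\to X_3$ for the remaining factor map, so that the commuting triangle gives $\pi=\psi\circ\phi$. Recall that, by definition, $\phi$ being isometric means: for every $\varepsilon>0$ there is $\delta>0$ such that $\rho(T^n x,T^n y)<\varepsilon$ for all $n\in\Z$ whenever $\phi(x)=\phi(y)$ and $\rho(x,y)<\delta$, where $\rho$ denotes the metric on $X_1$ — which is the very same metric appearing in the hypothesis that $\pi$ is isometric.

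First I would fix $\varepsilon>0$ and apply the hypothesis that $\pi$ is isometric to obtain $\delta>0$ such that $\pi(x)=\pi(y)$ and $\rho(x,y)<\delta$ together imply $\rho(T^n x,T^n y)<\varepsilon$ for all $n\in\Z$. Next I would record the fiber inclusion $R_\phi\subset R_\pi$: if $\phi(x)=\phi(y)$, then applying $\psi$ gives $\pi(x)=\psi(\phi(x))=\psi(\phi(y))=\pi(y)$. Consequently, whenever $\phi(x)=\phi(y)$ and $\rho(x,y)<\delta$, we also have $\pi(x)=\pi(y)$ and $\rho(x,y)<\delta$, hence $\rho(T^n x,T^n y)<\varepsilon$ for all $n\in\Z$. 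Thus the same $\delta$ witnesses isometry of $\phi$ for this $\varepsilon$, and since $\varepsilon>0$ was arbitrary, $\phi$ is an isometric extension.

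There is essentially no obstacle here: the statement is purely formal. It rests only on the fact that the fibers of the lower factor map $\phi$ are contained in the fibers of the full map $\pi$, together with the observation that both isometry conditions are phrased with respect to the same metric on $X_1$. The only points worth stating carefully are the identity $\pi=\psi\circ\phi$ implicit in the displayed commuting triangle and the resulting inclusion $R_\phi\subset R_\pi$; after that the $\delta$ produced for $\pi$ is reused verbatim for $\phi$.
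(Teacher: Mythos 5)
Your proof is correct and is essentially identical to the paper's: both fix $\varepsilon$, take the $\delta$ furnished by the isometry of $\pi$, and use the fiber inclusion $R_\phi\subset R_\pi$ (from $\pi=\psi\circ\phi$) to reuse that same $\delta$ for $\phi$. Nothing more is needed.
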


\begin{proof}
Assume that $\pi$ is isometric.
Fix $\epsilon>0$.
Then there is some $\delta>0$
such that $\rho(T^nx_1,T^nx_2)<\varepsilon$ for all $n\in \mathbb{Z}$
whenever $\pi(x_1)=\pi(x_2)$ and $\rho(x_1,x_2)<\delta$.
Now let $(x,y)\in X_1$ with $\phi(x)=\phi(y)$ such that $\rho(x,y)<\delta$,
then $\pi(x)=\pi(y)$.
It follows that $\rho(T^nx,T^ny)<\varepsilon$ for all $n\in \mathbb{Z}$.
This shows that $\phi$ is isometric.
\end{proof}

\begin{lemma}\label{maximal-isometric}\cite[Chapter 7]{JA}
  Let $(X,T)$ be a minimal distal system and $R\subset X\times X$ be a closed invariant equivalence relation.
  Then the maximal isometric extension of $X/R$ below $X$ is $X/Q(R)$.
\end{lemma}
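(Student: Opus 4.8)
The plan is to identify $Q(R)$ with the regionally proximal relation of the extension $X\to X/R$ and then invoke the structure theory of distal extensions from \cite[Chapter 7]{JA}, while giving a direct argument for the maximality half. First I would record the elementary facts: $\Delta_X\subset Q(R)\subset R$: the left inclusion holds because $\Delta_X\subset\Delta_{1/k}\cap R$ for every $k$, the right one because each set in the defining intersection is contained in $\overline{R}=R$; moreover $Q(R)$ is closed, being an intersection of closed sets, it is symmetric, and it is $(T\times T)$-invariant by the usual recurrence argument for minimal systems (one may equivalently use $\bigcup_{n\in\Z}$ in place of $\bigcup_{n\geq1}$ in the definition). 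The two substantive assertions are that $Q(R)$ is transitive, so that $X/Q(R)$ is a well-defined system, and that $X/Q(R)\to X/R$ is isometric.

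The maximality half rests on the following rigidity statement, which I would prove directly: if $\chi:Z\to W$ is an isometric extension of minimal systems, then $Q(R_\chi)=\Delta_Z$. Given $(z,z')\in Q(R_\chi)$, pick $(a_i,b_i)\to(z,z')$ with $\chi(a_i)=\chi(b_i)$ and integers $n_i$ with $\rho(T^{n_i}a_i,T^{n_i}b_i)\to0$; fix $\varepsilon>0$ and let $\delta>0$ be as in the definition of isometric extension. For $i$ large we have $\rho(T^{n_i}a_i,T^{n_i}b_i)<\delta$ and $\chi(T^{n_i}a_i)=\chi(T^{n_i}b_i)$, hence $\rho(T^n(T^{n_i}a_i),T^n(T^{n_i}b_i))<\varepsilon$ for all $n\in\Z$; taking $n=-n_i$ gives $\rho(a_i,b_i)<\varepsilon$, so $\rho(z,z')\leq\varepsilon$, and letting $\varepsilon\to0$ forces $z=z'$.

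Next I would deduce the upper bound. Let $\psi:X\to Z$ be a factor map with $\chi:Z\to X/R$ isometric. If $(x,x')\in Q(R)$, then (since $Q(R)\subset R$) the pairs approximating $(x,x')$ in the definition of $Q(R)$ transport under the uniformly continuous map $\psi$ to pairs witnessing $(\psi(x),\psi(x'))\in Q(R_\chi)$, so $\psi(x)=\psi(x')$ by the rigidity statement. Hence $Q(R)\subset R_\psi$ for every such $\psi$; equivalently, once $Q(R)$ is known to be an equivalence relation, every isometric extension of $X/R$ lying below $X$ is a factor of $X/Q(R)$.

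It remains to show that the extension $X/Q(R)\to X/R$ is itself isometric, which in particular yields the transitivity of $Q(R)$. Following \cite[Chapter 7]{JA}, one produces on the fibers of $X\to X/R$ a continuous $T$-invariant pseudometric $d$ with $\{d=0\}=Q(R)$, constructed by the enveloping-semigroup methods of \cite{JA} applied to the distal system $(R,T)$, or by suitably regularizing the scale $(x,x')\mapsto\inf\{1/k:(x,x')\in\overline{\bigcup_{n\geq1}(T\times T)^{-n}\Delta_{1/k}\cap R}\}$ so as to make it $T$-invariant. Since the kernel of a continuous pseudometric is a closed equivalence relation, $Q(R)$ is transitive, and since $d$ is $T$-invariant and continuous, $X/Q(R)\to X/R$ is isometric; together with the upper bound this makes $X/Q(R)$ the maximal isometric extension of $X/R$ below $X$. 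The main obstacle is exactly the construction of this $T$-invariant fiber pseudometric realizing $Q(R)$; as \cite{JA} carries it out for all distal minimal flows, I would cite it there and only verify that its abstract equicontinuous structure relation of $X$ over $X/R$ agrees with the explicit relation $Q(R)$ of Definition \ref{def: maximal isometric extension}.
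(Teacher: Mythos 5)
Your proposal is correct as far as it goes, but note that the paper itself offers no proof of this lemma at all: it is stated purely as a citation to \cite[Chapter 7]{JA}, so there is no internal argument to compare against. What you add beyond the paper is a genuine, self-contained proof of the maximality half: the rigidity statement that $Q(R_\chi)=\Delta_Z$ for an isometric extension $\chi$ is proved correctly (the key step $n=-n_i$ is legitimate because the paper's definition of isometric extension quantifies over all $g\in G$, hence over negative powers), and the transport of approximating pairs under a uniformly continuous factor map $\psi$ with $R_{\chi\circ\psi}=R$ does give $Q(R)\subset R_\psi$. Your preliminary observations ($\Delta_X\subset Q(R)\subset R$, closedness, symmetry, and invariance via the equivalence of the one-sided and two-sided unions, which holds here by almost periodicity of every point of the distal product system) are also fine. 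The one substantive piece you do not prove --- that $Q(R)$ is transitive and that $X/Q(R)\to X/R$ is isometric, via an invariant fiber pseudometric or the enveloping-semigroup construction of the relative equicontinuous structure relation --- is exactly the content of Auslander's Chapter 7, and deferring it to that reference is no weaker than what the paper does for the entire statement. If you wanted a fully self-contained argument you would need to supply that construction (and verify that for a distal, hence open, extension the relation $Q(R)$ of Definition \ref{def: maximal isometric extension} coincides with the abstract equicontinuous structure relation rather than merely being contained in it), but as a reconstruction of the lemma at the level of rigor the paper itself adopts, your proposal is sound.
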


\begin{lemma}\label{factor--map}\cite[Appendix E.15]{JDV}
  Let $(X,T),(Y_1,T),(Y_2,T)$ be dynamical systems
  and let $\pi_i:(X,T)\to (Y_i,T),i=1,2$ be factor maps.
  If $R_{\pi_1}\subset R_{\pi_2}$, then there exists a factor map $\theta$ from $Y_1$ to $Y_2$,
  such that $\theta \circ \pi_1=\pi_2$.
 \end{lemma}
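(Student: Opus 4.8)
The plan is to build $\theta$ as the map induced on the quotient $Y_1$ of $X$ by $R_{\pi_1}$, exploiting the universal property of $\pi_1$. First I would define $\theta : Y_1 \to Y_2$ by $\theta(y) = \pi_2(x)$ for an arbitrary choice of $x \in \pi_1^{-1}(y)$. The one place the hypothesis is needed is well-definedness: if $\pi_1(x) = \pi_1(x') = y$, then $(x,x') \in R_{\pi_1} \subset R_{\pi_2}$, hence $\pi_2(x) = \pi_2(x')$, so the value $\theta(y)$ is independent of the chosen representative. By construction $\theta \circ \pi_1 = \pi_2$, and since $\pi_1$ is surjective this identity already forces $\theta$ to be unique.

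Next I would check that $\theta$ is a factor map in the sense used in the paper. Surjectivity of $\theta$ is immediate, since $\pi_2 = \theta \circ \pi_1$ is onto. Equivariance is a one-line computation: for $y = \pi_1(x)$ one has $\theta(Ty) = \theta(\pi_1(Tx)) = \pi_2(Tx) = T\pi_2(x) = T\theta(y)$, using only that $\pi_1$ and $\pi_2$ commute with $T$.

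The only slightly substantive point, and the one I would flag as the crux, is the continuity of $\theta$. Here I would invoke the standard fact that a continuous surjection from a compact space onto a Hausdorff space is a closed map, hence a topological quotient map; this applies to $\pi_1 : X \to Y_1$ since $X$ is compact metric and $Y_1$ is metric. Consequently a map out of $Y_1$ is continuous if and only if its precomposition with $\pi_1$ is continuous, and since $\theta \circ \pi_1 = \pi_2$ is continuous, so is $\theta$. I do not anticipate any genuine obstacle beyond remembering that this last step is exactly where compactness of $X$ is used; everything else is the routine ``universal property of the quotient'' bookkeeping.
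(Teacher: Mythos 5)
Your proof is correct; it is the standard quotient-map argument (well-definedness from $R_{\pi_1}\subset R_{\pi_2}$, equivariance, and continuity via the fact that $\pi_1$ is a closed surjection from a compact space onto a Hausdorff space, hence a topological quotient map). The paper itself gives no proof of this lemma, citing it directly from de Vries \cite[Appendix E.15]{JDV}, and your argument is exactly the one that reference supplies, so there is nothing to reconcile.
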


\begin{theorem}\label{isom}
  Let $(X,T)$ be a minimal distal system and $d\in \N$.
  Then the extension $X/\mathbf{RP}^{[d+1]}(X)\to X/\mathbf{RP}^{[d]}(X)$
  is isometric.
\end{theorem}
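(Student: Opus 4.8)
The plan is to deduce the statement from Lemma \ref{maximal-isometric} once we have established, for \emph{every} minimal system $(X,T)$ and every $d\in\N$, the inclusion
\[
Q(\mathbf{RP}^{[d]}(X))\subseteq \mathbf{RP}^{[d+1]}(X).
\]
Granting this, apply it to $Z:=X/\mathbf{RP}^{[d+1]}(X)$, which is again minimal and distal as a factor of $X$. By Theorem \ref{lift-property} the system $Z$ is of order $d+1$, so $\mathbf{RP}^{[d+1]}(Z)=\Delta_Z$, and the inclusion forces $Q(\mathbf{RP}^{[d]}(Z))=\Delta_Z$. Then Lemma \ref{maximal-isometric} says the maximal isometric extension of $Z/\mathbf{RP}^{[d]}(Z)$ below $Z$ is $Z/\Delta_Z=Z$, so $Z\to Z/\mathbf{RP}^{[d]}(Z)$ is isometric; and $\mathbf{RP}^{[d+1]}(X)\subseteq\mathbf{RP}^{[d]}(X)$ together with the lifting property (Theorem \ref{lift-property}(1)) identifies $Z/\mathbf{RP}^{[d]}(Z)$ with $X/\mathbf{RP}^{[d]}(X)$. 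This is exactly the assertion of the theorem.

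To prove the inclusion, take $(x,y)\in Q(\mathbf{RP}^{[d]}(X))$. Unwinding Definition \ref{def: maximal isometric extension}, for each $k\in\N$ we get $(a_k,b_k)\in\mathbf{RP}^{[d]}(X)$ with $\rho(a_k,x),\rho(b_k,y)<1/k$ and an integer $n_k$ with $\rho(T^{n_k}a_k,T^{n_k}b_k)<1/k$. Since $(a_k,b_k)\in\mathbf{RP}^{[d]}(X)$, Theorem \ref{cube-minimal} gives $(a_k,b_k^{[d+1]}_*)\in\overline{\mathcal{F}^{[d+1]}}(a_k^{[d+1]})$, so there are vectors $\vec h\in\Z^{d+1}$ for which $T^{\vec h\cdot\omega}a_k$ is simultaneously as close as we wish to $b_k$ for all $\omega\in\{0,1\}^{d+1}\setminus\{\vec 0\}$. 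Exploiting that the single homeomorphism $T^{n_k}$ is uniformly continuous, I would choose one such $\vec h_k$ with, at the same time, $\rho(T^{\vec h_k\cdot\omega}a_k,b_k)<1/k$ and $\rho(T^{n_k}T^{\vec h_k\cdot\omega}a_k,T^{n_k}b_k)<1/k$ for every $\omega\neq\vec 0$. Putting $\vec N_k=(n_k,\vec h_k)\in\Z^{d+2}$ and $\mathbf{v}_k=(T^{\vec N_k\cdot\epsilon}a_k:\epsilon\in\{0,1\}^{d+2})$, we have $\mathbf{v}_k\in\mathcal{F}^{[d+2]}a_k^{[d+2]}\subseteq\mathbf{Q}^{[d+2]}(X)$, and, writing $\{0,1\}^{d+2}=\{0,1\}\times\{0,1\}^{d+1}$, the estimates say: $\mathbf{v}_k(\vec 0)=a_k$; $\mathbf{v}_k(0,\omega)$ lies within $1/k$ of $b_k$ for $\omega\neq\vec 0$; and $\mathbf{v}_k(1,\omega)$ lies within $1/k$ of $T^{n_k}b_k$ for every $\omega$.

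Passing to a subsequence along which $T^{n_k}b_k\to c$, the $\mathbf{v}_k$ converge to the point $\mathbf{w}\in X^{[d+2]}$ with bottom face $(x,y^{[d+1]}_*)$ and top face $c^{[d+1]}$, so $\mathbf{w}\in\mathbf{Q}^{[d+2]}(X)$ because $\mathbf{Q}^{[d+2]}(X)$ is closed. It remains to remove the spurious value $c$: apply the Euclidean permutation exchanging the two faces to obtain $(c^{[d+1]},(x,y^{[d+1]}_*))\in\mathbf{Q}^{[d+2]}(X)$, then act by powers of the Host--Kra element $T^{(B)}$, $B=\{\epsilon:\epsilon_1=0\}$ the bottom hyperface, which is $T$ on the bottom face and the identity on the top face; this gives $((T^{m}c)^{[d+1]},(x,y^{[d+1]}_*))\in\mathbf{Q}^{[d+2]}(X)$ for all $m$. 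Choosing $m=m_i$ with $T^{m_i}c\to y$ (minimality), using closedness once more, and swapping the faces back, we land at $((x,y^{[d+1]}_*),y^{[d+1]})\in\mathbf{Q}^{[d+2]}(X)$, which as a point of $X^{[d+2]}$ is precisely $(x,y^{[d+2]}_*)$. By Theorem \ref{cube-minimal} this means $(x,y)\in\mathbf{RP}^{[d+1]}(X)$, completing the inclusion (and note that only minimality, not distality, is used here).

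The one delicate step is the middle paragraph: converting the purely metric ``$1/k$-close'' data supplied by the definition of $Q(\cdot)$ into an actual element of the cubespace $\mathbf{Q}^{[d+2]}(X)$. That is where the uniform continuity of the individual maps $T^{n_k}$ (which makes the two simultaneous estimates available) and the closedness of $\mathbf{Q}^{[d+2]}(X)$ do the real work; the rest is bookkeeping with the invariance of $\mathbf{Q}^{[d+2]}(X)$ under $\mathcal{G}^{[d+2]}$ and under Euclidean permutations, plus the characterizations of $\mathbf{RP}^{[d]}$ in Theorem \ref{cube-minimal}.
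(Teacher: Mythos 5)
Your proposal is correct, and its skeleton is the same as the paper's: everything reduces to the inclusion $Q(\mathbf{RP}^{[d]})\subseteq\mathbf{RP}^{[d+1]}$ together with Lemma \ref{maximal-isometric} (the paper then finishes with Lemma \ref{intersection} on the tower $X\to X/Q\to X/\mathbf{RP}^{[d+1]}\to X/\mathbf{RP}^{[d]}$, whereas you pass to the quotient $Z=X/\mathbf{RP}^{[d+1]}(X)$ first; both are fine). Where you genuinely diverge is in the proof of the inclusion. The paper works directly with Definition \ref{def-rp}: given $(x,y)\in Q(\mathbf{RP}^{[d]}(X))$ and $\varepsilon>0$, it picks $(x',y')\in\mathbf{RP}^{[d]}(X)$ near $(x,y)$ with $\rho(T^nx',T^ny')<\varepsilon$, uses uniform continuity of $T^n$ to choose the witnesses $x'',y''$ and $\vec n\in\Z^d$ at scale $\delta$, and simply appends $n$ to form $\vec m=(\vec n,n)\in\Z^{d+1}$ — a purely metric, three-line verification. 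You implement the same idea (append the time $n$ from the definition of $Q$ as a new cube direction, exploiting uniform continuity of the single map $T^{n_k}$) but through the cubespace characterization of Theorem \ref{cube-minimal}: you assemble an honest element of $\mathbf{Q}^{[d+2]}(X)$, pass to a limit, and then must dispose of the spurious limit value $c$ on the top face via a Euclidean permutation, the lower-hyperface element $T^{(B)}$, minimality, and closedness. Your route is longer and relies on the $\mathcal{G}^{[d+2]}$-invariance and Euclidean-permutation invariance of $\mathbf{Q}^{[d+2]}(X)$, but it buys two small things: it handles the coordinate $\epsilon=(1,\vec 0)$ explicitly (where the paper's write-up silently needs $x'',y''$ chosen close enough that $T^nx'',T^ny''$ remain $\varepsilon$-close, since only $\rho(T^nx',T^ny')<\varepsilon$ is given), and it makes visible that the inclusion itself uses only minimality, distality entering only through Lemma \ref{maximal-isometric}.
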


\begin{proof}
It follows from Lemma \ref{factor--map} and the fact $\mathbf{RP}^{[d+1]}(X)\subset \mathbf{RP}^{[d]}(X)$
that the extension $X/\mathbf{RP}^{[d+1]}(X)\to X/\mathbf{RP}^{[d]}(X)$ exists.
We may assume that this extension is nontrivial, otherwise there is nothing to prove.
That is, $\mathbf{RP}^{[d+1]}(X)$ is not equal to $\mathbf{RP}^{[d]}(X)$.
By Lemma \ref{maximal-isometric}, it suffices to show that
  $Q(\mathbf{RP}^{[d]}(X))\subset \mathbf{RP}^{[d+1]}(X)$.

Assume that $Q(\mathbf{RP}^{[d]}(X))$ is nontrivial,
and let $(x,y)\in Q(\mathbf{RP}^{[d]}(X))$ with $x\neq y$.
  Fix $\varepsilon>0$.
  By Definition \ref{def: maximal isometric extension}, there exist $(x',y')\in \mathbf{RP}^{[d]}(X)$ and $n\in \N$
  such that
  \[
  \rho(x,x')<\frac{\varepsilon}{4},\;  \rho(y,y')<\frac{\varepsilon}{4}\;\;
\text{and}\; \rho(T^nx',T^ny')<\varepsilon.
\]
There is $\delta>0$ such that $\rho(T^nu,T^nv)<\varepsilon$
whenever $u,v\in X$ with $\rho(u,v)<\delta<\varepsilon$.
As $(x',y')\in \mathbf{RP}^{[d]}(X)$,
there exist $x'',y''\in X$ and $\vec{n}=(n_1,\ldots,n_d)\in \Z^d$
such that
\[
\rho(x',x'')<\frac{\varepsilon}{4},\rho(y',y'')<\frac{\varepsilon}{4}
\; \text{and}\; \rho(T^{\vec{n}\cdot \epsilon}x'',T^{\vec{n}\cdot \epsilon}y'')<\delta,
\]
for all $\epsilon\in \{0,1\}^d\backslash \{\vec{0}\}$.

Put $\vec{m}=(\vec{n},n)$, then we have that
$\rho(x,x'')<\varepsilon,\rho(y,y'')<\varepsilon$ and
\[
\rho(T^{\vec{m}\cdot \epsilon}x'',T^{\vec{m}\cdot \epsilon}y'')<\varepsilon
\]
for all $\epsilon\in \{0,1\}^{d+1}\backslash \{\vec{0}\}$
which implies that $(x,y)\in \mathbf{RP}^{[d+1]}(X)$.

We conclude that
the extension $X/\mathbf{RP}^{[d+1]}(X)\to X/\mathbf{RP}^{[d]}(X)$
  is isometric.
\end{proof}

\subsection{Nilsystems}
We start by recalling some basic results in nilsystems.
For more details and proofs, see \cite{LA,PW}.
If $G$ is a nilpotent Lie group, let $G^0$ denote the connected component of its
unit element $1_G$.
Then $G^0$ is an open, normal subgroup of $G$.
In the sequel,
$s\geq3$ is an integer and $(X=G/\Gamma,T)$ is a minimal $s$-step nilsystem.
We let $\tau$ denote the element of $G$ defining the transformation $T$.

If $(X,T)$ is minimal, let $G'$ be the subgroup of $G$
spanned by $G^0$ and $\tau$ and let $\Gamma'=\Gamma\cap G'$,
then we have that $G=G'\Gamma$.
Thus the system $(X,T)$ is conjugate to the system $(X',T')$
where $X'=G'/\Gamma'$ and $T'$ is the translation by $\tau $ on $X'$.
Therefore,
without loss of generality, we can restrict to the case that $G$ is spanned by $G^0$
  and $\tau$.
 We can also assume that $G^0$ is simply connected
 (see for example \cite{LA} or \cite{AM} for the case that $G=G^0$
 and \cite{AL05} for the general case).
 This in turns implies that
 $G^0$ is \emph{divisible}, i.e., for any $g\in G^0, d\in \N$, there exists $h\in G^0$
such that $h^d=g$ (see for example \cite[Chapter 10, Corollary 9]{HK06}).
Recall that an element $g\in G$ is \emph{rational} (with respect to $\Gamma$)
if $g^n\in \Gamma$ for some $n\in \mathbb{N}$.
Let $\pi :G\to X$ be the natural projection.

For $r=1,\ldots,s-1$, define $Z_r=G/(G_{r+1}\Gamma)$.
Let $\pi_r:X \to Z_r$ be the quotient map.
The factors $Z_r$ define a decreasing sequence of factors between nilsystems,
starting with $X=Z_s$ and ending with $Z_1$:
\begin{equation}\label{factormap}
  Z_s\to Z_{s-1}\to \ldots\to Z_r\to Z_{r-1}\to \ldots \to Z_2\to Z_1.
\end{equation}

Refer \cite{PD13} or \cite{HK06} for the following description of maximal factors of higher order
 of a minimal nilsystem.

\begin{lemma}\label{nilfact}
  Let $(X,T)$ be a minimal $s$-step nilsystem.
 For $r=1,\ldots,s$, if $X_r$ is the maximal factor of order $r$ of $X$,
  then $X_r$ has the form $G/(G_{r+1}\Gamma)$, endowed with the translation by the projection
of $\tau $ on $G/G_{r+1}$.
\end{lemma}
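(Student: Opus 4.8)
The plan is to prove the two inclusions $\mathbf{RP}^{[r]}(X)\subseteq R_{\pi_r}$ and $R_{\pi_r}\subseteq\mathbf{RP}^{[r]}(X)$, which together show that the maximal factor $X/\mathbf{RP}^{[r]}(X)$ of order $r$ coincides with $Z_r=G/(G_{r+1}\Gamma)$, with the induced translation.

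First I would check that $Z_r$ really is a minimal $r$-step nilsystem. Since $G/G^0$ is cyclic (generated by the image of $\tau$) it is abelian, hence $G_2\subseteq G^0$ and so $G_{r+1}\subseteq G^0$ for every $r\ge1$; as $G^0$ is a simply connected nilpotent Lie group, $G_{r+1}$ is a closed connected rational subgroup, so $G_{r+1}\Gamma$ is closed and $Z_r=(G/G_{r+1})/(\Gamma G_{r+1}/G_{r+1})$ is a nilmanifold for the $r$-step nilpotent group $G/G_{r+1}$, with translation by the projection of $\tau$. It is minimal, being a factor of $X$. An $r$-step minimal nilsystem is a system of order $r$ (for $r\ge2$ this is Theorem \ref{description}; for $r=1$ it is the classical fact that such a system is equicontinuous), so $\mathbf{RP}^{[r]}(Z_r)=\Delta$. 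Applying Theorem \ref{lift-property}(2) to $\pi_r\colon X\to Z_r$ gives $\mathbf{RP}^{[r]}(X)\subseteq R_{\pi_r}$, i.e.\ $Z_r$ is a factor of the maximal factor of order $r$ of $X$.

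The reverse inclusion is the substantive point. Since $G_{r+1}$ is normal in $G$, two points of $X=G/\Gamma$ lie in the same fibre of $\pi_r$ exactly when they have the form $g\Gamma$ and $\gamma g\Gamma$ with $\gamma\in G_{r+1}$; hence it suffices to show that $(x,\gamma x)\in\mathbf{RP}^{[r]}(X)$ for every $\gamma\in G_{r+1}$ and every $x\in X$. By Theorem \ref{cube-minimal} this amounts to placing the point $(\gamma x,x^{[r+1]}_*)$ inside $\mathbf{Q}^{[r+1]}(X)$. Here I would invoke the description of the dynamical cube space of a minimal nilsystem (see \cite[Chapter 11]{HK06}): $\mathbf{Q}^{[r+1]}(X)$ is the image in $X^{[r+1]}$ of the Host--Kra cube group of the lower central filtration of $G$, and that group contains, for $\gamma\in G_{r+1}$, the element $\gamma^{(\{\vec{1}\})}$ supported at the single vertex $\vec{1}$ (a face of codimension $r+1$). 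Thus $\gamma^{(\{\vec{1}\})}x^{[r+1]}\in\mathbf{Q}^{[r+1]}(X)$, i.e.\ the cube equal to $x$ at every $\epsilon\ne\vec{1}$ and to $\gamma x$ at $\vec{1}$. Applying the Euclidean isometry $\epsilon\mapsto\vec{1}-\epsilon$, which leaves $\mathbf{Q}^{[r+1]}(X)$ invariant, transports this to $(\gamma x,x^{[r+1]}_*)$, so this point is in $\mathbf{Q}^{[r+1]}(X)$, and Theorem \ref{cube-minimal} gives $(\gamma x,x)\in\mathbf{RP}^{[r]}(X)$; by symmetry of $\mathbf{RP}^{[r]}(X)$ we get $(x,\gamma x)\in\mathbf{RP}^{[r]}(X)$, hence $R_{\pi_r}\subseteq\mathbf{RP}^{[r]}(X)$.

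Combining the two inclusions yields $\mathbf{RP}^{[r]}(X)=R_{\pi_r}$, so the maximal factor of order $r$ of $X$ is $X/R_{\pi_r}=Z_r=G/(G_{r+1}\Gamma)$, as claimed. The step I expect to be the main obstacle is precisely the use of the structure of $\mathbf{Q}^{[r+1]}(X)$: one must know that the dynamical cube space, defined as the orbit closure of $x^{[r+1]}$ under the face group of $\langle\tau\rangle$, already equals the image of the full Host--Kra cube group of $G_\bullet$ — it is here that minimality together with the nilpotent structure is essential, and without it the single-corner element $\gamma^{(\{\vec{1}\})}x^{[r+1]}$ need not be visible. An alternative route, closer to the results quoted above, is an induction on the step $s$ starting from the classical identification of the maximal equicontinuous factor ($r=1$) and using Theorem \ref{isom} together with the central (vertical-torus) action of $G_r/G_{r+1}$ on the fibres of $Z_r\to Z_{r-1}$, but the cube argument is the most direct.
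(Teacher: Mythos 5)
Your proposal is correct; note that the paper itself offers no proof of this lemma at all, but simply points the reader to \cite{PD13} and \cite{HK06}, so there is no internal argument to match against. Your easy inclusion $\mathbf{RP}^{[r]}(X)\subseteq R_{\pi_r}$ is handled exactly as one would expect (checking that $G_{r+1}\subseteq G^0$ is a closed rational subgroup so that $Z_r$ is a genuine minimal $r$-step nilsystem, hence of order $r$ by Theorem \ref{description}, then applying Theorem \ref{lift-property}(2)). For the substantive inclusion $R_{\pi_r}\subseteq\mathbf{RP}^{[r]}(X)$ you correctly reduce, via normality of $G_{r+1}$, to showing $(x,\gamma x)\in\mathbf{RP}^{[r]}(X)$ for $\gamma\in G_{r+1}$, and the chain ``$\gamma^{(\{\vec{1}\})}x^{[r+1]}$ lies in the algebraic cube space, apply the Euclidean flip $\epsilon\mapsto\vec{1}-\epsilon$, invoke Theorem \ref{cube-minimal}(1) and symmetry of the equivalence relation'' is sound. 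The one external input you rely on --- that for a minimal nilsystem the dynamically defined $\mathbf{Q}^{[r+1]}(X)$ coincides with the image of the Host--Kra cube group of the lower central filtration of $G$, so that the single-corner configuration is visible even though the dynamical cube space is generated only by powers of $\tau$ --- is precisely the nontrivial content of \cite[Chapters 11--12]{HK06}, i.e.\ the same source the paper cites for the whole lemma, so this is a legitimate reduction rather than a gap. The alternative inductive route you sketch at the end (identifying $Z_1$ as the maximal equicontinuous factor and climbing the tower using Theorem \ref{isom} and the central action of $G_{r+1}/G_{r+2}$ on the fibres) is closer in spirit to the tower analysis the authors carry out in Section 6 and to the treatment in \cite{PD13}, but your cube argument is shorter and perfectly adequate.
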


  Lemma \ref{nilfact} tells us that for every $r=1,\ldots,s-1$,
$Z_r$ defined in (\ref{factormap}) is indeed the maximal
factor of order $r$ of $X$.
By Theorem \ref{isom}, we obtain that the extension $Z_{r+1}\to Z_r$ is isometric.
It suffices to show that such extension is maximal.

First, we choose a metric $d_G$ on the group $G$ that defines its topology.
For the moment, we only assume that this distance is invariant under right translations,
meaning that for all $g,g',h\in G$,
\[
d_G(gh,g'h)=d_G(g,g').
\]
The nilmanifold $X$ is endowed with the quotient distance,
meaning that $x,y\in X$,
\[
d_X(x,y)=\inf\{d_G(g,h):\pi(g)=x \; \text{and }\; \pi(h)=y\}.
\]
In other words, for all $g,h\in G$,
we have
\begin{equation}\label{00}
d_X(\pi(g),\pi(h))=\inf_{\alpha,\beta\in \Gamma}d_G(g\alpha,h\beta)=\inf_{\gamma\in \Gamma}d_G(g,h\gamma).
\end{equation}

\begin{lemma}\label{good-property}
  For any $g,h\in G_{s-1},r\in G$ and $n\in \N$,
  we have
  \[
  [gh,r]=[g,r][h,r] \quad \text{and} \quad [g^n,r]=[g,r]^n=[g,r^n].
  \]
  The maps
\[
G\to G,\quad \cdot \mapsto [g,\cdot],\quad \cdot \mapsto [\cdot,g]
\]
and
\[
 G_{s-1}\to G_{s-1},\quad \cdot \mapsto [\cdot,r]\quad \cdot \mapsto [r,\cdot]
\]
  are all continuous.
\end{lemma}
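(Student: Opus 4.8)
The plan is to reduce every identity to the single fact that, since $(X,T)$ is an $s$-step nilsystem, the group $G$ is $s$-step nilpotent, so $G_{s+1}=[G_s,G]=\{1_G\}$; equivalently, $G_s$ lies in the center of $G$. Since $g,h\in G_{s-1}$, all the commutators that occur on the right-hand sides of the asserted identities, namely $[g,r]$, $[h,r]$ and $[g,r^{n-1}]$, belong to $[G_{s-1},G]\subset G_s$ and are therefore central; in particular they commute with everything, and with each other.

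First I would record the two standard commutator identities, valid in any group with the convention $[a,b]=aba^{-1}b^{-1}$:
\[
[ab,c]=a[b,c]a^{-1}\,[a,c],\qquad [a,bc]=[a,b]\,b\,[a,c]\,b^{-1}.
\]
Applying the first with $a=g$, $b=h$, $c=r$ and using that $[h,r]\in G_s$ is central (so $g[h,r]g^{-1}=[h,r]$), one gets $[gh,r]=[h,r][g,r]=[g,r][h,r]$. Because $G_{s-1}$ is a subgroup, $g^{n-1}\in G_{s-1}$, so taking $a=g^{n-1}$, $b=g$, $c=r$ in the same identity and inducting on $n$ gives $[g^{n},r]=[g^{n-1},r][g,r]=[g,r]^{n}$. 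Applying the second identity with $a=g$, $b=r$, $c=r^{n-1}$ and using that $[g,r^{n-1}]\in G_s$ is central gives $[g,r^{n}]=[g,r]\,[g,r^{n-1}]$, and induction on $n$ yields $[g,r^{n}]=[g,r]^{n}$. Combining the last two facts, $[g^{n},r]=[g,r]^{n}=[g,r^{n}]$.

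For the continuity assertions, I would invoke that $G$ is a Lie group, hence a topological group, so that $(x,y)\mapsto [x,y]=xyx^{-1}y^{-1}$ is continuous on $G\times G$ as a composition of multiplication and inversion. Fixing one variable equal to $g$ then gives continuity of $\cdot\mapsto[g,\cdot]$ and $\cdot\mapsto[\cdot,g]$ on $G$. Since $G_{s-1}$ is a subgroup of $G$, it is itself a topological group in the subspace topology, and $[G_{s-1},G]\subset G_s\subset G_{s-1}$, so restriction yields continuous self-maps $\cdot\mapsto[\cdot,r]$ and $\cdot\mapsto[r,\cdot]$ of $G_{s-1}$. No step here is a genuine obstacle; the one point needing care is to invoke $s$-step nilpotency precisely where it makes the a priori non-abelian identities collapse to the stated multiplicative forms, i.e.\ where the relevant commutators become central.
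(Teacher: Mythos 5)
Your proposal is correct and follows essentially the same route as the paper: both proofs rest on the observation that $[G_{s-1},G]\subset G_s$ is central because $G$ is $s$-step nilpotent, which collapses the commutator identities to the stated multiplicative forms. The only cosmetic difference is in the continuity part, where the paper runs an explicit estimate using the right-invariance of $d_G$ and continuity of conjugation, while you simply invoke joint continuity of $(x,y)\mapsto xyx^{-1}y^{-1}$ on the Lie group $G$; both are valid and rest on the same fact.
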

\begin{proof}
  As $[h,r]$ belongs to $G_s$
and thus belongs to the center of $G$,
we have
  \[
  [gh,r]=ghrh^{-1}g^{-1}r^{-1}=g(hrh^{-1}r^{-1})rg^{-1}r^{-1}=[g,r][h,r].
  \]
  Moreover, for any integer $n\geq1$,
  $[g^n,r]=[g,r]^n=[g,r^n]$.

  Now let $u,u'\in G_{s-1}$, then
  \begin{align*}
    d_G([u,g],[u',g])& =d_G([u,g],[uu^{-1}u',g]) \\
     & =d_G([u,g],[u^{-1}u',g][u,g]) \\
     &  =d_G(1_G,[u^{-1}u',g]) \quad  (\text{by right-invariance})\\
     &\leq d_G(u,u')+d_G(gug^{-1},gu'g^{-1}).
  \end{align*}
  As $G$ is a Lie group,
the map $G\to G,x\mapsto gxg^{-1}$ is continuous.
It follows that the map $G_{s-1}\to G_{s-1},\cdot \mapsto [\cdot,g]$
  is also continuous.

 By similar argument,
 we obtain the result.
\end{proof}


\begin{lemma}\cite[Corollary 1.14]{AL06}\label{rational-dense}
  A closed subgroup $H$ of $G$ is rational if and only if rational elements
of $G$ are dense in $H$.
\end{lemma}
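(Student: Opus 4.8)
The plan is to reduce the statement, via Mal'cev coordinates, to an elementary fact about $\mathbb{Q}$-rational subspaces. Recall the reduction already in force in this subsection: $G$ is spanned by its identity component $G^{0}$, which is simply connected nilpotent, together with $\tau$, so $G/G^{0}$ is cyclic. Fix a Mal'cev basis of $\mathfrak{g}^{0}=\mathrm{Lie}(G^{0})$ adapted to $\Gamma\cap G^{0}$ and let $\mathfrak{g}^{0}_{\mathbb{Q}}$ be the associated $\mathbb{Q}$-structure. First I would set up the dictionary: (i) an element $g\in G^{0}$ is rational, i.e.\ $g^{n}\in\Gamma$ for some $n\in\mathbb{N}$, if and only if $\log g\in\mathfrak{g}^{0}_{\mathbb{Q}}$; and (ii) a closed connected subgroup $H\subseteq G^{0}$ is rational, i.e.\ $H\Gamma$ is closed in $G$ (equivalently $H\cap\Gamma$ is cocompact in $H$), if and only if $\mathfrak{h}=\mathrm{Lie}(H)$ is spanned over $\mathbb{R}$ by $\mathfrak{h}\cap\mathfrak{g}^{0}_{\mathbb{Q}}$ (i.e.\ is defined over $\mathbb{Q}$). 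Statement (i) follows because $\exp$ is a polynomial diffeomorphism and, by the Baker--Campbell--Hausdorff formula, the coordinates of $\log(g^{n})$ are polynomials in $n$ with leading term $n\log g$ and with rational dependence on the coordinates of $\log g$, so integrality of these coordinates for a single $n$ forces $\log g$ to be rational; the converse is clear from divisibility of $G^{0}$. Statement (ii) is the classical Mal'cev lattice criterion.

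Granting the dictionary, the connected case of the lemma is exactly the elementary fact that a $\mathbb{Q}$-subspace of a finite-dimensional $\mathbb{Q}$-vector space has its rational points dense in the real span, and conversely that a real subspace which is the real span of rational vectors is defined over $\mathbb{Q}$. For a general closed subgroup $H$ I would then pass to the identity component. If $H$ is rational, then $H\cap\Gamma$ is a lattice in $H$, and since $H^{0}$ is open in $H$ the group $H^{0}(H\cap\Gamma)$ is open and closed in $H$, whence $H^{0}/(H^{0}\cap\Gamma)$ is compact and $\mathfrak{h}$ is $\mathbb{Q}$-rational; by the connected case the rational elements are dense in $H^{0}$. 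The cosets of $H^{0}$ in $H$ met by $H\cap\Gamma$ form a finite-index (in particular nontrivial) subgroup of the cyclic group $H/H^{0}$, which is enough --- with some further bookkeeping using divisibility of $G^{0}$ to adjust coset representatives to rational elements --- to spread the density over all of $H$. The converse direction is symmetric: density of rational elements forces $\mathfrak{h}\cap\mathfrak{g}^{0}_{\mathbb{Q}}$ to span $\mathfrak{h}$, hence $H^{0}$ is rational, and forces $H/H^{0}$ to be generated by the image of a rational element, and together these give cocompactness of $H\cap\Gamma$ in $H$.

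The step I expect to be the main obstacle is this last bookkeeping at the component group $H/H^{0}$ --- choosing a rational element of $H$ whose class generates $H/H^{0}$ and matching it up with the lattice --- rather than the linear algebra, which is routine. When $G$ is connected this difficulty disappears entirely and only the statement of the second paragraph is needed. Since the lemma is exactly \cite[Corollary 1.14]{AL06}, we will rely on that reference for the complete argument and use the outline above only to indicate its shape.
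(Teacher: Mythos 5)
The paper offers no proof of this lemma: it is stated verbatim as a quotation of \cite[Corollary 1.14]{AL06}, and your proposal ultimately does the same, so the two agree in substance (your Mal'cev-coordinate outline is indeed the standard shape of Leibman's argument). One small caution about the sketch itself: $H/H^{0}$ need not be cyclic --- only $H/(H\cap G^{0})$ embeds in the cyclic group $G/G^{0}$, and e.g.\ a rank-two lattice inside $G^{0}$ has $H/H^{0}\cong\Z^{2}$ --- so the component-group bookkeeping you flag as the main obstacle is slightly more involved than you describe, but since you, like the paper, defer to the cited reference for the complete argument, this does not affect correctness.
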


The following result can be found in \cite{HKM02},
here we give a direct proof.
 \begin{lemma}\label{key}
For every $\varepsilon>0$ and $t\in G_s$, there exist
  $h\in G_{s-1},\theta\in G_s\cap \Gamma$
  and $k\in \N$ such that
  \begin{equation}\label{kkkk}
  d_G([h,\tau^k],t\theta)<\varepsilon.
  \end{equation}
 \end{lemma}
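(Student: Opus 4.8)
The plan is to realize $t \in G_s$ (up to an element of $G_s \cap \Gamma$) as a commutator $[h,\tau^k]$ with $h \in G_{s-1}$, exploiting that $G_s$ lies in the center of $G$, that $G_{s-1}^0$ is divisible and connected, and that $G$ is generated by $G^0$ and $\tau$. First I would recall that since $(X,T)$ is minimal, $G_s \cap \Gamma$ is cocompact in $G_s$; indeed $G_s \Gamma / \Gamma$ is the fiber of $X \to Z_{s-1}$, which is a connected nilmanifold, so $G_s / (G_s \cap \Gamma)$ is compact. Hence it suffices to find $h \in G_{s-1}$ and $k \in \N$ with $[h,\tau^k]$ lying in a prescribed neighborhood of the coset $t(G_s \cap \Gamma)$, i.e. to show that the set
\[
C = \{ [h,\tau^k] : h \in G_{s-1},\ k \in \N \}
\]
has image dense in the torus $G_s / (G_s \cap \Gamma)$.

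The core observation is Lemma \ref{good-property}: for $h \in G_{s-1}$ and any $r \in G$, the map $h \mapsto [h,r]$ is a continuous homomorphism from $G_{s-1}$ into the central group $G_s$, and $[h^n,r] = [h,r]^n = [h,r^n]$. Consequently, fixing $k$, the set $\{[h,\tau^k] : h \in G_{s-1}\}$ is a subgroup of $G_s$; since $G_{s-1}^0$ is connected and $h \mapsto [h,\tau^k]$ is continuous, $\{[h,\tau^k] : h \in G_{s-1}^0\}$ is a connected subgroup of $G_s$. The next step is to identify the closure of $\bigcup_k \{[h,\tau^k] : h \in G_{s-1}^0\}$ as a closed connected subgroup $N$ of $G_s$ that is moreover normalized by all of $G$ (using that commutators $[h,g]$ with $h \in G_{s-1}$ range over a generating set of $G_s$ as $g$ ranges over generators $G^0 \cup \{\tau\}$ of $G$, together with $[G_{s-1}, G^0] \subset G_s$ and the fact that $G^0$ is divisible so $[G_{s-1}^0, G^0]$ is already closed and connected). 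The claim to nail down is that $N = G_s$: this follows because $G_s = [G_{s-1}, G]$ and $G$ is generated by $G^0$ and $\tau$, so $G_s$ is generated by $[G_{s-1}, G^0]$ and $\{[h,\tau] : h \in G_{s-1}\}$; the first piece is already connected and contained in $N$, and for the second piece one uses divisibility of $G_{s-1}^0$ to write $h = h_0 \cdot(\text{rational/discrete part})$ and $[h,\tau] = [h_0,\tau][\text{disc},\tau]$, absorbing the discrete contribution into $G_s \cap \Gamma$-translates via the $k \mapsto \tau^k$ freedom — which is exactly why we need the $\theta \in G_s \cap \Gamma$ slack and arbitrary $k$ in the statement.

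In more detail, the argument I would carry out is: (i) pick $t \in G_s$; since rational elements are dense in $G_s$ (Lemma \ref{rational-dense}, as $G_s$ is rational — it is the commutator subgroup, hence rational by standard Malcev theory, or one cites \cite{AL06}), and modifying $t$ by an element of $G_s \cap \Gamma$ is allowed, it suffices to handle $t$ in a fixed connected neighborhood of $1_G$ in $G_s^0$; (ii) $G_s^0$ is divisible and equals the connected group $[G_{s-1}^0, G]^0$; (iii) write $G_s^0$ as generated by the connected subgroups $[G_{s-1}^0, G^0]$ and the one-parameter-type families $\{[h,\tau] : h \in G_{s-1}^0\}$, both of which lie in $C$ up to $G_s \cap \Gamma$ and up to choosing $k$ large; (iv) conclude that $C \cdot (G_s \cap \Gamma)$ is dense in $G_s$, so given $\varepsilon > 0$ there are $h \in G_{s-1}$, $\theta \in G_s \cap \Gamma$, $k \in \N$ with $d_G([h,\tau^k], t\theta) < \varepsilon$, using right-invariance of $d_G$ to pass between cosets.

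The main obstacle is step (iii)–(iv): controlling the interaction between the discrete part of $\Gamma$ and the connected commutator subgroups, i.e. showing that allowing arbitrary powers $\tau^k$ and multiplying by $\theta \in G_s \cap \Gamma$ is enough to reach a dense subset of the central torus, rather than just a proper closed subgroup. This is where minimality of $(X,T)$ must be used decisively — equivalently, that $\tau$ acts with dense orbit on $Z_{s-1} = G/(G_s \Gamma)$ — to guarantee that the "directions" $[h,\tau^k]$ fill out all of $G_s/(G_s\cap\Gamma)$ and are not trapped in a subtorus. I would unwind this via the structure of the maximal factor $Z_{s-1}$ from Lemma \ref{nilfact} and the fact that $X \to Z_{s-1}$ is a group extension by the torus $G_s/(G_s \cap \Gamma)$, on which minimality forces the relevant cocycle to be non-degenerate.
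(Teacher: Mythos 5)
Your reformulation -- that it suffices to show the set $\{[h,\tau^k]:h\in G_{s-1},\,k\in\N\}$ is dense in $G_s$ modulo $G_s\cap\Gamma$ -- is correct, and you correctly identify the ingredients (bilinearity of the commutator from Lemma \ref{good-property}, divisibility of $G^0$, density of rational elements, minimality). But the proof has a genuine gap at exactly the point you flag as ``the main obstacle'': you never actually show that $[G_{s-1},G^0]$ lies in the closure of $\{[h,\tau^k]\}\cdot(G_s\cap\Gamma)$. The remark that ``minimality forces the relevant cocycle to be non-degenerate'' is not an argument, and the suggestion that one writes $h=h_0\cdot(\text{rational part})$ and absorbs $[\text{disc},\tau]$ into $G_s\cap\Gamma$-translates is false as stated: a commutator $[u_1,\tau]$ with $u_1$ rational and $\tau$ arbitrary does not lie in $\Gamma$; only a commutator of two elements of $\Gamma$ does.

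The paper closes this gap with a concrete two-step device that your sketch is missing. Write $t=[u,v]$ with $u\in G_{s-1}$, $v\in G$, and decompose $v=v_1\tau^n$ with $v_1\in G^0$. Approximate $u$ by a \emph{rational} $u_1\in G_{s-1}$ (Lemma \ref{rational-dense}), so that $u_1^l\in\Gamma$ for some $l$; then use divisibility of $G^0$ to choose $v_2$ with $v_2^l=v_1$, so that by Lemma \ref{good-property}
\[
[u_1,v_1\tau^n]=[u_1^l,v_2]\,[u_1,\tau^n].
\]
This shifts the exponent $l$ from $u_1$ onto the $G^0$-variable. Now minimality enters in its concrete form: $\{\tau^m\Gamma\}$ is dense in $X$, so $v_2$ is approximated by elements $\tau^m\gamma$ with $\gamma\in\Gamma$, and by continuity of $\cdot\mapsto[u_1^l,\cdot]$ one gets $[u_1^l,v_2]\approx[u_1^l,\tau^m\gamma]=[u_1,\tau^{ml}]\,[u_1^l,\gamma]$. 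The factor $[u_1^l,\gamma]$ lies in $G_s\cap\Gamma$ precisely because \emph{both} entries $u_1^l$ and $\gamma$ lie in $\Gamma$ -- this is the only place the $\theta$-slack comes from, and it is why the rational approximation of $u$ must be performed before the minimality approximation of $v_2$. Setting $h=u_1$, $k=ml+n$, $\theta=[u_1^l,\gamma]^{-1}$ finishes the proof. Without this interplay between rationality of $u_1$ and divisibility of $G^0$, your density claim for $N$ remains unproved.
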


\begin{proof}
Fix  $\varepsilon>0$ and $t\in G_s$.
Let $u\in G_{s-1}$ and $v\in G$ with $[u,v]=t$.
As $G$ is spanned by $G^0$ and $\tau$,
there exist $v_1\in G^0$ and $n\in \N$ with
$v=v_1 \tau^n.$

Recall that $G_{s-1}$ is a rational subgroup of $G$,
thus by Lemma \ref{rational-dense},
there is some rational element $u_1\in G_{s-1}$ such that 
   \[
       d_G([u,v_1\tau^n],[u_1,v_1\tau^n])
  <\frac{\varepsilon}{2}.
   \]

   Let $l\in \mathbb{N}$ such that $u_1^l\in \Gamma$.
 Recall that $G^0$ is divisible, choose $v_2\in G^0$ with $v_2^l=v_1$.
By Lemma \ref{good-property},
we get
\[
  [u_1,v_1\tau^n]=[u_1,v_2^l\tau^n]=[u_1^l,v_2][u_1,\tau^n].
\]

By continuity of the map $\cdot \mapsto [u_1^l,\cdot]$
and by minimality of the system $(X,T)$,
there exist $m\in \mathbb{N}$
and $\gamma\in \Gamma$ such that
\[
  d_G([u_1^l,v_2],[u_1^l,\tau^m \gamma])<\frac{\varepsilon}{2}.
\]

Now we have
  \begin{align*}&d_G(t,[u_1^l,\tau^m\gamma][u_1,\tau^n])\\
 =&d_G([u,v_1\tau^n],[u_1^l,\tau^m\gamma][u_1,\tau^n])& \\
 \leq &d_G([u,v_1\tau^n],[u_1,v_1\tau^n])+d_G([u_1,v_1\tau^n],[u_1^l,\tau^m\gamma][u_1,\tau^n]) \\
      <& d_G([u_1,v_1\tau^n],[u_1^l,\tau^m\gamma][u_1,\tau^n])+\frac{\varepsilon}{2}
     & \\
       =&d_G([u_1^l,v_2][u_1,\tau^n],[u_1^l,\tau^m\gamma][u_1,\tau^n])+\frac{\varepsilon}{2}
     & \\
     =& d_G([u_1^l,v_2],[u_1^l,\tau^m \gamma])+\frac{\varepsilon}{2}<\varepsilon.&
  \end{align*}

Again by Lemma \ref{good-property}, we obtain that
\[
[u_1^l,\tau^m\gamma][u_1,\tau^n]=[u_1,\tau^{ml+n}][u_1^l,\gamma].
\]

Set $h=u_1,k=ml+n,\theta=[u_1^l,\gamma]^{-1}\in G_s\cap \Gamma$,
then
$d_G([h,\tau^k],t\theta)<\varepsilon$.
\end{proof}

\begin{remark}\label{small}
Indeed, in (\ref{kkkk}) of Lemma \ref{key},
we can choose $h$ small enough satisfying the inequality.
Now assume that $h,\theta,k$ have been chosen satisfying (\ref{kkkk}).
Let $K$ be the closed ball $\{g\in G^0:d_G(g,1_G)\leq 2d_G(h,1_G)\}$ in $G^0$ and let
the Lie algebra $\mathfrak{g}$ of $G^0$ endowed with a norm $||\cdot||$.
Let $L$ be a closed ball in $\mathfrak{g}$ centered at 0 such that $\{  \exp {\xi}:\xi\in L \}\supset K$.
Since the exponential map exp is a diffeomorphism from $\mathfrak{g}$ onto $G^0$,
we have that the restriction of the exponential map $L$ is Lipschitz.
Thus there exists a constant $C>0$ such that for every $\xi \in L$,
\[
C^{-1}||\xi||\leq d_G(\exp (\xi),1_G) \leq C ||\xi||.
\]
Writing $h=\exp(\xi)$ for some $\xi\in L$ and setting $h_m=\exp(\xi/m)\in G_{s-1}$,
it follows that $(h_m)^{m}=\exp(\xi)=h$ and
\begin{equation}\label{large}
  d_G(h_m,1_G)\leq C||\xi/m||=C||\xi||/m\leq C^2d_G(h,1_G)/m.
\end{equation}
In (\ref{large}), let integer $m$ large enough such that $d_G(h_m,1_G)<\varepsilon$,
then by Remark \ref{good-property}, \[
d_G([h_m,\tau^{mk}],t\theta)=
d_G([(h_m)^m,\tau^{k}],t\theta)=d_G([h,\tau^{k}],t\theta)<\varepsilon.
\]
\end{remark}

 We emphasize that there is no other restriction on $k$ in (\ref{kkkk})
 and when using Lemma \ref{key},
 we always take $h$ in (\ref{kkkk}) small enough.
\begin{lemma}\label{not-iso}
  The extension $\pi_{s-2}:X\to Z_{s-2}$ is not isometric.
\end{lemma}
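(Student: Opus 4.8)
The plan is to argue by contradiction: assuming $\pi_{s-2}\colon X\to Z_{s-2}$ is isometric, I will produce, for arbitrarily small $\delta>0$, points $x,y\in X$ with $\pi_{s-2}(x)=\pi_{s-2}(y)$ and $\rho(x,y)<\delta$ but $\rho(T^kx,T^ky)\ge c$ for a fixed $c>0$ and some $k\in\mathbb N$. The pair will be $x=\pi(1_G)$ and $y=\pi(h)=h\cdot x$ for a small $h\in G_{s-1}$ (so that $\pi_{s-2}(x)=\pi_{s-2}(y)$ and $\rho(x,y)$ is small), and $k$ will be the exponent produced by Lemma~\ref{key}. The geometric ingredient is that $G_s$ is central in $G$ (since $[G_s,G]=G_{s+1}=\{e\}$), so $K:=G_s\Gamma/\Gamma$ is a compact subgroup of $X$ acting on it by $g_s\cdot(g\Gamma):=g_sg\Gamma$, freely (a fixed point would force $g^{-1}g_sg=g_s\in\Gamma$) and commuting with $T$; moreover $g_s\mapsto g_s\Gamma$ is a continuous, hence uniformly continuous, homomorphism $G_s\to K$ with kernel $G_s\cap\Gamma$. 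Since whether an extension is isometric depends only on the topology of the compact space $X$, not on the chosen metric, I may assume $\rho$ is $K$-invariant. I record $\bar F\colon K\to[0,\infty)$, $\bar F(\kappa):=\min_{z\in X}\rho(z,\kappa z)$, which is continuous (hence uniformly continuous on the compact group $K$) and vanishes exactly at the identity by freeness. Finally, since $G_s\not\subseteq\Gamma$ (otherwise $Z_{s-1}=G/(G_s\Gamma)=X$ and $(X,T)$ would already be $(s-1)$-step), I fix $t\in G_s\setminus\Gamma$, let $\bar t$ be its class, and set $c:=\tfrac12\bar F(\bar t)>0$.

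Everything hinges on one identity: for $h\in G_{s-1}$ and any $n$, one has $\tau^n h=[\tau^n,h]\,h\,\tau^n$ with $[\tau^n,h]\in G_s$, so applying $\pi$ yields
\[
T^ny=\pi(\tau^n h)=[\tau^n,h]\cdot\big(h\cdot T^nx\big),
\]
where $[\tau^n,h]$ acts through its class $\kappa_n\in K$. Left translation by $h$ is a homeomorphism of $X$ equal to the identity at $h=1_G$, so by compactness $\sup_{z\in X}\rho(z,h\cdot z)\to0$ as $h\to1_G$ in $G_{s-1}$; in particular $\rho(x,y)$ and $\rho(T^nx,h\cdot T^nx)$ are both small for small $h$. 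Using the $K$-invariance of $\rho$,
\[
\rho(T^nx,T^ny)\ \ge\ \rho\big(T^nx,\kappa_n\cdot T^nx\big)-\rho\big(\kappa_n\cdot T^nx,\kappa_n\cdot(h\cdot T^nx)\big)\ \ge\ \bar F(\kappa_n)-\sup_{z\in X}\rho(z,h\cdot z).
\]

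Thus it suffices to force $\bar F(\kappa_k)$ close to $\bar F(\bar t)$ for a suitable $k$ while keeping $h$ small. I apply Lemma~\ref{key} to the element $t^{-1}\in G_s$ together with Remark~\ref{small}: for every $\varepsilon>0$ there exist $h\in G_{s-1}$ with $d_G(h,1_G)<\varepsilon$, an element $\theta\in G_s\cap\Gamma$ and $k\in\mathbb N$ with $d_G([h,\tau^k],t^{-1}\theta)<\varepsilon$. Pushing forward by the uniformly continuous homomorphism $G_s\to K$ (which kills $\theta$) shows that the class of $[h,\tau^k]$ is within a distance tending to $0$ as $\varepsilon\to0$ of the class of $t^{-1}$; since $[\tau^k,h]=[h,\tau^k]^{-1}$ and inversion is uniformly continuous on the compact group $K$, the class $\kappa_k$ is that close to $\bar t$; and by uniform continuity of $\bar F$ we get $\bar F(\kappa_k)\to\bar F(\bar t)$ as $\varepsilon\to0$. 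Hence, choosing $\varepsilon$—and with it $h$, via Remark~\ref{small}—small enough, the pair $x,y$ satisfies $\pi_{s-2}(x)=\pi_{s-2}(y)$, $\rho(x,y)<\delta$, and $\rho(T^kx,T^ky)\ge\bar F(\kappa_k)-\sup_{z}\rho(z,h\cdot z)\ge c$, contradicting isometry of $\pi_{s-2}$ (take $\varepsilon_0=\tfrac23 c$ in its definition).

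The step I expect to be the main obstacle is exactly this uniformity: Lemma~\ref{key} only pins $[h,\tau^k]$ down modulo the lattice part $G_s\cap\Gamma$, and $k$ is a priori unbounded, so one cannot estimate inside $G$ directly; the resolution is to push everything into the compact group $K$, where the projection from $G_s$, inversion, and $\bar F$ are all uniformly continuous, while the $K$-invariance of $\rho$ neutralises the perturbation by $h$. The only other point worth a line is the metric-independence of the notion of isometric extension, which is what allows the reduction to a $K$-invariant $\rho$.
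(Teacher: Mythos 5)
Your proof is correct, and it reaches the contradiction by a genuinely different route than the paper, even though the technical engine is the same. Both arguments hinge on Lemma~\ref{key} together with Remark~\ref{small}: one produces an arbitrarily small $h\in G_{s-1}$ (so $h\Gamma$ lies in the $\pi_{s-2}$-fibre of $\Gamma$ and is close to it) whose commutator $[\tau^k,h]$ approximates, modulo $\Gamma\cap G_s$, a fixed central element $t\notin\Gamma$; the identity $\tau^k h=[\tau^k,h]h\tau^k$ then says the orbit of $h\Gamma$ drifts from that of $\Gamma$ by roughly the central translation by $t$. The paper exploits this by \emph{assuming} the isometry to conclude $T^k\Gamma$ and $T^k u\Gamma$ stay close, deduces that $(\Gamma,t\Gamma)$ is proximal, and contradicts distality of the nilsystem. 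You instead contradict the definition of isometric (equicontinuous) extension head-on, by showing $T^k\Gamma$ and $T^k h\Gamma$ are separated by a uniform constant $c$; to make that separation uniform in $k$ you introduce the compact central quotient $K=G_s\Gamma/\Gamma$ acting freely on $X$, a $K$-invariant metric, and the continuous positive function $\bar F$. What the paper's route buys is brevity, since distality of nilsystems is already available; what yours buys is independence from distality and an explicit quantitative lower bound, at the price of setting up $K$, the averaged metric, and $\bar F$ (all of which check out: $G_s\Gamma$ is closed and rational, the action of $K$ is free by centrality, and the metric-independence of the isometric-extension property is legitimate on a compact space). Two cosmetic remarks: the phrase ``uniformly continuous homomorphism $G_s\to K$'' overstates what you need --- since $d_G$ is right-invariant one has $d_G([h,\tau^k]\theta^{-1},t^{-1})=d_G([h,\tau^k],t^{-1}\theta)<\varepsilon$ with $\theta$ killed in $K$, so plain continuity of the projection already gives $\kappa_k\to\bar t$; and your standing assumption $G_s\not\subseteq\Gamma$ is also implicit in the paper's proof (its choice of $t$ with $\inf_{\gamma\in\Gamma}d_G(t,\gamma)>0$), so it is not a defect of your argument relative to theirs.
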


\begin{proof}
Suppose for a contradiction that the extension $\pi_{s-2}$ is isometric.

Let $t\in G_s$ with $\inf_{\gamma\in \Gamma}d_G(t,\gamma)>0$.
We claim that $(\Gamma,t\Gamma)$ is a proximal pair.

Fix $\varepsilon>0$.
As $\pi_{s-2}$ is isometric,
there is some $\varepsilon>\delta>0$
such that whenever $g,h\in G$
with $\pi_{s-2}(g\Gamma)=\pi_{s-2}(h\Gamma)$
and $d_X(g\Gamma,h\Gamma)<\delta$,
 we have
 $d_X(T^n g\Gamma,T^n h\Gamma)<\varepsilon$ for all $n \in \Z$.
For such $\delta$,
it follows from Lemma \ref{key}
and Remark \ref{small}
that there exist $u\in G_{s-1},\theta\in G_s\cap \Gamma$ and $n_0\in \Z$ such that
$d_G(u,1_G)<\delta$ and $d_G([u^{-1},\tau^{n_0}],t\theta)<\delta.$

As $t,\theta$ belong to the center of the group $G$,
we have that
\begin{align*}
d_G(\tau^{n_0} u,\tau^{n_0} t\theta)&=d_G(u[u^{-1},\tau^{n_0}],t\theta)&\\
&\leq d_G(u[u^{-1},\tau^{n_0}],[u^{-1},\tau^{n_0}])+
 d_G([u^{-1},\tau^{n_0}],t\theta) \nonumber&\\
 &= d_G(u,1_G)+
 d_G([u^{-1},\tau^{n_0}],t\theta) \nonumber\\
 &< 2\delta.
\end{align*}
It is easy to see that
$\pi_{s-2}(\Gamma)=\pi_{s-2}(u\Gamma)$
and $d_X(\Gamma,u\Gamma)\leq d_G(1_G,u)<\delta$,
then since $\pi_{s-2}$ is isometric
\[
 d_X(T^n \Gamma,T^n u \Gamma)<\varepsilon
\]
for all $n\in \Z$.
By the definition of the metric $d_X$,
we can choose $\gamma_0\in \Gamma$ such that
\begin{equation*}
d_G(\tau^{n_0},\tau^{n_0} u\gamma_0)=d_X(T^{n_0}\Gamma,T^{n_0} u\Gamma)<\varepsilon.
\end{equation*}
Thus we have that
\begin{align*}
  d_X(T^{n_0}\Gamma,T^{n_0}t\Gamma)&\leq d_G(\tau^{n_0},\tau^{n_0}t\theta\gamma_0) & \\
   &\leq  d_G(\tau^{n_0},\tau^{n_0}u\gamma_0)  + d_G(\tau^{n_0}u\gamma_0,\tau^{n_0}t\theta\gamma_0) & \\
   &=d_G(\tau^{n_0},\tau^{n_0}u\gamma_0)  + d_G(\tau^{n_0}u,\tau^{n_0}t\theta)&\\
   &<\varepsilon+2\delta<3\varepsilon,
\end{align*}
which implies that $(\Gamma,t\Gamma)$ is a proximal pair.

This is a contradiction as the system $(X,T)$ is distal.
This shows that the extension $\pi_{s-2}$ is not isometric.
\end{proof}

\begin{lemma}\label{tower}
   The maximal isometric extension of $Z_{s-2}$ below $X$ is $Z_{s-1}.$
\end{lemma}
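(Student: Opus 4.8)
The plan is to combine Lemma~\ref{maximal-isometric} with an explicit description of the two relations involved. By Lemma~\ref{nilfact} and Theorem~\ref{lift-property} we have $Z_{s-1}=X/\mathbf{RP}^{[s-1]}(X)$ and $Z_{s-2}=X/\mathbf{RP}^{[s-2]}(X)$, and since $Z_{s-1}=G/(G_s\Gamma)$,
\[
\mathbf{RP}^{[s-1]}(X)=\{(g\Gamma,g'\Gamma):g^{-1}g'\in G_s\Gamma\}=\{(x,tx):x\in X,\ t\in G_s\},
\]
where $t\in G_s$ acts on $X=G/\Gamma$ by the left translation $L_t$, which lies in the centre of the action because $G_s$ is central in $G$. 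By Lemma~\ref{maximal-isometric} the maximal isometric extension of $Z_{s-2}$ below $X$ is $X/Q(\mathbf{RP}^{[s-2]}(X))$, so it suffices to prove that $Q(\mathbf{RP}^{[s-2]}(X))=\mathbf{RP}^{[s-1]}(X)$.

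The inclusion ``$\subset$'' is free: $Z_{s-1}$ is a factor of $X$ and, by Theorem~\ref{isom}, the extension $Z_{s-1}\to Z_{s-2}$ is isometric, so $Z_{s-1}$ is a factor of the maximal isometric extension $X/Q(\mathbf{RP}^{[s-2]}(X))$; hence $Q(\mathbf{RP}^{[s-2]}(X))\subset\mathbf{RP}^{[s-1]}(X)$ (this is also exactly what the proof of Theorem~\ref{isom} gives). For the reverse inclusion I would first reduce to a single family of pairs. Since $Q(\mathbf{RP}^{[s-2]}(X))$ is a closed $T\times T$-invariant equivalence relation (Lemma~\ref{maximal-isometric}) and $T^n(\Gamma,t\Gamma)=(\tau^n\Gamma,L_t\tau^n\Gamma)$, the minimality of $(X,T)$ (density of $\{\tau^n\Gamma\}$) together with continuity of $L_t$ shows it is enough to prove $(\Gamma,t\Gamma)\in Q(\mathbf{RP}^{[s-2]}(X))$ for every $t\in G_s$.

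To this end, fix $t\in G_s$, $j\in\N$ and $\varepsilon>0$, and put $\varepsilon'=\min\{\varepsilon,1/(2j)\}$. By Lemma~\ref{key} and Remark~\ref{small} there are $h\in G_{s-1}$ with $d_G(h,1_G)<\varepsilon'$, $\theta\in G_s\cap\Gamma$ and $n_0\in\N$ with $d_G([h,\tau^{n_0}],t\theta)<\varepsilon'$. I would then test the pair $x'=h^{-1}\Gamma$, $y'=t\Gamma$. Because $h^{-1}\in G_{s-1}$ and $t\in G_s\subset G_{s-1}$, the points $x'$ and $y'$ lie in the same fibre of $\pi_{s-2}:X\to Z_{s-2}$, so $(x',y')\in R_{\pi_{s-2}}=\mathbf{RP}^{[s-2]}(X)$; moreover $d_X(x',\Gamma)\le d_G(h,1_G)<\varepsilon$ and $d_X(y',t\Gamma)=0$. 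Using right-invariance of $d_G$, the centrality of $G_s$, and the identity $\tau^{n_0}h^{-1}\tau^{-n_0}=h^{-1}[h,\tau^{n_0}]$ with $[h,\tau^{n_0}]\in G_s$, a short computation gives
\[
d_X(T^{n_0}x',T^{n_0}y')\le d_G\bigl(h^{-1}[h,\tau^{n_0}]\theta^{-1}t^{-1},1_G\bigr)\le d_G(h,1_G)+d_G([h,\tau^{n_0}],t\theta)<2\varepsilon'\le 1/j.
\]
Since $\varepsilon$ was arbitrary, this shows $(\Gamma,t\Gamma)$ lies in the closure of $\bigcup_n(T\times T)^{-n}\Delta_{1/j}\cap\mathbf{RP}^{[s-2]}(X)$ for every $j$, i.e.\ $(\Gamma,t\Gamma)\in Q(\mathbf{RP}^{[s-2]}(X))$. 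Together with ``$\subset$'' this yields $Q(\mathbf{RP}^{[s-2]}(X))=\mathbf{RP}^{[s-1]}(X)$, hence the maximal isometric extension of $Z_{s-2}$ below $X$ is $X/\mathbf{RP}^{[s-1]}(X)=Z_{s-1}$.

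The hard part will be the inclusion $\mathbf{RP}^{[s-1]}(X)\subset Q(\mathbf{RP}^{[s-2]}(X))$, and specifically arranging the perturbed pair so that the commutator $[h,\tau^{n_0}]$ produced by Lemma~\ref{key} is precisely the quantity controlling the distance between $T^{n_0}x'$ and $T^{n_0}y'$, while keeping $h$ (hence $x'$) near the identity — this is where Remark~\ref{small} is essential — and correctly absorbing the rational error $\theta\in\Gamma$ with the right exponent. Everything else (identifying the two relations, the ``$\subset$'' inclusion, and the reduction to the pair $(\Gamma,t\Gamma)$ via invariance and minimality) is formal; one should only observe in passing that $Q$ does not depend on the choice of compatible metric on $X$, so the estimate may be carried out with the quotient metric $d_X$ rather than the original $\rho$.
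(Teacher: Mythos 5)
Your proposal is correct, and for the hard inclusion it takes a genuinely more direct route than the paper. Both arguments ultimately rest on the same engine --- Lemma~\ref{key} together with Remark~\ref{small}, the centrality of $G_s$, and right-invariance of $d_G$ --- but the paper reaches $R_{\pi_{s-1}}\subset Q(R_{\pi_{s-2}})$ in three stages: it first invokes Lemma~\ref{not-iso} to know that $Q(R_{\pi_{s-2}})$ is nontrivial, then proves a translation claim (its Claim~1) saying that for an already given pair $(u\Gamma,v\Gamma)\in Q(R_{\pi_{s-2}})\setminus\Delta$ one may replace $v\Gamma$ by $tv\Gamma$ for any $t\in G_s$, and finally manufactures a nontrivial central element of the stabilizer $H$ (Claims~2--3) to seed the translation. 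The translation claim forces the paper to synchronize the return time $n$ witnessing $(u\Gamma,v\Gamma)\in Q$ with the exponent produced by Lemma~\ref{key} --- this is exactly the delicate ``infinitely many $m$'' point flagged in the paper's footnote. You bypass all of this: you exhibit the witness pair $(h^{-1}\Gamma,t\Gamma)\in R_{\pi_{s-2}}$ for the specific pair $(\Gamma,t\Gamma)$ from scratch, with the single exponent $n_0$ coming from Lemma~\ref{key} and no extra constraint on it, and then spread to all of $R_{\pi_{s-1}}=\{(x,tx):t\in G_s\}$ by closedness and $T\times T$-invariance of $Q$, minimality, and centrality of $G_s$ (this spreading step is essentially the paper's Claim~2). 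Your computation checks out: $\tau^{n_0}h^{-1}\tau^{-n_0}=h^{-1}[h,\tau^{n_0}]$, the rational error is absorbed by taking $\gamma=\theta$ in the infimum defining $d_X$, and the two terms are controlled by $d_G(h,1_G)<\varepsilon'$ (note $d_G(h^{-1},1_G)=d_G(1_G,h)$ by right-invariance) and $d_G([h,\tau^{n_0}],t\theta)<\varepsilon'$. What you lose relative to the paper is only the slightly more general translation statement of its Claim~1, which is not needed for the lemma; what you gain is that Lemma~\ref{not-iso} and the footnoted synchronization issue become unnecessary. The ``$\subset$'' direction via Theorem~\ref{isom}, Lemma~\ref{nilfact} and maximality is the same as the paper's.
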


\begin{proof}
Recall that $\pi_r:X \to Z_r$ is the quotient map $r=1,\ldots,s-1$.
By Lemma \ref{not-iso}, $Q(R_{\pi_{s-2}})$ is nontrivial.
By Lemma \ref{maximal-isometric} and Theorem \ref{isom} we have the following factor maps:
\[
X\to X/Q(R_{\pi_{s-2}})\to Z_{s-1}=X/R_{\pi_{s-1}}\to Z_{s-2}=X/R_{\pi_{s-2}}.
\]
It suffices to show that
$R_{\pi_{s-1}}\subset Q(R_{\pi_{s-2}})$.
We need the following claims.

\medskip

\noindent {\bf Claim 1}:
Let $u,v\in G$ with
$(u\Gamma,v\Gamma)\in Q(R_{\pi_{s-2}})\backslash \Delta$,
then$(u\Gamma,tv\Gamma)\in Q(R_{\pi_{s-2}})$ for all $t\in G_s$.

\begin{proof}[Proof of Claim 1]
Let $u,v\in G$ with
$(u\Gamma,v\Gamma)\in Q(R_{\pi_{s-2}})\backslash \Delta$.
Let ${t\in G_s}$ and assume that $t$ does not belong to $\Gamma$, otherwise there is nothing to prove.

Fix $\varepsilon>0$.
Since $(u\Gamma,v\Gamma)\in Q(R_{\pi_{s-2}})$,
there exist $(u'\Gamma,v'\Gamma)\in R_{\pi_{s-2}}$
and \textcolor{red}{infinitely many}\footnote{This means that we can choose integer $m$ large enough  to satisfy the condition
in Lemma \ref{key}.}
$m\in \mathbb{N}$
such that
$ d_X(u\Gamma,u'\Gamma)<\varepsilon,d_X(v\Gamma,v'\Gamma)<\varepsilon$
and
\begin{equation}\label{infi-oo}
  d_X(T^mu'\Gamma,T^mv'\Gamma)<\varepsilon.
\end{equation}
As $t\in G_s$,
by Lemma \ref{key}
and Remark \ref{small}
there exist ${h\in G_{s-1},\theta\in G_s\cap \Gamma}$
and $n\in \N$ which also satisfies (\ref{infi-oo}) such that
  \begin{equation}\label{6}
    d_G(h,1_G)<\varepsilon,\; d_G([h^{-1},u'],1_G)<\varepsilon,
  \;\text{and}\; d_G([h^{-1},\tau^n],t\theta)<\varepsilon.
  \end{equation}
As $(u'\Gamma,v'\Gamma)\in R_{\pi_{s-2}} $,
  we have $u'^{-1}v'\in G_{s-1}\Gamma$ and
   \[
   (u'h)^{-1}tv'=th^{-1}u'^{-1}v'\in G_{s-1}\Gamma,
   \]
which implies $(u'h\Gamma,tv'\Gamma)\in R_{\pi_{s-2}}$.
  We next show that
   \begin{enumerate}
    \item $d_X(u\Gamma,u'h\Gamma)<3\varepsilon$;
     \item $d_X(tv\Gamma,tv'\Gamma)<\varepsilon$;
      \item $d_X(T^nu'h\Gamma,T^ntv'\Gamma)<4\varepsilon$.
       \end{enumerate}
       From this, we get that $(u\Gamma,tv\Gamma)\in Q(R_{\pi_{s-2}})$.

 \medskip

To estimate (1),
choose $\gamma,\gamma_1\in \Gamma$ such that
\begin{align}\label{99}
   & d_G(u,u'\gamma_1)= d_X(u\Gamma,u'\Gamma)<\varepsilon,
   &  \\
   &d_G(\tau^nu',\tau^nv'\gamma)=d_X(T^nu'\Gamma,T^nv'\Gamma)<\varepsilon.
\end{align}
By right-invariance of the metric $d_G$,
we have
\begin{align*}
d_X(u\Gamma,u'h\Gamma)&\leq  d_G(u,u'h\gamma_1)& \\
& = d_G(u,h[h^{-1},u']u'\gamma_1)& \\
  & \leq d_G(u,u'\gamma_1)+d_G(u'\gamma_1,h[h^{-1},u']u'\gamma_1) &\\
  &=d_G(u,u'\gamma_1)+d_G(1_G,h[h^{-1},u'])&\\
  &\leq d_G(u,u'\gamma_1)+d_G(1_G,[h^{-1},u'])+d_G(1_G,h)<3
  \varepsilon.  &\text{by} \;(\ref{6})\; \mathrm{and}\; (\ref{99})
\end{align*}

\medskip
To estimate (2),
   as $t$ belongs to the center of the group $G$,
   we have
 \[
 d_X(tv\Gamma,tv'\Gamma)=d_X(v\Gamma,v'\Gamma)<\varepsilon.
 \]

  \medskip

To estimate (3),
 we have
  \begin{align*}
     &d_X(T^nu'h\Gamma,T^ntv'\Gamma)&\\
     \leq& d_X(T^n u'h\Gamma,T^n u't\Gamma)+d_X(T^n u't\Gamma,T^n tv'\Gamma)&\\
      < &d_X(T^n u'h\Gamma,T^n u't\Gamma)+\varepsilon  & \text{by} \;(\ref{infi-oo})\\
     \leq& d_G(\tau^n u' h,\tau^n u' t\theta)+\varepsilon&\\
      =&d_G(\tau^n h[h^{-1},u']u',\tau^n t\theta u')+\varepsilon &  (t,\theta\in G_s)\\
 =&d_G( [h^{-1},u']\tau^n h ,\tau^n t \theta)+\varepsilon & ([h^{-1},u']\in G_s)&\\
 \leq& d_G([h^{-1},u']\tau^n h, \tau^nh)+d_G(\tau^n h,\tau^n t \theta)+\varepsilon&\\
 \leq &d_G([h^{-1},u'], 1_G)+d_G(h,1_G)+d_G([h^{-1},\tau^n],t\theta)+\varepsilon.&\\
 <&4\varepsilon. & \text{by } (\ref{6})
  \end{align*}

This shows Claim 1.
  \end{proof}

Let $\theta : X\to X/Q(R_{\pi_{s-2}})$ be the factor map
and let $\varphi=\theta\circ \pi:G\to X/Q(R_{\pi_{s-2}})$, then $\varphi$ is continuous.
Let $H=\{ g\in G: \varphi(g)=\varphi(1_G)\}.$
Clearly, $\Gamma\subset H\subset G_s \Gamma$.

\medskip

\noindent {\bf Claim 2}:
$(g\Gamma,gh\Gamma)\in Q(R_{\pi_{s-2}})$ for any $g\in G$ and $h\in H$.

\begin{proof}[Proof of Claim 2]
Let $g\in G$ and $h\in H$, then $(\Gamma,h\Gamma)\in Q(R_{\pi_{s-2}})$.
Since $H\subset G_s\Gamma$ and $G_s$ is in the center of $G$, we may assume that $h\in G_s$.

By minimality of the system $(X,T)$, there is some sequence $\{n_k\}_{k\in \N}\subset \Z$ with
\[
d_X(T^{n_k}\Gamma, g\Gamma)\to 0,k\to \infty.
\]
As $h$ belongs to the center of the group $G$,
we have
\[
d_X(T^{n_k}h\Gamma, gh\Gamma)=d_X(T^{n_k}\Gamma, g\Gamma)\to 0,k\to \infty,
\]
which implies $(g\Gamma,gh\Gamma)\in Q(R_{\pi_{s-2}})$ by
equivalence of $Q(R_{\pi_{s-2}})$.
\end{proof}

\noindent {\bf Claim 3}:
There is some element $h\in H\cap G_s$ such that $h\notin \Gamma$.

\begin{proof}[Proof of Claim 3]
Recall that $Q(R_{\pi_{s-2}})$ is nontrivial,
we can choose $u,v\in G$ with $(u\Gamma,v\Gamma)\in Q(R_{\pi_{s-2}})\backslash \Delta$.
Then $(u\Gamma,v\Gamma)\in R_{\pi_{s-1}}$ which implies that there is some $h\in G_s$ and $h\notin \Gamma$
such that $v\Gamma=uh\Gamma$.
It suffices to show that $h\in H$.

As the system $(X,T)$ is minimal, there is some sequence $\{m_k\}_{k\in \N}\subset \Z$ such that
\[
d_X(T^{m_k}u\Gamma ,\Gamma)\to 0,k\to \infty.
\]
As $h$ belongs to the center of the group $G$,
we have
\[
d_X(T^{m_k}uh\Gamma,h\Gamma)=d_X(T^{m_k}u\Gamma ,\Gamma)\to 0,k\to \infty,
\]
which implies $(\Gamma,h\Gamma)\in Q(R_{\pi_{s-2}})$ by
equivalence of $Q(R_{\pi_{s-2}})$.
This shows $h\in H$.
\end{proof}

We next show that $R_{\pi_{s-1}}\subset Q(R_{\pi_{s-2}})$.

Let $u,v\in G$ with $(u\Gamma,v\Gamma)\in R_{\pi_{s-1}}$,
then there exist $g\in G_s$ such that $v\Gamma=gu\Gamma$.
  By Claim 3,
  there is some element $h\in H\cap G_s$ such that $h\notin \Gamma$.
  By Claim 2, we get that $(u\Gamma,uh\Gamma)\in Q(R_{\pi_{s-2}})$ .
Notice that $u\Gamma$ and $uh\Gamma$
are distinct points by the choice of $h$,
so applying Claim 1 by taking $t=h^{-1}g\in G_s$,
we obtain that
\[
(u\Gamma,v\Gamma)=(u\Gamma,gu\Gamma)=
(u\Gamma,(h^{-1}g) uh \Gamma)\in  Q(R_{\pi_{s-2}}),
\]
  as was to be shown.

This completes the proof.
\end{proof}

Now we are able to show Theorem \ref{main-thm2}.

\begin{proof}[Proof of Theorem \ref{main-thm2}]

For $r=1,\ldots,s-1$, define $Z_r=G/(G_{r+1}\Gamma)$.
By Lemma \ref{nilfact}, we get that $Z_r$ is the maximal factor of order $r$ of $X$.
It suffices to show that the maximal isometric extension of $Z_{r}$ below $X$
is $Z_{r+1}$ for $r=1,\ldots,s-2$.

We first show that
the maximal isometric extension of $Z_{r}$ below $Z_{r+2}$
is $Z_{r+1}$ for $r=1,\ldots,s-2$.
As a matter of fact,
let $G'=G/G_{r+3},\phi:G\to G'$ be the quotient homomorphism,
$\Gamma'=\phi(\Gamma)$ and $\tau'=\phi(\tau)$.
Let $T'$ be the translation by $\tau'$ on $X'=G'/\Gamma'$.
Then for every $j\geq1$, we have that $\phi(G_j)=G_j'$.
Therefore $G'_{r+3}=\{1_{G'}\}$ and $(X',T')$ is an $(r+2)$-step nilsystem.
Moreover, since $(X',T')$ is a factor of $(X,T)$, it is minimal.
For $j=1,\ldots,r+2$, let $Z'_{j}=G'/(G'_{j+1}\Gamma')$.
Applying Lemma \ref{tower} for $s=r+2$,
we obtain that the maximal isometric extension of $Z'_{r}$ below $X'=Z_{r+2}'$ is $Z'_{r+1}$.
Recalling that $G'=G/G_{r+3}$,
we obtain that $Z_{i}$ is conjugate to $Z_{i}'$ for $i=1,\ldots,r+2$.
Thus the maximal isometric extension of $Z_{r}$ below $Z_{r+2}$ is $Z_{r+1}$.

\medskip

We now return to the proof of the theorem.
We show it by induction on $r$.

When $r=s-2$, it follows from Lemma \ref{tower}.

Let $r\leq s-3$ be an integer and
suppose the statement is true for all $j=r+1,\ldots, s-2$.
Assume that $Y$ is the maximal isometric extension of $Z_{r}$ below $X$,
then
there is a commutative diagram:
\[\xymatrix{
  X \ar[drr]_{} \ar[r]^{}
             &    Z_{r+2} \ar[r] &Z_{r+1} \ar[r]^{}  &Z_{r}  \\
             &   & Y \ar[u]^{}    \ar[ur]^{}         }
             \]
      By Lemma \ref{intersection}, the extension $Y\to Z_{r+1}$ is also isometric.
      By inductive hypothesis,
      the maximal isometric extension of $Z_{r+1}$ below $X$ is $Z_{r+2}$.
      This shows that $Y$ is a factor of $Z_{r+2}$ and thus $Y$ is an isometric extension of $Z_{r}$ below $Z_{r+2}$.
      Notice that the maximal isometric extension of $Z_{r}$ below $Z_{r+2}$ is $Z_{r+1}$,
     we deduce that $Y$ is a factor of $Z_{r+1}$ and so $Y=Z_{r+1}$  as was to be shown.

      This completes the proof.
\end{proof}

\section{Proofs of Theorems \ref{main-thm3} and \ref{main-thm4}}

In the final section, we give proofs of Theorems \ref{main-thm3} and \ref{main-thm4}.

\begin{lemma}\label{d-step-nil}
  Let $d\geq 2$ be an integer and let $(X,T)$ be a minimal nilsystem with a
$d$-step top-nilpotent enveloping semigroup,
then it is a $d$-step nilsystem.
\end{lemma}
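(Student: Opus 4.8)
The plan is to show that a minimal nilsystem $(X,T)=(G/\Gamma,T)$ with a $d$-step top-nilpotent enveloping semigroup is in fact a $d$-step pro-nilsystem — equivalently, by Theorem \ref{description}, that $\mathbf{RP}^{[d]}(X)=\Delta$ — and then invoke Lemma \ref{nilfact} to conclude it is a $d$-step nilsystem. Suppose $(X,T)$ is a minimal $s$-step nilsystem; we may assume (reducing as in the preceding subsection) that $G$ is spanned by $G^0$ and $\tau$, that $G^0$ is simply connected and divisible, and that $G_{s+1}=\{1_G\}$ while $G_s\neq\{1_G\}$ with $G_s/(G_s\cap\Gamma)$ nontrivial; if $s\le d$ there is nothing to prove, so assume for contradiction that $s>d\ge 2$. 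The goal is to produce a nontrivial element of $E_{d+1}^{\mathrm{top}}(X)$, contradicting $d$-step top-nilpotency.

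First I would use Theorem \ref{main-thm2}: since $s\ge 3$ (because $s>d\ge 2$), the tower of maximal factors of higher order of $X$ is a Furstenberg tower, and in particular $Z_s=X\to Z_{s-1}=X/\mathbf{RP}^{[s-1]}(X)$ is an isometric extension, with each $Z_{r+1}\to Z_r$ the maximal isometric extension. Next I would exploit Lemma \ref{key} (and Remark \ref{small}): for $t\in G_s$ with $\inf_{\gamma\in\Gamma}d_G(t,\gamma)>0$ there exist $h\in G_{s-1}$ arbitrarily small, $\theta\in G_s\cap\Gamma$, and $k\in\N$ with $d_G([h,\tau^k],t\theta)<\varepsilon$. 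The element $[h,\tau^k]$ is a commutator of $\tau^k\in E(X)$ with a translation $h\in G_{s-1}$; I would argue that $h\in G_{s-1}$, acting by left translation, lies in $E_{s-1}^{\mathrm{top}}(X)$ — this follows by an induction running the same machinery used in Lemma \ref{not-iso} and Lemma \ref{tower}, namely that translations by $G_r$ are built from commutators of translations by $G_{r-1}$ with powers of $\tau$, approximated via minimality and the divisibility of $G^0$. Hence $[h,\tau^k]\in [E_{s-1}^{\mathrm{top}}(X),E(X)]\subset E_s^{\mathrm{top}}(X)$, and taking a limit of such approximations (using that $E_s^{\mathrm{top}}(X)$ is closed, by Lemma \ref{subgroup}) produces a nontrivial element of $E_s^{\mathrm{top}}(X)$ inducing the translation by a nontrivial element of $G_s/(G_s\cap\Gamma)$. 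Therefore $E_s^{\mathrm{top}}(X)\neq\{\mathrm{id}\}$, which forces $s\le d$; this contradiction shows $s\le d$, i.e. $(X,T)$ is an $s$-step nilsystem with $s\le d$, and so it is $d$-step nilpotent (allowing, as the statement does, the degenerate padding to exactly $d$ steps).

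The main obstacle I expect is the claim that translation by $G_{s-1}$ genuinely belongs to $E_{s-1}^{\mathrm{top}}(X)$ — more generally, that for each $r$ the translations by elements of $G_r$ lie in $E_r^{\mathrm{top}}(X)$. This requires an induction on $r$ in which, at each stage, one writes an element of $G_r$ as a limit (in the uniform topology, via minimality of $(X,T)$ and divisibility of $G^0$) of products of commutators $[g,\tau^n]$ with $g\in G_{r-1}$, and then checks that such commutators of an element of $E_{r-1}^{\mathrm{top}}(X)$ with the transformation lie in $[E_{r-1}^{\mathrm{top}}(X),E(X)]$, whose closure is $E_r^{\mathrm{top}}(X)$; the subtlety is that one must take limits inside $E(X)$ (not just in $G$) and keep track of the topological-commutator structure, for which Lemma \ref{subgroup}, Lemma \ref{uniform}, and the continuity statements of Lemma \ref{good-property} are the relevant tools. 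Once this identification is in place, the rest is a short contradiction argument as above.
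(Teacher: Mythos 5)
There is a genuine gap, and it sits exactly where you flag ``the main obstacle'': the claim that left translation by an element of $G_r$ lies in $E_r^{\mathrm{top}}(X)$ is false in general, and the induction you propose already fails at its base case $r=1$. Take the minimal $2$-step nilsystem given by the skew product $T(x,y)=(x+\alpha,y+x)$ on $\mathbb{T}^2$, represented as $G/\Gamma$ with $G$ generated by $G^0$ (the pure translations of $\mathbb{T}^2$) and $\tau=T$. The pure translation $S(x,y)=(x+a,y)$ is an element of $G^0\subset G_1$, but if $T^{n_\beta}\to S$ pointwise then, since $T^{n}(x,y)=(x+n\alpha,\,y+nx+\binom{n}{2}\alpha)$, evaluating at $x=0$ gives $\binom{n_\beta}{2}\alpha\to 0$, hence $n_\beta x\to 0$ for every $x\in\mathbb{T}$, i.e.\ $n_\beta\to 0$ in the Bohr compactification of $\Z$; applying this to the character $x=\alpha$ forces $n_\beta\alpha\to 0$ and therefore $a=0$. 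So for $a\neq 0$ the translation $S$ does not belong to $E(X)=E_1^{\mathrm{top}}(X)$ at all. The enveloping semigroup is a \emph{pointwise} closure of $\{T^n\}$ and consists mostly of discontinuous maps; membership of a given continuous translation in it is a genuinely restrictive arithmetic condition, and Lemma \ref{key} together with Remark \ref{small} only provides metric estimates inside $G$ --- it says nothing about realizing $h$ or $[h,\tau^k]$ as elements of $E(X)$. Consequently the step ``$h\in G_{s-1}$ acting by left translation lies in $E_{s-1}^{\mathrm{top}}(X)$, hence $[h,\tau^k]\in E_s^{\mathrm{top}}(X)$'' is unsupported. (What is true --- and is essentially Theorem \ref{main-thm4}, proved \emph{after} this lemma --- is that $E_{s}^{\mathrm{top}}(X)$ has the same orbits as $G_s$; but that is an output of the whole development, not an available input here.)

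The paper avoids exhibiting any explicit element of $E_j^{\mathrm{top}}(X)$. It compares two towers: $X_j=X/\mathbf{R}_j(X)$, the quotient by the $E_{j+1}^{\mathrm{top}}(X)$-action (Lemma \ref{euqi}), and $Z_j=X/\mathbf{RP}^{[j]}(X)$. Using that each extension $X_j\to X_{j-1}$ is isometric (Lemma \ref{iso--}) while $Z_j$ is the \emph{maximal} isometric extension of $Z_{j-1}$ below $X$ (Theorem \ref{main-thm2}), it shows inductively that $X_j=Z_j$ for $j\le d-1$; if $\mathbf{RP}^{[d]}(X)$ were nontrivial, then $X=X_d\to X_{d-1}=Z_{d-1}$ would be isometric, forcing $X$ to factor through the proper factor $Z_d$, a contradiction. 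If you want to keep your enveloping-semigroup strategy, you would need an independent proof that $E_{d+1}^{\mathrm{top}}(X)$ is nontrivial whenever $X$ is an $s$-step nilsystem with $s>d$; that is essentially the substance of the lemma and is not delivered by the sketch.
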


\begin{proof}
 Let $d\geq 2$ be an integer and let $(X,T)$ be a minimal nilsystem with a
$d$-step top-nilpotent enveloping semigroup.
For $j=1,\ldots,d$, let $Z_j=X/\mathbf{RP}^{[j]}(X)$ and let $X_{j}=X/\mathbf{R}_{j}(X)$.
By Lemma \ref{euqi},
we obtain the following commutative diagram:
\begin{equation*}
\xymatrix
        {(X=X_d,T) \ar[d]    \ar[r]        &
     (X_{d-1},T) \ar[r]      \ar[d]       &
         \cdots \ar[r] &
        (X_2,T) \ar[d] \ar[r]  &
                (X_1,T)  \ar[d] &
        \\
 (Z_d,T) \ar[r]               &
     (Z_{d-1},T) \ar[r]         &
  \cdots      \ar[r]   &
     (Z_2,T)\ar[r] &
     (Z_1,T)
    }
\end{equation*}

We will show inductively that $Z_j=X_j,j=1,\ldots,d-1$.

Note that
$X_1$ is a factor of $X$,
 for which its
enveloping semigroup is abelian.
By Theorem \ref{distal-ellis-group},
$X_1$ is equicontinuous.
Since $Z_1$ is the maximal equicontinuous factor of $X$,
we obtain that $X_1$ is a factor of $Z_1$ and thus $X_1=Z_1$.
For integer $d\geq 3$,
let $j\in \{2,\ldots, d-1\}$.
Suppose the statement is true for all $k=1,\ldots,j-1$.
That is, $Z_k=X_k,k=1,\ldots,j-1$.
By Lemma \ref{iso--} the extension $X_j\to X_{j-1}$ is isometric.
On the other hand, by Theorem ~\ref{main-thm2}
 the maximal isometric extension of $Z_{j-1}$ below $X$
is $Z_j$.
This implies that $X_j$ is a factor of $Z_j$ and thus $Z_j=X_j$.

For any integer $d\geq2$,
we conclude that $Z_j=X_j,j=1,\ldots,d-1$.
We next show that $X=Z_d$ which implies that $X$ is a $d$-step nilsystem.

Suppose for a contradiction that $X$ is not a $d$-step nilsystem.
Then $\RP^{[d]}(X)$ is nontrivial and by Theorem \ref{main-thm2}
 the maximal isometric extension of $Z_{d-1}$ below $X$
is $Z_d$.
Again by Lemma \ref{iso--},
we obtain that the extension $X\to X_{d-1}=Z_{d-1}$ is isometric which implies that $X$ is a factor of $Z_d$.
It is a contradiction!

We conclude that $X$ is a $d$-step nilsystem.
\end{proof}

\begin{cor}\label{key-thm}
    Let $d\geq 2$ be an integer and let $(X,T)$ be a system of order $\infty$ with a
$d$-step top-nilpotent enveloping semigroup,
then it is a $d$-step pro-nilsystem.
\end{cor}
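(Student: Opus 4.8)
The plan is to reduce to the case of a single minimal nilsystem by means of the structure theorem for systems of order $\infty$, and then to transport the conclusion back up through the inverse limit.

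First, since $(X,T)$ has a $d$-step top-nilpotent enveloping semigroup, $E(X)$ is in particular a group, so $(X,T)$ is distal by Theorem \ref{distal-ellis-group}. As $(X,T)$ is also a system of order $\infty$, Theorem \ref{system-of-order} allows us to write $X=\varprojlim X_i$ as an inverse limit of minimal nilsystems $(X_i,T)$; let $\pi_i:X\to X_i$ denote the associated factor maps. Each $(X_i,T)$ is a factor of the distal system $(X,T)$, hence itself distal, and therefore $E_{d+1}^{\mathrm{top}}(X_i)=\pi_i^*\bigl(E_{d+1}^{\mathrm{top}}(X)\bigr)=\{\mathrm{id}\}$. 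Thus every $(X_i,T)$ is a minimal nilsystem whose enveloping semigroup is $d$-step top-nilpotent.

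Next, I would apply Lemma \ref{d-step-nil} to each $X_i$ (here the hypothesis $d\ge 2$ is used): each $(X_i,T)$ is a $d$-step nilsystem, so $\mathbf{RP}^{[d]}(X_i)=\Delta_{X_i}$. It then remains to lift triviality of $\mathbf{RP}^{[d]}$ back to $X$. By Theorem \ref{lift-property} we have $(\pi_i\times\pi_i)\mathbf{RP}^{[d]}(X)=\mathbf{RP}^{[d]}(X_i)=\Delta_{X_i}$, so $\mathbf{RP}^{[d]}(X)\subset R_{\pi_i}$ for every $i$; since $X=\varprojlim X_i$ we have $\bigcap_i R_{\pi_i}=\Delta_X$, and hence $\mathbf{RP}^{[d]}(X)=\Delta_X$. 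Therefore $(X,T)$ is a $d$-step pro-nilsystem, as claimed.

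Given all the machinery already in place, the argument is short; the only point demanding attention is verifying that the inverse-limit factors $X_i$ still carry a $d$-step top-nilpotent enveloping semigroup — which is precisely where one invokes that a factor map between distal systems is onto on the topological commutator subgroups — so that Lemma \ref{d-step-nil} is applicable to each of them.
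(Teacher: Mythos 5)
Your argument is correct and follows essentially the same route as the paper: decompose $X$ as an inverse limit of minimal nilsystems via Theorem \ref{system-of-order}, note that each factor inherits a $d$-step top-nilpotent enveloping semigroup (via surjectivity of $\pi_i^*$ on topological commutators for distal factor maps), apply Lemma \ref{d-step-nil} to each, and conclude. You merely spell out two steps the paper leaves implicit — why the factors inherit top-nilpotency and why triviality of $\mathbf{RP}^{[d]}$ passes to the inverse limit — both of which are handled correctly.
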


\begin{proof}
  By Theorem \ref{system-of-order}, there exists a sequence of minimal nilsystems
  $\{X_i\}_{i\in \N}$
  such that $X=\lim\limits_{\longleftarrow}\{ X_i\}_{i\in \N}$.

  As the enveloping semigroup of $(X,T)$ is $d$-step top-nilpotent,
we obtain that for every $i\in \N$,
the enveloping semigroup of $(X_i,T)$ is
also $d$-step top-nilpotent.
So by Lemma \ref{d-step-nil}, $(X_i,T)$ is a $d$-step nilsystem for every $i\in\N$.

 We conclude that $X$ is a $d$-step pro-nilsystem.
\end{proof}


Finally, we are able to show Theorems \ref{main-thm3} and \ref{main-thm4}.
\begin{proof}[Proof of Theorem \ref{main-thm3}]
When $d=1$, it follows from Theorem \ref{distal-ellis-group}.

When $d\geq2$,
it follows from Theorem \ref{main-thm1} and Corollary \ref{key-thm}.
\end{proof}


\begin{proof}[Proof of Theorem \ref{main-thm4}]
Let $d\geq 2$ be an integer.
When $k=d$, the result is trivial.

Let $k\in \{1,\ldots , d-1\}$.
It follows from Lemma \ref{euqi} that
$X/\mathbf{R}_k(X)$ is an extension of $X/\mathbf{RP}^{[k]}(X)$.

On the other hand, the enveloping semigroup of $X/\mathbf{R}_k(X)$ is $k$-step top-nilpotent,
thus by Theorem \ref{main-thm3} it is a $k$-step pro-nilsystem.
Recall that $X/\mathbf{RP}^{[k]}(X)$ is the maximal factor of order $k$ of $X$,
so $X/\mathbf{R}_k(X)$ is a factor of $X/\mathbf{RP}^{[k]}(X)$.

Following these facts, we deduce that $X/\mathbf{R}_k(X)=X/\mathbf{RP}^{[k]}(X)$
which implies
\[
\mathbf{RP}^{[k]}(X)=\{(x,px):x\in X,p\in   E_{k+1}^{\mathrm{top}}(X)\}.
\]

This completes the proof.
\end{proof}

\end{document}